\numberwithin{equation}{section}
\newtheorem{theo}{Theorem} 
\newtheorem{mainprop}[theo]{Proposition} 
\newtheorem{maincorol}[theo]{Corollary} 
\newtheorem{lemma}{Lemma}[section]
\newtheorem{prop}[lemma]{Proposition}
\newtheorem{corol}[lemma]{Corollary}
\newtheorem{theoint}[lemma]{Theorem}
\newtheorem{claim}[lemma]{Claim}
\newtheorem{inttheo}[lemma]{Theorem}
\theoremstyle{remark}
\newtheorem{remark}[lemma]{Remark}
\newtheorem{step}{Step}
\newtheorem{cas}{Case}
\theoremstyle{definition}
\newtheorem{defi}[lemma]{Definition}
\newcommand{\hdot}{\dot{H}^1}
\newcommand{\vell}{\boldsymbol{\ell}}
\newcommand{\NN}{\mathbb{N}}
\newcommand{\RR}{\mathbb{R}}
\newcommand{\eps}{\varepsilon}
\newcommand{\tlh}{\tilde{h}}
\newcommand{\tu}{\tilde{u}}
\newcommand{\indxt}{\indic_{\{|x|\geq |t|\}}}
\newcommand{\vu}{\vec{u}}
\newcommand{\CCC}{\mathcal{C}}
\newcommand{\EEE}{\mathcal{E}}
\newcommand{\HHH}{\mathcal{H}}
\newcommand{\LLL}{\mathcal{L}}
\newcommand{\MMM}{\mathcal{M}}
\newcommand{\NNN}{\mathcal{N}}
\newcommand{\TTT}{\mathcal{T}}
\newcommand{\YYY}{\mathcal{Y}}
\newcommand{\ZZZ}{\mathcal{Z}}
\newcommand{\ZZZZ}{\boldsymbol{\mathcal{Z}}}
\newcommand{\tV}{\widetilde{V}}
\newcommand{\indic}{1\!\!1}
\newcommand{\spartial}{ {\slash\!\!\! \partial} }
\newcommand{\ent}[1]{\left\lfloor #1 \right\rfloor} 
\newcommand{\ind}{\indic_{\{|x|>|t|\}}}
\newcommand{\bigvec}{\overrightarrow}
\newcommand{\dix}{\frac{2(N+2)}{N-2}}
\newcommand{\cinq}{\frac{N+2}{N-2}}
\newcommand{\huit}{\frac{2(N+1)}{N-2}}
\DeclareMathOperator{\loc}{loc}
\DeclareMathOperator{\vect}{span}
\DeclareMathOperator{\sgn}{sgn}
\DeclareMathOperator{\app}{app}
\DeclareMathOperator{\id}{Id}
\DeclareMathOperator{\supp}{supp}
\title[Critical wave equation close to the ground state]{Exterior energy bounds for the critical wave equation close to the ground state}
\author[T.~Duyckaerts]{Thomas Duyckaerts$^1$}
\author[C.~Kenig]{Carlos Kenig$^2$}
\author[F.~Merle]{Frank Merle$^3$}
\thanks{$^1$Universit\'e Sorbonne Paris Nord, LAGA (UMR 7539) and Institut Universitaire de France}
\thanks{$^2$University of Chicago. Partially supported by NSF Grants DMS-14363746 and DMS-1800082}
\thanks{$^3$Universit\'e de Cergy-Pontoise, Laboratoire de Math\'ematiques AGM (UMR 8088) and Institut des Hautes \'Etudes Scientifiques}
\keywords{Focusing wave equation, soliton}
\begin{document}
\begin{abstract}
 By definition, the \emph{exterior asymptotic energy} of a solution to a wave equation on $\RR^{1+N}$ is the sum of the limits as $t\to \pm\infty$ of the energy in the the exterior $\{|x|>|t|\}$ of the wave cone. In our previous work \cite{DuKeMe12}, we have proved that the exterior asymptotic energy of a solution of the linear wave equation in odd space dimension $N$  is bounded from below by the conserved energy of the solution. 

In this article, we study the analogous problem for the linear wave equation with a potential 
\begin{equation}
 \label{abstractLW}
\tag{*}
\partial_t^2u+L_Wu=0,\quad L_W:=-\Delta -\frac{N+2}{N-2}W^{\frac{4}{N-2}}
\end{equation} 
obtained by linearizing the energy critical wave equation at the ground-state solution $W$, still in odd space dimension. This equation admits nonzero solutions of the form $A+tB$, where $L_WA=L_WB=0$ with vanishing asymptotic exterior energy.
We prove that the exterior energy of a solution of \eqref{abstractLW} is bounded from below by the energy of the projection of the initial data on the orthogonal complement of the space of initial data corresponding to these solutions. This will be used in a subsequent paper to prove soliton resolution for the energy-critical wave equation with radial data in all odd space dimensions.

 We also prove analogous results for the linearization of the energy-critical wave equation around a Lorentz transform of $W$, and give applications to the dynamics of the nonlinear equation close to the ground state in space dimensions $3$ and $5$.
\end{abstract}

\maketitle
\tableofcontents

\section{Introduction}
Consider the wave equation on $\RR^N$, $N\geq 3$, with an energy-critical focusing nonlinearity:
\begin{equation}
 \label{NLW}
 \partial_t^2u-\Delta u=|u|^{\frac{4}{N-2}}u,
\end{equation} 
and initial data
\begin{equation}
 \label{ID}
 \vec{u}_{\restriction t=0}=(u_0,u_1)\in \HHH,
\end{equation} 
where $\vec{u}:=(u,\partial_t u)$, $\HHH:=\hdot(\RR^N)\times L^2(\RR^N)$, and $\hdot(\RR^N)$ is the usual homogeneous Sobolev space. The equation is locally well-posed in $\HHH$ (see e.g. \cite{LiSo95,KeMe08,BuCzLiPaZh13}): for any initial data $(u_0,u_1)\in \HHH$, there exists a unique maximal solution $\vec{u}\in C^0((T_-,T_+),\HHH)$ with $u\in L^{\frac{2(N+1)}{N-2}}\left(I\times \RR^N\right)$ for all intervals $I\Subset (T_-,T_+)$. The energy:
$$E(\vec{u}(t))=\frac{1}{2}\int_{\RR^N} |\nabla_{t,x}u(t,x)|^2\,dx-\frac{N-2}{2N}\int_{\RR^N} |u(t,x)|^{\frac{2N}{N-2}}\,dx$$
and the momentum 
$$P(\vec{u}(t))=\int_{\RR^N}\nabla u(t,x)\partial_tu(t,x)\,dx$$
of a solution are conserved, where 
$$\nabla u=(\partial_{x_j}u)_{1\leq j\leq N},\quad \nabla_{t,x}u=(\partial_tu,\nabla u).$$
The equation \eqref{NLW} has the following scaling invariance: if $u$ is a solution of \eqref{NLW} and $\lambda>0$, then 
$$\frac{1}{\lambda^{\frac{N}{2}-1}}u\left( \frac{t}{\lambda},\frac{x}{\lambda} \right)$$
 is also a solution. Denote for simplicity $\hdot=\hdot(\RR^N)$, $L^2=L^2(\RR^N)$. For $f\in \hdot$, $g\in L^2$ and $\lambda>0$, we denote
 $$f_{(\lambda)}(x)=\frac{1}{\lambda^{\frac N2-1}} f\left( \frac{x}{\lambda} \right),\quad g_{[\lambda]}(x)=\frac{1}{\lambda^{\frac{N}{2}}}\left( \frac{x}{\lambda} \right).$$
 We let:
$$ \Sigma=\{Q\in \hdot\setminus\{0\}\;:\;-\Delta Q=|Q|^{\frac{4}{N-2}}Q\},$$
 and
\begin{equation}
\label{defW}
W:=\left(1+\frac{|x|^2}{N(N-2)}\right)^{1-\frac{N}{2}}. 
\end{equation}  
 Then $W\in \Sigma$, and, as a consequence of  \cite{Pohozaev65,GiNiNi81}, the only radial elements of $\Sigma$ are $\pm\frac{1}{\lambda^{\frac{N}{2}-1}}W\left(\frac{\cdot}{\lambda}\right)$.  Note that $W$ is the ground state, i.e. it minimizes the energy $\frac 12\int |\nabla Q|^2-\frac{N-2}{2N}\int |Q|^{\frac{2N}{N-2}}$ among the elements of $Q$ of $\Sigma$ (see \cite{Aubin76,Talenti76}).

 If $x,y$ are two vectors in $\RR^N$, we denote by $x\cdot y$ their scalar product and $|x|$ the Euclidean norm of $x$. Let $u$ be a function on $\RR\times\RR^N$ and $\ell=|\vell|<1$. Then the \emph{Lorentz transform} of $u$ with parameter $\ell$ is defined as:
\begin{equation}
 \label{defLl}
 \LLL_{\vell} u(t,x)=u_{\vell}(t,x)=u\left(\frac{t-\vell\cdot x}{\sqrt{1-\ell^2}},\left(-\frac{t}{\sqrt{1-\ell^2}}+\frac{1}{\ell^2} \left(\frac{1}{\sqrt{1-\ell^2}}-1\right)\vell\cdot x\right)\vell+x\right).
\end{equation} 
We note that 
\begin{equation}
 \label{propLl}
(\partial_t^2-\Delta)\left( \LLL_{\vell}u \right)=\LLL_{\vell}\left((\partial_t^2-\Delta)u\right).
\end{equation} 
In particular, if $u$ is a $C^2$ solution of \eqref{NLW}, then $\LLL_{\vell}u$ is also a solution of \eqref{NLW}.

If $Q\in \Sigma$, $Q_{\vell}$ is a \emph{traveling wave} solution of \eqref{NLW}. Indeed,
\begin{equation}
\label{travelling_wave}
Q_{\vell}(t,x)=Q_{\vell}(0,x-t\vell).
\end{equation} 
The \emph{soliton resolution conjecture} predicts that every solution $\vec{u}$ of \eqref{NLW} such that $T_+=+\infty$, or which is bounded in $\HHH$ as $t\to T_+$ decomposes asympotically as $t\to T_+$, up to a term which is negligible in $\HHH$, as a sum of decoupled solitary waves (modulated by the transformations of the equation) and a radiation term. The radiation term is a solution of the linear wave equation if $T_+=+\infty$ or a fixed element of $\HHH$ if $T_+<\infty$.

Various results in this direction were recently proved. The dynamics below the energy of the ground state was studied in \cite{KeMe08}, exactly at the energy of the ground state in  \cite{DuMe08} and just above this energy in \cite{DuKeMe12}, \cite{KrNaSc13a}, \cite{KrNaSc13b}, \cite{KrNaSc15} These works are all in accordance with the soliton resolution conjecture.

In \cite{DuKeMe13} the complete re\-solution was obtained in space dimension $3$, for spherically symmetric initial data. In several articles, a weaker version of this conjecture was proved for solutions that remain bounded in $\HHH$, namely that the expansion as a sum of solitary waves and a radiation term holds for a sequence of times going to the maximal time of existence (that can be finite or infinite): 
\begin{itemize}
\item for spherically symmetric solutions: see \cite{Rodriguez16} in any odd space dimension, \cite{CoKeLaSc18} in space dimension $4$, \cite{JiaKenig17} in space dimension $6$.
\item without symmetry assumption: see \cite{DuJiKeMe17} in space dimension $3$, $4$ or $5$.
\end{itemize}
The proofs of most of these results rely on bounds from below of the energy outside wave cones, for some classes of solutions of \eqref{NLW}. These inequalities arise from the observation that for any solution $u_F$ of the free wave equation in odd space dimension:
\begin{equation}
\label{FW}
\partial_t^2u_F-\Delta u_F=0, 
\end{equation} 
with $\vec{u}_F(0)=(u_0,u_1)\in \HHH$, one has:
\begin{equation}
\label{ext_energ_intro}
\sum_{\pm}\lim_{t\to\pm\infty}  \int_{|x|\geq |t|}|\nabla_{t,x}u_F(t,x)|^2\,dx\geq \int |\nabla u_0|^2+u_1^2. 
\end{equation} 
In particular, the only solution $u$ of \eqref{FW} such that
\begin{equation}
 \label{no_channels}
\sum_{\pm}\lim_{t\to\pm\infty}  \int_{|x|\geq |t|}|\nabla_{t,x}u(t,x)|^2\,dx=0 
\end{equation} 
is the null solution (see \cite{DuKeMe12}). For even dimension, the quantitative estimate \eqref{ext_energ_intro} fails for solutions of \eqref{FW}, even for radial data (see \cite{CoKeSc14}). However, the fact that the property \eqref{no_channels} implies $u\equiv 0$ is still valid (see \cite[Proposition 1]{DuKeMe19Pc}). 
 
It turns out that this property fails for some solutions of the nonlinear equation \eqref{NLW}, but the study of this issue has been fundamental in the proofs of the results mentioned in the last paragraph. Of course, since solitary waves travel at velocity $<1$, \eqref{no_channels} holds if $u$ is a solitary wave, and we conjecture that these solutions are the only ones satisfying \eqref{no_channels}. This conjecture was proved in \cite{DuKeMe13} in space dimension $3$, assuming that the solution is radial (see Propositions 2.1 and 2.2 there), and is the key ingredient in the proof of the soliton resolution in that article. The proof is specific to the $3$-dimensional radial wave equation and cannot be generalized to a different dimension.

Note that the knowledge, from the articles cited above, that the soliton resolution conjecture holds along a sequence of times reduces the proof of the soliton resolution for all times to the understanding of the dynamics of the equation close to a sum of solitons, that are decoupled by scaling and space translations. It is important in particular to understand which solutions of \eqref{NLW} satisfy \eqref{no_channels} in a neighborhood of such a multi-soliton. In this article, we initiate this program by considering the case of the neighborhood of the ground state soliton $W$, proving, when $N$ is odd, an analog of \eqref{ext_energ_intro} for the linearized equation around $W$: 
\begin{equation}
 \label{LW}
 \partial_t^2u+L_Wu=0,
\end{equation} 
where $L_W$ is the linearized operator around $W$:
\begin{equation} 
 \label{def_LW}
 L_W=-\Delta-\frac{N+2}{N-2}W^{\frac{4}{N-2}},
\end{equation}
and more generally for the linearized equation around $W_{\vell}$ (see equation \eqref{LWl} below). 

We also give an application to the nonlinear equation \eqref{NLW} proving that in space dimensions $3$ and $5$,
the only solutions satisfying a slightly stronger statement than \eqref{no_channels}, in a neighborhood of the solitary waves $W_{\vell}$ is a solitary wave (see Corollary \ref{Cor:rigidity_intro}). 

Define:
\begin{equation*}
\ZZZ=\left\{Z\in \hdot\;:\; L_WZ=0\right\}.
 \end{equation*} 
Let 
$$\Lambda W:=x\cdot \nabla W+\left( \frac{N}{2}-1 \right)W.$$
Since \eqref{NLW} is invariant by scaling and space translation, we have
$$\Lambda W\in \ZZZ,\quad \partial_{x_j}W\in \ZZZ,\; j\in \llbracket 1, N\rrbracket,$$
where $\llbracket 1, N\rrbracket=\{1,\ldots,N\}$.
Furthermore the following nondegeneracy property of $W$ is known (see e.g. \cite{Rey90})
$$\ZZZ=\vect\Big\{\Lambda W,\; \partial_{x_j}W,\; j\in \llbracket 1, N\rrbracket\Big\}.$$
Note that for all $j\in \llbracket 1,N\rrbracket$, $\partial_{x_j}W\in L^2$, and that $\Lambda W\in L^2$ if and only if $N\geq 5$.

Define
\begin{equation}
\label{defZZZZ}
\ZZZZ=\left(\ZZZ\times \ZZZ\right)\cap \HHH, 
\end{equation} 
which is a finite dimensional subspace of $\HHH$. Indeed, $\ZZZZ$ is spanned by $(\Lambda W,0)$, $(\partial_{x_j}W,0)$, $(0,\partial_{x_j}W)$ if $N=3$ and $4$, and by $(\Lambda W,0)$, $(0,\Lambda W)$, $(\partial_{x_j}W,0)$, $(0,\partial_{x_j}W)$ if $N\geq 5$. 
Note that if $u$ is a solution of \eqref{LW} with initial data $(u_0,u_1)\in \ZZZZ$, then $u(t,x)=u_0(x)+tu_1(x)$. 

Let $\vell\in \RR^N$ such that $|\vell|<1$. Linearizing the equation \eqref{NLW} around $W_{\vell}$ we obtain the following generalization of \eqref{LW}:
\begin{equation} 
 \label{LWl}
 \partial_t^2u-\Delta u-\frac{N+2}{N-2}W_{\vell}^{\frac{4}{N-2}}u=0.
\end{equation} 
Note that the linear potential in \eqref{LWl} is now time-dependent. Let us mention that the global well-posedness of equation \eqref{LWl} is easy to prove. Indeed, the local well-posedness can be proved by a fixed point argument relying on the Strichartz inequality recalled in \eqref{Strichartz} below and on the H\"older inequality:
$$ \left\|W_{\vell}^{\frac{4}{N-2}}u\right\|_{L^1(I,L^2)}\leq \left\|W_{\vell}^{\frac{4}{N-2}}\right\|_{L^{\frac{2(N+1)}{N+4}}\left(I,L^{\frac{2(N+1)}{3}}\right)}\|u\|_{L^{\frac{2(N+1)}{N-2}}(I\times \RR^N)},$$
where $I$ is a bounded interval. By linearity of the equation and the fact that the $L^{\frac{2(N+1)}{3}}$ norm of $W_{\vell}(t)$ is independent of $t$, the time of existence obtained by this argument is independent of the initial data and of the initial time, and the global well-posedness follows. Note however that this does not yield a solution which is uniformly bounded in the energy norm, but in fact can grow exponentially in this norm.

Consider the finite dimensional subspace of 
$\HHH$ 
$$\ZZZZ_{\vell}=\left\{\left(\LLL_{\vell}(v_0(x)+tv_1(x)),
\partial_t \LLL_{\vell}(v_0(x)+tv_1(x))\right)_{\restriction t=0},\; (v_0,v_1)\in \ZZZZ\right\}.$$
The solution of \eqref{LWl} with initial data $(u_0,u_1)\in \ZZZZ_{\vell}$ is exactly:
\begin{equation}
 \label{formula}
u(t,x)=u_0(x-t\vell)+t(\vell\cdot \nabla u_0+u_1)(x-t\vell).
\end{equation} 
Indeed, assume $\vell=\ell(1,0,\ldots,0)$ to fix ideas, so that 
$$\LLL_{\ell}(v(t,x))=v\left(\frac{t-x_1\ell}{\sqrt{1-\ell^2}}, \frac{x_1-t\ell}{\sqrt{1-\ell^2}},x' \right),\quad x':=(x_2,\ldots,x_N).$$
Then 
\begin{multline*}
\LLL_{\vell}(v_0(x)+tv_1(x))=\\v_0\left( \frac{x_1-t\ell}{\sqrt{1-\ell^2}},x'\right)-\frac{\ell(x_1-t\ell)}{\sqrt{1-\ell^2}} v_1\left( \frac{x_1-t\ell}{\sqrt{1-\ell^2}},x'\right)+t\sqrt{1-\ell^2}\,v_1\left( \frac{x_1-t\ell}{\sqrt{1-\ell^2}},x'\right). 
\end{multline*}
This proves that solutions of \eqref{LWl} with initial data in $\ZZZZ_{\vell}$ are of the form $A(x-t\vell)+tB(x-t\vell)$. 
Letting $t=0$ in this formula (and its time derivative) we see that $u_0=A$ and $u_1=-\vell\cdot \nabla A+B$, which yields \eqref{formula}. 

Using \eqref{formula}, one can check that for these solutions \eqref{no_channels} holds. The main result of this article proves that the solutions of \eqref{LWl} with initial data in $\ZZZZ_{\vell}$ are (at least in odd space dimensions) the only solutions of \eqref{LWl} such that \eqref{no_channels}  holds. If $V$ is a closed subspace of $\HHH$, we denote by $V^{\bot}$ its orthogonal complement in $\HHH$ and by $\pi_{V}$ the orthogonal projection on $V$. Then:
\begin{theo}
\label{T:LW}
 Assume $N\geq 3$ is odd, and let $\eta_0\in (0,1)$. Then there exists a constant $C(N,\eta_0)>0$ such that for all $(u_0,u_1)\in \HHH$, for all $\vell\in \RR^N$ with $|\vell|\leq \eta_0$,
 \begin{equation}
  \label{A2}
 \left\|\pi_{\ZZZZ_{\vell}^{\bot}}(u_0,u_1)\right\|^2\leq C(N,\eta_0)\sum_{\pm} \lim_{t\to \pm\infty} \int_{|x|\geq |t|}|\nabla_{t,x}u(t,x)|^2,
 \end{equation} 
 where $u$ is the solution of \eqref{LWl} with initial data $(u_0,u_1)$.
\end{theo}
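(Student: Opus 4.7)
The plan is to reduce \eqref{A2} to the case $\vell = 0$ by a Lorentz transformation, then to combine a qualitative rigidity statement (vanishing exterior asymptotic energy forces the initial data into $\ZZZZ$) with a compactness argument.

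\textbf{Reduction to }$\vell = 0$. By \eqref{propLl}, if $v$ solves \eqref{LW} then $u = \LLL_{\vell} v$ solves \eqref{LWl}, and by the very definition of $\ZZZZ_{\vell}$ the transform $\LLL_{\vell}$ sends $\ZZZZ$ bijectively onto $\ZZZZ_{\vell}$. Since $|\vell| \leq \eta_0 < 1$, the Lorentz boost preserves the exterior of the wave cone and distorts both the $\HHH$-norm of the initial data and the exterior asymptotic energy by factors depending only on $\eta_0$. This reduces the problem to the estimate
\[
\|\pi_{\ZZZZ^{\bot}}(u_0, u_1)\|_{\HHH}^2 \leq C(N) \sum_{\pm} \lim_{t \to \pm \infty} \int_{|x| \geq |t|} |\nabla_{t,x} u(t,x)|^2\,dx
\]
for solutions of \eqref{LW}. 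Moreover, for $(z_0, z_1) \in \ZZZZ$ the solution is $z_0(x) + t z_1(x)$, and the explicit decay of $\partial_{x_j} W$ (and of $\Lambda W$ when $N \geq 5$) shows by direct computation that its exterior asymptotic energy vanishes. Hence the right-hand side depends only on $\pi_{\ZZZZ^{\bot}}(u_0, u_1)$ and we may assume $(u_0, u_1) \in \ZZZZ^{\bot}$.

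\textbf{Qualitative rigidity.} The heart of the proof is that any solution $u$ of \eqref{LW} satisfying \eqref{no_channels} has $(u_0, u_1) \in \ZZZZ$. I would decompose $u$ into spherical harmonics $u = \sum_{k, m} u_{k,m}(t, r) Y_{k,m}(\omega)$; each mode $u_{k,m}$ satisfies a $1+1$-dimensional wave equation whose potential behaves like $\frac{k(k+N-2)}{r^2} + O(r^{-4})$ at infinity. Outside a large ball, after an appropriate conjugation by a power of $r$, the equation becomes a short-range perturbation of the radial free wave equation in an effective higher dimension, and the exterior energy lower bounds of \cite{DuKeMe12} (valid in all odd dimensions) force, in each mode, the exterior initial data to lie in an explicit finite-dimensional space of asymptotic profiles. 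Inside the ball, the vanishing-channel condition combined with the nondegeneracy description $\ZZZ = \vect\{\Lambda W, \partial_{x_j}W\}$ allows one to match these asymptotic profiles to global stationary solutions: modes with $k \geq 2$ must vanish (no stationary $\hdot$-solution of $L_W u = 0$ exists there), the $k=0$ mode reduces to multiples of $(\Lambda W, 0)$ (and $(0, \Lambda W)$ when $N \geq 5$), and the $k=1$ modes reduce to combinations of $(\partial_{x_j} W, 0)$ and $(0, \partial_{x_j} W)$.

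\textbf{From rigidity to the quantitative bound.} To upgrade rigidity to the uniform inequality \eqref{A2} I would argue by contradiction and compactness: if the inequality fails, a sequence $(u_0^n, u_1^n) \in \ZZZZ^{\bot}$ of norm $1$ with exterior asymptotic energy tending to zero yields, via a linear profile decomposition adapted to $L_W$, a nonzero weak limit still lying in $\ZZZZ^{\bot}$ and still satisfying \eqref{no_channels}, contradicting the rigidity step. Profiles that concentrate at scales where $L_W$ degenerates to $-\Delta$ are handled directly by the free-wave exterior energy bound \eqref{ext_energ_intro}. The main obstacle is the rigidity step for the low spherical harmonics $k = 0$ and $k = 1$: the matching of the finite-dimensional exterior asymptotic dynamics with the explicit global kernel $\ZZZ$ is delicate precisely because $\ZZZ$ is concentrated in these modes, and it requires a careful ODE/Wronskian analysis on the interior coupled with the channel-of-energy computation at spatial infinity.
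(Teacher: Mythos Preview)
Your three-stage architecture --- Lorentz reduction to $\vell=0$, a qualitative rigidity theorem for \eqref{LW}, and a compactness/profile argument to upgrade rigidity to the uniform bound --- is exactly the paper's strategy (Sections \ref{S:reduction} and \ref{S:uniqueness}), and your identification of the spherical-harmonic reduction to a family of radial problems in effective dimensions $D=N+2\nu$ is on the mark.

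Two points in the rigidity step are underspecified and worth flagging. First, once you work outside a ball $\{r>R\}$ in dimension $D$, the relevant exterior-energy lower bound is not the $R=0$ statement of \cite{DuKeMe12} but the $R>0$ radial bound of \cite{KeLaLiSc15} (Theorem~\ref{T:KLLS} here), whose exceptional subspace $B(R)$ has dimension roughly $D/2$ and thus grows with the spherical-harmonic degree. The objects you must eliminate are therefore not just stationary solutions of $L_W$: the paper builds a hierarchy $Z_1,\dots,Z_{\lfloor (D+2)/4\rfloor}$ with $PZ_1=0$ and $PZ_k=Z_{k-1}$ (Proposition~\ref{P:Z1}), matches the data to this hierarchy outside a large ball, and then uses an ODE analysis of the $Z_k$ near $r=0$ to show that all but $Z_1$ (and only when $D\in\{N,N+2\}$) are too singular to lie in $\HHH$. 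Second, the passage from ``data equals a $Z_k$-combination for $r>R$'' to ``data equals a $Z_k$-combination everywhere'' is not automatic: it requires a separate propagation-of-support argument (Proposition~\ref{P:support}), showing that a compactly supported nonzero residual would itself carry channels of energy. Your phrase ``matching asymptotic profiles to global stationary solutions'' compresses both of these ingredients; they are where the genuine work of the rigidity proof lies.
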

Theorem \ref{T:LW} will be used in \cite{DuKeMe19Pb} to prove the soliton resolution in general odd space dimension, in a radial context, for $\HHH$-bounded solutions. 

%

We will also prove, as a consequence of Theorem \ref{T:LW}:
\begin{maincorol}
\label{Cor:rigidity_intro}
Assume $N\in \{3,5\}$. Let $\eta_0\in (0,1)$. There exists a constant $\eps_0=\eps_0(\eta_0)>0$ such that, for any $\vell\in \RR^N$ with $|\vell|\leq \eta_0$, for all $(u_0,u_1)\in \HHH$ such that $\|(u_0,u_1)-\vec{W}_{\vell}(0)\|_{\HHH}\leq \eps_0$, 
the solution $u$ of \eqref{NLW} with initial data $(u_0,u_1)$ satisfies one of the following:
\begin{itemize}
 \item $u$ is a traveling wave, i.e. there exists $\lambda>0$, $\vell_0\in \RR^N$ with $|\vell_0|<1$ and $X\in \RR^N$ such that
 \begin{equation}
 \label{TW}
 u(t,x)=\frac{1}{\lambda^{\frac{N-2}{2}}}W_{\vell_0}\left(\frac{t}{\lambda},\frac{x -X}{\lambda}\right). 
 \end{equation} 
 \item There exists $X\in \RR^N$, $|X|\lesssim \eps_0$ such that, when $N=5$, for all small $\tau_0>0$, the solution $u$ of \eqref{NLW} is well defined for $\{|x-X|>|t|-\tau_0, \; t\in\RR\}$ and satisfies:
\begin{gather}
\label{channel}
\sum_{\pm}\lim_{t\to \pm\infty} \int_{|x-X|\geq |t|-\tau_0}\left|\nabla_{t,x}u(t,x)\right|^2\,dx>0. 
 \end{gather}
 For $N=3$, \eqref{channel} holds for $\tau_0=0$
 \end{itemize}
 \end{maincorol}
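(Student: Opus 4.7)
The strategy combines modulation with Theorem \ref{T:LW} and a nonlinear perturbation argument on the exterior of the light cone.

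First, using that $(u_0,u_1)$ is $\eps_0$-close to $\vec W_{\vell}(0)$ and that the family $(\lambda,X,\vell_0)\mapsto\vec W_{\vell_0,\lambda,X}(0)$ of initial data of traveling waves is a smooth $(2N+1)$-dimensional submanifold of $\HHH$ whose tangent space at $\vec W$ is generated, up to the Lorentz transformation $\LLL_{\vell}$, by $(\Lambda W,0)$, $(\partial_{x_j}W,0)$ and $(0,\partial_{x_j}W)$, the implicit function theorem yields parameters $(\tilde\lambda,\tilde X,\tilde{\vell}_0)$ close to $(1,0,\vell)$ such that
$$(h_0,h_1):=(u_0,u_1)-\vec W_{\tilde{\vell}_0,\tilde\lambda,\tilde X}(0)$$
is orthogonal in $\HHH$ to the tangent space at $(\tilde\lambda,\tilde X,\tilde{\vell}_0)$, with $\|(h_0,h_1)\|_{\HHH}\lesssim\eps_0$. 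By the scaling and translation invariances of \eqref{NLW}, one may then reduce to $(\tilde\lambda,\tilde X)=(1,0)$.

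If $(h_0,h_1)=0$, the initial data is exactly that of a traveling wave and \eqref{TW} holds. Otherwise, let $v$ be the solution of \eqref{LWl} at velocity $\tilde{\vell}_0$ with data $(h_0,h_1)$. The orthogonality from the first step, combined with the explicit basis of $\ZZZZ_{\tilde{\vell}_0}$ recalled before Theorem \ref{T:LW}, ensures that $\pi_{\ZZZZ_{\tilde{\vell}_0}^{\bot}}(h_0,h_1)$ retains a definite fraction of the $\HHH$-norm of $(h_0,h_1)$; in dimension $3$ the two subspaces coincide and this is automatic, while in dimension $5$ the extra direction $(0,\Lambda W)$ must be handled separately. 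Theorem \ref{T:LW} then yields
$$\sum_{\pm}\lim_{t\to\pm\infty}\int_{|x|\geq|t|}|\nabla_{t,x}v(t,x)|^2\,dx\ \gtrsim\ \|(h_0,h_1)\|_{\HHH}^2,$$
and by a small translation of the cone the same lower bound survives on $\{|x-X|\geq|t|-\tau_0\}$ for some $|X|\lesssim\eps_0$ and any sufficiently small $\tau_0\geq 0$.

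Writing $u=W_{\tilde{\vell}_0}+h$ and $r:=h-v$, the remainder $r$ solves \eqref{LWl} with a source consisting of terms at least quadratic in $h$. Since $W_{\tilde{\vell}_0}$ has finite energy and travels at speed $<1$, its $\HHH$-norm on $\{|x-X|\geq|t|-\tau_0\}$ vanishes as $|t|\to\infty$, so \eqref{channel} for $u$ reduces to showing that the exterior $L^{\infty}_t\HHH$-norm of $r$ is $o(\|(h_0,h_1)\|_{\HHH})$. This in turn follows from a Duhamel formula coupled with an exterior Strichartz estimate and the pointwise decay of $W_{\tilde{\vell}_0}$. This final step is the main obstacle, and it is the origin of the dimensional restriction: for $N=3$ the quintic nonlinearity allows the estimate to close on the exact cone $\{|x|\geq|t|\}$, whereas for $N=5$ the exponent $\frac{N+2}{N-2}=\frac{7}{3}$ forces one to enlarge the region by an arbitrarily small $\tau_0>0$ in order to absorb the non-integrable behaviour of $W_{\tilde{\vell}_0}^{4/(N-2)}$ near the light cone.
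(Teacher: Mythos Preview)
Your argument for $N=3$ is essentially the paper's: modulate to obtain orthogonality to $\widetilde{\ZZZZ}_{\vell}=\ZZZZ_{\vell}$, apply Theorem~\ref{T:LW} to the linearization $v$, and absorb the quadratic remainder $r=h-v$ via exterior Strichartz estimates (Lemma~\ref{L:linear_approx}). This part is fine.

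For $N=5$, however, there is a genuine gap. The modulation step produces orthogonality to the tangent space of the soliton manifold, which is $\widetilde{\ZZZZ}_{\vell}$, a $(2N+1)$-dimensional subspace. But $\ZZZZ_{\vell}$ has dimension $2N+2$ when $N\geq 5$: it contains the additional direction corresponding to $(0,\Lambda W)$. Thus your $(h_0,h_1)$ may have a nontrivial component $\beta(0,\Lambda W)$ along this extra direction, and Theorem~\ref{T:LW} only bounds $\|\pi_{\ZZZZ_{\vell}^{\bot}}(h_0,h_1)\|_{\HHH}^2$, not the full norm. You acknowledge this (``the extra direction $(0,\Lambda W)$ must be handled separately'') but do not actually handle it; the displayed inequality $\sum_{\pm}\lim\int|\nabla_{t,x}v|^2\gtrsim\|(h_0,h_1)\|_{\HHH}^2$ is therefore unjustified when $\beta\neq 0$.

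The paper's treatment of this direction is substantially more involved than a perturbation estimate. It first uses a Lorentz transform to reduce to zero momentum, then proves a uniqueness lemma (Lemma~\ref{L:PP1}) by applying Theorem~\ref{T:LW} not only at $t=0$ but at \emph{all nearby times}, exploiting the energy constraint $E(u_0,u_1)=E(v_0,v_1)$ to control $\beta$; this is why one must enlarge the cone by $\tau_0>0$. The argument concludes via a self-similarity claim (Claim~\ref{C:self_similar}) and the rigidity theorem of \cite{DuKeMe16a}. Your diagnosis of the origin of $\tau_0$ is also off: the potential $W_{\vell}^{4/(N-2)}$ decays like $|x|^{-4}$ in both dimensions and is equally integrable on the exterior of the exact cone; the loss of $\tau_0$ comes from the need to propagate information across nearby time slices, not from integrability of the potential.
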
 
(See Theorems \ref{T:NL3} and \ref{T:NL5} for more precise statements, and Section \ref{S:preliminaries} for the definition of a solution of \eqref{NLW} outside a wave cone).

Combining the preceding corollary with the soliton resolution up to a sequence of times proved in \cite{DuJiKeMe17}, we can also prove 
 \begin{maincorol}
\label{Cor:rigidity_intro2}
Assume $N\in \{3,5\}$. Let $(u_0,u_1)\in \HHH$ be such that $E(u_0,u_1)<2E(W,0)$. If the solution $u$ of \eqref{NLW} with initial data $(u_0,u_1)$ is global and is not a traveling wave then there exists $\tau_0>0$, such that \eqref{channel} holds.
 \end{maincorol}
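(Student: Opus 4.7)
The plan is to combine the sequential soliton resolution of \cite{DuJiKeMe17} (whose scope includes $N=3,5$) with Corollary~\ref{Cor:rigidity_intro}. Since $E(u_0,u_1)<2E(W,0)$ and $u$ is global, classical variational arguments show that $\vec u\in L^\infty(\RR;\HHH)$. Applying \cite{DuJiKeMe17} yields a sequence $t_n\to+\infty$, an integer $J\geq 0$, parameters $(\lambda_j^n,X_j^n,\vell_j^n)$ with $|\vell_j^n|<1$, and a free wave $U_L$ solving \eqref{FW}, such that
\[
\vec u(t_n) = \vec U_L(t_n) + \sum_{j=1}^J \vec Q_j^n + o_{\HHH}(1),
\]
where each $\vec Q_j^n$ is the initial data of the traveling wave $W_{\vell_j^n}$ rescaled by $\lambda_j^n$ and translated by $X_j^n$, and the profiles are asymptotically pairwise orthogonal. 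The Pythagorean expansion of the energy, together with $E(\vec{W}_{\vell})\geq E(W,0)$ and $E(u_0,u_1)<2E(W,0)$, forces $J\leq 1$; when $J=1$, the conservation of $(E,P)$ and the explicit formula for the energy of a Lorentz-boosted ground state bound $|\vell_1^n|\leq \eta_0$ uniformly in $n$ for some $\eta_0\in(0,1)$.

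If $U_L\not\equiv 0$, the free-wave exterior energy bound \eqref{ext_energ_intro} in odd dimension $N\in\{3,5\}$ gives
\[
\sum_{\pm}\lim_{t\to\pm\infty}\int_{|x|\geq |t|}|\nabla_{t,x}U_L(t,x)|^2\,dx \geq \|\vec U_L(0)\|_{\HHH}^2 > 0.
\]
Since the soliton (when present) moves with speed $|\vell_1^n|\leq \eta_0<1$ and is thus confined to the strict interior of the light cone, the nonlinear term $|u|^{4/(N-2)}u$ is integrable outside a slightly enlarged cone $\{|x|>|t|-A\}$ for $A$ large enough. A scattering-outside-a-wave-cone argument, in the spirit of the radiation-field analysis of \cite{DuKeMe13,DuJiKeMe17}, then shows $\|\vec u(t)-\vec U_L(t)\|_{\HHH(\{|x|>|t|-A\})}\to 0$ as $t\to\pm\infty$, so the exterior channel of $U_L$ transfers to $u$ and yields \eqref{channel} with $X=0$ and $\tau_0=A$.

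If $U_L\equiv 0$ and $J=0$, then $\vec u(t_n)\to 0$ in $\HHH$, so $E(u_0,u_1)=0$, which by the variational lower bound in the Sobolev inequality forces $u\equiv 0$---a degenerate case excluded by the statement. Otherwise $U_L\equiv 0$ and $J=1$, and I rescale and translate by setting
\[
v_n(s,y) := \frac{1}{(\lambda_1^n)^{(N-2)/2}}\,u\bigl(t_n+\lambda_1^n s,\,X_1^n+\lambda_1^n y\bigr),
\]
so that $v_n$ solves \eqref{NLW} with $\vec v_n(0)=\vec{W}_{\vell_1^n}(0)+o_{\HHH}(1)$. For $n$ large, since $|\vell_1^n|\leq \eta_0$, Corollary~\ref{Cor:rigidity_intro} applies: either $v_n$ is a traveling wave, which by inverting the rescaling makes $u$ a traveling wave (contradicting the hypothesis), or \eqref{channel} holds for $v_n$ at some $X'\in\RR^N$ with $|X'|\lesssim\eps_0$ and some $\tau_0'\geq 0$. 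Undoing the scaling and translation, the region $\{|y-X'|\geq|s|-\tau_0'\}$ becomes $\{|x-(X_1^n+\lambda_1^n X')|\geq|t-t_n|-\lambda_1^n\tau_0'\}$, and choosing $\tau_0:=t_n+\lambda_1^n\tau_0'>0$ absorbs the $t_n$-shift on both the $t\to\pm\infty$ sides, yielding \eqref{channel} for $u$ centered at $X=X_1^n+\lambda_1^n X'$.

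The main obstacle is the case $U_L\not\equiv 0$: rigorously transferring the linear channel of $U_L$ to a nonlinear channel of $u$ requires a quantitative exterior-scattering statement matching $u$ and $U_L$ in $\HHH(\{|x|>|t|-A\})$ asymptotically, in the presence of a possibly nontrivial confined soliton. Such a statement lies in the spirit of the channel-of-energy analysis already developed for the energy-critical wave equation near the ground state, and is available from the framework of \cite{DuJiKeMe17}.
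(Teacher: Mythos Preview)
Your overall strategy coincides with the paper's: invoke the sequential soliton resolution of \cite{DuJiKeMe17}, use the energy bound $E(u_0,u_1)<2E(W,0)$ together with the variational characterization of $W$ to force $J\le 1$ and $Q^1=\pm W$, and then feed the single-soliton configuration into Corollary~\ref{Cor:rigidity_intro}. The difference is that the paper argues by contradiction (assuming \eqref{channel} fails for every $\tau_0$), whereas you argue directly. The contradiction route is not merely cosmetic: the no-channel hypothesis kills the radiation profile $U_L$ in one line (any nonzero free wave leaks a fixed fraction of its energy into $\{|x|>|t|-A\}$ for $A$ large), so the paper never has to handle $U_L\not\equiv 0$ as a separate case.

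Your treatment of the case $U_L\not\equiv 0$ has a genuine, if repairable, slip. You invoke \eqref{ext_energ_intro} for $U_L$ and then assert $\|\vec u(t)-\vec U_L(t)\|_{\HHH(\{|x|>|t|-A\})}\to 0$ as $t\to\pm\infty$, i.e.\ in \emph{both} time directions. But the radiation term extracted in \cite{DuJiKeMe17} along $t_n\to+\infty$ is only the forward scattering profile; as $t\to-\infty$ the solution matches a possibly different free wave, so the two-sided bound \eqref{ext_energ_intro} cannot be transferred to $u$ this way. The correct (and simpler) argument avoids \eqref{ext_energ_intro} entirely: since for any free wave $\lim_{A\to\infty}\lim_{t\to+\infty}\int_{|x|>|t|-A}|\nabla_{t,x}U_L|^2\,dx=\|\vec U_L(0)\|_{\HHH}^2$, nonvanishing of $U_L$ already gives a positive forward channel at some finite $A$, which passes to $u$ via the one-sided exterior scattering. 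Once you adopt the paper's contradiction framing, this entire case disappears. A couple of minor points: in the resolution of \cite{DuJiKeMe17} the velocities $\vell_j$ are fixed limits, not $n$-dependent, and the profiles are a priori general $Q^j\in\Sigma$ before the energy bound forces $Q^1=\pm W$; also, the boundedness $\vec u\in L^\infty(\RR,\HHH)$ is not needed, since \cite{DuJiKeMe17} applies to any global solution regardless of boundedness, and the conclusion $J\le 1$ then follows from the Pythagorean energy expansion alone.
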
 
Since $E(u_0,u_1)<2E(W,0)$ in Corollary \ref{Cor:rigidity_intro2}, the assumption that $u$ is not a traveling wave is equivalent to the fact that $u$ is not of the form \eqref{TW}.
 
 We believe that Corollaries \ref{Cor:rigidity_intro} and Corollaries \ref{Cor:rigidity_intro2} are also true for all $N$ odd, however our proof of Corollary \ref{Cor:rigidity_intro} cannot be carried out in dimension $N\geq 7$ because of a technical difficulty due to the weaker long-time perturbation theory statement available in these dimensions. Let us mention however that in large odd dimension, Corollary \ref{Cor:rigidity_intro2} remains valid, for radial solutions, as a consequence of \cite{Rodriguez16} and Theorem \ref{T:LW}:
 \begin{mainprop}
\label{P:rigidity_intro3}
Assume $N\geq 7$. Let $(u_0,u_1)\in \HHH$, radial  and such that $E(u_0,u_1)<2E(W,0)$. If the solution $u$ of \eqref{NLW} with initial data $(u_0,u_1)$ is global and is not a stationary solution, then there exists $\tau_0>0$, such that \eqref{channel} holds with $X=0$.
 \end{mainprop}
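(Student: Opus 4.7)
The plan is to argue by contradiction. I assume that for every $\tau_0 > 0$,
\[
\lim_{t \to \pm\infty} \int_{|x| \geq |t| - \tau_0} |\nabla_{t,x} u(t,x)|^2 \, dx = 0
\]
(since each term is non-negative and their sum vanishes), and aim to deduce that $u$ is stationary. First I would apply Rodriguez's radial soliton resolution along a sequence \cite{Rodriguez16} to $u$: since $E(u) < 2E(W)$ rules out more than one soliton, there exist $t_n \to +\infty$, $\iota \in \{-1, 0, +1\}$, scales $\lambda_n > 0$ with $\lambda_n / t_n \to 0$, and a free-wave solution $v_L^+$ such that
\[
\vec{u}(t_n) = \vec{v}_L^+(t_n) + \iota \vec{W}_{\lambda_n}(0) + o(1)_{\HHH},
\]
with an analogous decomposition at some $t_n^- \to -\infty$.

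Next I would show $v_L^\pm \equiv 0$. By scaling, the soliton's exterior energy on $\{|x| \geq t_n - \tau_0\}$ equals $\int_{|y| \geq (t_n - \tau_0)/\lambda_n} |\nabla W|^2 \, dy$, which vanishes as $n \to \infty$. Combined with the Rodriguez expansion and the vanishing-exterior-energy hypothesis, this forces
\[
\lim_{n \to \infty} \int_{|x| \geq t_n - \tau_0} |\nabla_{t,x} v_L^+(t_n)|^2 \, dx = 0
\]
for every $\tau_0 > 0$. Monotonicity in $\tau_0$ and the free-wave exterior energy bound \eqref{ext_energ_intro} in odd dimension then give $v_L^+ \equiv 0$, and likewise $v_L^- \equiv 0$.

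I would then dispose of the case $\iota = 0$: then $\vec{u}(t_n) \to 0$ in $\HHH$, so for large $n$ the data is below the small-data scattering threshold of \eqref{NLW}, and continuous dependence of the backward flow forces $(u_0,u_1)=0$, i.e.\ $u \equiv 0$, stationary, contradicting the hypothesis. In the remaining case $\iota \in \{\pm 1\}$, energy conservation gives $E(u) = E(W)$. After a modulation I would pick $\mu_n$ close to $\lambda_n$ so that $\vec{\eps}_n := \vec{u}(t_n) - \iota \vec{W}_{\mu_n} \to 0$ is orthogonal to $\ZZZZ$ (rescaled at $\mu_n$). Linearizing \eqref{NLW} around $W_{\mu_n}$, the difference $\eps := u - \iota W_{\mu_n}$ satisfies $\partial_t^2 \eps + L_{W_{\mu_n}} \eps = N(\eps)$ with a superlinear remainder. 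Applying Theorem \ref{T:LW} with $\vell = 0$ (after rescaling) to the pure linear solution with data $\vec{\eps}_n$, the orthogonality to $\ZZZZ$ gives $\|\vec{\eps}_n\|_{\HHH}^2 \lesssim$ (sum of linear exterior energies at $\pm\infty$). A short-time perturbative comparison with the linear flow, together with the vanishing exterior energy of $u$ and the negligible far-cone contribution of $W_{\mu_n}$, would then yield $\|\vec{\eps}_n\|_{\HHH}^2 = o(\|\vec{\eps}_n\|_{\HHH}^2)$, forcing $\vec{\eps}_n = 0$ for $n$ large. By well-posedness $u \equiv \iota W_{\mu_n}$ is stationary, contradiction.

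The hard part will be the last step: leveraging Theorem \ref{T:LW} to control the nonlinear problem for $N \geq 7$ without the long-time perturbation theory that underlies Corollary \ref{Cor:rigidity_intro}, which is unavailable because $|u|^{4/(N-2)}u$ is only H\"older continuous in this range. The radial symmetry is essential; it trivializes the translation modes in $\ZZZZ$ (so $\ZZZZ_{\rm rad}$ is spanned by $(\Lambda W, 0)$ and, for $N \geq 5$, also $(0, \Lambda W)$) and makes Rodriguez's decomposition available. A careful choice of time window (comparable to $\mu_n$) will be needed so that the sublinear nonlinear error does not overwhelm the quadratic lower bound supplied by Theorem \ref{T:LW}.
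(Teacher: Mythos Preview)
Your overall strategy is right and tracks the paper's proof closely up to the point where you invoke modulation. The genuine gap is in the sentence ``pick $\mu_n$ close to $\lambda_n$ so that $\vec{\eps}_n := \vec{u}(t_n) - \iota \vec{W}_{\mu_n}$ is orthogonal to $\ZZZZ$''. In the radial setting with $N\geq 5$, $\ZZZZ_{\rm rad}=\vect\{(\Lambda W,0),(0,\Lambda W)\}$ is two--dimensional, but you have only one scalar parameter $\mu_n$. Worse, since $\vec{W}_{\mu_n}(0)=(W_{\mu_n},0)$, the second component of $\vec\eps_n$ is $\partial_t u(t_n)$ and does not depend on $\mu_n$ at all; no choice of scaling can kill its projection onto $(0,\Lambda W)$. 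Consequently Theorem~\ref{T:LW} only controls $\pi_{\ZZZZ^{\bot}}\vec\eps_n$, and the coefficient $\beta_n:=\|\Lambda W\|_{L^2}^{-1}\int\partial_t u(t_n)\Lambda W_{\mu_n}$ along $(0,\Lambda W)$ is left uncontrolled by your argument. The paper closes this hole with the energy identity: writing $\vec u(t_n)-(W_{\mu_n},0)=(f_0,\beta\Lambda W+f_1)$ with $(f_0,f_1)\in\ZZZZ^\bot$, the channel estimate plus perturbation gives $\|(f_0,f_1)\|_{\HHH}\lesssim \delta^{\frac{N+1}{N-2}}\lesssim |\beta|^{\frac{N+1}{N-2}}$; then expanding $E(W,0)=E(W+f_0,\beta\Lambda W+f_1)$ (the linear term vanishes by criticality of $W$) yields $\beta^2\lesssim \|(f_0,f_1)\|_{\HHH}^2\lesssim \beta^{\frac{2(N+1)}{N-2}}$, hence $\beta=0$ and then $(f_0,f_1)=0$.

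A second, more technical gap is your ``short-time perturbative comparison'' with ``time window comparable to $\mu_n$''. Theorem~\ref{T:LW} is a statement about the limits $t\to\pm\infty$; a finite window cannot produce those asymptotic channels. The paper instead compares $h=u-W$ with the linearized solution $h_L$ on the \emph{entire} exterior $\{|x|>|t|\}$, obtaining $\sup_t\|\indic_{\{|x|>|t|\}}\nabla_{t,x}(h-h_L)\|_{L^2}\lesssim \delta^{\frac{N+1}{N-2}}$. This is precisely where the $N\geq 7$ difficulty is overcome: the nonlinearity $F(u)=|u|^{\frac{4}{N-2}}u$ is only H\"older, so the estimate is run in the Besov-type spaces $W,W'$ using the fractional chain rule \eqref{fractional_cones} and the pointwise bound $|\NNN(h)|\lesssim W^{\frac{1}{N-2}}|h|^{\frac{N+1}{N-2}}$, together with Lemma~\ref{L:linear_approx}. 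Your proposal does not indicate how you would replace this machinery.
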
 
We refer to \cite[Theorem 1.4]{Rodriguez16} for the proof of the soliton resolution conjecture for all times, in the odd-dimensional radial setting, with an assumption similar to $E(u_0,u_1)<2E(W,0)$,

We will prove Corollary \ref{Cor:rigidity_intro2} and Proposition \ref{P:rigidity_intro3} in Subsection \ref{SS:rig2}. Let us mention that the conclusion of Proposition \ref{P:rigidity_intro3} remains valid, in a radial setting, without the assumption $E(u_0,u_1)<2E(W,0)$. In other words, in odd space dimension, there is no pure radial multisoliton for equation \eqref{NLW}, in sharp contrast with the completely integrable case. We will obtain this stronger statement, as a byproduct of our proof of the soliton resolution conjecture, in our subsequent paper \cite{DuKeMe19Pb}.

The analog of Theorem \ref{T:LW}, in the case of the free wave equation in odd space dimensions was proved by the authors in \cite{DuKeMe12}, using the explicit representation formula for the free wave equation. Note that in the free case, no orthogonal projection is needed, since the analog of $\ZZZZ_{\vell}$ is $\{(0,0)\}$. To treat the linearized operator, we first consider $|x|>R_0$, $R_0$ large, so as to treat the linearized operator as a perturbation of the free one. This makes it necessary to use the version of the ``channel'' property introduced by the authors in \cite[Lemma 4.2]{DuKeMe11a}, extended to $N=5$ in \cite[Proposition 4.1]{KeLaSc14} and to general odd $N$ in \cite{KeLaLiSc15}. Here a projection is needed, and it is the orthogonal projection to the orthogonal complement of specific subspaces, whose dimension increases with $N$ (see Theorem \ref{T:KLLS} below). For the non-radial free case, more exceptional subspaces can be seen to arise, each one for each spherical harmonic degree. The main challenge that we need to overcome, to establish Theorem \ref{T:LW}, is to eliminate the redundant counterexamples that arise from the use of \cite{KeLaLiSc15}, as we take $R_0\to 0$.

The outline of the paper is as follows. After some preliminaries (Section \ref{S:preliminaries}), we reduce in Section \ref{S:reduction} the proof of Theorem \ref{T:LW} to the proof of a uniqueness result on solutions of the linearized equation \eqref{LW} (Theorem \ref{T:LW''}), which is a weaker, qualitative version of Theorem \ref{T:LW}. This section relies on the Lorentz transformation and the profile decomposition of \cite{BaGe99}. In Section \ref{S:uniqueness}, we prove the uniqueness Theorem \ref{T:LW''}. Projecting on spherical harmonics, it is sufficient to study a family of radial wave equations with a potential in odd space dimensions, which we do using the lower energy bound for the free wave equation obtained in \cite{KeLaLiSc15}. In Section \ref{S:NL_channels} we prove two rigidity theorems,  which imply Corollary \ref{Cor:rigidity_intro}, for solutions of the nonlinear wave equation in space dimension $N=3$ and $N=5$. The main tool is Theorem \ref{T:LW}. We also prove Corollary \ref{Cor:rigidity_intro2} and Proposition \ref{P:rigidity_intro3}, using  the soliton resolution for a sequence of times (from \cite{Rodriguez16}, \cite{DuJiKeMe17}) and, in the nonradial case, Theorem \ref{T:LW}.
In Appendix \ref{S:Lorentz} we recall some useful facts about the Lorentz transformation for equation \eqref{NLW}.

\section{Preliminaries}
\label{S:preliminaries}

\subsection{Duhamel formulation and Strichartz estimates}
We denote by $S_L(t)$ the linear wave evolution:
\begin{equation}
\label{SL}
S_L(t)(u_0,u_1)=\cos(t\sqrt{-\Delta})u_0+\frac{\sin(t\sqrt{-\Delta})}{\sqrt{-\Delta}}u_1,
\end{equation} 
so that the general solution (in the Duhamel sense) of 
\begin{equation}
 \label{LW_in}
\left\{
\begin{aligned}
(\partial_t^2-\Delta)u&=f\\
\vec{u}_{\restriction t=t_0}&=(u_0,u_1)\in \HHH,
\end{aligned}\right.
\end{equation} 
where $I$ is an interval and $t_0\in I$
is 
\begin{equation}
\label{Duhamel}
u(t)=S_L(t-t_0)(u_0,u_1)+\int_{t_0}^{t}S_L(t-s)(0,f(s))\,ds. 
\end{equation} 
We note that by finite speed of propagation, for any $x_0\in \RR^N$ and any $R\geq 0$, the restriction of $u$ to $\left\{(t,x)\in I\times \RR^N\;:\;|x-x_0|>|t-t_0|+R\right\}$ depends only on the restriction of $f$ to  $\left\{(t,x)\in I\times \RR^N\;:\;|x-x_0|>|t-t_0|+R\right\}$ and the restriction of $(u_0,u_1)$ to $\left\{ x\in \RR^N\;:\;|x-x_0|>R\right\}$.

We need to introduce function spaces adapted to the Cauchy theory in large space dimension. Define $
S(I):=L^{\frac{2(N+1)}{N-2}}\left( I\times \RR^N\right)$ and:
\begin{gather*}
W(I):=L^{\frac{2(N+1)}{N-1}}\left( I,\dot{B}^{\frac{1}{2}}_{\frac{2(N+1)}{N-1},2} (\RR^N) \right),\;
W'(I):= L^{\frac{2(N+1)}{N+3}}\left(I,\dot{B}^{\frac{1}{2}}_{\frac{2(N+1)}{N+3},2}(\RR^N) \right)\end{gather*}
(where $B^{\frac 12}_{p,2}$ are the usual Besov spaces  (see e.g. \cite[\S 6.3]{BerghLofstrom76BO})). 
We have the following Strichartz estimate: if $u$ satisfies \eqref{Duhamel} on $I$, with $f=f_1+f_2$, then
\begin{multline}
\label{Strichartz}
\indic_{\{N\leq 6\}}\left\|u\right\|_{L^{\frac{N+2}{N-2}}\big(I,L^{\frac{2(N+2)}{N-2}}(\RR^N)\big)}+
\sup_{t\in \RR} \|\vec{u}(t)\|_{\HHH}+\|u\|_{L^{\frac{2(N+1)}{N-2}}\left(I\times\RR^N\right)}
+\|u\|_{W(I)} \\
\lesssim \|\vec{u}(0)\|_{\HHH(I)} +\|f_1\|_{W'(I)}
+\|f_2\|_{L^1(I,L^2)}.
\end{multline}
Note that the bound of the $L^{\frac{N+2}{N-2}}\big(I,L^{\frac{2(N+2)}{N-2}}(\RR^N)\big)$ norm is only available in space dimension $N\leq 6$. We will use it for the proof of Corollary \ref{Cor:rigidity_intro} in space dimensions $3$ and $5$.

We will also need the fact that the spaces $W'(I)$ can be localized in the exterior of wave cones. Let, for $T>0$, $R\geq 0$, 
$$\Gamma_R(T):=\left\{(t,x)\;:\; 0\leq t\leq T,\; |t|>|x|+R\right\}.$$
Then there is a constant $C>0$ (independent of $T$ and $R$) such that (see \cite{BuCzLiPaZh13}, \cite[Lemma 2.3]{DuKeMe19Pc})
\begin{equation}
 \label{boundedness_indic}
 \left\|\indic_{\Gamma_R(T)}f\right\|_{W'((0,T))}\leq C\left\|f\right\|_{W'(0,T)}.
\end{equation} 
Furthermore we have the following chain rule for fractional derivatives (\cite[Remark 2.4]{DuKeMe19Pc}):
\begin{equation}
 \label{fractional_cones}
 \|\indic_{\Gamma_0(T)}F(u)\|_{W'((0,T))}\leq C \|\indic_{\Gamma_0(T)}u\|_{S((0,T))}^{\frac{4}{N-2}}\|u\|_{W((0,T))}.
\end{equation}

\subsection{Profile decomposition}
\label{SS:profile}
Let $\big\{(u_{0,n},u_{1,n})\big\}_n$ be a bounded sequence of radial functions in $\HHH$. We say that it admits a profile decomposition if for all $j\geq 1$, there exist a solution $U^j_F$ to the free wave equation with initial data in $\HHH$ and sequences of parameters $\{\lambda_{j,n}\}_n\in (0,\infty)^{\NN}$, $\{t_{j,n}\}_n\in \RR^{\NN}$, $\{x_{j,n}\}_n\in (\RR^N)^{\NN}$ such that
\begin{equation}
 \label{psdo_orth}
 j\neq k \Longrightarrow \lim_{n\to\infty}\frac{\lambda_{j,n}}{\lambda_{k,n}}+\frac{\lambda_{k,n}}{\lambda_{j,n}}+\frac{|t_{j,n}-t_{k,n}|}{\lambda_{j,n}}+\frac{|x_{j,n}-x_{k,n}|}{\lambda_{j,n}}=+\infty,
\end{equation} 
and, denoting 
\begin{gather}
 \label{rescaled_lin}
 U^j_{F,n}(t,r)=\frac{1}{\lambda_{j,n}^{\frac N2-1}}U^j_F\left( \frac{t-t_{j,n}}{\lambda_{j,n}},\frac{x-x_{j,n}}{\lambda_{j,n}} \right),\quad j\geq 1\\
 w_{n}^J(t)=S_L(t)(u_{0,n},u_{1,n})-\sum_{j=1}^J U^j_{L,n}(t),
\end{gather} 
one has 
\begin{equation}
 \label{wnJ_dispersive}
 \lim_{J\to\infty}\limsup_{n\to\infty}\|w_n^J\|_{L^{\frac{2(N+1)}{N-2}}(\RR^{N+1})}=0.
\end{equation} 
We recall (see \cite{BaGe99}, \cite{Bulut10}) that any bounded sequence in $\HHH$ has a subsequence that admits a profile decomposition.


\subsection{Wave equation with potential outside a wave cone}
If $R\geq 0$ and $(u_0,u_1)\in \HHH$, we will denote
$$\|(u_0,u_1)\|^2_{\HHH(R)}=\int_{|x|\geq R} (|\nabla u_0(x)|^2+u_1^2(x))\,dx.$$

\begin{lemma}
 \label{L:linear_approx}
 Let $N\geq 3$ and $M\in (0,\infty)$.
 There exists $C_M>0$ such that for all $R\geq 0$, for all $V\in L^{\frac{2(N+1)}{N+4}}_{\loc}\left( \RR, L^{\frac{2(N+1)}{3}}\left(\RR^N\right) \right)$ with
 \begin{equation}
  \label{G84}
  \left\|\indic_{\{|x|>R+|t|\}} V\right\|_{L^{\frac{2(N+1)}{N+4}} \left( \RR, L^{\frac{2(N+1)}{3}} \left(\RR^N\right) \right)} \leq M,
 \end{equation} 
 for all solutions $u$ of 
 \begin{equation}
  \label{G83}
  \partial_t^2u-\Delta u+Vu=f_1+f_2,\quad \vec{u}_{\restriction t=0}=(u_0,u_1)\in \HHH(R),
 \end{equation} 
 where $\indic_{\{|x|>R+|t|\}}f_1\in W'(\RR)$, $\indic_{\{|x|>R+|t|\}} f_2\in L^1\left(\RR,L^2(\RR^N)\right)$, one has:
 \begin{multline}
  \label{G90}
  \indic_{\{N\leq 6\}}\left\|u\indic_{\{|x|>R+|t|\}}\right\|_{L^{\frac{N+2}{N-2}}\left( \RR,L^{\frac{2(N+2)}{N-2}} \right)}\\
 +\left\|u\indic_{\{|x|>R+|t|\}}\right\|_{L^{\frac{2(N+1)}{N-2}}(\RR\times \RR^N)}+\sup_{t\in \RR}\left\|\indic_{\{|x|>R+|t|\}} \nabla_{t,x}u(t)\right\|_{L^2}\\ 
  \leq C_M\left( \|(u_0,u_1)\|_{\HHH(R)}+\left\| \indic_{\{|x|>R+|t|\}} f_1\right\|_{W'(\RR)}+\left\| \indic_{\{|x|>R+|t|\}} f_2\right\|_{L^1(\RR,L^2)}\right).
 \end{multline} 
\end{lemma}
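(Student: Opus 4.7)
The plan is to use finite speed of propagation to reduce to a globally defined solution, treat $Vu$ as an $L^1_tL^2_x$ source, and iterate the Strichartz estimate on a finite partition of $\RR$ adapted to the size of the truncated potential.

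By finite speed of propagation, the restriction of $u$ to $\{|x|>R+|t|\}$ depends only on the restrictions of $(u_0,u_1)$ to $\{|x|>R\}$ and of $V,f_1,f_2$ to $\{|x|>R+|t|\}$. Extend $(u_0,u_1)$ by zero on $\{|x|\le R\}$ and replace $V,f_1,f_2$ by their products with $\indic_{\{|x|>R+|t|\}}$. The resulting global solution $\tilde u\in C(\RR,\HHH)$ coincides with $u$ on $\{|x|>R+|t|\}$, its initial data have $\HHH$-norm equal to $\|(u_0,u_1)\|_{\HHH(R)}$, the new potential $\tilde V:=\indic_{\{|x|>R+|t|\}}V$ has $L^{\frac{2(N+1)}{N+4}}L^{\frac{2(N+1)}{3}}$-norm at most $M$ by \eqref{G84}, and the source terms obey the obvious bounds. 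It therefore suffices to prove the unweighted analogue of \eqref{G90} for $\tilde u$.

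Apply \eqref{Strichartz} to $\partial_t^2\tilde u-\Delta\tilde u=-\tilde V\tilde u+\tilde f_1+\tilde f_2$, bounding $\tilde V\tilde u$ in $L^1_tL^2_x$ via H\"older's inequality with the identities $\tfrac{N+4}{2(N+1)}+\tfrac{N-2}{2(N+1)}=1$ in time and $\tfrac{3}{2(N+1)}+\tfrac{N-2}{2(N+1)}=\tfrac12$ in space to get $\|\tilde V\tilde u\|_{L^1(I,L^2)}\le\|\tilde V\|_{L^{\frac{2(N+1)}{N+4}}(I,L^{\frac{2(N+1)}{3}})}\|\tilde u\|_{S(I)}$. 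This produces, for any interval $I$ with initial time $t_0\in I$, an estimate of $\|\tilde u\|_{S(I)}+\sup_{t\in I}\|\vec{\tilde u}(t)\|_{\HHH}$ (and of the $L^{\frac{N+2}{N-2}}(I,L^{\frac{2(N+2)}{N-2}})$-norm when $N\le 6$, since this is an admissible Strichartz pair) by a Strichartz constant $C$ times the sum of $\|\vec{\tilde u}(t_0)\|_{\HHH}$, the source norms on $I$, and $\|\tilde V\|_{L^{\frac{2(N+1)}{N+4}}(I,L^{\frac{2(N+1)}{3}})}\|\tilde u\|_{S(I)}$. By the hypothesis on $\tilde V$, partition $\RR$ into $K=K(M,N)$ consecutive subintervals $I_1,\dots,I_K$ on each of which the truncated potential norm is at most $1/(2C)$. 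Starting from the interval containing $0$ and proceeding outward, absorb the $\tilde V\tilde u$ term into the left-hand side, and at each step use $\vec{\tilde u}$ evaluated at the shared endpoint (already controlled by the previous bound on $\sup\|\vec{\tilde u}\|_{\HHH}$) as fresh initial data. Iterating across all $K$ pieces and summing the resulting geometric-type bound yields \eqref{G90}.

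The main technical point I anticipate is justifying the absorption, which formally requires $\|\tilde u\|_{S(I_k)}$ to be a priori finite before being absorbed. This is handled by a standard continuity argument: local well-posedness for \eqref{G83} (proved via the same Strichartz/H\"older pair on sufficiently small intervals, on which the $L^{\frac{2(N+1)}{N+4}}L^{\frac{2(N+1)}{3}}$-norm of $\tilde V$ is arbitrarily small) gives local finiteness of $\|\tilde u\|_{S}$; the absorbed estimate then propagates this bound across each $I_k$, and ultimately across $\RR$, with a constant $C_M$ depending only on $M$, $N$, and the Strichartz constant, as required.
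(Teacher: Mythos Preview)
Your argument is correct and proceeds along the same lines as the paper: both apply the Strichartz estimate together with the H\"older pairing $\tfrac{N+4}{2(N+1)}+\tfrac{N-2}{2(N+1)}=1$ in time (and the matching one in space) to control $Vu$ in $L^1_tL^2_x$, and then absorb the potential contribution. The only substantive difference is the absorption mechanism: the paper invokes a Gr\"onwall-type inequality (Lemma~8.1 of \cite{FaXiCa11}) to pass directly from the integral inequality to the global bound, whereas you partition $\RR$ into $K=K(M,N)$ subintervals on which the truncated potential norm is at most $1/(2C)$ and iterate the absorbed estimate across them. Your route is a bit more self-contained (no external lemma) and makes the a~priori finiteness issue explicit via local well-posedness; the paper's route is terser. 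One minor technical point: ``extending $u_0$ by zero'' across $\{|x|=R\}$ need not land in $\dot H^1$ if $u_0$ does not vanish there; instead use a Sobolev extension of $u_0\vert_{\{|x|>R\}}$ to $\RR^N$ with $\dot H^1$-norm controlled by $\|\nabla u_0\|_{L^2(\{|x|>R\})}$ (the constant is independent of $R$ by scaling). This does not affect the rest of your argument.
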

\begin{proof}
 Let
$$   A:=\|(u_0,u_1)\|_{\HHH(R)}+\left\|\indic_{\{|x|>R+|t|\}} f_1\right\|_{W'(\RR)}+
\left\|\indic_{\{|x|>R+|t|\}} f_2\right\|_{L^1(\RR,L^2)}.$$
 By Strichartz inequality, for all $T>0$,
 \begin{equation*}
  \left\|\indic_{\{|x|>R+|t|\}} u\right\|_{L^{\frac{2(N+1)}{N-2}}\left( [0,T]\times \RR^N \right)}
  \lesssim \left\| \indic_{\{|x|>R+|t|\}} Vu\right\|_{L^1\left((0,T),L^2\right)}+A.
 \end{equation*}
Using H\"older inequality in the space variable, we deduce
\begin{multline*}
  \left\|\indic_{\{|x|>R+|t|\}} u\right\|_{L^{\frac{2(N+1)}{N-2}}\left( [0,T]\times \RR^N \right)}
   \\ \lesssim A
  +\int_0^{T}\left\|\indic_{\{|x|>R+|t|\}} V\right\|_{L^{\frac{2(N+1)}{3}}}\left\| \indic_{\{|x|>R+|t|\}} u\right\|_{L^{\frac{2(N+1)}{N-2}}}\,dt,
\end{multline*}
and thus, using a Gr\"onwall type inequality (Lemma 8.1 of \cite{FaXiCa11}) we obtain
\begin{equation*}
 \left\|\indic_{\{|x|>R+|t|\}} u\right\|_{L^{\frac{2(N+1)}{N-2}}\left( [0,T]\times \RR^N \right)} \leq C_M A.
\end{equation*}
Using Strichartz and H\"older's inequalities again we deduce the rest of \eqref{G90}.
  \end{proof}
As a consequence of Lemma \ref{L:linear_approx}, one can obtain an asymptotic formula, outside a wave cone, for solutions of the wave equation \eqref{G83}. For simplicity, we restrict ourselves to the linearized equation \eqref{LWl}. However we remark for further use that the same result holds when $W_{\vell}$ is replaced by any solitary wave $Q_{\vell}$ in \eqref{LWl}.
\begin{corol}
\label{Co:radiation}
 Assume $N\geq 3$. Then for all $\vell \in \RR^N$ such that $|\vell|<1$, there exists a bounded linear map
\begin{align*}
 \Phi: \HHH & \longrightarrow \left(L^2([0,+\infty)\times S^{N-1})\right)^2\\
(u_0,u_1)& \longmapsto (H_+,H_-)
\end{align*}
such that, denoting by $u$ the solution of \eqref{LWl} with initial data $(u_0,u_1)$ at $t=0$, we have
\begin{align*}
 \lim_{t\to \pm\infty} \int_{|t|}^{\infty} \int_{S^{N-1}} \left|r^{\frac{N-1}{2}} \partial_ru(t,r\theta)-H_{\pm}(r-|t|,\theta)\right|^2\,d\sigma(\theta)\,dr&=0\\
 \lim_{t\to \pm\infty} \int_{|t|}^{\infty} \int_{S^{N-1}} \left|r^{\frac{N-1}{2}} \partial_tu(t,r\theta)\mp H_{\pm}(r-|t|,\theta)\right|^2\,d\sigma(\theta)\,dr&=0\\
\lim_{t\to\pm\infty} \int_{|x|\geq |t|} \left|\spartial u(t,x)\right|^2+\frac{1}{|x|^2} |u(t,x)|^2\,dx=0.
\end{align*}
If one fixes $\eta_0\in (0,1)$, the operator norm of $\Psi$ is bounded uniformly with respect to $\vell$ such that $|\vell|\leq \eta_0$, 
\end{corol}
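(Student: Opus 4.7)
The plan is to reduce the statement to the classical radiation-field theorem for the free wave equation on $\RR^{1+N}$ by showing that, outside the wave cone, $u$ scatters in energy to a free wave. The technical engine is Lemma~\ref{L:linear_approx}, applied with $V=-\frac{N+2}{N-2}W_{\vell}^{4/(N-2)}$ and $f_1=f_2=0$.

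The first step is to verify that $V$ fulfils the hypothesis \eqref{G84} uniformly in $R\geq 0$ and in $\vell$ with $|\vell|\leq \eta_0<1$. Since $W_{\vell}$ is a traveling wave with $W_{\vell}(t,x)=W_{\vell}(0,x-t\vell)$ and $W_{\vell}(0,\cdot)$ decays like $(1+|y|)^{-(N-2)}$, in the region $|x|>R+|t|$ one has $|x-t\vell|\geq (1-\eta_0)|x|$, so explicit polynomial integrations in space and time yield
\begin{equation*}
 \sup_{R\geq 0,\; |\vell|\leq \eta_0}\bigl\|\indic_{\{|x|>R+|t|\}}W_{\vell}^{\frac{4}{N-2}}\bigr\|_{L^{\frac{2(N+1)}{N+4}}\bigl(\RR,\,L^{\frac{2(N+1)}{3}}(\RR^N)\bigr)}=:M(\eta_0)<\infty.
\end{equation*}
Taking $R=0$, Lemma~\ref{L:linear_approx} then furnishes a constant $C(\eta_0)$ such that
\begin{equation*}
 \bigl\|u\,\indic_{\{|x|>|t|\}}\bigr\|_{L^{\frac{2(N+1)}{N-2}}(\RR\times \RR^N)} + \sup_{t\in\RR}\bigl\|\nabla_{t,x}u(t)\,\indic_{\{|x|>|t|\}}\bigr\|_{L^2} \leq C(\eta_0)\|(u_0,u_1)\|_{\HHH},
\end{equation*}
and the H\"older pairing $\tfrac{N-2}{2(N+1)}+\tfrac{3}{2(N+1)}=\tfrac12$, $\tfrac{N-2}{2(N+1)}+\tfrac{N+4}{2(N+1)}=1$ gives $G:=\frac{N+2}{N-2}W_{\vell}^{4/(N-2)}u$ the bound $\|G\,\indic_{\{|x|>|t|\}}\|_{L^1(\RR,L^2)}\leq C(\eta_0)\|(u_0,u_1)\|_{\HHH}$.

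Since $(\partial_t^2-\Delta)u=G$, I would use Duhamel's formula \eqref{Duhamel} to define
\begin{equation*}
 (\tilde v^+_0,\tilde v^+_1):=(u_0,u_1)+\int_0^{+\infty}S_L(-s)\bigl(0,\,G(s)\,\indic_{\{|x|>s\}}\bigr)\,ds\ \in\ \HHH,
\end{equation*}
the integral converging in $\HHH$ by the previous $L^1L^2$ bound. Set $\tilde v^+(t):=S_L(t)(\tilde v^+_0,\tilde v^+_1)$, and define $(\tilde v^-_0,\tilde v^-_1)$ and $\tilde v^-$ analogously from an integral on $(-\infty,0]$. The decomposition
\begin{equation*}
 \vec u(t)-\vec{\tilde v}^+(t)=\int_0^{t}S_L(t-s)\bigl(0,\,G(s)\,\indic_{\{|x|\leq s\}}\bigr)\,ds-\int_t^{+\infty}S_L(t-s)\bigl(0,\,G(s)\,\indic_{\{|x|>s\}}\bigr)\,ds
\end{equation*}
combined with finite speed of propagation shows that the first integral vanishes on $\{|x|>t\}$ for $t>0$, while the $\HHH$-norm of the second is bounded by $\|G\,\indic_{\{|x|>|s|\}}\|_{L^1((t,\infty),L^2)}\to 0$. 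Hence $\|\vec u(t)-\vec{\tilde v}^+(t)\|_{\HHH(|t|)}\to 0$ as $t\to+\infty$, and the maps $(u_0,u_1)\mapsto(\tilde v^{\pm}_0,\tilde v^{\pm}_1)$ are bounded and linear on $\HHH$ with norms uniform in $|\vell|\leq \eta_0$.

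The conclusion follows by appealing to the classical radiation-field theorem for free waves in $\RR^{1+N}$, $N\geq 3$: for any $(v_0,v_1)\in\HHH$ there exist $H_{\pm}^F\in L^2([0,\infty)\times S^{N-1})$ depending linearly and boundedly on $(v_0,v_1)$ such that the three limits of the corollary hold with $S_L(\cdot)(v_0,v_1)$ in place of $u$. Applying this to $\tilde v^{\pm}$ and transferring the limits from $\tilde v^{\pm}$ to $u$ via the scattering statement of the previous paragraph produces the desired $\Phi$ as a composition of two bounded linear maps, whose norm depends only on $\eta_0$. The main obstacle in this plan is the uniform-in-$R$ and uniform-in-$\vell$ estimate of Step~1, which is what makes Lemma~\ref{L:linear_approx} deliver constants depending only on $\eta_0$; once that is in place, everything else is a standard Duhamel--perturbation scattering scheme and the radiation statements for $u$ follow mechanically from those for the free wave $\tilde v^{\pm}$.
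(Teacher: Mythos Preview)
Your proposal is correct and follows essentially the same route as the paper: apply Lemma~\ref{L:linear_approx} with $R=0$ to control $u$ in $L^{\frac{2(N+1)}{N-2}}$ outside the cone, use this to show that the truncated inhomogeneity $G\indic_{\{|x|>|t|\}}$ lies in $L^1_tL^2_x$, build a free wave $\tilde v^{\pm}$ via Duhamel that agrees with $u$ asymptotically on $\{|x|>|t|\}$, and then invoke the Friedlander radiation-field isometry for the free wave. The only cosmetic difference is that the paper introduces the intermediate truncated solution $\tilde u$ (with $\tilde u=u$ for $|x|>t$) and compares $\tilde u$ to $\tilde u_F$, whereas you carry out the equivalent Duhamel splitting directly on $u-\tilde v^+$; your explicit verification of the uniform-in-$\vell$ bound \eqref{G84} is a welcome addition that the paper leaves implicit.
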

We note that a small variant of the proof shows the existence of a linear map $\Psi: (u_0,u_1)\mapsto G_{\pm}\in L^2_{\loc}\left( \RR\times S^N \right)$ such that, for all $A\in \RR$,
\begin{align*}
 \lim_{t\to \pm\infty} \int_{|t|+A}^{\infty} \int_{S^{N-1}} \left|r^{\frac{N-1}{2}} \partial_ru(t,r\theta)-G_{\pm}(r-|t|,\theta)\right|^2\,d\sigma(\theta)\,dr&=0\\
 \lim_{t\to \pm\infty} \int_{|t|+A}^{\infty} \int_{S^{N-1}} \left|r^{\frac{N-1}{2}} \partial_tu(t,r\theta)\mp G_{\pm}(r-|t|,\theta)\right|^2\,d\sigma(\theta)\,dr&=0\\
\lim_{t\to\pm\infty} \int_{|x|\geq |t|+A} \left|\spartial u(t,x)\right|^2+\frac{1}{|x|^2} |u(t,x)|^2\,dx=0.
\end{align*}
By Corollary \ref{Co:radiation}, the restriction of $G_{\pm}$ to $[0,+\infty)\times S^{N-1}$ belongs to $L^2([0,+\infty)\times S^{N-1})$. However, $G_{\pm}$ is not, in general, an element of $L^2(\RR\times S^{N-1}$), as shows the example of the solution $u(t,r)=e^{\omega t}\YYY(x)$, where $-\omega^2$ is the negative eigenvalue of $L_W$, and $\YYY(x)$ the corresponding eigenfunction. In this case it follows from the asymptotics of $\YYY$ (see \cite{Meshkov89}, \cite[Proposition 3.9]{DuKeMe16a}) that $G_+(\eta,\theta)=e^{-\omega \eta}V(\theta)$ for some function $V\in L^2(S^{N-1})\setminus \{0\}$ and $G_-\equiv 0$.
\begin{proof}[Proof of Corollary \ref{Co:radiation}]
We focus on the case $t\to+\infty$ and the construction of $H_+$ to lighten notations. It is known (see e.g. \cite{Friedlander62, Friedlander80} and \cite[Appendix B]{DuKeMe19}) that for all $(u_0,u_1)\in \HHH$, there exists $G_+\in L^2(\RR\times S^{N-1})$ such that, denoting by $u_F$ the solution of the free wave equation with initial data $(u_0,u_1)$ we have
\begin{align*}
 \lim_{t\to +\infty} \int_{0}^{\infty} \int_{S^{N-1}} \left|r^{\frac{N-1}{2}} \partial_ru_F(t,r\theta)-G_{+}(r-|t|,\theta)\right|^2\,d\theta\,dr&=0\\
 \lim_{t\to +\infty} \int_{0}^{\infty} \int_{S^{N-1}} \left|r^{\frac{N-1}{2}} \partial_tu_F(t,r\theta)- G_{+}(r-|t|,\theta)\right|^2\,d\theta\,dr&=0\\
\lim_{t\to +\infty} \int_{\RR^N} \left|\spartial u_F(t,x)\right|^2+\frac{1}{|x|^2} |u_F(t,x)|^2\,dx=0,
\end{align*}
and that $(u_0,u_1)\mapsto G_+$ is a one to one isometry from $\HHH$ to $L^2(\RR\times S^{N-1})$. 

To prove the corollary, it is thus sufficient to construct a bounded linear operator
\begin{align*}
\Phi:\HHH &\to \HHH \\
(u_0,u_1)&\mapsto (\tu_0,\tu_1),
\end{align*}
such that, if $\tilde{u}_F$ is the solution of the free wave equation \eqref{FW} with initial data $(\tu_0,\tu_1)$, and $u$ the solution of \eqref{LWQ} with initial data $(u_0,u_1)$, one has
\begin{equation*}
 \lim_{t\to\infty} \int_{|x|\geq |t|} \left|\nabla_{t,x}\tilde{u}_F(t,x)-\nabla_{t,x}u(t,x)\right|^2\,dx=0.
\end{equation*}
By Lemma \ref{L:linear_approx}, there exists $C>0$ (depending only on $\eta_0$) such that for all $(u_0,u_1)\in \HHH$, one has 
\begin{equation}
\label{A8}
\left\|\indic_{\{|x|\geq |t|\}} u\right\|_{L^{\frac{2(N+1)}{N-2}}\left( [0,+\infty)\times \RR^N\right)}\leq C\|(u_0,u_1)\|_{\HHH}. 
\end{equation} 
We define $\tu$ as the solution of 
\begin{equation*}
 \left\{\begin{aligned} 
 \partial_t^2\tilde{u}-\Delta\tilde{u}&=\frac{N+2}{N-2} \indic_{\{|x|\geq |t|\}} |W_{\vell}|^{\frac{4}{N-2}}u\\
 \vec{u}_{\restriction t=0}&=(u_0,u_1).
 \end{aligned}\right.
\end{equation*} 
By finite speed of propagation
\begin{equation}
 \label{A9}
 |x|>t>0\Longrightarrow u(t,x)=\tu(t,x).
\end{equation} 
Furthermore, letting 
$$(\tu_0,\tu_1)=(u_0,u_1)+\int_{0}^{+\infty} \vec{S}_L(-s)\left( 0,\frac{N+2}{N-2}\indic_{\{|x|\geq |s|\}}|W_{\vell}|^{\frac{4}{N-2}}u(s) \right)\,ds,$$
and 
$$\tu_F=S_L(t)(\tilde{u}_0,\tilde{u}_1),$$
we see that 
\begin{equation}
 \label{A10}
 \lim_{t\to\infty} \left\|\vec{\tu}_F(t)-\vec{\tu}(t)\right\|_{\HHH}=0.
\end{equation} 
Indeed,
\begin{align*}
 \tilde{u}_F(t)=&S_L(t)(u_0,u_1)+\int_0^{+\infty} S_L(t-s)\left( 0,\frac{N+2}{N-2}\indic_{\{|x|\geq |s|\}}|W_{\vell}|^{\frac{4}{N-2}}u(s) \right)\,ds\\
 &=\tilde{u}(t)+\int_t^{+\infty} S_L(t-s)\left( 0,\frac{N+2}{N-2}\indic_{\{|x|\geq |s|\}}|W_{\vell}|^{\frac{4}{N-2}}u(s) \right)\,ds,
\end{align*}
which proves \eqref{A10}. Combining \eqref{A9} and \eqref{A10}, we have proved the corollary.
\end{proof}

\section{Reduction to a uniqueness theorem}
\label{S:reduction}
In this section we reduce the proof of Theorem \ref{T:LW} to the following uniqueness theorem:
\begin{inttheo}
\label{T:LW''}
 Assume $N\geq 3$ is odd. For any solution $u$ of \eqref{LW} with initial data $(u_0,u_1)\in \HHH$, if 
\begin{equation}
\label{ext_0}
\sum_{\pm} \lim_{t\to \pm\infty}\int_{\{|x|>|t|\}} |\nabla_{t,x}u(t,x)|^2\,dx=0,
\end{equation} 
then $(u_0,u_1)\in \ZZZZ$.
\end{inttheo}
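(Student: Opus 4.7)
The plan is to reduce the statement, via spherical harmonic decomposition, to a collection of one-dimensional problems in odd effective dimensions, and then to apply the channel-of-energy bound for the free wave equation from \cite{KeLaLiSc15} on each mode. The point is that the potential $\frac{N+2}{N-2}W^{4/(N-2)}$ decays like $|x|^{-4}$ at infinity, so it is a short-range perturbation of the free operator outside a large ball, in the sense of Lemma~\ref{L:linear_approx}.

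First, I would expand
\[
u(t,x)=\sum_{k\geq 0}\sum_{m} u_k^m(t,|x|)\, Y_k^m\!\left(\tfrac{x}{|x|}\right)
\]
in spherical harmonics. Since the potential in \eqref{LW} is radial, each coefficient $u_k^m$ solves independently
\[
\partial_t^2 v-\partial_r^2 v-\frac{N-1}{r}\partial_r v+\frac{k(k+N-2)}{r^2}v-\frac{N+2}{N-2}W^{\frac{4}{N-2}}v=0,
\]
which is a radial wave equation in the effective odd dimension $d_k:=N+2k$ perturbed by a radial short-range potential $V_k(r)$. Because the cone $\{|x|>|t|\}$ respects the decomposition into spherical harmonics (by orthogonality on each sphere $|x|=r$), the hypothesis \eqref{ext_0} forces each mode $u_k^m$ individually to have vanishing exterior asymptotic energy.

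Second, fix $(k,m)$ and $R>0$ large. On the exterior region $\{|x|\geq R+|t|\}$, I would treat the equation as a perturbation of the free wave equation in dimension $d_k$: using Lemma~\ref{L:linear_approx} with $V=V_k$ and the Duhamel formula \eqref{Duhamel}, one associates to $u_k^m$ a free radial wave $u_{F,k}^m$ agreeing with it on the exterior cone up to a remainder whose $\HHH(R)$-norm is small for $R$ large. The channel-of-energy lemma of \cite{KeLaLiSc15} (Theorem~\ref{T:KLLS}) then states that for a free radial wave in odd dimension $d_k$ with data supported in $\{|x|\geq R\}$, the vanishing of the exterior asymptotic energy forces the data to lie in an explicit finite-dimensional subspace $P_k(R)\subset\HHH(R)$, spanned by pairs of homogeneous profiles $(r^{-\alpha},0)$ and $(0,r^{-\beta})$ with exponents determined by $d_k$. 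Combined with the perturbative smallness of the remainder, this shows that on each dyadic annulus $\{R\leq |x|\leq 2R\}$ the Cauchy data $(u_{0,k}^m,u_{1,k}^m)$ is approximated, with error vanishing as $R\to\infty$, by an element of $P_k(R)$.

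Finally, I would let $R\to 0$ and identify the surviving exterior profiles with elements of $\ZZZZ$. This is where the main obstacle lies: the dimension of $P_k(R)$ grows with $k$ (and with $N$), so many spurious power profiles appear at the purely perturbative level and must be ruled out. To do so, I would analyze the stationary ODE $L_W Z=0$ mode by mode: restricted to the $k$-th harmonic class, $L_W$ reduces to an explicit one-dimensional Schr\"odinger operator whose $\hdot$-admissible kernel is known by the nondegeneracy of $W$ recalled in the introduction (spanned by $\Lambda W$ on $k=0$, by $\partial_{x_j}W$ on $k=1$, and trivial on $k\geq 2$). Matching the exterior power behavior dictated by $P_k(R)$ against actual Cauchy data for $\partial_t^2u+L_Wu=0$, and using the $\hdot\times L^2$ integrability near the origin, I would show that every admissible profile corresponds to a pair $(A,B)\in(\ZZZ\times\ZZZ)\cap\HHH=\ZZZZ$, so that $u(t,x)=u_0(x)+tu_1(x)$ with $(u_0,u_1)\in\ZZZZ$. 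The delicate technical step is precisely this elimination of the redundant profiles from $P_k(R)$ as $R\to 0$---an argument that essentially dictates the restriction to odd space dimensions, since only then is a pointwise channel-of-energy inequality available for the free wave equation.
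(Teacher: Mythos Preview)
Your outline captures the right opening moves---spherical-harmonic decomposition onto effective odd dimensions $D=N+2k$, then invocation of the channel-of-energy result from \cite{KeLaLiSc15} for large $R$---and these are exactly what the paper does. But the proposal has a real gap at the step you yourself flag as the ``main obstacle,'' and your suggested resolution does not close it.

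First, ``let $R\to 0$'' is not available. The perturbative comparison with the free equation (via Lemma~\ref{L:linear_approx} and Theorem~\ref{T:KLLS}) relies on the smallness of $\|\indic_{\{|x|>R+|t|\}}W^{4/(N-2)}\|$ in the relevant Strichartz norm, which holds only for $R$ large. The paper does \emph{not} push $R$ down. Instead it fixes one large $R_0$, shows that on $\{r>R_0\}$ the data \emph{exactly} equals a finite linear combination from an explicit space $\widetilde B$ (so the remainder $(w_0,w_1)$ is compactly supported), and then proves a separate propagation lemma (Proposition~\ref{P:support}): any compactly supported solution of the perturbed radial wave equation with vanishing exterior asymptotic energy must be identically zero. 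That lemma is proved by a one-dimensional approximation $(\partial_t^2-\partial_r^2)(r^{(D-1)/2}v)\approx 0$ in a thin shell near the boundary of the support, and requires no smallness of the potential. This step is entirely absent from your sketch, and without it there is no bridge between the large-$R$ information and the full line $r>0$.

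Second, the matching space $\widetilde B$ is not built from solutions of $L_WZ=0$ alone. The KLLS exceptional space $B(R)$ has dimension $\lfloor(D+2)/4\rfloor+\lfloor D/4\rfloor$, spanned by the profiles $r^{2k-D}$; to cancel it one needs equally many functions with the same leading asymptotics at infinity that also generate solutions of the linearized equation with zero exterior energy. The paper constructs a chain $Z_1,\dots,Z_{\lfloor(D+2)/4\rfloor}$ with $PZ_1=0$ and $PZ_k=Z_{k-1}$ for $k\geq 2$ (Proposition~\ref{P:Z1}), so that $(Z_k,0)$ and $(0,Z_k)$ produce polynomial-in-$t$ solutions. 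Only after subtracting \emph{these} and invoking Proposition~\ref{P:support} does one return to an ODE analysis, now of the $Z_k$ near $r=0$: for $k\geq 2$ (and for $k=1$ when $D\geq N+4$) they are too singular to lie in $\dot H^1\times L^2$, which forces the surviving coefficients to correspond exactly to $\Lambda W$ and $\partial_{x_j}W$. Your proposal mentions only the kernel $L_WZ=0$, which is not enough to match the full KLLS obstruction.
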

Note that Theorem \ref{T:LW''} concerns the linearized equation \eqref{LW} around $W$ and not the more general linearized equation \eqref{LWl} around the Lorentz transform of $W$.

The proof is divided into two steps.
In Subsection \ref{SS:Lor}, we use the Lorentz transformation to reduce the proof of Theorem \ref{T:LW} to the case $\ell=0$, i.e. to a statement on the linearized equation \eqref{LW}. 
In Subsection \ref{SS:qual}, we prove, using profile decomposition, that Theorem \ref{T:LW''} implies Theorem \ref{T:LW}.

\subsection{Lorentz transformation}
\label{SS:Lor}
In this subsection we prove that the following theo\-rem implies Theorem \ref{T:LW}:
\begin{inttheo}
\label{T:LW'}
 Assume $N\geq 3$ is odd. Then there exists a constant $C>0$ such that for all $(u_0,u_1)\in \HHH$,
 \begin{multline}
  \label{A2'}
 \frac{1}{C}\int |\nabla u_0(x)|^2+(u_1(x))^2\,dx \leq \left\|\pi_{\ZZZZ}(u_0,u_1)\right\|^2+ \sum_{\pm} \lim_{t\to \pm\infty} \int_{|x|\geq |t|}|\nabla_{t,x}u(t,x)|^2,
 \end{multline} 
 where $u$ is the solution of \eqref{LW} with initial data $(u_0,u_1)$.
\end{inttheo}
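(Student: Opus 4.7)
\medskip
\noindent\textbf{Proof proposal.}
The plan is to derive Theorem \ref{T:LW'} from the qualitative uniqueness result Theorem \ref{T:LW''} by a contradiction argument based on the Bahouri--G\'erard profile decomposition recalled in Subsection \ref{SS:profile}. Suppose \eqref{A2'} fails. Then there exists a sequence $\{(u_{0,n},u_{1,n})\}$ in $\HHH$ with $\|(u_{0,n},u_{1,n})\|_{\HHH}=1$ and
\begin{equation*}
\varepsilon_n:=\|\pi_{\ZZZZ}(u_{0,n},u_{1,n})\|^2+\sum_{\pm}\lim_{t\to\pm\infty}\int_{|x|\geq|t|}|\nabla_{t,x}u_n(t,x)|^2\,dx\longrightarrow 0,
\end{equation*}
where $u_n$ is the solution of \eqref{LW} with initial data $(u_{0,n},u_{1,n})$. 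After extracting a subsequence we apply a profile decomposition to $(u_{0,n},u_{1,n})$, producing linear profiles $U^j_F$ and parameters $(\lambda_{j,n},t_{j,n},x_{j,n})$ satisfying the pseudo-orthogonality \eqref{psdo_orth}, with remainders $w_n^J$ dispersive in the sense of \eqref{wnJ_dispersive}. Since $L_W$ contains a potential supported essentially at scale $1$ around the origin, the nonlinear profile $V^j_n$ associated with $U^j_F$ in the sense of the linearized flow \eqref{LW} splits into two cases, depending on the limit behavior of $(\lambda_{j,n},x_{j,n}/\lambda_{j,n},t_{j,n}/\lambda_{j,n})$ after extraction:
\begin{itemize}
\item \emph{Compact regime.} $\lambda_{j,n}\to\lambda^j_\infty\in(0,\infty)$, $x_{j,n}\to x^j_\infty\in\RR^N$, $t_{j,n}\to t^j_\infty\in\RR$. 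In this case $V^j_n$ is (up to error vanishing in $\HHH$) the translate/rescaling of a fixed solution $V^j$ of \eqref{LW}.
\item \emph{Dispersive regime.} $\lambda_{j,n}\to 0$ or $\lambda_{j,n}\to\infty$ or $|x_{j,n}|/\lambda_{j,n}\to\infty$. Lemma \ref{L:linear_approx} applied to $V=\tfrac{N+2}{N-2}W^{\frac{4}{N-2}}$ (which has finite $L^{\frac{2(N+1)}{N+4}}_tL^{\frac{2(N+1)}{3}}_x$ norm) together with the scale/space separation shows that, in the rescaled frame, the potential becomes negligible, so $V^j_n$ is asymptotically the linear profile $U^j_{F,n}$ itself.
\end{itemize}

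The next step is to propagate these profiles and compare exterior asymptotic energies. Using the asymptotic radiation map of Corollary \ref{Co:radiation} for \eqref{LW} and the analogous map for the free equation, together with the pseudo-orthogonality \eqref{psdo_orth} of the parameters (which forces the associated radiation profiles $H^j_\pm$ to have pairwise orthogonal translates in $L^2_{r,\theta}$), we obtain the identity
\begin{equation*}
\sum_{\pm}\lim_{t\to\pm\infty}\int_{|x|\geq|t|}|\nabla_{t,x}u_n|^2\,dx = \sum_{j=1}^{J}\sum_{\pm}\lim_{t\to\pm\infty}\int_{|x|\geq|t|}|\nabla_{t,x}V^j_n|^2\,dx + o_n(1) + o_J(1),
\end{equation*}
where the $o_J(1)$ term absorbs the contribution of the dispersive remainder $w_n^J$ (which vanishes in $\HHH$-norm outside the cone by Strichartz and \eqref{wnJ_dispersive}). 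In particular, each profile $V^j$ has vanishing asymptotic exterior energy. In the dispersive regime this forces, thanks to the free-wave exterior energy bound \eqref{ext_energ_intro} valid in odd dimension, the linear profile $U^j_F\equiv 0$. In the compact regime, Theorem \ref{T:LW''} yields $V^j(0)\in\ZZZZ$, i.e.\ the profile lies in the finite-dimensional space generated by $\Lambda W$ and $\partial_{x_j}W$ (pulled back by the scaling/translation).

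It remains to combine these facts with the Pythagorean expansion
\begin{equation*}
\|(u_{0,n},u_{1,n})\|_{\HHH}^2 = \sum_{j=1}^J \|V^j_n(0)\|_{\HHH}^2 + \|w_n^J(0)\|_{\HHH}^2 + o_n(1),
\end{equation*}
which also follows from \eqref{psdo_orth}. The compact-regime profiles $V^j_n(0)$ belong, up to a vanishing error, to a fixed neighborhood of $\ZZZZ$ at scale $\lambda^j_\infty$ and translation $x^j_\infty$; since the parameters are pseudo-orthogonal, at most one index $j$ (with $\lambda^j_\infty=1$, $x^j_\infty=0$, $t^j_\infty=0$) can contribute to the fixed-scale, untranslated part of the projection $\pi_{\ZZZZ}$, and the hypothesis $\|\pi_{\ZZZZ}(u_{0,n},u_{1,n})\|\to 0$ forces its $\HHH$-norm to vanish as well. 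All other compact-regime profiles can be reabsorbed using the scaling invariance of $\ZZZZ$ combined with a finer argument: a pseudo-orthogonal family of elements of translates/rescalings of the fixed finite-dimensional space $\ZZZZ$ must have vanishing total $\HHH$-norm, since the kernel elements $\Lambda W$ and $\partial_{x_j}W$ decay sufficiently fast. The main obstacle, and the most delicate point of this reduction, is precisely this last step: showing that after summing the squared $\HHH$-norms of rescaled-and-translated elements of $\ZZZZ$ along pseudo-orthogonal parameters, one recovers a genuine control by $\|\pi_{\ZZZZ}(u_{0,n},u_{1,n})\|$ up to vanishing errors. Once this is established, letting $J\to\infty$ and $n\to\infty$ we obtain $1=\|(u_{0,n},u_{1,n})\|_{\HHH}^2\to 0$, a contradiction, completing the proof.
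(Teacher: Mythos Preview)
Your approach is substantially more elaborate than the paper's, and as written it contains a genuine error. The paper does not run a full profile decomposition on $(u_{0,n},u_{1,n})$. Instead it extracts a single weak limit $(u_0,u_1)$ and splits into two cases. If $(u_0,u_1)=0$, Lemma~\ref{L:B1} (whose proof uses profile decomposition only to show that $\|W^{4/(N-2)}u_{F,n}\indic_{\{|x|>|t|\}}\|_{L^1_tL^2_x}\to 0$) gives $u_n\approx u_{F,n}$ outside the cone, and then Theorem~\ref{T:equirepartition} yields the contradiction directly. If $(u_0,u_1)\neq 0$, the key observation is that the radiation map $\Phi$ of Corollary~\ref{Co:radiation} is linear and bounded, hence weakly continuous; since $H_{\pm,n}\to 0$ strongly in $L^2$, the weak limit $(u_0,u_1)$ has zero exterior asymptotic energy, and Theorem~\ref{T:LW''} together with $(u_0,u_1)\in\ZZZZ^{\bot}$ (from $\pi_{\ZZZZ}(u_{0,n},u_{1,n})\to 0$) forces $(u_0,u_1)=0$, a contradiction. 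No Pythagorean expansion of exterior energies is needed.

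In your argument, the assertion that the remainder $w_n^J$ contributes only $o_J(1)$ to the exterior asymptotic energy, because it ``vanishes in $\HHH$-norm outside the cone by Strichartz and \eqref{wnJ_dispersive}'', is false: \eqref{wnJ_dispersive} gives only smallness in $L^{\frac{2(N+1)}{N-2}}_{t,x}$, not in $\HHH$. Since $w_n^J$ is essentially a free wave relative to the potential, its exterior asymptotic energy is $\|w_n^J(0)\|_{\HHH}^2+o_n(1)$ by Theorem~\ref{T:equirepartition}, which need not be small. Your scheme can be repaired by keeping this term on the right-hand side (all terms being nonnegative, $\varepsilon_n\to 0$ then forces $\|w_n^J(0)\|_{\HHH}^2\to 0$ and every profile's exterior energy to vanish, yielding the contradiction via the Pythagorean expansion of $\|(u_{0,n},u_{1,n})\|_{\HHH}^2$). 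But you would still owe a proof of the Pythagorean expansion of the exterior asymptotic energies themselves, which is not immediate since $\Phi$ is not an isometry. Note also that by pseudo-orthogonality there is at most one compact-regime profile (necessarily the weak limit), so the paragraph about ``all other compact-regime profiles'' and reabsorption via scaling of $\ZZZZ$ is superfluous and confused; the equation \eqref{LW} is not scale-invariant, so rescaled elements of $\ZZZZ$ are not in the kernel of the relevant operator anyway.
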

We first need a trace property related to the Lorentz transformation. Let $\vell \in \RR^N$ with $|\vell|<1$. Define the map 
$\TTT_{\vell}$ as follows. If $(u_0,u_1)\in \left(C^{\infty}_0(\RR^N)\right)^2$ and $u$ is the solution of \eqref{LW} with initial data $(u_0,u_1)$, then 
$$\TTT_{\vell}(u_0,u_1)=\left(\LLL_{\vell}(u),\partial_t\LLL_{\vell}(u)\right)_{\restriction t=0}.$$
Here $\LLL_{\vell}u$ is the Lorentz transform of $u$, defined in \eqref{defLl}.
Note that 
$$\TTT_{\vell}:\left(C^{\infty}_0(\RR^N)\right)^2\to\left(C^{\infty}_0(\RR^N)\right)^2.$$
\begin{lemma}
\label{L:trace}
The map $\TTT_{\vell}$ can be extended to a bounded linear isomorphism from $\HHH$ to $\HHH$. Furthermore, for all $\eta_0\in (0,1)$, there exists a constant $C>0$ such that for all $(u_0,u_1)\in \HHH$, for all $\vell\in \RR^N$ with $|\vell|\leq \eta_0$, 
\begin{equation}
 \label{LL1}
\frac{1}{C}\left\|\TTT_{\vell}(u_0,u_1)\right\|_{\HHH}\leq \|(u_0,u_1)\|_{\HHH}\leq C\left\|\TTT_{\vell}(u_0,u_1)\right\|_{\HHH}.
\end{equation} 
\end{lemma}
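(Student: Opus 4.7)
The plan is to reinterpret the claim as a trace estimate for solutions of \eqref{LW} on a spacelike hyperplane. A direct computation from \eqref{defLl}, taking for concreteness $\vell=\ell e_1$, shows that $\LLL_{\vell}u(0,x)$ and $\partial_t\LLL_{\vell}u(0,x)$ are linear combinations (with coefficients bounded by constants depending only on $\eta_0$) of $\partial_tu$ and the $\partial_{x_j}u$ evaluated at a point on the spacelike hyperplane $\Sigma_{\vell}:=\{(t,y)\in\RR\times\RR^N : t=-\vell\cdot y\}$. A change of variables whose Jacobian is uniformly comparable to $1$ for $|\vell|\leq\eta_0$ then reduces \eqref{LL1} to showing that the $L^2$ norm of $\nabla_{t,x}u$ over $\Sigma_{\vell}$ is equivalent to $\|(u_0,u_1)\|_{\HHH}$, uniformly in $\vell$.

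I would first establish this trace estimate for data $(u_0,u_1)\in (C_0^\infty(\RR^N))^2$ and extend by density. Write $u=u_F+w$, with $u_F=S_L(t)(u_0,u_1)$ and $w$ the solution of $(\partial_t^2-\Delta)w=Vu$, $V:=\frac{N+2}{N-2}W^{4/(N-2)}$, with $\vec{w}_{\restriction t=0}=0$. For $u_F$ the classical Lorentz invariance of the free wave equation yields that the $L^2$ norm of $\nabla_{t,x}u_F$ on $\Sigma_{\vell}$ equals $\|(u_0,u_1)\|_{\HHH}$ up to a factor depending only on $\eta_0$. For $w$, the trace on $\Sigma_{\vell}$ is controlled by a sidewise energy identity on the spacetime slab bounded by $\{t=0\}$ and $\Sigma_{\vell}$ (obtained by multiplying the equation by $\partial_tw$ and integrating), combined with the H\"older--Sobolev bound $\|Vu\|_{L^2}\lesssim\|V\|_{L^N}\|(u,\partial_tu)\|_{\HHH}$ (valid since $W^{4/(N-2)}\sim|x|^{-4}$ at infinity implies $V\in L^N$) and the Strichartz estimate \eqref{Strichartz} on bounded time intervals.

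The main obstacle is the global nature of $\Sigma_{\vell}$, which extends to $|t|\to\infty$ as $|y|\to\infty$. The solution $u$ of \eqref{LW} may grow exponentially in time due to the unstable eigenfunction of $L_W$, so uniform Strichartz bounds on all of $\RR$ are not directly available. I would resolve this by splitting $\Sigma_{\vell}$ into a bounded piece $\{|y|\leq R\}$, handled by the sidewise argument on the compact time interval that it spans, and an exterior piece $\{|y|\geq R\}$ lying in the spacetime region $\{|y|\geq|t|+(1-\eta_0)R\}$, outside a wave cone. On the exterior piece, Lemma \ref{L:linear_approx} provides a uniform bound on $\nabla_{t,x}u$, since the $L^{2(N+1)/(N+4)}_t L^{2(N+1)/3}_x$ norm of $V\indic_{\{|x|>R+|t|\}}$ is finite and tends to zero as $R\to\infty$, by a direct computation from $W^{4/(N-2)}\sim|x|^{-4}$. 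Finally, the lower bound in \eqref{LL1} follows from a symmetric argument: since $\LLL_{-\vell}\LLL_{\vell}u=u$, the data $(u_0,u_1)$ is obtained from $\TTT_{\vell}(u_0,u_1)$ by applying $\LLL_{-\vell}$ to the solution of \eqref{LWl} with initial data $\TTT_{\vell}(u_0,u_1)$ and evaluating at $t=0$; the same trace/sidewise/exterior-cone argument, now with the time-dependent potential $W_{\vell}^{4/(N-2)}$ of uniformly bounded $L^N$ norm, produces the reverse inequality.
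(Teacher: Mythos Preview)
Your approach is sound and would produce a correct proof, but it is longer than the paper's. The paper's key simplification is that the spacelike hyperplane on which you trace lies \emph{entirely} in the closed exterior light cone $\{|y|\geq |t|\}$ (your own inequality $|y|-|t|\geq(1-\eta_0)|y|$ says exactly this, already with $R=0$). Hence there is no need to split into bounded and exterior pieces: one globally replaces $u$ by the solution $\tilde{u}$ of $(\partial_t^2-\Delta)\tilde{u}=\frac{N+2}{N-2}W^{4/(N-2)}\indic_{\{|x|\geq|t|\}}u$ with $\vec{\tilde{u}}(0)=(u_0,u_1)$, which by finite speed of propagation agrees with $u$ on the whole hyperplane, so that $\TTT_{\vell}(u_0,u_1)=(\LLL_{\vell}\tilde{u},\partial_t\LLL_{\vell}\tilde{u})_{\restriction t=0}$. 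The source on the right is globally in $L^1_tL^2_x$ (H\"older plus Lemma~\ref{L:linear_approx}), and the paper then invokes the ready-made trace inequality of \cite[Lemma~2.2]{KeMe08}: for $(\partial_t^2-\Delta)w=f$ one has $\|(\LLL_{\vell}w,\partial_t\LLL_{\vell}w)_{\restriction t=0}\|_{\HHH}\lesssim\|(w_0,w_1)\|_{\HHH}+\|f\|_{L^1_tL^2_x}$, which is your sidewise energy identity packaged as a black box. This also makes your decomposition $u=u_F+w$ unnecessary. Your treatment of the reverse inequality, via $\LLL_{-\vell}$ applied to solutions of \eqref{LWl}, coincides with the paper's. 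One imprecision in your exterior-piece argument: Lemma~\ref{L:linear_approx} controls $\sup_t\|\indic_{\{|x|>R+|t|\}}\nabla_{t,x}u(t)\|_{L^2}$, which is not by itself the $L^2$ norm on a tilted hyperplane; you would still need a trace step there, at which point you may as well have taken $R=0$ from the start.
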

\begin{proof}
 In all of the proof we fix $\eta_0\in (0,1)$ and $\vell\in \RR^N$ with $|\vell|\leq \eta_0$. The letter $C$ will denote a constant, that may change from line to line, depends on $\eta_0$ but not on $\vell$. We will assume without loss of generality that 
$$\vell=\ell(1,0,\ldots,0).$$
\begin{step}[Left-hand bound]
 In this step we assume $(u_0,u_1)\in C^{\infty}_0(\RR^N)$ and prove the left-hand inequality in \eqref{LL1}. We let $\tilde{u}$ be the solution of 
\begin{equation*}
 \left\{\begin{aligned}
         \partial_t^2\tu-\Delta\tu&=\frac{N+2}{N-2}W^{\frac{4}{N-2}}\indic_{\{|x|\geq |t|\}}u\\
\vec{u}_{\restriction t=0}&=(u_0,u_1).
        \end{aligned}\right.
\end{equation*}
By finite speed of propagation, $u(t,x)=\tu(t,x)$ for $|x|>|t|$. Denoting $x'=(x_2,\ldots,x_N)$ we have
\begin{equation*}
 \TTT_{\vell}(u_0,u_1)(x)=\left( u\left(Y_{\ell}\right),\frac{1}{\sqrt{1-\ell^2}} \partial_tu\left( Y_{\ell}\right)
-\frac{\ell}{\sqrt{1-\ell^2}}\partial_{x_1}u\left( Y_{\ell}\right) \right),
\end{equation*}  
where $Y_{\ell}=\left(\frac{-\ell x_1}{\sqrt{1-\ell^2}},\frac{x_1}{\sqrt{1-\ell^2}},x'\right)$.
Since  $\left( \frac{x_1}{\sqrt{1-\ell^2}}\right)^2+(x')^2\geq \left( \frac{\ell x_1}{\sqrt{1-\ell^2}} \right)^2$, we deduce
$$ \TTT_{\vell}(u_0,u_1)=\left( \LLL_{\vell}\tu,\partial_t\LLL_{\vell}\tu \right)_{\restriction t=0}.$$
We recall Lemma 2.2 of \cite{KeMe08}: if $\partial_t^2u-\Delta u=f$, then 
$$ \left\|\left(\LLL_{\vell}(u),\partial_t\LLL_{\vell}(u)\right)_{\restriction t=0}\right\|_{\HHH}\leq C\left(\left\|(u_0,u_1)\right\|_{\HHH}+\|f\|_{L^1_tL^2_x}\right),$$
where the constant $C>0$ depends only on $\eta_0$. Note that in \cite{KeMe08}, the lemma is stated with $\eta_0=\frac{1}{4}$, but that exactly the same proof works for any $\eta_0\in (0,1)$. 

We thus obtain:
\begin{multline*}
 \left\|\TTT_{\vell}(u_0,u_1)\right\|_{\HHH} \leq C\left\|(u_0,u_1)\right\|_{\HHH}+C\left\|\indic_{\{|x|>|t|\}} W^{\frac{4}{N-2}}u\right\|_{L^1_tL^2_x}\\
\leq C\|(u_0,u_1)\|_{\HHH}+C\left\|\indic_{\{|x|>|t|\}} W^{\frac{4}{N-2}}\right\|_{L^{\frac{2(N+1)}{N+4}}_tL^{\frac{2(N+1)}{3}}_x}\|\indic_{\{|x|>|t|\}}u\|_{L^{\frac{2(N+1)}{N-2}}_{t,x}}.
\end{multline*}
Using that by Lemma \ref{L:linear_approx}
$$\|\indic_{\{|x|>|t|\}}\tu\|_{L^{\frac{2(N+1)}{N-2}}_{t,x}}=\|\indic_{\{|x|>|t|\}}u\|_{L^{\frac{2(N+1)}{N-2}}_{t,x}}\leq C\|(u_0,u_1)\|_{\HHH},$$
we deduce 
\begin{equation*}
 \left\|\TTT_{\vell}(u_0,u_1)\right\|_{\HHH} \leq C\left\|(u_0,u_1)\right\|_{\HHH}
\end{equation*} 
which concludes this step.
\end{step}
\begin{step}[Right-hand inequality]
Note that by the first step, $\TTT_{\vell}$ extends to a bounded linear operator from $\HHH$ to $\HHH$. In this step we construct the inverse $\widetilde{\TTT}_{\vell}$ of $\TTT_{\vell}$ and prove the right-hand inequality in \eqref{LL1}. We define $\widetilde{\TTT}_{\vell}$ as follows.

Let $(v_0,v_1)\in \left(C^{\infty}_0(\RR^N)\right)^2$, and consider the solution $v$ of 
\begin{equation}
\label{LWellbis}
 \left\{\begin{aligned}
         \partial_t^2v-\Delta v&=\frac{N+2}{N-2}W^{\frac{4}{N-2}}_{\vell}v\\
\vec{v}_{\restriction t=0}&=(v_0,v_1).
        \end{aligned}\right.
\end{equation}
Then $\widetilde{\TTT}_{\vell}(v_0,v_1):=\left(\LLL_{-\vell}(v),\partial_t\LLL_{-\vell}(v)\right)_{\restriction t=0}$. By the same argument as in Step 1, 
\begin{equation}
\label{LL2}
\|\widetilde{\TTT}_{\vell}(v_0,v_1)\|_{\HHH}\leq C\left\|(v_0,v_1)\right\|_{\HHH}.
\end{equation} 
Let $(u_0,u_1)\in \left(C^{\infty}_0(\RR^N)\right)^2$, and 
denote by $u$ the solution of \eqref{LW} with initial data $(u_0,u_1)$, and by $v=\LLL_{\vell}u$. Then $v$ is the solution of \eqref{LWellbis} with initial data $\TTT_{\vell}(u_0,u_1)$, which shows that 
$$\widetilde{\TTT}_{\vell}(v_0,v_1)=\left(\LLL_{-\vell}\LLL_{\vell} u,\partial_t \LLL_{-\vell}\LLL_{\vell} u\right)_{\restriction t=0} =(u_0,u_1)$$
Since $v=\LLL_{\vell} u$, we have proved
$$ \widetilde{\TTT}_{\vell}\circ\TTT_{\vell}(u_0,u_1)=(u_0,u_1).$$
Using \eqref{LL2} and a density argument, we see that $\widetilde{\TTT}_{\vell}$ can be extended to a bounded linear map from $\HHH$ to $\HHH$ that satisfies $\widetilde{\TTT}_{\vell}\circ \TTT_{\vell}=\id_{\HHH}$. The right-hand side inequality in \eqref{LL1} follows immediately.
\end{step}
\end{proof}
We next prove that Theorem \ref{T:LW'} implies Theorem \ref{T:LW}. 

Assume Theorem \ref{T:LW'}. We fix in all the proof $\eta_0\in (0,1)$, $\vell$ such that $|\vell|\leq \eta_0$. As before $C>0$ denotes a constant that might depend on $\eta_0$ but not on $\vell$. 
\setcounter{step}{0}
\begin{step}[Lorentz transformation]
In this step we prove that there exists a projection $P_{\vell}$ on $\HHH$, whose rank is exactly the dimension of $\ZZZZ_{\vell}$, and such that for all $(u_0,u_1)\in \HHH$, the solution $u$ of \eqref{LWl} satisfies 
 \begin{multline}
  \label{LL2'}
 \frac{1}{C}\int |\nabla u_0(x)|^2+(u_1(x))^2\,dx \leq \left\|P_{\vell}(u_0,u_1)\right\|^2+ \sum_{\pm} \lim_{t\to \pm\infty} \int_{|x|\geq |t|}|\nabla_{t,x}u(t,x)|^2.
 \end{multline} 
Using a density argument, it is sufficient to prove \eqref{LL2'} for $(u_0,u_1)\in \left( C^{\infty}_0(\RR^N) \right)^2$. Assuming (without loss of generality) that $\vell=(\ell,0,\ldots,0)$, and letting 
$$v(t,x)=\left(\LLL_{\vell}u\right)(t,x)=u\left( \frac{t+\ell x_1}{\sqrt{1-\ell^2}},\frac{x_1+t\ell}{\sqrt{1-\ell^2}},x' \right),$$
we see that $v$ satisfies the equation \eqref{LW}. By Theorem \ref{T:LW'}, we obtain
\begin{equation}
 \label{LL3} 
 \left\|\pi_{\ZZZZ}(v_0,v_1)\right\|^2_{\HHH} +\sum_{\pm}\lim_{t\to\pm \infty} \int_{|x|>|t|} |\nabla_{t,x}v(t,x)|^2\,dx\geq \frac{1}{C}\left\|(v_0,v_1)\right\|^2_{\HHH}.
\end{equation} 
We have $\|(u_0,u_1)\|^2_{\HHH}=\|\TTT_{\ell}(v_0,v_1)\|^2_{\HHH}$ and thus, by Lemma \ref{L:trace},
\begin{equation}
 \label{LL4}
\|(u_0,u_1)\|^2_{\HHH}\leq C\|(v_0,v_1)\|^2_{\HHH}.
\end{equation} 
Lemma \ref{L:trace} also implies
\begin{equation}
 \label{LL4'}
 \left\|\pi_{\ZZZZ}(v_0,v_1)\right\|_{\HHH}^2=\left\|\pi_{\ZZZ}\circ \TTT_{\ell}^{-1}(u_0,u_1)\right\|^2_{\HHH}
 \leq C\left\|\TTT_{\ell}\circ\pi_{\ZZZZ}\circ\TTT_{\ell}^{-1}(u_0,u_1)\right\|^2_{\HHH}.
\end{equation} 
Note that $P_{\vell}:=\TTT_{\ell}\circ\pi_{\ZZZZ}\circ\TTT_{\ell}^{-1}$ is a projection of rank $\dim \ZZZZ$. In view of \eqref{LL4} and \eqref{LL4'}, we are left with proving that $L_{u}\geq L_v$, where
$$L_u:=\sum_{\pm} \lim_{t\to\pm \infty} \int_{|x|>|t|} |\nabla_{t,x}u(t,x)|^2\,dx,$$
and similarly for $L_v$. Note that $L_u$ and $L_v$ exist by Corollary \ref{Co:radiation}. 

Let 
$\CCC(T,\ell)=\Big\{(s,y)\in \RR^{1+N}\;:\;
|y|>|s|\text{ and } -T\leq \frac{s-\ell y_1}{\sqrt{1-\ell^2}}\leq T \Big\}. $
By the change of variable
$$ (s,y)=\left( \frac{t+\ell x_1}{\sqrt{1-\ell^2}},\frac{x_1+t\ell}{\sqrt{1-\ell^2}},x' \right),\quad (t,x)=\left( \frac{s-\ell y_1}{\sqrt{1-\ell^2}},\frac{y_1-s\ell}{\sqrt{1-\ell^2}},y' \right)$$
we obtain
\begin{multline}
\label{LL5}
L_v=\lim_{T\to \infty} \frac{1}{T}\int_{-T}^{+T} \int_{|x|>|t|} |\nabla_{t,x}v(t,x)|^2\,dx\,dt\\
=\lim_{T\to\infty} \frac 1T \iint_{\CCC(T,\ell)}
\left[\frac{1+\ell^2}{1-\ell^2}\left( (\partial_su)^2+(\partial_{y_1}u)^2 \right)+\frac{4\ell}{1-\ell^2} \partial_su\partial_{y_1}u+(\partial_{y'}u)^2\right]\,ds\,dy.
\end{multline}
Hence
\begin{equation*}
L_v\leq \frac{C}{T}\limsup_{T\to+\infty} \iint_{\CCC(T,\ell)} |\nabla_{s,y} u(s,y)|^2\,ds\,dy.
 \end{equation*} 
 Since $(u_0,u_1)\in \left( C_0^{\infty}(\RR^N) \right)^2$, we know (using finite speed of propagation) that $u(s,y)=0$ for $|y|\geq s+ K_u$, where $K_u$ is a constant depending on $u$. Thus, in $\CCC(T,\ell)$, we have
$$|s|\leq T\sqrt{1-\ell^2}+\ell|y|\leq T\sqrt{1-\ell^2}+\ell|s|+\ell K_u.$$
Hence $(1-\ell)|s|\leq T\sqrt{1-\ell^2} +\ell K_u$, which implies
$$ |s|\leq c_{\ell}T +\frac{\ell}{1-\ell}K_u, \quad c_{\ell}:=\sqrt{\frac{1+\ell}{1-\ell}}>1.$$
As a consequence,
\begin{multline}
\label{bound_Lorentz}
 L_v\leq \frac{C}{T} \limsup_{T\to +\infty}\int_{-c_{\ell}T-\frac{\ell}{1-\ell}K_u}^{c_{\ell} T+\frac{\ell}{1-\ell}K_u} \int_{|y|\geq |s|} |\nabla_{s,y}u(s,y)|^2dyds\\
 =\frac{C}{T} \limsup_{T\to +\infty}\int_{-c_{\ell}T}^{c_{\ell}T} \int_{|y|\geq |s|} |\nabla_{s,y}u(s,y)|^2dy\,ds\leq C L_u.
\end{multline} 
Combining with \eqref{LL4} and \eqref{LL4'}, we deduce \eqref{LL2'}.
\end{step}
\begin{step}[Conclusion of the proof]
 We conclude the proof using elementary linear algebra. Recalling that
 \begin{equation}
 \label{LL7'}
 (z_0,z_1)\in \ZZZZ_{\vell}\Longrightarrow  \lim_{t\to+\infty}\int_{|x|\geq |t|} |\nabla_{t,x}z(t,x)|^2\,dx=0,
 \end{equation} 
 we obtain with \eqref{LL2'} that 
 \begin{equation}
  \label{LL7}
  (z_0,z_1)\in \ZZZZ_{\vell}\Longrightarrow C\left\| P_{\vell}(z_0,z_1)\right\|^2_{\HHH}\geq \left\|(z_0,z_1)\right\|^2_{\HHH}.
 \end{equation} 
 Since the dimension of $\ZZZZ_{\vell}$ and the rank of $P_{\vell}$ are equal, we deduce that the restriction of $P_{\vell}$ to $\ZZZZ_{\vell}$ is an isomorphism between $\ZZZZ_{\vell}$ and the image of $P_{\vell}$.   
 
 Let $(u_0,u_1)\in \HHH$. Let $(z_0,z_1)$ be the only element of $\ZZZZ_{\vell}$ such that 
 $$ P_{\vell}(z_0,z_1)=P_{\vell}(u_0,u_1),$$
 and let $(w_0,w_1)=(u_0,u_1)-(z_0,z_1)$, which is in the kernel of $P_{\vell}$. Let $w$ be the solution of \eqref{LWl} with initial data $(w_0,w_1)$. Then by \eqref{LL2'},
\begin{multline}
 \label{LL8}
 \left\|(w_0,w_1)\right\|^2_{\HHH} \leq C\sum_{\pm} \lim_{t\to \pm\infty} \int_{|x|>|t|} |\nabla_{t,x}w(t,x)|^2\,dx\\
 \leq C\sum_{\pm} \lim_{t\to\pm \infty} \int_{|x|>[t|} |\nabla_{t,x}u(t,x)|^2\,dx,
\end{multline}
where we have used \eqref{LL7'} to obtain the second line. 

Since $\pi_{\ZZZ_{W_{\vell}}^{\bot}}(u_0,u_1)=\pi_{\ZZZ_{W_{\vell}}^{\bot}}(w_0,w_1)$, we deduce from \eqref{LL8}:
\begin{equation*}
 \left\|\pi_{\ZZZ_{W_{\vell}}^{\bot}}(u_0,u_1)\right\|^2_{\HHH} 
 \leq C\sum_{\pm} \lim_{t\to\pm \infty} \int_{|x|>[t|} |\nabla_{t,x}u(t,x)|^2\,dx,
\end{equation*}
which concludes the proof of Theorem \ref{T:LW}.
\end{step}

\subsection{Reduction to a qualitative statement}
\label{SS:qual}
In this subsection, we prove that Theorem \ref{T:LW''} implies Theorem \ref{T:LW'} (and thus, according to the preceding subsection, Theorem \ref{T:LW}). The argument is quite general and works around any stationary solution. We will thus fix $Q\in \hdot$ such that $-\Delta Q=|Q|^{\frac{4}{N-2}}Q$ and consider the linearized equation
\begin{equation}
 \label{LWQ}
 \partial_t^2u+L_Qu=0, 
\end{equation} 
where $L_Q:= -\Delta -\frac{N+2}{N-2}|Q|^{\frac{4}{N-2}}$.
We recall (see \cite{DuKeMe13}) that $Q$ is of class $C^2$ and
\begin{equation}
\label{bound_Q}
\exists C_Q>0,\; \forall x\in \RR^N,\quad |Q(x)|\leq \frac{C_Q}{1+|x|^{N-2}}.
\end{equation} 
\begin{prop}
 \label{P:qual_to_quant}
 Assume that $N\geq 3$ is odd.
 Let $A$ be a finite dimensional subspace of $\HHH$, and $A^{\bot}$ the orthogonal complement of $A$ in $\HHH$. We assume that the following implication is true:
 \begin{multline}
  \label{QQ1}
  (u_0,u_1) \in A^{\bot} 
  \text{ and } \sum_{\pm}\lim_{t\to \pm\infty} \int_{|x|\geq |t|}|\nabla_{t,x}u(t,x)|^2=0\\
  \Longrightarrow (u_0,u_1)\equiv (0,0).
 \end{multline} 
 Then there exists a constant $C>0$ such that for all $(u_0,u_1)\in \HHH$,
 \begin{multline}
  \label{QQ2}
  \int |\nabla u_0(x)|^2+(u_1(x))^2\,dx\\
  \leq C\|\pi_A (u_0,u_1)\|^2_{\HHH}+C\sum_{\pm}\lim_{t\to \pm\infty} \int_{|x|\geq |t|}|\nabla_{t,x}u(t,x)|^2,
 \end{multline} 
 where $u$ is the solution of \eqref{LW} with initial data $(u_0,u_1)\in \HHH$.
 \end{prop}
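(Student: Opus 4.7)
My plan is to argue by contradiction, using the Bahouri--Gérard profile decomposition of Subsection \ref{SS:profile}. Suppose \eqref{QQ2} fails; after normalization I pick $(u_{0,n},u_{1,n})\in\HHH$ with $\|(u_{0,n},u_{1,n})\|_{\HHH}=1$ such that
\[
\|\pi_{A}(u_{0,n},u_{1,n})\|_{\HHH}^{2}+\sum_{\pm}\lim_{t\to\pm\infty}\int_{|x|\geq|t|}|\nabla_{t,x}u_n(t,x)|^{2}\,dx\xrightarrow[n\to\infty]{} 0,
\]
where $u_n$ solves \eqref{LWQ} with data $(u_{0,n},u_{1,n})$. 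After passing to a subsequence, $(u_{0,n},u_{1,n})$ admits a profile decomposition with free-wave profiles $U^j_F$, parameters $(\lambda_{j,n},t_{j,n},x_{j,n})$ satisfying \eqref{psdo_orth}, and remainder $w^J_n$.

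The pseudo-orthogonality \eqref{psdo_orth} admits at most one profile whose parameters all converge in $(0,\infty)\times\RR\times\RR^{N}$ (a \emph{compact} profile): after absorbing any such limits into $U^1_F$, I may assume that, if present, the compact profile has $\lambda_{1,n}\equiv 1$, $t_{1,n}\equiv 0$, $x_{1,n}\equiv 0$, and that $(U^1_F(0),\partial_t U^1_F(0))$ equals the weak $\HHH$-limit of $(u_{0,n},u_{1,n})$. Since $\pi_A$ is finite-rank and $\pi_A(u_{0,n},u_{1,n})\to 0$, this weak limit lies in $A^{\bot}$; and because, by Corollary \ref{Co:radiation}, the exterior asymptotic energy is a continuous quadratic form on $\HHH$ (hence weakly lower semi-continuous), the weak limit also has vanishing exterior energy. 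Hypothesis \eqref{QQ1} then forces the weak limit to vanish, so no compact profile is present: every $U^j_F$ is \emph{non-compact} in the sense that one of $\lambda_{j,n}\to 0$, $\lambda_{j,n}\to\infty$, $|x_{j,n}|/\lambda_{j,n}\to\infty$, or $|t_{j,n}|/\lambda_{j,n}\to\infty$ holds.

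For a non-compact profile the decay \eqref{bound_Q} of $Q$, together with the dispersive behaviour of $U^j_{F,n}$ when $|t_{j,n}|/\lambda_{j,n}\to\infty$, makes the time-dependent potential $|Q|^{\frac{4}{N-2}}$ negligible on the profile support in the norm appearing in \eqref{G84}; Lemma \ref{L:linear_approx} then shows that the solution of \eqref{LWQ} launched from the profile data differs from $U^j_{F,n}$ by a term small in the Strichartz and energy norms. A standard long-time perturbation argument based on \eqref{Strichartz} and the linearity of \eqref{LWQ} then yields
\[
u_n=\sum_{j=1}^{J} U^j_{F,n}+w^J_n+r^J_n,\qquad \limsup_{J\to\infty}\limsup_{n\to\infty}\|\vec r^J_n\|_{L^{\infty}(\RR,\HHH)}=0,
\]
and, combining Corollary \ref{Co:radiation} with \eqref{psdo_orth}, a Pythagorean decoupling of both $\|(u_{0,n},u_{1,n})\|_{\HHH}^{2}$ and the exterior energy. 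In particular, each free-wave profile has vanishing exterior energy, and the qualitative rigidity statement for the free wave equation (see \cite{DuKeMe12}, \cite[Prop.~1]{DuKeMe19Pc}) gives $U^j_F\equiv 0$ for every $j$. The Pythagorean identity then forces $\|w^J_n\|_{\HHH}\to 1$, while $w^J_n$ is a free wave whose exterior energy tends to zero and which converges weakly to $0$ in $\HHH$. Invoking the quantitative exterior-energy lower bound for the free wave equation in odd space dimension (Theorem \ref{T:KLLS} below, at $R=0$; the non-radial case is reduced to the radial one by decomposition on spherical harmonics), $w^J_n$ must be asymptotically contained in a fixed finite-dimensional subspace of $\HHH$; combined with weak vanishing, this yields $\|w^J_n\|_{\HHH}\to 0$, the desired contradiction.

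The key technical obstacle is the exterior-energy decoupling under the cone restriction: one must verify that the contributions of profiles living at different scales, positions, and times asymptotically decouple in the exterior asymptotic energy, despite the non-scale-invariance of the region $\{|x|\geq|t|\}$. This is handled via Corollary \ref{Co:radiation}, which converts the exterior energy into the squared $L^{2}$-norm of a radiation field on $\RR\times S^{N-1}$, on which the pseudo-orthogonality \eqref{psdo_orth} translates into asymptotic $L^{2}$-orthogonality; and via the cone-localized Strichartz estimates \eqref{boundedness_indic}--\eqref{fractional_cones}, which allow the potential to be replaced by zero on the profile supports compatibly with the wave-cone geometry.
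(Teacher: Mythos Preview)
Your contradiction setup and your treatment of the ``compact profile'' (the weak $\HHH$-limit of $(u_{0,n},u_{1,n})$) are correct and coincide with the paper's Case~2: the bounded linear radiation map $\Phi$ of Corollary~\ref{Co:radiation} makes the exterior asymptotic energy a continuous non-negative quadratic form on $\HHH$, hence weakly lower semi-continuous, so the weak limit lies in $A^{\bot}$ with vanishing exterior energy and \eqref{QQ1} kills it.

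Where you diverge from the paper is in the ``weak limit $=0$'' case. The paper does \emph{not} decouple the exterior energy profile by profile. Instead it proves a single approximation lemma (Lemma~\ref{L:B1}): if $(u_{0,n},u_{1,n})\rightharpoonup 0$ in $\HHH$, then the solution $u_n$ of \eqref{LWQ} and the free solution $u_{F,n}$ with the same data satisfy $\sup_t\int_{|x|>|t|}|\nabla_{t,x}(u_n-u_{F,n})|^2\to 0$. The profile decomposition is used only inside the proof of Lemma~\ref{L:B1}, and only to show that $\|\indic_{\{|x|\geq|t|\}}|Q|^{4/(N-2)}u_{F,n}\|_{L^1_tL^2_x}\to 0$; after that a Gr\"onwall argument on the Duhamel formula finishes. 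Once $u_{F,n}$ has exterior energy tending to~$0$, Theorem~\ref{T:equirepartition} (the equirepartition identity $\sum_{\pm}\lim_{t\to\pm\infty}\int_{|x|>|t|}|\nabla_{t,x}u_F|^2=\|(u_0,u_1)\|_{\HHH}^2$, valid without any radial assumption) immediately gives $\|(u_{0,n},u_{1,n})\|_{\HHH}\to 0$, contradicting the normalization.

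Your route introduces a genuine extra difficulty: a Pythagorean expansion of the \emph{exterior} asymptotic energy along the profile decomposition. Since the cone $\{|x|>|t|\}$ is neither scale- nor translation-invariant, this decoupling is not a formal consequence of \eqref{psdo_orth}; the sketch via Corollary~\ref{Co:radiation} (orthogonality of radiation fields in $L^2([0,\infty)\times S^{N-1})$) is plausible but would require real work, and the paper simply avoids it. Likewise, in your last step you reach for Theorem~\ref{T:KLLS} plus a spherical-harmonic reduction, when Theorem~\ref{T:equirepartition} (already non-radial, and with $R=0$) is exactly the tool that closes the argument in one line. In short: your proof is morally correct but overengineered; replacing the profile-by-profile exterior-energy decoupling by Lemma~\ref{L:B1} and replacing Theorem~\ref{T:KLLS} by Theorem~\ref{T:equirepartition} recovers the paper's two-case argument.
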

 \begin{remark}
 Assume that Theorem \ref{T:LW''} holds. Then \eqref{QQ1} holds and so, by Proposition \ref{P:qual_to_quant}, \eqref{QQ2} holds with $Q=W$ and $A=\ZZZZ$. We claim that the conclusion of Theorem \ref{T:LW'} is true. Indeed, let $(u_0,u_1)\in \HHH$. Let $w$ be the solution of \eqref{LW} with initial data $\pi_{\ZZZZ}(u_0,u_1)$. We have 
  $$\lim_{t\to\pm\infty} \int_{|x|>|t|} |\nabla_{t,x}w(t,x)|^2\,dx=0.$$
  Using \eqref{QQ2} with $A=\ZZZZ$, we obtain, with $\tilde{w}$ the solution of \eqref{LW} with data $\pi_{\ZZZZ^{\bot}}(u_0,u_1)$, so that $u=w+\tilde{w}$, 
  \begin{multline*}
  \left\|\pi_{\ZZZZ^{\bot}}(u_0,u_1)\right\|^2_{\HHH}\leq C\sum_{\pm}\lim_{t\to \pm\infty}\int_{|x|>|t|} |\nabla_{t,x}\tilde{w}(t,x)|^2\,dx
  \\ =C\sum_{\pm}\lim_{t\to \pm\infty}\int_{|x|>|t|} |\nabla_{t,x}u(t,x)|^2\,dx,  
  \end{multline*} 
  which yields the conclusion of Theorem \ref{T:LW'}.
 \end{remark}
 We now turn to the proof of Proposition \ref{P:qual_to_quant}.
 
We recall the following equirepartition of the energy outside the wave cone for the free wave equation \eqref{FW} (see \cite{DuKeMe12}).
\begin{theoint}
 \label{T:equirepartition}
 Assume that $N\geq 3$ is odd. Let $u_F$ be a solution of \eqref{FW} with initial data $(u_0,u_1)$ at $t=0$. Then
 $$ \sum_{\pm }\lim_{t\to \pm \infty} \int_{|x|\geq |t|} |\nabla_{t,x} u_F(t,x)|^2\,dx=\int |\nabla u_0(x)|^2+u_1(x)^2\,dx$$
\end{theoint}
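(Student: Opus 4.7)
I would prove this equirepartition identity via the Friedlander translation representation (``radiation field'') for the free wave equation, combined with strong Huygens' principle in odd space dimensions. By density it suffices to consider $(u_0,u_1)\in\bigl(C_c^\infty(\RR^N)\bigr)^2$.

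\textbf{Radiation field and exterior energy.} Extend the asymptotic profiles $H_\pm$ of Corollary~\ref{Co:radiation} to functions $G_\pm\in L^2(\RR\times S^{N-1})$ defined on the full translated line, i.e.
\begin{equation*}
\lim_{t\to\pm\infty}\int_A^{\infty}\!\!\int_{S^{N-1}}\left|r^{\frac{N-1}{2}}\partial_r u_F(t,r\theta)-G_\pm(r-|t|,\theta)\right|^2 d\sigma(\theta)\,dr=0
\end{equation*}
for every $A\in\RR$, with the companion identity for $\partial_t u_F$ yielding $\mp G_\pm$ as in the remark following Corollary~\ref{Co:radiation}. The key linear fact, due to Friedlander/Lax--Phillips, is that $(u_0,u_1)\mapsto G_+$ is an isometric isomorphism $\HHH\to L^2(\RR\times S^{N-1})$ in a suitable normalization. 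The vanishing of the angular gradient and of $u_F/|x|$ in the exterior limit, combined with the asymptotic equipartition between the temporal and radial derivatives in the outgoing cone, then yields
\begin{equation*}
\lim_{t\to+\infty}\int_{|x|\geq t}|\nabla_{t,x}u_F|^2\,dx=2\|G_+\|_{L^2((0,\infty)\times S^{N-1})}^2,
\end{equation*}
and likewise with $G_-$ for $t\to-\infty$.

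\textbf{Huygens reflection.} The crucial input, specific to odd $N$, is a reflection identity
\begin{equation*}
G_-(\eta,\theta)=\varepsilon_N\,G_+(-\eta,-\theta),\qquad \varepsilon_N\in\{\pm 1\},
\end{equation*}
which is the translation-representation form of strong Huygens. I would establish it by decomposing the data in spherical harmonics: the $\ell$-th harmonic component has a radial profile $v$ whose substitution $v(r)=r^{\ell}w(r)$ makes $w$ solve the free radial wave equation in the still-odd dimension $M=N+2\ell$. Applying the explicit descent/Kirchhoff formula for radial solutions in odd dimensions, one checks that the outgoing radiation profile on $\eta<0$ is determined by the incoming profile on $\eta>0$ via an antipodal reflection, with an $\ell$- and $N$-dependent sign that aggregates to $\varepsilon_N$; summing over $\ell$ by Parseval yields the displayed identity. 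It follows that $\|G_-\|_{L^2((0,\infty)\times S^{N-1})}^2=\|G_+\|_{L^2((-\infty,0)\times S^{N-1})}^2$, and adding the two half-line contributions gives $2\|G_+\|_{L^2(\RR\times S^{N-1})}^2=\|(u_0,u_1)\|_\HHH^2$ by the Friedlander isometry (with the factor $2$ absorbed into the normalization).

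\textbf{Main obstacle.} The delicate point is the Huygens reflection identity above: it holds precisely in odd dimensions, and it fails in even dimensions, which is exactly why the sharp equirepartition equality breaks down there (as recalled in the introduction just after \eqref{ext_energ_intro}). Carrying out the spherical-harmonic reduction uniformly in $\ell$ and carefully tracking the signs in the Kirchhoff/descent formula for dimension $M=N+2\ell$ is the technical heart of the proof; once this is in hand, the rest is essentially bookkeeping between the $L^2$-isometry of the Friedlander transform and the exterior-energy integrals.
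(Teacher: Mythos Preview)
The paper does not prove Theorem~\ref{T:equirepartition} here; it is quoted from \cite{DuKeMe12}, where the argument is a direct Fourier/Plancherel computation using the explicit representation formula for the free wave equation in odd dimension (see also the remark in the introduction: ``proved by the authors in \cite{DuKeMe12}, using the explicit representation formula for the free wave equation''). Your route via the Friedlander radiation field and the Lax--Phillips reflection is a genuinely different packaging of the same phenomenon, and it is correct.

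A brief comparison. The original proof computes the exterior energy by writing $u_F$ through $\widehat{u_0},\widehat{u_1}$ and evaluating the resulting oscillatory integrals explicitly; the odd-dimensional miracle appears as an exact algebraic cancellation. Your approach instead isolates the mechanism conceptually: the Friedlander isometry $(u_0,u_1)\mapsto G_+$ (recalled in the proof of Corollary~\ref{Co:radiation}) reduces the identity to $\|G_-\|_{L^2((0,\infty)\times S^{N-1})}^2=\|G_+\|_{L^2((-\infty,0)\times S^{N-1})}^2$, and this is exactly the strong Huygens reflection $G_-(\eta,\theta)=\pm G_+(-\eta,-\theta)$. Your spherical-harmonic derivation of the reflection is sound; the point worth making explicit is that the $\ell$-dependent sign $(-1)^{(N-1)/2+\ell}$ coming from the radial problem in dimension $N+2\ell$ combines with the parity $\Phi_\ell(-\theta)=(-1)^\ell\Phi_\ell(\theta)$ of degree-$\ell$ spherical harmonics to produce the single global sign $(-1)^{(N-1)/2}$, so the ``aggregation'' you allude to is not merely a norm statement but an honest pointwise identity. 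Either way, only the $L^2$ equality is needed. What your approach buys is a transparent explanation of why the identity fails in even dimension (no strong Huygens, hence no reflection), at the cost of importing the radiation-field machinery; the original proof is more self-contained but less illuminating on this point.
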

A key ingredient of the proof of Proposition \ref{P:qual_to_quant} is the following lemma:
\begin{lemma}
 \label{L:B1}
 Let $\{u_n\}_n$ be a sequence of solutions of \eqref{LWQ}, with initial data 
 $$\vec{u}_{n\restriction t=0}=(u_{0,n},u_{1,n})\in \HHH,$$
 such that 
 \begin{equation*}
 (u_{0,n},u_{1,n})\xrightharpoonup[n\to\infty]{}0\text{ in }\HHH.
 \end{equation*} 
 Let $u_{F,n}$ be the solution of the free wave equation \eqref{FW} 
 with initial data 
 $$\vec{u}_{F,n\restriction t=0}=(u_{0,n},u_{1,n})\in \HHH.$$
Then 
\begin{equation}
 \label{B1}
\lim_{n\to\infty}\left(\sup_{t\in \RR}\int_{|x|\geq |t|} |\nabla_{t,x}(u_n(t,x)-u_{F,n}(t,x))|\,dx\right)=0.
\end{equation} 
\end{lemma}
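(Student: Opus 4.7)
The plan is to set $v_n:=u_n-u_{F,n}$, which by linearity solves
\[
(\partial_t^2-\Delta)v_n=\tfrac{N+2}{N-2}|Q|^{\frac{4}{N-2}}u_n,\qquad \vec v_{n\restriction t=0}=(0,0).
\]
Applying Lemma~\ref{L:linear_approx} with $R=0$, $V\equiv 0$, $f_1=0$ and $f_2=\tfrac{N+2}{N-2}|Q|^{\frac{4}{N-2}}u_n$, the statement \eqref{B1} reduces to proving
\begin{equation}\label{B1red}
\lim_{n\to\infty}\bigl\|\indic_{\{|x|\geq|t|\}}|Q|^{\frac{4}{N-2}}u_n\bigr\|_{L^1(\RR,L^2(\RR^N))}=0.
\end{equation}
Before splitting, I first record that $u_n$ is bounded in $L^\infty(\RR,\HHH)$ uniformly in $n$: indeed, applying Lemma~\ref{L:linear_approx} to \eqref{LWQ} itself with $V=-\tfrac{N+2}{N-2}|Q|^{\frac{4}{N-2}}$, the hypothesis \eqref{G84} holds because \eqref{bound_Q} and a direct computation show that $\indic_{\{|x|\geq |t|\}}|Q|^{\frac{4}{N-2}}$ is in $L^{\frac{2(N+1)}{N+4}}(\RR,L^{\frac{2(N+1)}{3}})$ (the spatial integral decays like $|t|^{-(5N+8)/3}$, which is integrable against the time exponent).

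I then control \eqref{B1red} by a three-step splitting. First, on the tail $|t|\geq T$, I use \eqref{bound_Q} together with H\"older in $x$ with the pair $(N/2,\,N/(N-2))$ and the Sobolev embedding $\hdot\subset L^{2N/(N-2)}$:
\[
\bigl\|\indic_{\{|x|\geq |t|\}}|Q|^{\frac{4}{N-2}}u_n(t)\bigr\|_{L^2_x}^2
\leq\bigl\||Q|^{\frac{8}{N-2}}\indic_{\{|x|\geq |t|\}}\bigr\|_{L^{N/2}}\|u_n(t)\|_{L^{\frac{2N}{N-2}}}^2
\lesssim (1+|t|)^{-6}\|u_n(t)\|_{\hdot}^2,
\]
so the contribution from $|t|\geq T$ is $\lesssim T^{-2}\sup_n\|u_n\|_{L^\infty\HHH}$, arbitrarily small in $T$ and uniform in $n$. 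Second, for $|t|\leq T$ I decompose $|Q|^{\frac{4}{N-2}}=|Q|^{\frac{4}{N-2}}\indic_{\{|x|>R\}}+|Q|^{\frac{4}{N-2}}\indic_{\{|x|\leq R\}}$; the outer piece is handled by the same H\"older argument and yields an $O(TR^{-3})$ bound uniform in $n$.

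Third, for the inner piece on $|t|\leq T$, since $|Q|^{\frac{4}{N-2}}\in L^\infty$, it is enough to show
\[
\sup_{|t|\leq T}\|u_n(t)\|_{L^2(|x|\leq R)}\xrightarrow[n\to\infty]{}0.
\]
By weak-to-weak continuity of the linear evolution \eqref{LWQ} (a consequence of the uniform Strichartz bound above), $u_n(t)\rightharpoonup 0$ in $\hdot$ for each fixed $t$, and the Rellich--Kondrachov theorem turns this into $u_n(t)\to 0$ in $L^2(|x|\leq R)$ pointwise in $t$. Since $\partial_t u_n$ is uniformly bounded in $L^\infty_tL^2_x$, the family $\{u_n\}$ is equicontinuous from $[-T,T]$ into $L^2(|x|\leq R)$, so Arzel\`a--Ascoli upgrades the pointwise convergence to uniform convergence on $[-T,T]$. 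Choosing first $T$ large, then $R$ large, then $n\to\infty$ yields \eqref{B1red} and hence the lemma.

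The main potential obstacle is the final local compactness step: one needs the equicontinuity in $t$ with values in $L^2_{\loc}$ to be \emph{uniform} in $n$, which in turn forces the preliminary use of Lemma~\ref{L:linear_approx} on $u_n$ itself to secure uniform bounds on $\|\vec u_n(t)\|_{\HHH}$; the decay \eqref{bound_Q} of $Q$ at infinity is what simultaneously enables this a priori bound and the tail/outer estimates in the H\"older arguments.
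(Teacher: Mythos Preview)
Your approach is genuinely different from the paper's. The paper writes the difference as $w_n=u_{F,n}-\tilde u_n$ (with $\tilde u_n$ solving the equation with truncated potential) and reduces to showing $\bigl\|\indic_{\{|x|\geq|t|\}}|Q|^{4/(N-2)}u_{F,n}\bigr\|_{L^1_tL^2_x}\to 0$, i.e.\ with the \emph{free} solution in the forcing; it then uses the Bahouri--G\'erard profile decomposition to prove this. Your route keeps the full solution $u_n$ in the forcing and replaces profile decomposition by a time/space cutoff together with Rellich compactness and Arzel\`a--Ascoli. This is more elementary and self-contained; the paper's argument, on the other hand, is more robust to perturbations of the setting (it only uses dispersive information about $u_{F,n}$).

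There is, however, one incorrect step. You assert that Lemma~\ref{L:linear_approx} gives $u_n$ bounded in $L^\infty(\RR,\HHH)$ uniformly in $n$. It does not: that lemma only controls the \emph{exterior} quantities $\sup_t\|\indic_{\{|x|\geq|t|\}}\nabla_{t,x}u_n\|_{L^2}$ and $\|\indic_{\{|x|\geq|t|\}}u_n\|_{L^{2(N+1)/(N-2)}_{t,x}}$. In fact the paper explicitly notes (introduction, discussion after \eqref{LWl}) that solutions of the linearized equation can grow exponentially in $\HHH$ because $L_W$ has a negative eigenvalue. So your tail bound, which invokes $\|u_n(t)\|_{\hdot}$ for all $t\in\RR$, is unjustified as written.

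The repair is short. For $|t|\leq T$ the local-in-time well-posedness of \eqref{LWQ} gives $\sup_{|t|\leq T}\|\vec u_n(t)\|_{\HHH}\leq C_T$, so your equicontinuity and Rellich arguments for the inner and outer pieces go through unchanged. For the tail $|t|\geq T$, either replace $u_n$ by the truncated solution $\tilde u_n$ of $(\partial_t^2-\Delta)\tilde u_n=\tfrac{N+2}{N-2}\indic_{\{|x|\geq|t|\}}|Q|^{4/(N-2)}\tilde u_n$ with the same data (it coincides with $u_n$ on $|x|\geq|t|$ and \emph{is} in $L^\infty(\RR,\HHH)$ by Strichartz plus Gr\"onwall), or simply change the H\"older pairing to
\[
\bigl\|\indic_{\{|x|\geq|t|\}}|Q|^{\frac{4}{N-2}}u_n\bigr\|_{L^1_tL^2_x(|t|\geq T)}
\leq \bigl\|\indic_{\{|x|\geq|t|\}}|Q|^{\frac{4}{N-2}}\bigr\|_{L^{\frac{2(N+1)}{N+4}}_tL^{\frac{2(N+1)}{3}}_x(|t|\geq T)}
\bigl\|\indic_{\{|x|\geq|t|\}}u_n\bigr\|_{L^{\frac{2(N+1)}{N-2}}_{t,x}},
\]
where the first factor is $o_T(1)$ and the second is uniformly bounded by Lemma~\ref{L:linear_approx}. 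With either fix your argument is complete.
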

\begin{proof}
\setcounter{step}{0}
\begin{step}[Convergence of the potential term to $0$]
 Let 
 $$\eps_n=\left\|\indic_{\{|x|\geq |t|\}} |Q|^{\frac{4}{N-2}} u_{F,n}\right\|_{L^1(\RR,L^2)}.$$
 We first prove
 \begin{equation}
  \label{B1'}
  \lim_{n\to\infty} \eps_n=0.
 \end{equation} 
 Using the profile decomposition recalled in Subsection \ref{SS:profile}, it is sufficient to prove \eqref{B1'} assuming one of the following
\begin{equation}
 \label{B2}
 \lim_{n\to\infty} \left\|u_{F,n}\right\|_{L^{\frac{2(N+1)}{N-2}}_{t,x}}=0
\end{equation} 
or
\begin{equation}
 \label{B3}
 u_{F,n}=\frac{1}{\lambda_n^{\frac{N}{2}-1}} U_F\left( \frac{t-t_n}{\lambda_n},\frac{x-x_n}{\lambda_n} \right)
\end{equation} 
where $U_F$ is a finite energy solution of the free wave equation  \eqref{FW} and
\begin{equation}
 \label{B3'}
 \lim_{n\to\infty} \frac{|x_n|+|t_n|}{\lambda_n}+\frac{1}{\lambda_n}+\lambda_n=+\infty.
\end{equation} 
Indeed, since by the assumptions of the lemma $\{(u_{0,n},u_{1,n})\}_n$ converges weakly to $0$ in $\HHH$, there is no nonzero profile $U^j_F$ in the profile decomposition of $\{(u_{0,n},u_{1,n})\}_n$ such that the corresponding sequence of parameters $\{\lambda_n^j,t_n^j,x_n^j\}_n$ satisfies
$$\limsup_{n\to\infty} \frac{|x_n^j|+|t_n^j|}{\lambda_n^j}+\frac{1}{\lambda_n^j}+\lambda_n^j<\infty.$$
Translating $U_F$ in time and space and extracting subsequences if necessary, we can assume without loss of generality:
\begin{equation}
 \label{B4}
 \lim_{n\to\infty} \frac{t_n}{\lambda_n}\in \{-\infty,0,\infty\},\quad \lim_{n\to\infty}\lambda_n\in\{0,1,\infty\}\text{ and }\lim_{n\to\infty} \frac{|x_n|}{\lambda_n}\in \{0,\infty\}.
\end{equation} 
Using \eqref{bound_Q}, we have 
\begin{equation}
\label{integrQ}
Q^{\frac{4}{N-2}}\indic_{\{|x|\geq |t|\}}\in L^{\frac{2(N+1)}{N+4}}_tL^{\frac{2(N+1)}{3}}_x. 
\end{equation} 
By H\"older's inequality,
$$\eps_n\leq \left\|\indic_{\{|x|\geq |t|\}} |Q|^{\frac{4}{N-2}}\right\|_{L^{\frac{2(N+1)}{N+4}}_tL^{\frac{2(N+1)}{3}}_x}\left\|u_{F,n}\right\|_{L^{\frac{2(N+1)}{N-2}}_{t,x}}.$$
This proves that \eqref{B2} implies \eqref{B1'}. Next, we assume \eqref{B3} and \eqref{B3'}. By Strichartz estimate, $U_F\in L^{\frac{2(N+1)}{N-2}}_{t,x}$. By H\"older inequality and \eqref{integrQ}, we see that the map
$$U\mapsto \indic_{\{|x|\geq |t|\}}|Q|^{\frac{4}{N-2}}U$$
is continuous from $L^{\frac{2(N+1)}{N-2}}_{t,x}$ to $L^1_tL^2_x$. By density, we deduce that to prove \eqref{B1'}, it is sufficient to check that 
\begin{equation}
\label{B5}
\lim_{n\to\infty} \left\|\indic_{\{|x|\geq |t|\}} |Q|^{\frac{4}{N-2}}  \frac{1}{\lambda_n^{\frac{N}{2}-1}} U\left( \frac{t-t_n}{\lambda_n},\frac{x-x_n}{\lambda_n} \right)  \right\|_{L^1(\RR,L^2)}=0 .
\end{equation} 
whenever $U\in C_0^{\infty}(\RR^{1+N})$. For $R\gg 1$, we let 
$$A_{n,R}=\left\{(t,x)\in \RR\times \RR^N,\; \frac{1}{R}\leq \frac{|t-t_n|}{\lambda_n}\leq R,\; \frac{1}{R}\leq \frac{|x-x_n|}{\lambda_n}\leq R\right\}.$$
Note that by \eqref{B3'}, \eqref{B4}, at fixed $R$, for almost every $(t,x)$ in $\RR\times \RR^N$,
\begin{equation}
\label{CVAnR}
\lim_{n\to \infty} \indic_{A_{n,R}}(t,x)=0. 
\end{equation} 
Denoting by 
$$ U_n(t,x)=\frac{1}{\lambda_n^{\frac{N}{2}-1}} U\left( \frac{t-t_n}{\lambda_n},\frac{x-x_n}{\lambda_n} \right),$$
we see that 
\begin{multline*}
\left\|\indic_{\{|x|\geq |t|\}} |Q|^{\frac{4}{N-2}}U_n\right\|_{L^1_tL^2_x}\\
\leq \left\|\indic_{\{|x|\geq |t|\}} \indic_{A_{n,R}}|Q|^{\frac{4}{N-2}}U_n\right\|_{L^1_tL^2_x}+\left\|\indic_{\{|x|\geq |t|\}}\indic_{{}^CA_{n,R}}|Q|^{\frac{4}{N-2}}U_n\right\|_{L^1_tL^2_x}.
\end{multline*}
We have
\begin{multline*}
 \left\|\indic_{\{|x|\geq |t|\}} \indic_{A_{n,R}}|Q|^{\frac{4}{N-2}}U_n\right\|_{L^1_tL^2_x}\\
 \leq \left\|U_n\right\|_{L^{\frac{2(N+1)}{N-2}}_{t,x}}\left\|\indic_{\{|x|\geq t\}}|Q^{\frac{4}{N-2}}\indic_{A_{n,R}}\right\|_{L^{\frac{2(N+1)}{N+4}}_tL^{\frac{2(N+1)}{3}}_x},
\end{multline*}
which goes to $0$ as $n$ goes to infinity by dominated convergence and \eqref{CVAnR}.
Furthermore, for large $R$, $\indic_{{}^C A_{n,R}}U_n=0$ since $U$ is compactly supported. This concludes the proof of \eqref{B1'}.
\end{step}
\begin{step}[End of the proof]
We let $\tilde{u}_n$ be the solution of 
\begin{equation}
 \label{B6}
\partial_t^2\tilde{u}_n-\Delta\tilde{u}_n-\frac{N+2}{N-2}\indic_{\{|x|\geq |t|\}} |Q|^{\frac{4}{N-2}} \tilde{u}_n=0
\end{equation} 
with the same initial data as $u_n$ at $t=0$. By finite speed of propagation,
$$|x|>|t|\Longrightarrow \tilde{u}_n(t,x)=u_n(t,x),$$
and it is sufficient to prove \eqref{B1} with $\tilde{u}_n$ instead of $u_n$. Let $w_n=u_{F,n}-\tilde{u}_n$. Then
\begin{equation}
\label{eq_wn} 
\left\{ 
\begin{aligned}
 (\partial_t^2-\Delta)w_n&=-\frac{N+2}{N-2}\indic_{\{|x|>|t|\}}|Q|^{\frac{4}{N-2}}u_{F,n}+\frac{N+2}{N-2}\indic_{\{|x|>|t|\}}|Q|^{\frac{4}{N-2}} w_n\\
\vec{w}_{n\restriction t=0}&=(0,0).
\end{aligned}
\right.
\end{equation} 
Hence for all $T>0$, 
\begin{multline*}
 \left\| \indic_{\{|x|>|t|\}}w_n\right\|_{L^{\frac{2(N+1)}{N-2}}([0,T]\times \RR^N)}\lesssim  \eps_n+\left\||Q|^{\frac{4}{N-2}}\indic_{\{|x|>|t|\}} w_n\right\|_{L^1_t([0,T],L^2_x)}\\
\lesssim  \eps_n+\int_0^T \left\||Q|^{\frac{4}{N-2}}\indic_{\{|x|>|t|\}}\right\|_{L^{\frac{2(N+1)}{3}}_x} \left\|\indic_{\{|x|>|t|\}} w_n(t)\right\|_{L^{\frac{2(N+1)}{N-2}}_x}\,dt.
\end{multline*}
Using a Gr\"onwall type inequality (see e.g. Lemma 8.1 in \cite{FaXiCa11}) and \eqref{integrQ}, we obtain that for all $T\geq 0$, 
$$\left\|\indic_{\{|x|> |t|\}}w_n\right\|_{L^{\frac{2(N+1)}{N-2}}([0,T]\times \RR^N)} \leq C_Q \eps_n,$$
where $C_Q$ depends only on $\left\|\indic_{\{|x|>|t|\}}Q\right\|_{L^{\frac{2(N+1)}{N+4}}_tL_x^{\frac{2(N+1)}{3}}},$
and $ \eps_n$ goes to $0$ as $n$ goes to infinity according to Step 1. The arbitrariness of $T>0$ and the same argument for $T<0$ imply
$$\left\|\indic_{\{|x|> |t|\}}w_n\right\|_{L^{\frac{2(N+1)}{N-2}}(\RR_t\times\RR^{N}_x)} \leq 3C_Q \eps_n.$$
Going back to the equation \eqref{eq_wn} and using Strichartz estimates again, we deduce
$$ \sup_{t\in \RR} \|\vec{w}_n(t)\|_{\HHH} \lesssim \eps_n\underset{n\to\infty}{\longrightarrow} 0,$$
which concludes the proof of Lemma \ref{L:B1}.
\end{step}
 \end{proof}
\begin{proof}[Proof of Proposition \ref{P:qual_to_quant}]
We assume \eqref{QQ1} and prove \eqref{QQ2} by contradiction, assuming that there is a sequence $\{u_n\}_n$ of solutions of \eqref{LWQ} with initial data $(u_{0,n},u_{1,n})$ such that 
\begin{gather}
 \label{A4} \left\|(u_{0,n},u_{1,n})\right\|_{\HHH}=1\\
\label{A5}
\lim_{n\to\infty}\left( 
\left\|\pi_A(u_{0,n},u_{1,n})\right\|_{\HHH}^2+\sum_{\pm}\lim_{t\to \pm\infty} \int_{|x|>|t|}|\nabla_{t,x}u_n(t,x)|^2\right)=0.
\end{gather}
Extracting subsequences, we assume that $(u_{0,n},u_{1,n})$ has a weak limit $(u_0,u_1)$ in $\HHH$ as $n\to\infty$. We distinguish between two cases.
\begin{cas} We first consider the case where $(u_0,u_1)\equiv (0,0)$.
Let $u_{F,n}$ be the solution of the free wave equation \eqref{FW} with the same initial data as $u_n$ at $t=0$. Then 
$$\|\vec{u}_{F,n}(0)\|_{\HHH}=1$$
and, by \eqref{A4}, \eqref{A5} and Lemma \ref{L:B1},
$$ \lim_{n\to\infty} \left( \lim_{t\to\pm \infty} \int_{|x|>|t|} |\nabla_{t,x} u_{F,n}(t,x)|^2\,dx\right)=0.$$
This contradicts the equirepartition of the energy (Theorem \ref{T:equirepartition}) above. 
\end{cas}
\begin{cas}
 We next assume that $(u_0,u_1) \not\equiv (0,0)$. Using the weak convergence of $(u_{0,n},u_{1,n})$ to $(u_0,u_1)$, we obtain
\begin{equation}
 \label{A6}
 (u_0,u_1)\in A^{\bot}.
\end{equation} 
We claim
\begin{equation}
 \label{A7}
\sum_{\pm} \lim_{t\to\pm  \infty} \int_{|x|>|t|} |\nabla_{t,x}u(t,x)|^2\,dx=0.
\end{equation} 
In view of \eqref{A6}, \eqref{A7} and since $(u_0,u_1)$ is nonzero we would obtain a contradiction from \eqref{QQ1}. To conclude the proof of \eqref{QQ2}, we are thus reduced to proving \eqref{A7}. 

For all $n$, we denote by $H_{\pm,n}$ the asymptotic states given by Corollary \ref{Co:radiation} (or rather its analog for the equation \eqref{LWQ}), corresponding to the solution $u_n$ of \eqref{LWQ}. By \eqref{A5},
$$ \lim_{n\to\infty}\left\|H_{\pm,n}\right\|_{L^2([0,+\infty)\times S^{N-1})}=0.$$
Using that the linear map $\Phi$ defined in the corollary is continuous 
$$\HHH \to \left(L^2([0,+\infty)\times S^{N-1})\right)^2$$
for the strong topologies of these spaces, and thus also for the weak topologies, we obtain 
$$ H_{\pm,n}\xrightharpoonup[n\to\infty]{} H_{\pm}$$
weakly in $L^2([0,+\infty)\times S^{N-1})$. As a consequence, $H_{\pm}\equiv 0$. This means exactly that \eqref{A7} holds, concluding the proof of \eqref{QQ2}.
\end{cas}
\end{proof}

\section{Proof of the uniqueness theorem}
\label{S:uniqueness}
In this Section, we prove the uniqueness Theorem \ref{T:LW''}. We will start, in Subsection \ref{SS:radial} by projecting the equation \eqref{LW} on spherical harmonics, which reduces the proof to the proof of a similar uniqueness result for a family of radial wave equations in odd space dimensions $D\geq N$. The next four subsections are dedicated to the proof of this result.

\subsection{Reduction to a radial problem}
\label{SS:radial}
Recall that the eigenfunctions of the Laplace-Beltrami operator on $S^{N-1}$ are of the form $\Phi(\theta)$, where $\Phi$ is a homogeneous harmonic polynomial in the variables $(x_1,\ldots,x_n)$. If $\Phi$ is such a polynomial and $\nu\in \NN$ is its degree, then $-\Delta_{S^{N-1}}u=\nu(\nu+N-2)u$.
Consider a Hilbert basis $\left(\Phi_{k}\right)_{k\in \NN}$ of the Laplace-Beltrami operator on $S^{N-1}$, and let $(\nu_k)_{k\in \NN}$ be the sequence of natural integers, so that for each $k\in \NN$
$$-\Delta_{S^{N-1}} \Phi_k=\nu_k(\nu_k+N-2)\Phi_k.$$
Let $u$ be a solution of \eqref{LW} such that \eqref{ext_0} holds
We let, for $t\in \RR$ and $r>0$,
 $$u_k(t,r)= \int_{S^{N-1}} \Phi_k(\theta)u(t,r\theta)\,d\sigma(\theta).$$
We will identify the function $u$ with the corresponding radial function on $\RR^N$. We have
$$ \vec{u}_k\in C^0(\RR,\HHH)$$
and
\begin{equation}
\label{T0}
 \partial_t^2u_k-\Delta u_k+\frac{\nu_k(\nu_k+N-2)}{r^2}u_k +\frac{N+2}{N-2}W^{\frac{4}{N-2}}u_k=0.
\end{equation}
Letting $v_k=r^{-\nu_k} u_{k}$, and dropping the suffixes $k$ to lighten notations (so that $\nu=\nu_k$, $v=v_k$), we obtain:
\begin{equation}
 \label{T1}
\partial_t^2v-\Delta_D v-\frac{N+2}{N-2}W^{\frac{4}{N-2}} v=0,
\end{equation} 
with initial data
\begin{equation*}
\vec{v}_{\restriction t=0}=(v_0,v_1):=r^{-\nu_k}\vec{u}_k(0), 
\end{equation*} 
where 
$$\Delta_D=\partial_r^2+\frac{D-1}{r}\partial_r$$
is the radial part of the Laplace operator in dimension $D=N+2\nu$. Considering $v$ as a radial function on $\RR^D$, we see that 
\begin{equation}
\label{cont_v}
\vec{v} \in C^0\left(\RR,(\hdot\times L^2)(\RR^D)\right), 
\end{equation} 
and using \eqref{ext_0}, and that 
\begin{equation}
 \label{T1bis}
\lim_{t\to \pm\infty}\int_{|x|\geq |t|} \frac{1}{|x|^2} v^2(t,x)\,dx=0,
\end{equation} 
we obtain
\begin{equation}
\label{T3}
 \lim_{t\to \pm\infty}\int_{|t|}^{+\infty}(\partial_{t,r} v)^2r^{D-1}\,dr=0.
\end{equation} 
The limit \eqref{T1bis} follows from \eqref{ext_0} and the exterior Hardy's inequality (or from the asymptotic behavior of the solutions of \eqref{LW}, see Corollary \ref{Co:radiation}). 

We note also that, for $j\in \llbracket 1, N\rrbracket$, 
$$\partial_{x_j}W=-\frac{1}{N} \frac{x_j}{\left(1+\frac{|x|^2}{N(N-2)}  \right)^{\frac N2}},$$
and thus 
$$\frac{1}{|x|}\partial_{x_j}W=-\frac{1}{N} \frac{\theta_j}{\left(1+\frac{|x|^2}{N(N-2)}\right)^{\frac N2}},$$
where $\theta_j=x_j/|x|\in S^{N-1}$.
Furthermore, $\theta \mapsto \theta_j$ is a spherical harmonic of degree $1$ and $(\theta_j)_{j \in \llbracket 1, N\rrbracket}$ spans the vector space of degree $1$ spherical harmonics.
As a consequence, we see that the proof of Theorem \ref{T:LW''} reduces to the proof of the following proposition:
\begin{prop}
 \label{P:T2}
Let $N,D$ be two odd integers with $D\geq N$.
Let $v$ be a radial solution of \eqref{T1} that satisfies \eqref{cont_v} and \eqref{T3}. 
Then $\vec{v}(0)=(0,0)$ if $D\geq N+4$. Furthermore, $\vec{v}(0)$ is an element of 
\begin{equation*}
\begin{cases}
               \vect\big\{ (\Lambda W,0)\big\}& \text{ if }N=D=3,\\
            \vect\big\{ (\Lambda W,0),(0,\Lambda W)\big\}&\text{ if }N=D\geq 5,\\
            \vect\left\{\left(\left(1+\frac{|x|^2}{N(N-2)}\right)^{-\frac N2},0\right),\left(0,\left(1+\frac{|x|^2}{N(N-2)}\right)^{-\frac N2}\right)\right\}&\text{ if }D=N+2.
              \end{cases}
\end{equation*}
\end{prop}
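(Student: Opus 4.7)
The plan is to establish Proposition \ref{P:T2} by combining exterior energy lower bounds for the \emph{free} radial wave equation in odd dimension $D$ (due to Kenig--Lawrie--Liu--Schlag, referenced in the paper as Theorem \ref{T:KLLS}) with a careful ODE analysis of the stationary problem associated with the potential $\tfrac{N+2}{N-2}W^{\frac{4}{N-2}}$ viewed as a radial operator on $\RR^D$. The main obstacle, as the authors themselves flag, will be to eliminate the spurious KLLS profiles produced at small scales, keeping only those that correspond to honest global $\hdot(\RR^D)$ stationary solutions.

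First, I would treat the potential term as a perturbation in the exterior region $\{|x|>R_0+|t|\}$. Since $W^{\frac{4}{N-2}}$ decays like $|x|^{-4}$, the norm $\big\|\indic_{\{|x|>R_0+|t|\}}W^{\frac{4}{N-2}}\big\|_{L^{\frac{2(N+1)}{N+4}}_tL^{\frac{2(N+1)}{3}}_x}$ is finite and small for $R_0$ large. Lemma \ref{L:linear_approx} together with the argument used in Corollary \ref{Co:radiation} then shows that in the exterior cone $v$ agrees with the free evolution of a data $(\tilde v_0,\tilde v_1)$ modulo an error controlled by $\|(v_0,v_1)\|_{\HHH(R_0)}$ times the potential norm. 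The vanishing condition \eqref{T3} transfers, after this perturbative correspondence, to the vanishing of the exterior asymptotic energy of the associated free wave in $\RR^D$.

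Next, I would invoke Theorem~\ref{T:KLLS}: in odd dimension $D$, the only radial data in $(\hdot\times L^2)(\RR^D)$ whose free evolution has vanishing exterior asymptotic energy on $\{|x|\geq |t|+R\}$ must, when restricted to $|x|>R$, coincide with an element of an explicit finite-dimensional subspace $\PPP(R)$ spanned by radial profiles of the form $(r^{-\alpha_k},0)$ and $(0,r^{-\beta_k})$, with $\alpha_k,\beta_k$ determined by $D$. Transferring back through Step~1 we conclude that outside $\{|x|>R_0\}$, $(v_0,v_1)$ must match such a profile. The strategy is to iterate this statement while letting $R_0\to 0$: at each scale the KLLS subspace grows, but the data $(v_0,v_1)$ is fixed once and for all in $\HHH$, so its only option is to lie in the intersection, over all $R>0$, of profiles that admit a global $\hdot(\RR^D)$ extension solving the full linearized equation at $t=0$. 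I would formalize this by showing that $(v_0,v_1)$ must in fact satisfy $v_0,v_1\in\ZZZ_D^{\mathrm{rad}}$, where
$$\ZZZ_D^{\mathrm{rad}}:=\left\{Z\in\hdot(\RR^D)\text{ radial}\;:\;-\Delta_D Z-\tfrac{N+2}{N-2}W^{\frac{4}{N-2}}Z=0\right\},$$
since otherwise the radiation fields $H_\pm$ supplied by Corollary~\ref{Co:radiation} would carry non-trivial mass at infinity in $r-|t|$, contradicting the KLLS structure at every scale.

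Finally, I would classify $\ZZZ_D^{\mathrm{rad}}$ by direct ODE analysis of
$$-\phi''-\frac{D-1}{r}\phi'-\frac{N+2}{N-2}W^{\frac{4}{N-2}}\phi=0.$$
This is a linear second-order ODE with two linearly independent solutions. They can be produced from the known zero modes of $L_W$ on $\RR^N$ via the substitution $v_k=r^{-\nu_k}u_k$ used to derive \eqref{T1}, which relates $D$ and the spherical-harmonic degree $\nu$ by $D=N+2\nu$. Tracking the asymptotic behavior at $r=0$ (one solution blows up like $r^{2-D}$) and at $r=\infty$ (decay rates shift with $D-N$), one checks that $\dot H^1(\RR^D)$ membership forces $\phi\equiv 0$ as soon as $D\geq N+4$, while for $D=N$ the only surviving solution is (up to a constant) $\Lambda W$, and for $D=N+2$ it is $(1+|x|^2/(N(N-2)))^{-N/2}$, matching $|x|^{-1}\partial_{x_j}W$ after undoing the change of variable. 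Combining this classification with $(v_0,v_1)\in\ZZZ_D^{\mathrm{rad}}\times\ZZZ_D^{\mathrm{rad}}$ and remembering that $\Lambda W\in L^2(\RR^N)$ only for $N\geq 5$ yields exactly the three cases in the statement. The hardest step will be the rigorous passage from the scale-dependent KLLS profiles to a global stationary solution; this is where all the ``redundant counterexamples'' must be ruled out, and I expect it to require either an inductive argument in the dimension of $\PPP(R)$ or a uniqueness argument based on the asymptotic-profile map $\Phi$ of Corollary~\ref{Co:radiation}.
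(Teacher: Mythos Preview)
Your overall architecture is right, but the key step---going from ``data matches KLLS profiles outside large $R_0$'' to ``data lies in $\ZZZ_D^{\mathrm{rad}}\times\ZZZ_D^{\mathrm{rad}}$ globally''---has a genuine gap, and the paper's route is different in two important respects.

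First, the paper does \emph{not} iterate $R_0\to 0$. Your perturbative comparison with the free flow requires the potential norm to be small on $\{|x|>R_0+|t|\}$, which fails for small $R_0$. Also, the KLLS subspace does not ``grow'' with scale: it is always spanned by the same profiles $r^{2k-D}$, merely restricted to $r>R$. So intersecting over $R$ gives nothing new. Instead, after establishing that the data matches a finite combination outside a \emph{fixed} large $R$ (Proposition~\ref{P:cpct}), the paper subtracts that combination to get a compactly supported remainder $(w_0,w_1)$ and proves that any such compactly supported solution with no exterior channels must vanish. This is Proposition~\ref{P:support}: near the boundary $\rho_0$ of $\supp(w_0,w_1)$, one approximates $w$ by the solution of the one-dimensional equation $(\partial_t^2-\partial_r^2)(r^{(D-1)/2}v_{\app})=0$, for which exterior energy is exactly conserved; the potential and the $(D-1)/r$ lower-order term are absorbed because their $L^{\frac{2(D+1)}{D+4}}_tL^{\frac{2(D+1)}{3}}_x$ norm on the thin strip $\{\rho_0-\eps+|t|<r<\rho_0+|t|\}$ is $O(\eps^{3/(2(D+1))})$. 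This propagation-of-support argument is the substitute for your $R_0\to 0$ limit, and your outline does not supply it.

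Second, your ODE step studies only $PZ=0$, but for $D\ge 7$ the KLLS subspace has dimension $\ent{(D+2)/4}+\ent{D/4}\ge 3$, so matching the free profiles $r^{2k-D}$ outside $R$ requires a whole tower $Z_1,\dots,Z_{\ent{(D+2)/4}}$ with $PZ_1=0$ and $PZ_k=Z_{k-1}$ for $k\ge 2$ (Proposition~\ref{P:Z1}). These are the objects one subtracts in Proposition~\ref{P:cpct}: the solution with data $(Z_k,0)$ is polynomial in $t$ with coefficients in the $Z_j$'s, hence has no exterior energy, and each $Z_k$ behaves like $c_k r^{2k-D}$ at infinity so that $\widetilde B(R)$ is transverse to $B(R)^\bot$ for large $R$. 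The final step then shows that any nontrivial combination of $Z_2,\dots,Z_k$ (or of $Z_1,\dots,Z_k$ when $D\ge N+4$) blows up like $r^{\theta-D}$ at $r=0$ with $0<\theta\le 2k$, hence fails Hardy's inequality; this is the content of \eqref{Z6c} and is proved by a variation-of-parameters induction, not merely by classifying $\ker P$. Your sketch jumps straight to $\ZZZ_D^{\mathrm{rad}}$ and would not rule out, e.g., the profile $r^{4-D}$ on $\{r>R\}$ when $D\ge 7$.
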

We divide the proof of Proposition \ref{P:T2} into 4 parts. In Subsection \ref{SS:ODE}, we study the differential equation satisfied by the stationary solutions of the equation \eqref{T1}. In Subsection \ref{SS:cmpct}, we prove that the initial data of a solution of \eqref{T1} satisfying the assumptions of Proposition \ref{P:T2} coincide for large $r$ with a stationary solution.  In Subsection \ref{SS:enlargement}, we use a bound from below of the energy for compactly supported solutions of \eqref{T1} to prove that the initial data actually coincide with a stationary solution for all $r>0$. In Subsection \ref{SS:conclusion} we combine the preceding results to conclude the proof.
\subsection{Preliminaries on a differential equation}
\label{SS:ODE}
In this subsection, we fix two odd integers $N$, $D$, with $3\leq N\leq D$ and study the differential operator:
$$ P=\frac{d^2}{dr^2}+\frac{D-1}{r}\frac{d}{dr} +\frac{N+2}{N-2}W^{\frac{4}{N-2}},$$
which was introduced in the previous subsection, by projecting the operator $L_W$ on spherical harmonics. As before, $W$ is the ground state \emph{in space dimension $N$}, defined in \eqref{defW}. 
Denoting by $\ent{x}$ the integer part of a real number $x$, we prove:
\begin{prop}
 \label{P:Z1}
Let $N$, $D$ and $P$ be as above. Then there exists functions $(Z_k)_{1\leq k\leq \ent{\frac{D+2}{4}}}$ with $Z_k\in C^{\infty}\Big((0,\infty)\Big)$, with the following properties:
\begin{gather}
 \label{Z1} \left|Z_1(r)-\frac{1}{r^{D-2}}\right|\lesssim \frac{1}{r^D}, \quad r\geq 1\\
 \label{Z2} \left|Z_1'(r)+\frac{D-2}{r^{D-1}}\right|\lesssim \frac{1}{r^{D+1}}, \quad r\geq 1\\
\label{Z3} PZ_1=0,
 \end{gather}
and, for $2\leq k\leq \ent{\frac{D+2}{4}}$, there exists $c_k\in \RR\setminus\{0\}$ such that
\begin{gather}
 \label{Z4} 
 \left|Z_k(r)-\frac{c_k}{r^{D-2k}}\right|\lesssim \frac{\log r}{r^{D+2-2k}},\quad r\geq 1\\
 \label{Z5}
 \left|Z_k'(r)+\frac{c_k(D-2k)}{r^{D-2k+1}}\right|\lesssim \frac{\log r}{r^{D+3-2k}},\quad r\geq 1\\
\label{Z6}
PZ_k=Z_{k-1}.
 \end{gather}
Furthermore, 
\begin{align}
\label{Z6a}
D=N &\Longrightarrow Z_1(r)=-\frac{2}{N^{\frac{N}{2}-1} (N-2)^{\frac N2}}\Lambda W,\\
\label{Z6b}
D=N+2&\Longrightarrow Z_1(r)=\frac{1}{\left(N(N-2)\right)^{\frac N2}}\left( 1+\frac{r^2}{N(N-2)} \right)^{-\frac N2},
\end{align}
Finally, let $k\in \NN$ with $1\leq k\leq \ent{\frac{D+2}{4}}$. Then if $k\geq 2$ and $Z\in \vect\{Z_2,\ldots,Z_k\}\setminus\{0\}$, or if $D\geq N+4$ and $Z\in \vect\{Z_1,\ldots,Z_k\}\setminus\{0\}$, then
\begin{equation}
\label{Z6c}
\exists d\in \RR\setminus\{0\},\; \exists \theta\in (0,2k]\cap 2\NN,\quad Z(r)\sim \frac{d}{r^{D-\theta}},\quad r\to 0.
\end{equation}
\end{prop}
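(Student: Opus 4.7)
My plan is to construct $Z_1$ as the (essentially unique) solution of the homogeneous ODE $PZ=0$ that decays at infinity, and then to build $Z_k$ inductively for $k\geq 2$ by solving $PZ_k=Z_{k-1}$ via variation of parameters. The main work then splits between verifying the asymptotic expansions at $r\to \infty$ (which is routine) and at $r\to 0$ (which is the delicate point).

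To construct $Z_1$, I would treat the two special cases first. When $D=N$, differentiating the ground-state equation $-\Delta W=W^{(N+2)/(N-2)}$ with respect to the scaling parameter at $\lambda=1$ shows that $\Lambda W$ solves $L_W Z=0$, and after computing the explicit coefficient of its $r^{-(N-2)}$ behavior at infinity one obtains \eqref{Z6a} together with \eqref{Z1}--\eqref{Z2}. When $D=N+2$, the explicit function on the right-hand side of \eqref{Z6b} can be checked by direct substitution (it corresponds to the radial reduction $\partial_{x_j}W/r$ of a degree-one spherical harmonic, as indicated just before the statement). For general $D$, I would regard $PZ=0$ as a perturbation at infinity of the Euler equation $Z''+(D-1)r^{-1}Z'=0$, whose indicial roots at infinity are $0$ and $-(D-2)$. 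Since $V(r):=\tfrac{N+2}{N-2}W^{4/(N-2)}(r)=O(r^{-4})$, a standard contraction argument on the integral formulation of $PZ=0$ produces a solution with $Z_1(r)\sim r^{-(D-2)}$, and iterating the asymptotic expansion yields the refined estimates \eqref{Z1}--\eqref{Z2}.

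For the induction step, let $\widetilde{Z}$ be a second solution of $PZ=0$ chosen so that $\widetilde{Z}(r)\to 1$ as $r\to\infty$. The structure of $P$ gives $(r^{D-1}\mathcal{W})'=0$, hence the Wronskian satisfies $\mathcal{W}(r)=c_0 r^{-(D-1)}$ for some $c_0\neq 0$, and a particular solution of $PZ_k=Z_{k-1}$ is given by variation of parameters:
\begin{equation*}
Z_k(r)=-\tfrac{1}{c_0}Z_1(r)\int_0^r\widetilde{Z}(s)Z_{k-1}(s)s^{D-1}\,ds+\tfrac{1}{c_0}\widetilde{Z}(r)\int_r^{+\infty}Z_1(s)Z_{k-1}(s)s^{D-1}\,ds,
\end{equation*}
the reference points being chosen so that each integral converges and $Z_k$ has the expected asymptotic at infinity. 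Since $Z_1(s)Z_{k-1}(s)s^{D-1}\sim s^{2k-D-1}$ at infinity, convergence of the second integral requires $k\leq \ent{(D+2)/4}$, which is precisely the range in the statement; the borderline case is where the logarithmic correction in \eqref{Z4}--\eqref{Z5} appears. Substituting the asymptotic expansions of $Z_1$, $\widetilde{Z}$, $Z_{k-1}$ into the formula yields \eqref{Z4}--\eqref{Z5} together with an explicit expression for the nonzero constant $c_k$.

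The claim \eqref{Z6c} about the behavior at $r=0$ is where the main difficulty lies. Near $r=0$ the indicial roots of $PZ=0$ are again $0$ and $-(D-2)$, so every local solution is a combination of a bounded branch and an $r^{-(D-2)}$ branch. For $D\in\{N,N+2\}$ the explicit formulas \eqref{Z6a}--\eqref{Z6b} show that $Z_1$ lies in the bounded branch, which explains why $Z_1$ is excluded from \eqref{Z6c} in these cases. For $D\geq N+4$, $Z_1$ cannot be simultaneously bounded at $0$ and decaying at infinity: otherwise it would define a radial element of the kernel of $L_W$ in $\hdot(\RR^D)$, which contradicts the non-degeneracy of $W$ in the higher spherical-harmonic sectors (where the only elements of $\ZZZ$ come from scaling and translation, corresponding to $D\in\{N,N+2\}$). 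Thus $Z_1(r)\sim d_1 r^{-(D-2)}$ at $0$ with $d_1\neq 0$, giving $\theta=2$. For $k\geq 2$, applying the variation-of-parameters expression and the induction hypothesis on $Z_{k-1}$ near $0$, one expands each integral and reads off that $Z_k(r)\sim d_k r^{-(D-2k)}$ with $d_k\neq 0$. The distinct leading orders $r^{-(D-2j)}$ of the $Z_j$ then imply that any nonzero linear combination has a well-defined leading order $r^{-(D-2j_0)}$ with $j_0$ the smallest index carrying a nonzero coefficient, which gives \eqref{Z6c}. The principal obstacle in this step is precisely the non-cancellation of the leading coefficient at each induction level, which amounts to verifying that the variation-of-parameters integrals against $Z_{k-1}$ never vanish accidentally — this requires a careful inductive tracking of the leading coefficients through the integrals.
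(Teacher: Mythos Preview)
Your construction of the $Z_k$ and the verification of the asymptotics \eqref{Z1}--\eqref{Z6} at infinity follow the paper's approach closely, and your argument that $Z_1$ must be singular at $r=0$ when $D\geq N+4$ (via non-degeneracy of $W$) is the same as the paper's.

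The genuine gap is in your treatment of \eqref{Z6c}. You assert that the variation-of-parameters formula, together with the induction hypothesis on $Z_{k-1}$, yields $Z_k(r)\sim d_k\,r^{-(D-2k)}$ at the origin, and then conclude via ``distinct leading orders''. This claim is not correct. Writing $Z_k=\gamma_k y_0+\delta_k z_0$ in the basis of Lemma~\ref{L:Z1} near $r=0$, the coefficient $\delta_k(r)$ has a finite limit $\delta_k(0)$ whose value is not determined by the asymptotics at infinity. If $\delta_k(0)\neq 0$ then $Z_k(r)\sim \delta_k(0)\,r^{-(D-2)}$, \emph{not} $r^{-(D-2k)}$; only if $\delta_k(0)=0$ do you pick up two extra powers of $r$. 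Thus different $Z_j$ can share the same leading order at $0$, and your linear-combination argument collapses.

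The paper avoids this by running the induction on the whole subspace $A_k$ rather than on the individual $Z_k$: given a nonzero $Z\in A_k\setminus A_{k-1}$, one sets $Y=PZ\in\vect\{Z_1,\dots,Z_{k-1}\}\setminus\{0\}$, applies the induction hypothesis to $Y$, and then distinguishes the two cases $\delta(0)\neq 0$ (giving $\theta=2$) and $\delta(0)=0$ (giving $\theta+2$). Crucially, the base case when $D\in\{N,N+2\}$ (where $Z_1$ is bounded at $0$, so one must show $Z_2$ is singular) requires a separate argument that you are missing entirely: if $\delta_2(0)=0$ then $Z_2$ and $Z_2'$ are bounded near $0$, and integrating by parts one obtains
\[
\int_0^\infty Z_1^2\,r^{D-1}\,dr=\int_0^\infty (PZ_2)\,Z_1\,r^{D-1}\,dr=\int_0^\infty Z_2\,(PZ_1)\,r^{D-1}\,dr=0,
\]
a contradiction. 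This self-adjointness trick is the key idea you would need to repair the proof.
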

For the proof of the Proposition, we need the following two Lemmas concerning the homogeneous and inhomogeneous ODE's
\begin{equation}
 \label{H}
 Py=0
\end{equation} 
and
\begin{equation}
 \label{In}
 Py=f
\end{equation} 
on $(0,\infty)$.
\begin{lemma}
\label{L:Z1}
There exist two basis of solutions of \eqref{H}, $(y_0,z_0)$, $(y_{\infty},z_{\infty})$, where $y_0,z_0,y_{\infty}, z_{\infty}$ are smooth on $(0,\infty)$, and satisfy:
\begin{alignat}{2}
 \label{Z7}
 \left|y_0(r)-1\right|&\lesssim r^2,&\quad |y_0'(r)|&\lesssim r,\\
 \label{Z8}
 \left|z_0(r)-\frac{1}{r^{D-2}}\right|&\lesssim 1+\frac{1}{r^{D-4}},&\quad |z_0'(r)|&\lesssim \frac{1}{r^{D-1}},
\end{alignat}
for $0<r\leq 1$
and
 \begin{alignat}{2}
 \label{Z9} 
 \left|y_{\infty}(r)-1\right|&\lesssim \frac{1}{r^2}+\frac{1}{r^{D-2}},&\quad |y'_{\infty}(r)|&\lesssim \frac{1}{r^3}+\frac{1}{r^{D-1}},\\
 \label{Z10} 
 \left|z_{\infty}(r)-\frac{1}{r^{D-2}}\right|&\lesssim \frac{1}{r^{D}},&\quad \left|z'_{\infty}(r)+\frac{D-2}{r^{D-1}}\right|&\lesssim \frac{1}{r^{D+1}},
 \end{alignat}
for $r\geq 1$.
 \end{lemma}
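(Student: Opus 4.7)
The plan is to construct each of $y_0, z_0, y_\infty, z_\infty$ as a fixed point of a variation-of-parameters integral equation, viewing $Py = 0$ as $L_0 y = -Vy$ for $L_0 = \partial_r^2 + \frac{D-1}{r}\partial_r$ with explicit kernel $\vect\{1, r^{-(D-2)}\}$, and treating $V := \frac{N+2}{N-2}W^{4/(N-2)}$ as a smooth potential with $V(r) \lesssim (1+r)^{-4}$. The Wronskian of the two free solutions is $W_0(r) = -(D-2) r^{-(D-1)}$, so any particular solution of $L_0 u = f$ can be written as
\begin{equation*}
u_p(r) = \frac{1}{D-2}\int sf(s)\,ds - \frac{r^{-(D-2)}}{D-2}\int s^{D-1}f(s)\,ds,
\end{equation*}
with endpoints to be chosen so that the integrals converge and produce the required asymptotic rate.

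I would construct $y_0$ by setting $f = -Vy_0$ and integrating both pieces from $0$ to $r$; since $y_0 \approx 1$, the integrands are $O(s)$ and $O(s^{D-1})$ respectively, and a Picard iteration on $\{y \in C([0, r_0]) : |y - 1| \leq C r^2\}$ converges for small $r_0$, yielding $y_0$ with $|y_0(r) - 1| \lesssim r^2$ and $|y_0'(r)| \lesssim r$. For $z_0 \approx r^{-(D-2)}$, the integrand $sV(s)z_0(s) \sim s^{3-D}$ fails to be integrable at $0$ when $D \geq 5$, so I would integrate the first piece from $r$ to $1$ and the second piece from $0$ to $r$; both contributions are then $O(1 + r^{-(D-4)})$, giving \eqref{Z8}. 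The analogous construction near infinity produces $z_\infty$ by integrating both pieces from $r$ to $\infty$, the integrands being $O(s^{-(D+1)})$ and $O(s^{-3})$, hence globally integrable; the contraction gives the sharper bound $|z_\infty(r) - r^{-(D-2)}| \lesssim r^{-D}$ of \eqref{Z10}.

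The construction of $y_\infty \approx 1$ is the main delicate point: the integrand $s^{D-1} V(s) y_\infty(s) \sim s^{D-5}$ is not integrable at infinity when $D \geq 5$, so neither naive splitting works. The fix I propose is to fix $R_0$ large and integrate the first piece from $r$ to $\infty$ (integrand $\sim s^{-3}$, contributing $O(r^{-2})$) but the second piece from $R_0$ to $r$, so that
\begin{equation*}
\frac{r^{-(D-2)}}{D-2}\int_{R_0}^r s^{D-1}V(s) y_\infty(s)\,ds = O\bigl(r^{-(D-2)}(r^{D-4} - R_0^{D-4})\bigr) = O(r^{-2} + r^{-(D-2)})
\end{equation*}
for $D \geq 5$, and $O(r^{-1})$ for $D = 3$, matching \eqref{Z9}. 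A contraction on $\{y \in C([R_1, \infty)) : |y - 1| \leq C(r^{-2} + r^{-(D-2)})\}$ for $R_1 \geq R_0$ large then produces $y_\infty$. Each of the four local solutions extends uniquely to a solution of $Py = 0$ on all of $(0, \infty)$ by standard ODE existence/uniqueness theory, and the derivative bounds in \eqref{Z7}--\eqref{Z10} follow by differentiating the integral equations and reusing the same estimates. The hardest step I expect is the asymmetric endpoint choice in the construction of $y_\infty$ and verifying that the corresponding weighted contraction closes up in the norm dictated by the target asymptotic $r^{-2} + r^{-(D-2)}$.
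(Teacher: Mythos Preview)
Your approach is correct, but it differs from the paper's in how $z_0$ and $y_\infty$ are obtained. For $y_0$ and $z_\infty$ you and the paper do essentially the same thing: a Picard iteration on the double-integral reformulation of $Py=0$, run in $C^0([0,\eps])$ near the origin and in the weighted space $\sup_{s\ge A}s^{D-2}|y(s)|$ near infinity. The difference is that the paper does \emph{not} set up a separate contraction for $z_0$ or $y_\infty$; instead it uses reduction of order. Given $y_0$, it sets $z_0=a_0 y_0$ with
\[
a_0(r)=-(D-2)\int_\eps^r \frac{ds}{s^{D-1}y_0(s)^2},
\]
and given $z_\infty$, it sets $y_\infty=b_\infty z_\infty$ with
\[
b_\infty(r)=(D-2)\int_A^r \frac{ds}{s^{D-1}z_\infty(s)^2}.
\]
The estimates \eqref{Z8} and \eqref{Z9} then follow from straightforward expansions of these integrals using the already-established asymptotics of $y_0$ and $z_\infty$. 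This entirely bypasses what you flagged as the hardest step---the asymmetric endpoint choice and weighted contraction for $y_\infty$---since no second fixed point is needed. Your route is more uniform (one mechanism for all four solutions) and produces each solution independently, whereas the paper's is shorter for the second solution in each pair and avoids having to verify that the mixed-endpoint Volterra map closes in the weighted norm.
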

\begin{lemma}
\label{L:Z2}
 Let $i\in\{1,\infty\}$ and $y_i$, $z_i$ be as in Lemma \ref{L:Z1}. Let $w_i$ be the Wronskian $y_iz_i'-z_iy_i'$. Then $w_i=\frac{1}{\omega_i r^{D-1}}$ for some constant $\omega_i\neq 0$. Furthermore, if $f\in C^0\big((0,\infty)\big)$, the general solution $y$ of \eqref{In} is given by 
 $$y(r)=\alpha(r)y_i(r)+\beta(r)z_i(r),\quad y'(r)=\alpha(r)y_i'(r)+\beta(r)z_i'(r),$$
 where 
 $$\alpha'(r)=-\omega_i f(r) r^{D-1}z_i(r), \quad \beta'(r)=\omega_if(r)r^{D-1}y_i(r).$$
\end{lemma}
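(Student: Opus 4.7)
My plan is to prove Lemma \ref{L:Z2} in two independent steps: pin down the form of the Wronskian $w_i$, and then derive the variation of parameters formulas.

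For the Wronskian, I would recast $Py=0$ in Sturm--Liouville form as
\[
(r^{D-1}y')'+r^{D-1}\frac{N+2}{N-2}W^{\frac{4}{N-2}}\,y=0.
\]
Abel's identity (i.e. direct differentiation) then gives $(r^{D-1}w_i)'=0$ on $(0,\infty)$, so $w_i(r)=C_i/r^{D-1}$ for some constant $C_i$. Since Lemma \ref{L:Z1} asserts that $(y_i,z_i)$ is a basis of the (two-dimensional) solution space of \eqref{H}, the Wronskian cannot vanish identically, hence $C_i\neq 0$. For concreteness one can verify this from the asymptotics: for $i=\infty$, \eqref{Z9}--\eqref{Z10} give $y_\infty(r)\to 1$ and $z_\infty'(r)=-(D-2)r^{-(D-1)}+O(r^{-(D+1)})$, while $z_\infty(r)y_\infty'(r)=O(r^{-(D-2)})\cdot O(r^{-3}+r^{-(D-1)})=o(r^{-(D-1)})$ as $r\to\infty$, so $r^{D-1}w_\infty(r)\to -(D-2)\neq 0$; for $i=0$ the explicit leading term $z_0(r)\sim r^{-(D-2)}$ together with $y_0(r)\to 1$ at the origin already precludes proportionality with $y_0$. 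Either way, set $\omega_i:=1/C_i\neq 0$.

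For the variation of parameters, I look for a solution of \eqref{In} of the form $y(r)=\alpha(r)y_i(r)+\beta(r)z_i(r)$, imposing the standard auxiliary condition
\[
\alpha'(r)\,y_i(r)+\beta'(r)\,z_i(r)=0,
\]
which is exactly what is required for $y'=\alpha y_i'+\beta z_i'$ to hold as stated. Differentiating once more and using $Py_i=Pz_i=0$, the equation $Py=f$ reduces to $\alpha' y_i'+\beta' z_i'=f$. The linear system
\[
\begin{pmatrix} y_i & z_i \\ y_i' & z_i' \end{pmatrix}\begin{pmatrix} \alpha' \\ \beta'\end{pmatrix}=\begin{pmatrix} 0 \\ f \end{pmatrix}
\]
has determinant $w_i=1/(\omega_i r^{D-1})$, so Cramer's rule yields exactly $\alpha'(r)=-f(r)z_i(r)/w_i(r)=-\omega_i f(r)r^{D-1}z_i(r)$ and $\beta'(r)=f(r)y_i(r)/w_i(r)=\omega_i f(r)r^{D-1}y_i(r)$. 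Conversely, any solution of \eqref{In} can be written in this form since $(y_i,z_i)$ is a basis of the homogeneous solution space: pick $\alpha,\beta$ to match $y(r_0),y'(r_0)$ at any base point $r_0\in(0,\infty)$, then both sides are solutions of \eqref{In} with identical Cauchy data at $r_0$.

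The one place requiring a moment of thought is the nonvanishing of $C_i$, but as noted this is built into Lemma \ref{L:Z1} via the claim that $(y_i,z_i)$ is a basis. Everything else is routine first-year ODE bookkeeping.
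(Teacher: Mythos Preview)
Your proof is correct and is precisely the standard variation-of-parameters argument that the paper invokes but omits; there is nothing to add.
Note that the paper explicitly writes ``Lemma \ref{L:Z2} follows from standard variation of parameters and we omit the proof,'' so your write-up fills in exactly the routine details the authors chose to leave out.
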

Lemma \ref{L:Z2} follows from standard variation of parameters and we omit the proof. Lemma \ref{L:Z1} is also very classical. We give a sketch of proof for the sake of completeness.
\begin{proof}[Sketch of proof of Lemma \ref{L:Z1}]
 Writing the equation \eqref{H} 
 $$\frac{d}{dr} \left(r^{D-1}\frac{d y}{dr}\right)=-r^{D-1}\frac{N+2}{N-2}W^{\frac{4}{N-2}}y,$$
 and integrating twice, we see that to construct $y_0$, it is sufficient to find a solution of the equation
 \begin{equation}
  \label{Z11}
  y_0=1-\Phi_0(y_0),\quad \Phi_0(y)(r):= \int_0^{r}\int_0^{\rho} \left( \frac{s}{\rho} \right)^{D-1}\frac{N+2}{N-2}W^{\frac{4}{N-2}}(s)y_0(s)\,ds\,d\rho.
 \end{equation} 
 This can be done by checking that if $\eps>0$ is small, $\Phi_0$ is a contraction mapping on $C^0\left([0,\eps],\|\cdot\|_{\infty}\right)$, and then extending the solution to $(0,\infty)$ by the linear Cauchy-Lipschitz theorem. The bounds \eqref{Z7} follow easily from \eqref{Z11}.
 
 We next fix a small $\eps>0$ (such that $y_0>0$ on $(0,\eps)$), and let, for $0<r<\eps$
 $$ z_0(r)=a_0(r)y_0(r),\quad a_0=-(D-2)\int_{\eps}^r \frac{1}{s^{D-1}y_0^2(s)}ds.$$
 Noting that 
 $$a_0''+a_0'\left( \frac{2y_0'}{y_0}+\frac{D-1}{r} \right)=0,$$
 we see that $Pz_0=0$. The estimates \eqref{Z8} are easy to check.
 
 Using again the Banach fixed point theorem, we define $z_{\infty}$ as the unique solution of 
 \begin{equation}
  \label{Z12}
  z_{\infty}(r)=\frac{1}{r^{D-2}} - \Phi_{\infty}(z_{\infty})(r),
   \end{equation} 
  where
  \begin{equation*}
  \Phi_{\infty}(z)(r):=\int_{r}^{+\infty}\int_{\rho}^{+\infty} \left( \frac{s}{\rho} \right)^{D-1}\frac{N+2}{N-2}W^{\frac{4}{N-2}}(s)z_{\infty}(s)\,ds\,d\rho,
 \end{equation*}
 on 
 $$X_{A}= \left\{ y\in C^0\left( [A,+\infty) \right),\;\|y\|_{X_{A}}:=\sup_{s\geq A}|y(s)|s^{D-2}<\infty\right\},$$
 where $A$ is a large constant. Using that $W(r)^{\frac{4}{N-2}}\lesssim 1/r^4$, it is easy to check that $\Phi_{\infty}$ is a contraction mapping on $X_A$. The estimates \eqref{Z10} follow from \eqref{Z12} and the fact that $z_{\infty}\in X_A$.

Finally, we let $y_{\infty}(r)=b_{\infty}(r)z_{\infty}(r)$, where 
$$ b_{\infty}(r)=(D-2)\int_A^r \frac{1}{s^{D-1}z_{\infty}^2(s)}\,ds.$$
The equation $Py_{\infty}=0$ and the estimates \eqref{Z9} are again easy to check.
 \end{proof}
We next prove Proposition \ref{P:Z1}.
\begin{proof}
\setcounter{step}{0}
 \begin{step}[Existence and estimates for large $r$]
 We let $Z_1(r)=z_{\infty}(r)$, and see that \eqref{Z1}, \eqref{Z2} and \eqref{Z3} are satisfied. If $D\in \{3,5\}$, $2$ is larger than $\ent{\frac{D+2}{4}}$ and we are done. If $D\geq 7$, we construct $Z_k$ for $2\leq k\leq \ent{\frac{D+2}{4}}$ satisfying \eqref{Z4}, \eqref{Z5} and \eqref{Z6} by induction. 
 
 Let $k\in \NN$ such that $2\leq k\leq \ent{\frac{D+2}{4}}$. Assume that $Z_{k-1}$ is known and satisfies the desired estimates. According to Lemma \ref{L:Z2}, the formula
 $$Z_k(r)=\alpha_k(r)y_{\infty}(r)+\beta_k(r)z_{\infty}(r),$$
 where 
 \begin{equation}
  \label{Z14}
  \left\{
  \begin{aligned}
\alpha_k(r) &= \omega_{\infty}\int_r^{\infty} z_{\infty}(s) Z_{k-1}(s)s^{D-1}\,ds\\
\beta_k(r)&=\omega_{\infty}\int_1^ry_{\infty}(s)Z_{k-1}(s)s^{D-1}\,ds
  \end{aligned}
\right.
 \end{equation} 
 yields a solution of \eqref{Z6}.
 It is easy to check, using the asymptotic behaviors of $Z_{k-1}(s)$ (known by the induction hypothesis) and of $z_{\infty}(s)$, that $\alpha_k$ is well defined. It remains to check that
 \eqref{Z4} and \eqref{Z5} are satisfied.
 
We start with the case $k=2$. In view of \eqref{Z14}, 
using the asymptotic behaviour of $z_{\infty}=Z_1$ for large $r$ (see \eqref{Z10}), we have
$$ \alpha_2(r)=\omega_{\infty}\int_r^{\infty}\left(s^{3-D}+O\left( s^{1-D} \right) \right)\,ds=\frac{\omega_{\infty}}{(D-4)r^{D-4}}+O\left( \frac{1}{r^{D-2}} \right),\; r\geq 1,$$
and similarly, using also the asymptotic behaviour \eqref{Z9} of $y_{\infty}$, $\beta_2(r)=\omega_{\infty}\frac{r^2}{2}+O(\log(r))$, $r\geq 1$. As a consequence,
$$ Z_2(r)=\left( \frac{1}{D-4}+\frac 12 \right)\frac{\omega_{\infty}}{r^{D-4}}+O\left( \frac{\log r}{r^{D-2}} \right).$$
This yields \eqref{Z4} for $k=2$, with $c_2=\left( \frac{1}{D-4}+\frac 12 \right)\omega_{\infty}$.  Furthermore,
\begin{multline*}
Z_2'(r)=\alpha_2(r)y_{\infty}'(r)+\beta_2(r)z_{\infty}'(r)\\
=-\frac{(D-2)\omega_{\infty} }{2 r^{D-3}}+O\left( \frac{\log r}{r^{D-1}} \right)=
-\frac{(D-4)c_2}{r^{D-3}}+O\left( \frac{\log r}{r^{D-1}} \right). 
\end{multline*}

We next treat the case $k\geq 3$. By \eqref{Z10}, \eqref{Z14} and the induction hypothesis, for $r\geq 1$:
\begin{equation*}
\alpha_k(r)
=\frac{c_{k-1}\omega_{\infty}}{(D-2k)r^{D-2k}} + O\left(\frac{\log r}{r^{D-2k+2}}  \right).
\end{equation*}
Using also \eqref{Z9}, we obtain similarly, for $r\geq 1$,
\begin{equation*}
 \beta_k(r)=c_{k-1}\omega_{\infty} \frac{r^{2k-2}}{2k-2}+O\left( r^{2k-4}\log r \right).
\end{equation*} 
Combining the above estimates with the asymptotic formulas \eqref{Z9} and \eqref{Z10} for $y_{\infty}$ and $z_{\infty}$, we obtain, for $r\geq 1$
\begin{equation*}
 Z_k(r)=\left( \frac{1}{D-2k}+\frac{1}{2k-2} \right)\frac{c_{k-1}\omega_{\infty}}{r^{D-2k}}+O\left( \frac{\log r}{r^{D-2k+2}} \right).
\end{equation*}
This yields \eqref{Z4} with $c_k=c_{k-1}\left( \frac{1}{D-2k}+\frac{1}{2k-2} \right)\omega_{\infty}$.
Also:
\begin{equation*}
 Z_k'(r)=a_k(r)y'_{\infty}(r)+\beta_k(r)z_{\infty}'(r)=-\frac{c_{k-1}\omega_{\infty}(D-2)}{(2k-2)r^{D-2k+1}}+O\left( \frac{\log r}{r^{D-2k+3}} \right).
\end{equation*}
Since 
$(D-2k)c_k=\frac{c_{k-1}\omega_{\infty}(D-2)}{2k-2}$, this proves \eqref{Z5}.
\end{step}
\begin{step}[Asymptotic behavior at the origin] 
We note that by Lemma \ref{L:Z1}, $Z_1=z_{\infty}$ is the unique solution of $PZ_1=0$ satisfying \eqref{Z1}. 

In the case $N=D$, it follows from this uniqueness property, the equation $P \Lambda W=L_W(\Lambda W)=0$ and the asymptotic behavior of $\Lambda W(r)$ as $r\to\infty$ that $Z_1=-\frac{2}{N^{\frac{N}{2}-1}(N-2)^{\frac{N}{2}}}\Lambda W$, i.e. \eqref{Z6a} holds.

In the case $N=D+2$, the computation before Proposition \ref{P:T2} implies, since $L_W(\partial_{x_j}W)=0$,
$$ P\left( \left( 1+\frac{r^2}{N(N-2)} \right)^{-\frac N2}\right)=0,$$
which yields \eqref{Z6b}.

We next prove \eqref{Z6c} distinguishing between three cases.

We start by proving \eqref{Z6c} when $Z=Z_1$ and $D\geq N+4$. By Lemma \ref{L:Z1}, $Z_1=\gamma_1 y_0+\delta_1 z_0$ for some constants $\gamma_1,\delta_1\in \RR$. We claim that $\delta_1\neq 0$. 
Letting $\nu=\frac{D-N}{2}$ and $\Phi(\theta)$ a spherical harmonic of degree $\nu$, we see that 
$$L_W\left( r^{\nu}\Phi(\theta)Z_1 \right)=0.$$
If $\delta_1=0$, the estimates \eqref{Z7} on $y_0$ and \eqref{Z1}, \eqref{Z2} on $Z_1$ imply $r^{\nu}\Phi(\theta)Z_1(r) \in \hdot(\RR^N)$. This is impossible since
$$\{Z\in \hdot(\RR^N)\;:\; L_WZ=0\}=\vect\left\{ \Lambda W,\partial_{x_1}W,\ldots,\partial_{x_N}W\right\}.$$
Thus $\delta_1 \neq 0$ and $Z_1\sim \frac{\delta_1}{r^{D-2}}$ as $r\to 0$.

We next prove \eqref{Z6c} when $Z=Z_2$, $D=N$ or $D=N+2$.  According to Lemma \ref{L:Z2}, 
$$Z_2(r)=\gamma_2(r)y_0(r)+\delta_2(r)z_0(r),\quad Z_2'(r)=\gamma_2(r)y_0'(r)+\delta_2(r)z_0'(r)$$
where 
$$\gamma_2'(r)=-\omega_0Z_1(r)z_0(r)r^{D-1},\quad \delta_2'(r)=\omega_0 Z_1(r)y_0(r)r^{D-1},\quad r>0.$$
Since $Z_1(r)$ is, by \eqref{Z6a} or \eqref{Z6b}, bounded in the neighborhood of $0$, we see that $\gamma_2'$ and $\delta_2'$ are integrable close to $r=0$, and thus that $\gamma_2(r)$ and $\delta_2(r)$ have limits $\gamma_2(0)$ and $\delta_2(0)$ as $r\to 0$. We next prove $\delta_2(0)\neq 0$. We argue by contradiction. If $\delta_2(0)=0$, then 
$$\delta_2(r)=\omega_0\int_0^r Z_1(s)y_0(s)s^{D-1}\,ds=O\left( r^{D-1} \right)\text{ as }r\to 0.$$
As a consequence, $Z_2'$ and $Z_2$ are bounded close to $r=0$.
Since $Z_2(r)\sim \frac{1}{r^{D-4}}$, $Z_2'(r)\sim \frac{4-D}{r^{D-3}}$ as $r\to \infty$ and $\frac{D+2}{4}>2$ (i.e. $D\geq 7$), we can integrate by parts:
\begin{multline*}
\int_0^{+\infty} (Z_1(r))^2r^{D-1}dr=\int_0^{+\infty} PZ_2(r)Z_1(r)r^{D-1}dr\\
=\int_0^{+\infty} Z_2(r)PZ_1(r)r^{D-1}dr=0. 
\end{multline*}
Thus $Z_1(r)=0$ a.e., a contradiction. Thus $\delta_2(0)\neq 0$, which proves
$$Z_2(r)\sim \frac{\delta_2(0)}{r^{D-2}},\quad r\to 0,$$
yielding \eqref{Z6c} with $Z=Z_2$.

Let 
\begin{equation*}
A_k:=
\begin{cases}
  \vect\{Z_2,\ldots,Z_k\} & \text{ if }D\in \{N,N+2\}\\
  \vect\{Z_1,Z_2,\ldots,Z_k\}&\text{ if }D\geq N+4.
 \end{cases}
\end{equation*}

To conclude the proof of Proposition \ref{P:Z1}, we show that if $2\leq k\leq \ent{\frac{D+2}{4}}$, or if $k=1$ and $D\geq N+4$, then for all $Z\in A_k\setminus\{0\}$, there exists $d\in \RR\setminus \{0\}$, $0<\theta\leq 2k$ ($\theta\in 2\NN$) such that 
\begin{equation}
 \label{Z15}
 Z(r)\sim \frac{d}{r^{D-\theta}},\quad r\to 0.
\end{equation} 
We argue by induction on $k$. The cases ($k=2$, $D\in \{N,N+2\}$) and ($k=1$, $D\geq N+4$) were treated above. Fixing $k\geq 3$ (if $D\in \{N,N+2\}$) or $k\geq 2$ (if $D\geq N+4$), we assume that for all $Z\in A_{k-1}\setminus\{0\}$, there exists $d\in \RR\setminus \{0\}$, $\theta\in 2\NN$ such that $0<\theta\leq 2(k-1)$ and
\begin{equation}
 \label{Z16}
 Z(r)\sim \frac{d}{r^{D-\theta}},\quad r\to 0.
\end{equation} 
Let $Z\in A_k\setminus\{0\}$. If $Z\in A_{k-1}$, then we can use the induction hypothesis on $Z$ and we are done. If not, we let $Y=PZ$. Then $Y\in \vect\{Z_1,\ldots, Z_{k-1}\}\setminus\{0\}$. By the induction hypothesis:
\begin{equation}
\label{Ycase1}
\exists d\neq 0,\; \exists \theta\in (0,2(k-1)]\cap 2\NN,\quad Y(r)\sim \frac{d}{r^{D-\theta}},\quad r\to 0 
\end{equation} 
By Lemma \ref{L:Z2}, we have
$$Z(r)=\gamma(r)y_0(r)+\delta(r)z_0(r),$$
where
\begin{align*}
\gamma'(r)&=-\omega_0Y(r)z_0(r)r^{D-1}\sim -\frac{\omega_0 d}{r^{D-1-\theta}},\quad r\to 0\\
\delta'(r)&=\omega_0 Y(r)y_0(r)r^{D-1}\sim \frac{\omega_0 d}{r^{1-\theta}},\quad r\to 0.
\end{align*}
Since $0<\theta\leq 2k-2< \frac D2-1$, we see that
$$\gamma(r)\sim \omega_0 d\int_r^1 \frac{1}{\sigma^{D-1-\theta}}\,d\sigma\sim \frac{\omega_0 d}{(D-\theta-2)r^{D-\theta-2}}$$
and thus
$$\gamma(r)y_0(r)\sim \frac{\omega_0d}{(D-\theta-2)r^{D-\theta-2}},\quad r\to 0.$$
Furthermore, $\delta'$ is integrable at the origin, and thus $\delta(r)$ has a limit $\delta(0)$ as $r\to 0$. We distinguish between two cases. \setcounter{cas}{0}
\begin{cas}[$\delta(0)\neq 0$] Since $\theta>0$, we see that $\gamma y_0=o\left( \delta z_0 \right)$ as $r\to 0$. Thus
$$ Z(r)\sim \delta(r)z_0(r)\sim \frac{\delta(0)}{r^{D-2}},\quad r\to 0,$$
and \eqref{Z15} follows with $\theta=2$, $d=\delta(0)$.
\end{cas}
\begin{cas}[$\delta(0)=0$] Writing $\delta(r)=\int_0^{r}\delta'(s)\,ds$, we obtain
$$\delta(r)z_0(r)\sim \omega_0 d \frac{1}{\theta r^{D-2-\theta}}.$$
Combining with the estimate on $\gamma(r)$, we deduce
$$ Z(r)\sim \omega_0d\left( \frac{1}{\theta}+\frac{1}{D-\theta-2} \right)\frac{1}{r^{D-2-\theta}},$$
which yields again \eqref{Z15}, with $\theta$ replaced by $\theta+2\in 2\NN \cap (0,2k]$.
\end{cas}
\end{step}
\end{proof}
\subsection{Compact support of the initial data}
\label{SS:cmpct}
In this subsection, we prove:
\begin{prop}
 \label{P:cpct}
Let $N$ and $D$ be two odd integers such that $3\leq N\leq D$. Let  $v$ be a (radial) solution of \eqref{T1}, that satisfies \eqref{cont_v} and \eqref{T3}. Then there exist real numbers $\zeta_1,\ldots,\zeta_{\ent{\frac{D+2}{4}}},\eta_1,\ldots,\eta_{\ent{\frac{D}{4}}}$ such that the essential support of 
\begin{equation}
\label{D4} 
\vec{v}(0)-\sum_{k=1}^{\ent{\frac{D+2}{4}}} \zeta_k (Z_k,0)-\sum_{k=1}^{\ent{\frac{D}{4}}}\eta_k(0,Z_k)
\end{equation} 
is compact. 
 \end{prop}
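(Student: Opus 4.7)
My approach combines the exterior-energy lower bound of \cite{KeLaLiSc15} (Theorem \ref{T:KLLS}) for the radial free wave equation in odd dimension $D$ with a perturbative analysis on the far region $\{r>R_0+|t|\}$, where the potential $\tfrac{N+2}{N-2}W^{4/(N-2)}$ becomes small.

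Since $W^{4/(N-2)}(r)\lesssim r^{-4}$, one checks that $\|\indic_{\{r>R_0+|t|\}}W^{4/(N-2)}\|_{L^{2(N+1)/(N+4)}_tL^{2(N+1)/3}_x}$ tends to $0$ as $R_0\to\infty$. By Lemma \ref{L:linear_approx}, if $u_F$ denotes the free radial wave in dimension $D$ with initial data $(\chi_{R_0}v_0,\chi_{R_0}v_1)$ for a smooth cut-off $\chi_{R_0}$ supported on $\{r>R_0\}$, then on $\{r>R_0+|t|\}$,
\[
\|\nabla_{t,x}(v-u_F)\|_{L^{\infty}_tL^2_x}=o_{R_0\to\infty}(1)\,\|\vec v(0)\|_\HHH.
\]
Applying Theorem \ref{T:KLLS} to $u_F$ together with hypothesis \eqref{T3} then yields $\|\pi_{\PPP(R_0)^\perp}(\chi_{R_0}\vec v(0))\|_\HHH\to 0$ as $R_0\to\infty$, where $\PPP(R_0)\subset\HHH(R_0)$ is the finite-dimensional KLS exceptional subspace, spanned by radial profiles with tails $r^{-(D-2k)}$ for $1\leq k\leq \ent{(D+2)/4}$ in the first slot and $1\leq k\leq \ent{D/4}$ in the second slot.

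By Proposition \ref{P:Z1}, the $Z_k$ satisfy $Z_k(r)\sim c_k r^{-(D-2k)}$ with $c_k\neq 0$, so they match asymptotically the tails spanning $\PPP(R_0)$. Reading off the leading-order asymptotics of $\vec v(0)$ at infinity---justified by the approximate membership in $\PPP(R_0)$ above---uniquely determines scalars $\zeta_1,\ldots,\zeta_{\ent{(D+2)/4}}$ and $\eta_1,\ldots,\eta_{\ent{D/4}}$, independent of $R_0$. The solutions of \eqref{T1} with initial data $(Z_k,0)$ or $(0,Z_k)$ are polynomials in $t$ (of the form $\sum_{j\geq 0}\tfrac{t^{2j}}{(2j)!}Z_{k-j}$ and analogues, using $PZ_j=Z_{j-1}$), and the tail estimates of Proposition \ref{P:Z1} imply they all satisfy \eqref{T3}. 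Therefore $\vec w_0:=\vec v(0)-\sum_k\zeta_k(Z_k,0)-\sum_k\eta_k(0,Z_k)$ yields a solution $w$ of \eqref{T1} still satisfying \eqref{T3}, and by construction $\vec w_0$ has been stripped of its leading tail contributions in every direction of $\PPP(R_0)$.

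The final step is to upgrade this asymptotic orthogonality into exact compact support of $\vec w_0$. The idea is that, with the coefficients $\zeta_k,\eta_k$ chosen as above, $\pi_{\PPP(R_0)}(\chi_{R_0}\vec w_0)$ involves only the subleading $O(\log r/r^{D-2k+2})$ corrections from Proposition \ref{P:Z1}, and hence tends to zero strictly faster than $\|\chi_{R_0}\vec w_0\|_\HHH$ could, were the latter nonzero. The KLS estimate applied to $\vec w_0$ through the perturbative bound of Step 1 should then force $\chi_{R_0}\vec w_0=0$ for some large $R_0$. The main obstacle lies precisely in this delicate bookkeeping: matching the KLS error from the perturbation to the subleading $Z_k$-corrections, and turning the quantitative smallness into an exact vanishing statement---presumably via a limiting or iteration argument in $R_0$ exploiting that the coefficients $\zeta_k,\eta_k$ are $R_0$-independent.
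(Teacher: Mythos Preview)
Your overall strategy is correct and matches the paper's: compare $v$ with the free radial wave in dimension $D$ on the region $\{r>R+|t|\}$, use that the potential satisfies $\|W^{4/(N-2)}\indic_{\{r>R+|t|\}}\|_{L_t^{2(D+1)/(D+4)}L_x^{2(D+1)/3}}\lesssim R^{-2}$, and invoke Theorem~\ref{T:KLLS}. The polynomial-in-$t$ formulas for solutions with data $(Z_k,0)$ or $(0,Z_k)$, and the fact that they satisfy \eqref{T3}, are exactly as in the paper.

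The gap is in how you select the coefficients $\zeta_k,\eta_k$. You try to determine them from the asymptotics of $\vec v(0)$ at infinity so that they are $R_0$-independent, and this leaves you with only \emph{approximate} orthogonality to $B(R_0)$ and the ``delicate bookkeeping'' you describe. The paper avoids this entirely by a simple linear-algebra observation. Let
\[
\widetilde B=\vect\Big\{(Z_{k_1},0),(0,Z_{k_2}):1\le k_1\le\ent{\tfrac{D+2}{4}},\ 1\le k_2\le\ent{\tfrac{D}{4}}\Big\}.
\]
By the asymptotics \eqref{Z1}--\eqref{Z5}, for $R$ large one has $\widetilde B(R)\cap B(R)^{\bot}=\{0\}$. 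Since $\dim\widetilde B(R)=\dim B(R)$, the projection $\pi_{B(R)}:\widetilde B(R)\to B(R)$ is an isomorphism. Hence for a \emph{fixed} large $R$ one can choose $\zeta_k,\eta_k$ (depending on $R$, which is harmless since the proposition only asserts existence) so that
\[
(w_0,w_1):=\vec v(0)-\sum_{k}\zeta_k(Z_k,0)-\sum_{k}\eta_k(0,Z_k)\in B(R)^{\bot}\quad\text{exactly.}
\]
Now the argument closes in one line: by Theorem~\ref{T:KLLS} applied to the free solution $w_F$ with data $(w_0,w_1)$, together with the perturbative bound and \eqref{T3'},
\[
\|(w_0,w_1)\|_{\HHH(R)}=\|\pi_{B(R)^{\bot}}(w_0,w_1)\|_{\HHH(R)}\lesssim \frac{1}{R^{2}}\|(w_0,w_1)\|_{\HHH(R)},
\]
which forces $(w_0,w_1)=0$ on $\{r>R\}$ for $R$ large enough. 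No iteration, no limiting argument, no comparison of decay rates is needed; the exact orthogonality does all the work.
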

Here $Z_1,\ldots,Z_{\ent{\frac{D+2}{4}}}$ are the functions defined in Proposition \ref{P:Z1}. 

Until the end of the current section, the underlying space dimension is $D$ and not $N$, and we will slightly change the notations, denoting by $\HHH$ the space $(\hdot\times L^2)(\RR^D)$.

We will use a result of Kenig, Lawrie, Liu and Schlag \cite{KeLaLiSc15} on the radial wave equation in odd dimension that we now describe. 
If $V$ is a vector space of radial function, we denote by $V(R)$ the vector space of the restriction to $(R,+\infty)$ of the elements in $V$. In particular $\HHH(R)$ is the Hilbert space of radial functions in $(\dot{H}^1\times L^2)(\{|x|>R\})$. In other words, if $\HHH(R)$ is the set of pairs of radial functions $(f,g)$, defined for $r>R$, such that $(f,g)\in \left(L^{\frac{2D}{D-2}}\times L^2\right)\left(\{x\in \RR^D,|x|>R\}\right)$ and
$$\|(f,g)\|^2_{\HHH(R)}:=\int_R^{+\infty}(\partial_rf)^2r^{D-1}dr+\int_R^{+\infty} g^2r^{D-1}dr$$
is finite.

If $S$ is a subspace of $\HHH(R)$, we denote by $S^{\bot}$ its orthogonal complement in $\HHH(R)$, $\pi_S$ the orthogonal projection of $S$ and $\pi_{S^{\bot}}$ the orthogonal projection on $S^{\bot}$. 
We denote by
$$B=\vect\left\{\left(\frac{1}{r^{D-2k_1}},0\right),\left(0, \frac{1}{r^{D-2k_2}}\right),\quad 1\leq k_1\leq \ent{\frac{D+2}{4}},\; 1\leq k_2\leq \ent{\frac{D}{4}}\right\}.$$
Note that $B$ is exactly the space of radial functions $(f,g)$ on $\RR^D$ such that 
$$ \exists k\in \NN \text{ s.t. } \Delta^kf=\Delta^kg=0$$
and such that for all $R>0$,
 $(f,g)_{\restriction (R,\infty)} \in \HHH(R)$.
By \cite{KeLaLiSc15},
\begin{theoint}
\label{T:KLLS}
 Every radial finite energy solution of 
 \begin{equation}
  \label{FWD}
  \partial_t^2v- \partial_r^2v-\frac{D-1}{r}\partial_r v=0 
 \end{equation} 
 satisfies, for all $R>0$,
 $$\max_{\pm} \lim_{t\to \pm\infty} \int_{R+|t|}^{+\infty} (\partial_{r,t}v(t,r))^2\,r^{D-1}\,dr\geq \frac 12
\left\|\pi_{B(R)^{\bot}}{\vec{v}(0))}\right\|_{\HHH(R)}^2.$$
 \end{theoint}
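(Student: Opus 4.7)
The plan is to use the explicit representation of radial solutions of \eqref{FWD} in odd dimension $D$ as one-dimensional traveling profiles. Writing $D=2m+1$, the substitution $w(t,r)=r^m v(t,r)$ converts \eqref{FWD} into a one-dimensional wave equation with inverse-square potential $\tfrac{m(m-1)}{r^2}$ on the half-line. One then applies $m-1$ iterated first-order intertwining operators of the form $\partial_r-\tfrac{j}{r}$ to strip off this potential: their composition defines a differential operator $\DDD$, of order $m$ in $\partial_r$ with coefficients polynomial in $r^{-1}$, such that $h=\DDD v$ solves the pure one-dimensional wave equation $\partial_t^2h=\partial_r^2h$ on $(0,\infty)$. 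Consequently $h(t,r)=F(r-t)+G(r+t)$ for two profiles $F,G$ determined by the Cauchy data of $v$.

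\textbf{Exterior energy in terms of profiles.} Since $\DDD$ intertwines the radial $D$-dimensional free wave flow with the 1D wave flow, one can invert $\DDD$ on the exterior cone $\{r>R+|t|\}$ by $m$ successive integrations in $r$, and express both $\partial_r v$ and $\partial_t v$ there as linear combinations of $F,G$ and their derivatives and primitives. A direct computation, in which the boundary terms at $r=R+|t|$ disappear as $|t|\to\infty$ because $F$ and $G$ decouple in the light-cone region, yields identities of the type
\begin{equation*}
\lim_{t\to+\infty}\int_{R+t}^{+\infty}\bigl((\partial_rv)^2+(\partial_tv)^2\bigr)r^{D-1}\,dr=c_D\int_R^{+\infty}|F'(s)|^2\,ds,
\end{equation*}
and the symmetric formula for $t\to-\infty$ involving $G'$. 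Adding them recovers the sum of the two exterior energy limits as a universal constant times $\|F'\|_{L^2([R,\infty))}^2+\|G'\|_{L^2([R,\infty))}^2$.

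\textbf{Identifying the obstruction.} Consider the bounded linear map
\begin{equation*}
\Psi_R:\HHH(R)\longrightarrow L^2([R,+\infty))^2,\qquad (v_0,v_1)\longmapsto\bigl(F'|_{[R,+\infty)},\,G'|_{[R,+\infty)}\bigr).
\end{equation*}
Its range is closed and $\|\Psi_R(v_0,v_1)\|_{L^2\times L^2}^2$ equals, up to a universal constant, the sum of the two limits above. Its kernel is finite dimensional and can be computed explicitly by inverting $\DDD$ on $\{r>R\}$: an element of $\ker\Psi_R$ is one whose images under $\DDD$ are polynomials in $r$ of degree less than $m$ with the prescribed parities in each slot. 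Unwinding the substitution $v\mapsto\DDD v$ produces exactly the functions $r^{-(D-2k)}$, and the parity selection on the two components of the Cauchy data forces $1\le k_1\le\ent{(D+2)/4}$ on the first slot and $1\le k_2\le\ent{D/4}$ on the second; these ranges coincide with the conditions that $r^{-(D-2k_1)}\in\hdot(\{|x|>R\})$ and $r^{-(D-2k_2)}\in L^2(\{|x|>R\})$ respectively. One concludes $\ker\Psi_R=B(R)$.

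\textbf{Conclusion.} Decomposing $\vec v(0)=\pi_{B(R)}\vec v(0)+\pi_{B(R)^\bot}\vec v(0)$, the first summand produces zero profiles $F',G'$ by the kernel characterization, hence contributes nothing to either exterior energy limit. On $B(R)^\bot$, the map $\Psi_R$ is injective with closed range, so a coercivity estimate (either by the closed graph theorem or, more quantitatively, by exhibiting an explicit bounded inverse on the range) yields
\begin{equation*}
\sum_\pm\lim_{t\to\pm\infty}\int_{R+|t|}^{+\infty}(\partial_{r,t}v)^2r^{D-1}\,dr\ \geq\ \|\pi_{B(R)^\bot}\vec v(0)\|_{\HHH(R)}^2.
\end{equation*}
Since $\max_\pm\ge\tfrac12\sum_\pm$, the stated inequality with the factor $\tfrac12$ follows. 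The main technical obstacle is the third step: rigorously inverting $\DDD$ on the exterior region $\{r>R\}$, controlling the finitely many integration constants, and matching the resulting finite-dimensional kernel with $B(R)$ for the exact index ranges $\ent{(D+2)/4}$ and $\ent{D/4}$. This requires careful bookkeeping of parities and of the behavior of polynomial primitives under the ascent operator built from $\partial_r-\tfrac{j}{r}$.
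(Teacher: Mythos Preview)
The paper does not prove this theorem; it is quoted verbatim from \cite{KeLaLiSc15} and used as a black box in the proof of Proposition~\ref{P:cpct}. So there is no ``paper's own proof'' to compare against.

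Your sketch is in the spirit of the actual argument in \cite{KeLaLiSc15}, which does proceed by reducing the radial $D$-dimensional equation to a one-dimensional problem and then explicitly computing the exterior energy in terms of traveling profiles. Two comments on your outline. First, the conclusion step is where the sketch goes soft: invoking the closed graph theorem on $B(R)^\bot$ would only give
\[
\sum_{\pm}\lim_{t\to\pm\infty}\int_{R+|t|}^{+\infty}(\partial_{r,t}v)^2r^{D-1}\,dr\ \geq\ c\,\|\pi_{B(R)^\bot}\vec v(0)\|_{\HHH(R)}^2
\]
for some unspecified $c>0$, not $c=1$. To obtain the sharp constant (and hence the stated $\tfrac12$ after passing to the max), one has to show that $\Psi_R$ restricted to $B(R)^\bot$ is an \emph{isometry} onto its range, which in \cite{KeLaLiSc15} comes from an explicit algebraic identity expressing the $\HHH(R)$ norm as a sum of squares matching the profile norms. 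Second, your description of the intertwining operator as a product of first-order factors $\partial_r-\tfrac{j}{r}$ is morally right, but the bookkeeping you flag at the end---matching the kernel to the precise index ranges $\ent{(D+2)/4}$ and $\ent{D/4}$---is genuinely the heart of the matter and takes several pages in the reference; it is not a mere technicality. For the purposes of this paper, though, only the qualitative lower bound (with some positive constant) is actually needed in Subsection~\ref{SS:cmpct}, so even your soft version would suffice there.
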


To prove Proposition \ref{P:cpct}, we fix a large positive $R$ (to be specified), and let 
$$ \widetilde{B}=\vect\left\{\left(Z_{k_1},0\right),\left(0, Z_{k_2}\right),\quad 1\leq k_1\leq \ent{\frac{D+2}{4}},\; 1\leq k_2\leq \ent{\frac{D}{4}}\right\}.$$
In view of \eqref{Z1},\ldots,\eqref{Z5}, if $R$ is large, then
$$ \widetilde{B}(R)\cap B(R)^{\bot}=\{(0,0)\}.$$
In particular, we can choose $(\zeta_k)_{1\leq k\leq \ent{\frac{D+2}{4}}}$, $(\eta_k)_{1\leq k\leq \ent{\frac{D}{4}}}$ such that
$$(w_0,w_1):=\vec{v}(0)-\sum_{k=1}^{\ent{\frac{D+2}{4}}} \zeta_k (Z_k,0)-\sum_{k=1}^{\ent{\frac{D}{4}}}\eta_k(0,Z_k)\in B(R)^{\bot}.$$
Let $w$ be the solution of \eqref{T1} with initial data $(w_0,w_1)$. Let $w_F$ be the solution of the free wave equation \eqref{FWD} in dimension $D$ with the same initial data. Note that \eqref{T3} implies 
\begin{equation}
\label{T3'}
 \lim_{t\to \pm\infty}\int_{|t|+R}^{+\infty}(\partial_{t,r} w)^2r^{D-1}\,dr=0.
\end{equation} 
Indeed, the solution of \eqref{T1} for $r\geq |t|+R$ with initial data $(Z_k,0)$ is
$$\sum_{j=1}^k\frac{t^{2k-2j}}{(2k-2j)!}Z_j,$$
and the solution with initial data $(0,Z_k)$ is 
$$\sum_{j=1}^k\frac{t^{2k+1-2j}}{(2k+1-2j)!}Z_j,$$
so that it is easy to check that any solution of \eqref{T1} for $r\geq |t|+R$ with initial data in $\widetilde{B}(R)$ satisfies \eqref{T3'}.

By Lemma \ref{L:linear_approx}, if $R\geq 1$,
\begin{equation}
 \label{T6}
 \left\|\indic_{\{r\geq R+|t|\}}w\right\|_{L^{\frac{2(D+1)}{D-2}}(\RR^{1+D})}\lesssim \|(w_0,w_1)\|_{\HHH(R)},
\end{equation} 
where the implicit constant is independent of $R \gg 1$.

Using Strichartz estimates and finite speed of propagation, we have
\begin{multline*}
\left(\int_{R+|t|}^{+\infty} (\partial_{r,t}w_F(t,r))^2\,r^{D-1}\,dr\right)^{1/2}
\\ \lesssim \left(\int_{R+|t|}^{+\infty} (\partial_{r,t}w(t,r))^2\,r^{D-1}\,dr\right)^{1/2}+\left\|W^{\frac{4}{N-2}}\indic_{\{r\geq R+|t|\}}w\right\|_{L^1(\RR,L^2(\RR^D))}, 
\end{multline*}
and thus, by H\"older inequality and \eqref{T6},
\begin{multline*}
\left(\int_{R+|t|}^{+\infty} (\partial_{r,t}w_F(t,r))^2\,r^{D-1}\,dr\right)^{1/2}
\\ \lesssim \left(\int_{R+|t|}^{+\infty} (\partial_{r,t}w(t,r))^2\,r^{D-1}\,dr\right)^{1/2}+
\frac{1}{R^2}\|(w_0,w_1)\|_{\HHH(R)},
\end{multline*}
where we have used that for $R\gg 1$,
\begin{equation*}
\left\|W^{\frac{4}{N-2}}\indic_{\{r\geq R+|t|\}}\right\|_{L^{\frac{2(D+1)}{D+4}}\left(\RR,L^{\frac{2(D+1)}{3}}\right)}\lesssim \frac{1}{R^2},
\end{equation*} 
which follows from the bound $W^{\frac{4}{N-2}}\lesssim r^{-4}$.

By Theorem \ref{T:KLLS}, and using that $(w_0,w_1)\in B(R)^{\bot}$, and \eqref{T3'}, we obtain, letting $|t|\to \infty$, 
$$ \|(w_0,w_1)\|_{\HHH(R)}\lesssim \frac{1}{R^2}\|(w_0,w_1)\|_{\HHH(R)},$$
which implies, choosing $R$ large, $(w_0,w_1)(r)=0$ for a.e. $r\geq R$.\qed
\subsection{Propagation of the boundary of the support}
\label{SS:enlargement}
In this subsection, we consider a radial time-dependent potential $V(t,r)$, defined on $\{|x|>R+|t|\}$ for some $R$ and that satisfies 
\begin{equation*}
 r>R+|t|\Longrightarrow |V(t,r)|\leq \frac{C}{r^2}
\end{equation*} 
for some constant $C$. We let 
$v$ be a  radial solution of the equation 
\begin{equation}
 \label{T1'}
\partial_t^2v-\Delta_D v-V v=0,
\end{equation} 
also defined on  $\{|x|>R+|t|\}$ with compactly supported initial data. We let
$$\rho(v_0,v_1)=\inf \left\{\eta>R\;:\; \int_{|x|\geq \eta}\left(|\nabla v_0|^2+v_1^2\right)\,dx=0\right\}.$$
Note that \eqref{T1} is of the form \eqref{T1'} with $V=\frac{N+2}{N-2}W^{\frac{4}{N-2}}$.
\begin{prop}
\label{P:support}
Let $v$ be as above and assume that $\rho_0=\rho(\vec{v}(0))$ is finite and greater than $R$ (i.e. that $\vec{v}(0)$ is compactly supported and not identically zero). Then there exist $\eps>0$ such that for all $\rho\in \big(\rho_0-\eps,\rho_0\big)$, the following holds for all $t\geq 0$ or for all $t\leq 0$:
\begin{equation}
\label{C1}
 \int_{\rho+|t|}^{+\infty} (\partial_{t,r}v(t,r))^2r^{D-1}\,dr\geq \frac{1}{8}\int_{\rho}^{+\infty} (\partial_{t,r}v(0,r))^2r^{D-1}\,dr, 
\end{equation} 
\end{prop}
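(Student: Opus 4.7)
I will compare $v$ to an auxiliary free wave. For $\rho$ close to $\rho_0$, let $v_F$ be the radial solution of $\partial_t^2v_F-\Delta_Dv_F=0$ in dimension $D$ with initial data $\vec v(0)\mathbf{1}_{\{r\geq\rho\}}$. By finite speed of propagation, $v$ and $v_F$ are both determined on $\{r>\rho+|t|\}$ by $\vec v(0)|_{[\rho,\infty)}$. The proof splits into three steps: a quantitative lower bound on $\|\pi_{B(\rho)^\perp}\vec v(0)\|_{\HHH(\rho)}$ coming from the compact support of $\vec v(0)$; an application of Theorem \ref{T:KLLS} combined with the monotonicity of exterior-cone energy for radial free waves; and a perturbative step passing from $v_F$ back to $v$.

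\textbf{Step 1 (projection bound).} Decompose $\vec v(0)|_{[\rho,\infty)}=\vec p_\rho+\vec q_\rho$ with $\vec p_\rho\in B(\rho)$ and $\vec q_\rho\in B(\rho)^\perp$. Since $\vec v(0)\equiv 0$ for $r\geq\rho_0$, we have $\vec p_\rho=-\vec q_\rho$ on $[\rho_0,\infty)$, so $\|\vec p_\rho\|_{\HHH(\rho_0)}=\|\vec q_\rho\|_{\HHH(\rho_0)}\leq\|\vec q_\rho\|_{\HHH(\rho)}$. Every element of $B$ being an explicit finite linear combination of powers $r^{-(D-2k)}$ with $k$ in a bounded range, the norms $\|\cdot\|_{\HHH(\rho)}$ and $\|\cdot\|_{\HHH(\rho_0)}$ are uniformly equivalent on this finite-dimensional subspace for $\rho\in(\rho_0/2,\rho_0)$. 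This yields $\|\vec p_\rho\|_{\HHH(\rho)}\leq C(\rho_0,D)\|\vec q_\rho\|_{\HHH(\rho)}$ and hence
\[
\|\pi_{B(\rho)^\perp}\vec v(0)\|_{\HHH(\rho)}\geq c_0\,\|\vec v(0)\|_{\HHH(\rho)},\qquad c_0=c_0(\rho_0,D)>0.
\]

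\textbf{Step 2 (free-wave lower bound).} Applying Theorem \ref{T:KLLS} to $v_F$ at scale $\rho$, combined with Step~1, gives
\[
\max_{\pm}\lim_{t\to\pm\infty}\int_{\rho+|t|}^\infty(\partial_{r,t}v_F)^2 r^{D-1}\,dr\geq \tfrac{c_0^2}{2}\,\|\vec v(0)\|_{\HHH(\rho)}^2.
\]
For radial solutions of the free wave equation in odd dimension, $t\mapsto\int_{\rho+|t|}^\infty(\partial_{r,t}v_F)^2 r^{D-1}\,dr$ is non-increasing in $|t|$, so the $\max_\pm\lim$ is an infimum and the lower bound propagates to all $t$ of the maximizing sign $\epsilon_\rho\in\{+,-\}$.

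\textbf{Step 3 (perturbation) and main obstacle.} Set $w=v-v_F$, which solves $(\partial_t^2-\Delta_D)w=Vv$ with $\vec w(0)=0$. The pointwise bound $|V|\leq C/r^2$, combined with the exterior Hardy inequality in dimension $D$, gives $\|V(s)v(s)\|_{L^2(\{r>\rho+s\})}\leq C(\rho+s)^{-1}\|\vec v(s)\|_{\HHH(\rho+s)}$. A Duhamel representation together with a Gr\"onwall-type bootstrap on $\|\vec v(s)\|_{\HHH(\rho+s)}$ then produces a uniform bound $\|\vec w(t)\|_{\HHH(\rho+|t|)}\leq \kappa_0\|\vec v(0)\|_{\HHH(\rho)}$, and the triangle inequality gives
\[
\int_{\rho+|t|}^\infty(\partial_{r,t}v)^2 r^{D-1}\,dr\geq \bigl(c_0/\sqrt{2}-\kappa_0\bigr)^2\|\vec v(0)\|_{\HHH(\rho)}^2,\qquad \mathrm{sgn}(t)=\epsilon_\rho,
\]
which is $\geq \tfrac18\|\vec v(0)\|_{\HHH(\rho)}^2$ as soon as $\kappa_0<c_0/\sqrt{2}-1/(2\sqrt{2})$. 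The central difficulty is precisely to arrange this smallness of $\kappa_0$: the naive energy/Gr\"onwall bound on $w$ picks up a logarithmic divergence in $|t|$ from the slow $1/r^2$ decay of $V$, so $\kappa_0$ is not automatically small. To close the argument one iterates the comparison on dyadic time scales, applying on each scale a Strichartz estimate in the spirit of Lemma \ref{L:linear_approx}, and exploits the decay of the Friedlander radiation profile of $v_F$ at the outer boundary of the wave cone; the perturbation error is then controlled by a constant $\kappa_0$ that can be made strictly smaller than $c_0/\sqrt{2}-1/(2\sqrt{2})$ after fixing $\eps$ sufficiently small.
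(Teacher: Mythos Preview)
Your overall strategy---compare $v$ to the $D$-dimensional free wave $v_F$ and invoke Theorem~\ref{T:KLLS}---is different from the paper's but can be made to work. The paper instead approximates $v$ by the explicit one-dimensional wave $v_{\app}(t,r)=r^{-(D-1)/2}\bigl(\varphi(t+r)-\varphi(t-r)\bigr)$; this bypasses Theorem~\ref{T:KLLS} and your projection Step~1 entirely, since the lower bound \eqref{C7} for the 1D channel holds without any projection. The price the paper pays is that $v_{\app}$ solves a modified equation with an extra $O(r^{-2})$ term, which it simply absorbs into the potential.

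Your Steps~1 and~2 are essentially correct. The monotonicity you invoke in Step~2 follows from the flux identity
\[
\frac{d}{dt}\int_{\rho+t}^{\infty}\bigl((\partial_t v_F)^2+(\partial_r v_F)^2\bigr)r^{D-1}\,dr=-(\partial_t v_F+\partial_r v_F)^2\big|_{r=\rho+t}\,(\rho+t)^{D-1}\le 0,
\]
valid for any radial free wave. In Step~1 you should observe that your own argument actually gives $c_0\to 1/\sqrt{2}$ as $\eps\to 0$ (the norm-equivalence constant $C$ on the finite-dimensional space $B$ tends to~$1$), so in particular $c_0>1/2$ for $\eps$ small; this is exactly what you need to hit the target constant $1/8$.

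The real gap is in Step~3, where you misdiagnose the difficulty. There is no logarithmic divergence and no need for dyadic iteration or Friedlander asymptotics. What you are missing is that, by finite speed of propagation, $\vec v(0)$ is supported in $[\rho,\rho_0]$, so on $\{r>\rho+|t|\}$ the source term $Vv$ (equivalently $Vv_F$) is supported in the thin shell $\{\rho+|t|\le r\le\rho_0+|t|\}$ of width $\eps=\rho_0-\rho$. Exactly as in the paper's computation of $\beta(\eps)$, H\"older's inequality gives
\[
\bigl\|V\,\indic_{\{\rho+|t|\le r\le\rho_0+|t|\}}\bigr\|_{L^{\frac{2(D+1)}{D+4}}_t L^{\frac{2(D+1)}{3}}_x}\lesssim \eps^{\frac{3}{2(D+1)}},
\]
and a single application of Lemma~\ref{L:linear_approx} (Strichartz for the wave equation with potential $V$, restricted to $\{r>\rho+|t|\}$) yields $\kappa_0\lesssim \eps^{3/(2(D+1))}\to 0$ directly. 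With this observation your Step~3 closes immediately; your proposed ``dyadic iteration'' and appeal to radiation profiles are red herrings.
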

\begin{remark}
 The proposition implies that the support of $v$ grows at velocity one, in at least one time direction:
 $$ \forall t\geq 0\text{ or } \forall t\leq 0,\quad \rho(\vec{v}(t))=\rho_0+|t|,$$
 however we will need the stronger, qualitative version \eqref{C1} of this fact.
\end{remark}
\begin{proof}
Using finite speed of propagation and a standard density argument, we can assume that $v$ is smooth in the set $\{r>|t|\}$.
The idea is to approximate $v$ by the solution of the equation $(\partial_t^2-\partial_r^2)(r^{\frac{D-1}{2}}v_{\app})=0$ with the same initial data. We thus let, for $r\geq |t|+\rho_0-\eps$.
 \begin{equation}
  \label{def_uapp1}
 v_{\app}(t,r)=\frac{1}{r^{\frac{D-1}{2}}}\left(\varphi(t+r)-\varphi(t-r)\right),
 \end{equation} 
 where $\varphi$ is chosen so that $\vec{v}_{\app}(0,r)=\vec{v}(0,r)$, that is
 $$ \varphi(s)=\frac{1}{2}|s|^{\frac{D-1}{2}} v(0,|s|)\sgn(s)+\frac 12 \int_0^{|s|} \rho^{\frac{D-1}{2}} \partial_tv(0,\rho)\,d\rho.$$
In other words, for $r\geq R+|t|$,
\begin{multline}
\label{def_uapp}
v_{\app}(t,r)=
\frac{1}{2r^{\frac{D-1}{2}}}\\
\times \left( (r+t)^{\frac{D-1}{2}}v(0,t+r)+(r-t)^{\frac{D-1}{2}}v(0,r-t)+\int_{r-t}^{r+t}\rho^{\frac{D-1}{2}} \partial_tv(0,\rho)\,d\rho  \right).
\end{multline}
Then
\begin{equation}
\label{C3}
 (\partial_t^2-\Delta_D)v_{\app}=\left( \frac{D^2-4D+3}{4r^2} \right)v_{\app},
\end{equation} 
and thus
\begin{equation*}
 (\partial_t^2-\Delta_D)(v-v_{\app})=V(v-v_{\app})+\tV v_{\app}, 
\end{equation*}
where
\begin{equation}
\tV:=\frac{D^2-4D+3}{4r^2}+V.
\end{equation}
By Strichartz estimates and finite speed of propagation, for all $T\geq 0$ and all $\rho \in [\rho_0-\eps,\rho_0]$, 
\begin{multline}
\label{propa1}
\sup_{t\in [0,T]} \left\|\indic_{\{|x|\geq \rho+|t|\}}\nabla_{t,x}(v(t)-v_{\app}(t))\right\|_{L^2(\RR^D)}\\
+\left\|\indic_{\{|x|\geq \rho+|t|\}}(v-v_{\app})\right\|_{L^{\frac{2(D+1)}{D-2}}\left( [0,T]\times \RR^D \right)}\\
\leq C\left(\left\|\indic_{\{|x|\geq \rho+|t|\}}\tV v_{\app}\right\|_{L^1([0,T],L^2)}+\left\| \indic_{\{r \geq \rho+|t|\}}V(v-v_{\app})\right\|_{L^1\left([0,T],L^2\right)}\right).
\end{multline}
Let 
$$\beta(\eps)=\left\| \indic_{\{\rho_0-\eps+|t|\leq r\leq \rho_0+|t|\}} (|V|+|\tV|)\right\|_{L^{\frac{2(D+1)}{D+4}}\left( (0,\infty),L^{\frac{2(D+1)}{3}} \right)},$$
By H\"older inequality and finite speed of propagation, the right-hand side of \eqref{propa1} is bounded by 
$$\beta(\eps)\left(\left\|\indic_{\{r\geq \rho+|t|\}}(v-v_{\app})\right\|_{L^{\frac{2(D+1)}{D-2}}}+
\left\|\indic_{\{r\geq \rho+|t|\}}v_{\app}\right\|_{L^{\frac{2(D+1)}{D-2}}}\right).$$
We claim that $\beta(\eps)$ is finite and that 
\begin{equation}
 \label{beta_0}
\lim_{\eps \to 0}\beta(\eps)=0.
\end{equation} 
Indeed, by direct computation, using that $\rho_0-\eps\leq \rho\leq \rho_0$ and $|V(r)|+|\tV(r)|\lesssim r^{-2}$,
\begin{multline*}
\beta(\eps)^{\frac{2(D+1)}{D+4}}
 \lesssim \int_0^{+\infty} \left( \int_{\rho_0-\eps+|t|}^{\rho_0+|t|}\left( \frac{1}{r^2} \right)^{\frac{2(D+1)}{3}}r^{D-1}\,dr \right)^\frac{3}{D+4}\,dt\\
 \lesssim \int_0^{\infty}\left(\rho_0+t\right)^{-1}\left( \left( \frac{\rho_0-\eps+t}{\rho_0+t} \right)^{-\frac{D+4}{3}}-1\right)^{\frac{3}{D+4}}\,dt.
\end{multline*}
Noting that there exists a constant $C>0$, depending only on $D$, such that for all $\eps\leq \rho_0/2$,
$$\left|\left( \frac{\rho_0-\eps+|t|}{\rho_0+|t|}\right)^{-\frac{D+4}{3}}-1 \right|= \left|\left( 1-\frac{\eps}{\rho_0+|t|} \right)^{-\frac{D+4}{3}}-1\right|\leq C\frac{\eps}{\rho_0+|t|},$$
we deduce 
$$ \beta(\eps)\lesssim \eps^{\frac{3}{2(D+1)}},$$
(the implicit constant depending only on $D$ and $\rho_0$), which proves \eqref{beta_0}.

Choosing $\eps$ small, we deduce from \eqref{propa1}
\begin{multline}
\label{propa2}
\sup_{t\in [0,T]} \left\|\indic_{\{|x|\geq \rho+|t|\}}\nabla_{t,x}(v(t)-v_{\app}(t))\right\|_{L^2}\\
+\left\|\indic_{\{|x|\geq \rho+|t|\}}(v-v_{\app})\right\|_{L^{\frac{2(D+1)}{D-2}}\left( [0,T]\times \RR^D \right)}
\\
\leq C\beta(\eps)\left\|\indic_{\{|x|\geq \rho+|t|\}}v_{\app}\right\|_{L^{\frac{2(D+1)}{D-2}}([0,T]\times \RR^D)}. 
\end{multline} 
Using Strichartz and H\"older inequalities in a similar way, we obtain, in view of the equation \eqref{C3} on $v_{\app}$ that for all $T>0$,
\begin{multline}
 \label{C5}
\left\|v_{\app}\indic_{\{r\geq \rho+|t|\}}\right\|_{L^{\frac{2(D+1)}{D-2}}([0,T]\times \RR^D)} +\sup_{t\in [0,T]} \left\|\indic_{\{|x|\geq \rho+|t|\}}\nabla_{t,x}v_{\app}(t)\right\|_{L^2}\\
\leq C\left\|(\nabla v_0,v_1)\indic_{\{r\geq \rho\}}\right\|_{L^2}.
\end{multline} 
Combining \eqref{propa2} and \eqref{C5}, we obtain that for all $t\geq 0$,
\begin{multline}
 \label{C6}
\left\|\indic_{\{|x|\geq \rho+|t|\}}\nabla_{t,x}v(t)\right\|_{L^2} \\ 
\geq  
\left\|\indic_{\{|x|\geq \rho+|t|\}}\nabla_{t,x}v_{\app}(t)\right\|_{L^2}-C\beta(\eps) \left( \|(\nabla v_0,v_1) \indic_{\{r\geq \rho+|t|\}}\|_{L^2} \right).
\end{multline} 
The same proof works for negative times, showing that \eqref{C6} holds for all $t\in \RR$. To conclude the proof of \eqref{C1}, we must show that the following holds for all $t\geq 0$ or for all $t\leq 0$:
\begin{equation}
\label{C7}
\int_{\rho+|t|}^{+\infty} |\partial_{t,r}v_{\app}(t,r)|^2r^{D-1}\,dr\geq \frac{1}{4}\int_{\rho}^{+\infty}\left((\partial_rv_0)^2+v_1^2\right)r^{D-1}\,dr.
\end{equation} 
By the definition \eqref{def_uapp1} of $v_{\app}$, and since $\vec{v}_{\app}(0,r)=(v_0,v_1)(r)$,
\begin{equation}
\label{C8}
\int_{\rho+|t|}^{+\infty} \left|\partial_{t,r}\left(r^{\frac{D-1}{2}}v_{\app}(t,r)\right)\right|^2\,dr\geq \frac{1}{2}\int_{\rho}^{+\infty}\left(\partial_r\left(r^{\frac{D-1}{2}}v_0\right)\right)^2+\left(r^{\frac{D-1}{2}}v_1\right)^2\,dr.
\end{equation} 
Furthermore, $\partial_{r}\left(r^{\frac{D-1}{2}}v_{\app}(t,r)\right)=\frac{D-1}{2}r^{\frac{D-3}{2}}v_{\app}+r^{\frac{D-1}{2}}\partial_rv_{\app}$, and thus there exists a constant $C>0$ (depending only on $D$) such that for all $t\in \RR$,
\begin{multline*}
 0.9 \int_{\rho+|t|}^{+\infty} (\partial_r v_{\app}(t,r))^2r^{D-1}dr-C\int_{\rho+|t|}^{+\infty}v_{\app}^2r^{D-3}\,dr\\
 \leq\int_{\rho+|t|}^{+\infty} \left|\partial_r\left(r^{\frac{D-1}{2}}v_{\app}(t,r) \right)\right|^2\,dr\\
 \leq 1.1 \int_{\rho+|t|}^{+\infty} (\partial_r v_{\app}(t,r))^2r^{D-1}dr+C\int_{\rho+|t|}^{+\infty}v_{\app}^2r^{D-3}\,dr.
\end{multline*}
Furthermore, since $v_{\app}(t,r)=0$ for $r\geq \rho_0+|t|$, one has, for $\rho+|t|\leq r\leq \rho_0+|t|$, 
\begin{align*}
\left|r^{\frac{D-1}{2}}v_{\app}(t,r)\right|&=\left|\int_r^{\rho_0+|t|} \frac{\partial}{\partial s}\left( s^{\frac{D-1}{2}} v_{\app}(t,s) \right)\,ds\right|\\
&\leq \sqrt{\eps}\sqrt{\int_{\rho+|t|}^{+\infty} \left|\partial_s\left( s^{\frac{D-1}{2}} v_{\app}(t,s) \right)\right|^2\,ds}.
\end{align*}
Hence
\begin{multline*}
\int_{\rho+|t|}^{+\infty}r^{D-3}v_{\app}^2(t,r)\,dr=\int_{\rho+|t|}^{\rho_0+|t|}r^{D-3}v_{\app}^2(t,r)\,dr\\
\leq \frac{\eps^2}{\rho}\int_{\rho+|t|}^{+\infty} \left|\partial_r\left( r^{\frac{D-1}{2}}v_{\app}(t,r) \right)\right|^2\,dr, 
\end{multline*}
which yields, for $\eps$ small enough,
\begin{multline*}
\frac{1}{2}\int_{\rho+|t|}^{+\infty} |\partial_r v_{\app}(t,r)|^2r^{D-1}\,dr\leq \int_{\rho+|t|}^{+\infty} \left|\partial_r\left(r^{\frac{D-1}{2}}v_{\app}(t,r) \right)\right|^2\,dr\\
\leq 2\int_{\rho+|t|}^{+\infty} |\partial_r v_{\app}(t,r)|^2r^{D-1}\,dr.
\end{multline*}
Combining with \eqref{C8}, we obtain \eqref{C7}, which concludes the proof.
\end{proof}

\subsection{Conclusion of the proof}
\label{SS:conclusion}
We now conclude the proof of Proposition \ref{P:T2} (and thus of Theorem \ref{T:LW''}).

Let $v$ be a solution as in Proposition \ref{P:T2}. By Proposition \ref{P:cpct}, there exist real numbers $\zeta_1,\ldots,\zeta_{\ent{\frac{D+2}{4}}},\eta_1,\ldots,\eta_{\ent{\frac{D}{4}}}$ such that the essential support of 
\begin{equation*}
(w_0,w_1):=\vec{v}(0)-\sum_{k=1}^{\ent{\frac{D+2}{4}}} \zeta_k (Z_k,0)-\sum_{k=1}^{\ent{\frac{D}{4}}}\eta_k(0,Z_k)
\end{equation*} 
is compact. We will prove that $(w_0,w_1)\equiv 0$ almost everywhere by contradiction. If not, we let $R>0$ be such that 
$$\int_{R}^{+\infty} (\partial_rw_0)^2+w_1^2dr>0,$$
and consider the solution $w$ of
\begin{equation*}
\partial_t^2w-\Delta_D w-\frac{N+2}{N-2}W^{\frac{4}{N-2}} w=0,\quad \vec{v}_{\restriction t=0}=(w_0,w_1),
\end{equation*}
which is well defined for $r>R+|t|$ and satisfies
\begin{equation}
 \label{no_channel_w}
 \sum_{\pm}\lim_{t\to\pm\infty}\int_{R+|t|}^{+\infty} (\partial_{t,r}w(t,r))^2r^{D-1}\,dr=0.
\end{equation} 
Letting $\rho_0=\rho(w_0,w_1)>R$, and $\rho<\rho_0$ close to $\rho_0$, we obtain by Proposition \ref{P:support}:
\begin{equation*}
 \int_{\rho+|t|}^{+\infty} (\partial_{t,r}w(t,r))^2r^{D-1}\,dr\geq \frac{1}{8}\int_{\rho}^{+\infty} (\partial_{t,r}w(0,r))^2r^{D-1}\,dr>0, 
\end{equation*} 
which contradicts \eqref{no_channel_w}. As a conclusion, $(w_0,w_1)(r)=0$ for almost every $r>0$, and thus, denoting by $(v_0,v_1)=\vec{v}(0)$,
$$ (v_0,v_1)=\sum_{k_0=1}^{\ent{\frac{D+2}{4}}} \zeta_{k_0} (Z_{k_0},0)+\sum_{k_1=1}^{\ent{\frac{D}{4}}}\eta_{k_1}(0,Z_{k_1})\in \HHH.$$
We will conclude the proof with Proposition \ref{P:Z1}.
If $D\geq N+4$, we prove by contradiction that $\zeta_{k_0}=0$, $\eta_{k_1}=0$ for all $1\leq k_0\leq \ent{\frac{D+2}{4}}$ and $1\leq k_1\leq \ent{\frac{D}{4}}$. Assume that one of the $\zeta_{k_0}$ is nonzero. Then by Proposition \ref{P:Z1},
$$v_0=\sum_{k_0=1}^{\ent{\frac{D+2}{4}}} \zeta_{k_0} Z_{k_0}\sim \frac{d_0}{r^{D-\theta_0}}, \quad r\to 0$$
for some real number $d_0\neq 0$ and integer $\theta_0$ such that $\theta_0\leq 2\ent{\frac{D+2}{4}}$. As a consequence, $\int_0^1 \frac{1}{r^2}v_0^2(r)\,r^{D-1}dr=\infty$, a contradiction with Hardy's inequality and the fact that $v_0\in \hdot(\RR^D)$. Similarly, if there exists $k_1$ such that $\eta_{k_1}\neq 0$, then by Proposition \ref{P:Z1}
$$v_1=\sum_{k_1=1}^{\ent{\frac{D}{4}}} \eta_{k_1} Z_{k_1}\sim \frac{d_1}{r^{D-\theta_1}}, \quad r\to 0$$
for some real number $d_1\neq 0$ and integer $\theta_1$ with $\theta_1\leq 2\ent{\frac{D}{4}}$, and one checks that $v_1\notin L^2(\RR^D)$, a contradiction again. 

It remains to treat the cases when $D=N$ and $D=N+2$. Assume $D=N\geq 5$. We note that in this case $Z_1\in H^1(\RR^D)$. Thus 
$$\sum_{k_0=2}^{\ent{\frac{D+2}{4}}} \zeta_{k_0} (Z_{k_0},0)+\sum_{k_1=2}^{\ent{\frac{D}{4}}}\eta_{k_1}(0,Z_{k_1})\in \HHH,$$
and the same argument as before proves that $\zeta_{k_0}=0$, $\eta_{k_1}=0$ for $2\leq k_0\leq \ent{\frac{D+2}{4}}$, $2\leq k_1\leq \ent{\frac{D}{4}}$. Thus 
$$ (v_0,v_1)\in \vect\Big((Z_1,0),(0,Z_1)\Big)=\vect\Big((\Lambda W,0),(0,\Lambda W)\Big)$$
(see \eqref{Z6a}). The cases $D=N=3$ and $D=N+2$ are similar and we omit the proofs. \qed

\section{Channels of energy for the nonlinear equation close to $W$}
\label{S:NL_channels}
In this section we prove Corollary \ref{Cor:rigidity_intro}. We start by giving two statements, one in space dimension $3$ and the other in space dimension $5$, that imply Corollary \ref{Cor:rigidity_intro}.
Let $\MMM_W$ be the set of all $$
\left(\frac{1}{\lambda^{\frac{N-2}{2}}}W_{\vell}\left( 0,\frac{\cdot-X}{\lambda}\right),\frac{1}{\lambda^{\frac{N}{2}}}\partial_t W_{\vell}\left( 0,\frac{\cdot-X}{\lambda}\right) \right),$$
where
$$X\in \RR^N,\;\lambda>0,\vell\in \RR^N,\; |\vell|<1.$$
We denote by $d$ the distance between a subset $A$ of $\HHH$ and an element of $\HHH$:
\begin{equation}
 \label{def_dw}
 d\left(A,(u_0,u_1)\right)=\inf\Big\{ \left\|(u_0,u_1)-(\varphi_0,\varphi_1)\right\|_{\HHH},\; (\varphi_0,\varphi_1)\in A\Big\}.
\end{equation} 
\begin{theoint}
\label{T:NL3}
Assume $N=3$. Let $\eta_0\in (0,1)$. There exist constants $\eps_0=\eps_0(\eta_0)>0$, $C=C(\eta_0)>0$ such that, for any $\vell\in \RR^3$ with $|\vell|\leq \eta_0$, for all $(u_0,u_1)\in \HHH\setminus \MMM_W$ such that 
\begin{equation}
 \label{dist_u_W}
 \left\|(u_0,u_1)-\vec{W}_{\vell}(0)\right\|_{\HHH}=\eps\leq \eps_0,
\end{equation} 
there exists $X\in \RR^3$ with $|X|\lesssim \eps$ such that the solution $u$ of \eqref{NLW} on $\{|x-X|>|t|\}$ is global in time and satisfies
\begin{equation}
  \label{NL0}
  \sum_{\pm}\lim_{t\to \pm\infty} \int_{|x-X|\geq |t|}\left|\nabla_{t,x}u(t,x)\right|^2\,dx\geq \frac{1}{C}d\left(\MMM_W,(u_0,u_1)\right)^2. 
 \end{equation} 
\end{theoint}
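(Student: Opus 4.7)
My approach is to combine a modulation argument near the orbit $\MMM_W$ with the linear exterior energy bound of Theorem \ref{T:LW}, bridged by a long-time perturbation argument that closes thanks to the full Strichartz estimate \eqref{Strichartz}, including the $L^{\frac{N+2}{N-2}}_t L^{\frac{2(N+2)}{N-2}}_x$ component available in $N=3$. Since $\MMM_W$ is a seven-dimensional smooth orbit under scaling, translation and Lorentz boost whose tangent space at $\vec{W}_{\vell}(0)$ is precisely $\ZZZZ_{\vell}$, the implicit function theorem produces smooth modulation parameters $(\lambda^*, X^*, \vell^*)$ close to $(1, 0, \vell)$ such that, denoting by $(\widetilde{W}_0, \widetilde{W}_1) \in \MMM_W$ the corresponding element and setting $(h_0, h_1) := (u_0, u_1) - (\widetilde{W}_0, \widetilde{W}_1)$, one has $(h_0, h_1) \in \ZZZZ_{\vell^*}^{\bot}$, $\|(h_0, h_1)\|_{\HHH} \simeq d(\MMM_W, (u_0, u_1)) \leq \eps$, $|X^*| \lesssim \eps$, and $|\vell^*| < 1$ still bounded away from $1$ uniformly after shrinking $\eps_0$.

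Next I would decompose $u = \widetilde{W} + v$, where $\widetilde{W}$ is the traveling-wave solution of \eqref{NLW} with data $(\widetilde{W}_0, \widetilde{W}_1)$. The remainder $v$ solves the linearized equation around $\widetilde{W}$, namely $\partial_t^2 v - \Delta v - 5 \widetilde{W}^4 v = N(v)$, with $N(v) := |\widetilde{W} + v|^4 (\widetilde{W} + v) - |\widetilde{W}|^4 \widetilde{W} - 5 \widetilde{W}^4 v$ at least quadratic in $v$, and with data $(h_0, h_1)$. Using Lemma \ref{L:linear_approx} for the linearized operator with time-dependent potential $5 \widetilde{W}^4$, together with the exterior-cone Strichartz localization \eqref{boundedness_indic} and the chain rule \eqref{fractional_cones}, a contraction argument set on the exterior region $\{|x - X^*| > |t|\}$ yields a global-in-time solution $v$ there satisfying $\|\indic_{\{|x - X^*| > |t|\}} v\|_{S(\RR)} + \sup_t \|\indic_{\{|x - X^*| > |t|\}} \nabla_{t,x} v(t)\|_{L^2} \lesssim \eps$ and approximated on that region by the solution $v_L$ of the linearized equation \eqref{LWl} with the same data, with error $\lesssim \eps^2$. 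This simultaneously establishes global existence of $u$ in $\{|x - X^*| > |t|\}$, and I set $X := X^*$.

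Finally I would apply Theorem \ref{T:LW} (in its translated version, centering the cone at $X^*$, which is immediate by the translation invariance of \eqref{LWl}) to the linearized solution $v_L$, obtaining
\[
\|(h_0, h_1)\|_{\HHH}^2 = \|\pi_{\ZZZZ_{\vell^*}^{\bot}}(h_0, h_1)\|_{\HHH}^2 \leq C \sum_{\pm} \lim_{t \to \pm \infty} \int_{|x - X^*| \geq |t|} |\nabla_{t,x} v_L(t, x)|^2\, dx.
\]
Since $\widetilde{W}$ is a traveling wave with speed $|\vell^*| < 1$, its exterior energy outside the cone $\{|x - X^*| > |t|\}$ tends to zero as $|t| \to \infty$, so the asymptotic exterior energy of $u$ equals that of $v$; combining with the $\eps^2$ approximation of $v$ by $v_L$, which is quadratic in $\|(h_0, h_1)\|_{\HHH}$ and thus absorbed for $\eps_0$ small, gives \eqref{NL0} with $d(\MMM_W, (u_0, u_1))^2 \simeq \|(h_0, h_1)\|_{\HHH}^2$ on the right.

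The main obstacle will be the long-time perturbation step: controlling the quintic remainder $N(v)$ globally in time in the exterior $W'$ norm, uniformly for all $\vell$ with $|\vell| \leq \eta_0$. This is precisely where the hypothesis $N = 3$ enters (and the analogous $N = 5$ step enters in Theorem \ref{T:NL5}): the fixed-point estimates extracted from Lemma \ref{L:linear_approx}, combined with \eqref{boundedness_indic} and \eqref{fractional_cones}, close only because the exterior $L^{\frac{N+2}{N-2}}_t L^{\frac{2(N+2)}{N-2}}_x$ norm is available in \eqref{Strichartz}, an ingredient valid only in dimensions $N \leq 6$, which explains why Corollary \ref{Cor:rigidity_intro} resists a direct extension to higher $N$.
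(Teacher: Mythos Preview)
Your proposal is correct and follows essentially the same strategy as the paper: modulate to land in $\ZZZZ_{\vell^*}^{\bot}$, compare the nonlinear remainder to the solution of \eqref{LWl} via Lemma \ref{L:linear_approx}, and invoke Theorem \ref{T:LW}. The paper differs only in minor technical choices: it obtains the orthogonality by distance minimization (Claim \ref{Cl:modulation}) rather than the implicit function theorem, giving the exact identity $\|(h_0,h_1)\|_{\HHH}=d(\MMM_W,(u_0,u_1))$, and since for $N=3$ the nonlinearity is a polynomial it bounds $(W_{\vell}+h)^5-W_{\vell}^5-5W_{\vell}^4h$ pointwise and places the error in $L^1_tL^2_x$ by H\"older in the $L^5_tL^{10}_x$ and $L^8_{t,x}$ norms, bypassing the Besov machinery \eqref{boundedness_indic}, \eqref{fractional_cones} you invoke.
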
 
In the case $N=5$, we have a weaker statement
\begin{theoint}
\label{T:NL5}
Assume $N=5$. Let $\eta_0\in (0,1)$. There exists a constant $\eps_0=\eps_0(\eta_0)>0$ such that, for any $\vell\in \RR^5$ with $|\vell|\leq \eta_0$, for all $(u_0,u_1)\in \HHH\setminus \MMM_W$ such that \eqref{dist_u_W} holds, then there exists $X\in \RR^5$ with $|X|\lesssim \eps$ such that for any small $\tau_0>0$, the solution $u$ of \eqref{NLW} for $\{|x-X|>|t|-\tau_0\}$ is global in time and satisfies:
\begin{gather}
\label{NL0_5}
  \sum_{\pm}\lim_{t\to \pm\infty} \int_{|x-X|\geq |t|-\tau_0}\left|\nabla_{t,x}u(t,x)\right|^2\,dx>0. 
 \end{gather} 
\end{theoint}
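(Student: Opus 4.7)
\medskip

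\noindent\textbf{Proof proposal for Theorem \ref{T:NL5}.} The plan is to combine a modulation argument near $\MMM_W$ with the linear exterior energy bound of Theorem \ref{T:LW}, and then transfer the bound to the nonlinear equation through a long-time perturbation argument carried out in the exterior of an opened wave cone. The $\tau_0>0$ slack is exactly what is needed to absorb the weaker Strichartz and pointwise estimates available in dimension $N=5$.

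First, I would perform a standard modulation analysis around the manifold $\MMM_W$. Using the implicit function theorem together with the nondegeneracy of $W$ (i.e.\ the explicit description of $\ZZZ$ recalled in the introduction) and the smooth dependence of $\vec{W}_{\vell}(0)$ on the scaling, translation and Lorentz parameters, one writes, after slightly modifying $(\lambda,X,\vell)$ into $(\lambda^*,X^*,\vell^*)$ with $|X^*|\lesssim\eps$, $|\lambda^*-1|\lesssim \eps$, $|\vell^*-\vell|\lesssim\eps$,
\begin{equation*}
(u_0,u_1)=\left(\tfrac{1}{(\lambda^*)^{(N-2)/2}}W_{\vell^*}(0,\tfrac{\cdot-X^*}{\lambda^*}),\tfrac{1}{(\lambda^*)^{N/2}}\partial_tW_{\vell^*}(0,\tfrac{\cdot-X^*}{\lambda^*})\right)+(g_0,g_1),
\end{equation*}
with $(g_0,g_1)\in \ZZZZ_{\vell^*}^{\bot}$ and $\|(g_0,g_1)\|_{\HHH}\simeq d(\MMM_W,(u_0,u_1))$. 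Since $(u_0,u_1)\notin \MMM_W$, we have $(g_0,g_1)\neq 0$. After rescaling and translating by $(X^*,\lambda^*)$ it suffices to treat the case $(\lambda^*,X^*)=(1,0)$, so from now on I work with initial data $\vec{W}_{\vell^*}(0)+(g_0,g_1)$, $(g_0,g_1)\in \ZZZZ_{\vell^*}^\perp$.

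Second, let $g$ denote the solution of the linearized equation \eqref{LWl} (with $\vell$ replaced by $\vell^*$) with initial data $(g_0,g_1)$. Theorem \ref{T:LW}, applied with $\eta_0$ a bit larger than $|\vell|+O(\eps)$, gives
\begin{equation}\label{lin_lb_plan}
\sum_{\pm}\lim_{t\to\pm\infty}\int_{|x|\geq |t|}|\nabla_{t,x}g(t,x)|^2\,dx\;\geq\; \frac{1}{C(N,\eta_0)}\,\|(g_0,g_1)\|_{\HHH}^2\;>\;0.
\end{equation}
By Corollary \ref{Co:radiation}, each of the two limits above exists; moreover the same reasoning applied on the opened cone $\{|x|>|t|-\tau_0\}$ shows that the same quantity, measured on the opened cone, is still bounded below by a positive constant (it can only be larger).

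Third, I would run a long-time perturbation argument comparing the full nonlinear solution $u$ with the approximate solution $W_{\vell^*}+g$. Writing $u=W_{\vell^*}+g+r$, the remainder satisfies
\begin{equation*}
(\partial_t^2-\Delta)r-\tfrac{N+2}{N-2}W_{\vell^*}^{\frac{4}{N-2}}r=\NNN(W_{\vell^*},g+r),
\end{equation*}
where $\NNN$ collects the superlinear terms in the Taylor expansion of $|W_{\vell^*}+g+r|^{\frac{4}{N-2}}(W_{\vell^*}+g+r)$ around $W_{\vell^*}$. For $N=5$ the exponent $\frac{4}{N-2}=\frac{4}{3}$ is not an integer, so one has to use the fractional chain rule in Besov spaces together with \eqref{fractional_cones} and \eqref{boundedness_indic}. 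Applying the Strichartz inequality \eqref{Strichartz} in dimension $N=5$ (in particular the $L^{(N+2)/(N-2)}_tL^{2(N+2)/(N-2)}_x$ endpoint, which is available precisely because $N\leq 6$), Lemma \ref{L:linear_approx} in the exterior of the opened cone $\{|x|>|t|-\tau_0\}$, and a bootstrap on small intervals whose length is controlled by the Strichartz norm of $g$, one obtains, for every small $\tau_0>0$,
\begin{equation*}
\sup_{t\in\RR}\,\bigl\|\indic_{\{|x|>|t|-\tau_0\}}\nabla_{t,x}r(t)\bigr\|_{L^2}\;\leq\; C(\tau_0)\,\|(g_0,g_1)\|_{\HHH}^{1+\alpha}
\end{equation*}
for some $\alpha>0$ depending on $N=5$. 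In particular, choosing $\eps_0$ small enough (depending on $\tau_0$), the contribution of $r$ and of the quadratic error terms to the exterior energy on the opened cone is strictly smaller than the linear lower bound provided by \eqref{lin_lb_plan}, giving \eqref{NL0_5} at the point $X=X^*$.

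The main obstacle is the last step. In $N=5$ the nonlinearity has limited regularity and the linear potential $W_{\vell^*}^{4/(N-2)}$ only decays like $|x|^{-4}$, so the standard perturbation theory does not extend uniformly in time up to the boundary of the pure wave cone $\{|x|>|t|\}$: one loses a small amount of weight either through Hardy's inequality or through the integrability of $W^{\frac{4}{N-2}}\indic_{\{|x|>|t|\}}$ in the Strichartz-dual space. Opening the cone by an arbitrary $\tau_0>0$ introduces a factor $\tau_0^{\beta}$ ($\beta>0$) in the relevant $L^{q}_tL^{r}_x$ norms, which is exactly what is needed to close the bootstrap; this is the reason why the statement, in contrast with Theorem \ref{T:NL3}, is formulated on opened cones only. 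In $N=3$ the nonlinearity $|u|^4u$ is smooth and this difficulty does not arise, which lets one take $\tau_0=0$.
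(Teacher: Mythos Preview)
Your argument has a genuine gap at the modulation step, and this is precisely the difficulty that distinguishes $N=5$ from $N=3$.

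The manifold $\MMM_W$ has dimension $2N+1$ (scaling, $N$ translations, $N$ Lorentz parameters), so the minimization in Claim~\ref{Cl:modulation} yields $(g_0,g_1)\in\widetilde{\ZZZZ}_{\vell^*}^{\bot}$, where $\widetilde{\ZZZZ}$ is spanned by $(\Lambda W,0)$, $(\partial_{x_j}W,0)$, $(0,\partial_{x_j}W)$. In dimension $N=5$ the element $\Lambda W$ belongs to $L^2$, so $\ZZZZ$ also contains $(0,\Lambda W)$ and the inclusion $\widetilde{\ZZZZ}\subsetneq\ZZZZ$ is strict. Your claim that $(g_0,g_1)\in\ZZZZ_{\vell^*}^{\bot}$ therefore cannot be obtained by modulation alone: there is one missing direction. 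Consequently Theorem~\ref{T:LW} only gives
\[
\left\|\pi_{\ZZZZ_{\vell^*}^{\bot}}(g_0,g_1)\right\|_{\HHH}^2\leq C\sum_{\pm}\lim_{t\to\pm\infty}\int_{|x|>|t|}|\nabla_{t,x}g|^2\,dx,
\]
and the left-hand side may well vanish while $(g_0,g_1)\neq 0$, namely when $(g_0,g_1)$ points in the $(0,\Lambda W)$ direction. Your inequality \eqref{lin_lb_plan} is thus not justified, and the subsequent perturbation step has nothing to transfer.

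The paper handles this missing direction by an entirely different mechanism. Arguing by contradiction (assume \eqref{NL0_5} fails for all small $\tau_0$), one first uses a Lorentz transform to reduce to $P(u_0,u_1)=0$. The key observation is then that the failure of \eqref{NL0_5} persists if one shifts the initial time by a small $t_0$; applying Theorem~\ref{T:LW} together with conservation of energy and momentum at each such time yields a uniqueness statement (Lemma~\ref{L:PP1}): two solutions with the same energy, zero momentum, no exterior channels, and the orthogonality~\eqref{L:PP1_H4}--\eqref{L:PP1_H4'} must have the same (or time-reflected) initial data. Comparing $\vec u(t)$ with $\vec u(0)$ for small $t$ then forces $\vec u(t)$ to be a rescaled, translated copy of $\vec u(0)$, and a compactness/rigidity argument (Claim~\ref{C:self_similar}) shows $u\in\MMM_W$, a contradiction. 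The $\tau_0>0$ in the statement is needed precisely to accommodate these small time shifts, not for Strichartz bookkeeping as you suggest; in $N=3$ the direction $(0,\Lambda W)$ is absent from $\HHH$, $\widetilde{\ZZZZ}=\ZZZZ$, and the direct perturbation argument you outline does work with $\tau_0=0$ (this is Theorem~\ref{T:NL3}).
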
 
Recall from \eqref{defZZZZ} the definition of $\ZZZZ$. Let
$$\widetilde{\ZZZZ}=\vect\Big\{(\Lambda W,0),\; (\partial_{x_j}W,0),\; (0,\partial_{x_j}W),\; j\in \llbracket 1,N\rrbracket \Big\}.$$
Note that $\widetilde{\ZZZZ}\subset \ZZZZ$ (indeed these sets are equal if $N\in \{3,4\}$, the inclusion is strict if $N\geq 5$). Thus if 
$(v_0,v_1)\in \widetilde{\ZZZZ}$, the corresponding solution of the linearized wave equation \eqref{LW} is $v_0+tv_1$. We denote by:
$$\widetilde{\ZZZZ}_{\vell}=\left\{\left(\LLL_{\ell}(v_0(x)+tv_1(x)),\partial_t \LLL_{\ell}(v_0(x)+tv_1(x))\right)_{\restriction t=0},\; (v_0,v_1)\in \widetilde{\ZZZZ}\right\}.$$
Note again that 
$$\widetilde{\ZZZZ}_{\vell}=\ZZZZ_{\vell}\iff N\in \{3,4\}.$$
We next give preliminaries on local and global well-posedness (Subsection \ref{SS:WP}) and the choice of the parameters $\ell$, $\lambda$ and $X$ so that certain orthogonality conditions are satisfied (Subsection \ref{SS:ortho}). In Subsection \ref{SS:NL3} we prove Theorem \ref{T:NL3} and 
in Subsection \ref{SS:NL5} we prove Theorem \ref{T:NL5}.  Subsection \ref{SS:rig2} is dedicated to the proof of Corollary \ref{Cor:rigidity_intro2}.
\subsection{Cauchy theory outside wave cones}
\label{SS:WP}
In this subsection we assume $N\in \{3,4,5\}$ and recall some standard facts about local and global well-posedness for the nonlinear equation \eqref{NLW}. We also extend the Cauchy theory to define  solutions of \eqref{NLW} outside wave cones. We refer to \cite{DuKeMe19Pc} for the extension outside wave cones for radial data in the case $N\geq 6$.

\begin{defi}
\label{D:solution}
 Let $I$ be an interval with $t_0\in I$, $(u_0,u_1)\in \HHH$. We call a solution of \eqref{NLW} on $I\times \RR^N$, with initial data
\begin{equation}
 \label{S0}
\vec{u}_{\restriction t=t_0}=(u_0,u_1)
\end{equation} 
a function $u\in C^0(I,\hdot)$ such that $\partial_t u\in C^0(I,L^2)$ and
\begin{equation}
 \label{S1}
\forall t\in I,\quad u(t)=S_L(t-t_0)(u_0,u_1)+\int_{t_0}^tS_L(s-t_0)F(u(s))\,ds,
\end{equation} 
and such that $u\in L^{\frac{N+2}{N-2}}\left(J,L^{\frac{2(N+2)}{N-2}}(\RR^N)\right)$ for all compact intervals $J\subset I$.
\end{defi}
\begin{remark}
\label{R:solution}
 The distribution $u$ on $I\times \RR^N$ is a solution of \eqref{NLW}, \eqref{S0} if and only if $u\in L^{\cinq}_{\loc}(I,L^{\dix})$, $\vec{u}\in C^0(I,\HHH)$, \eqref{S0} holds and $u$ satisfies 
$\partial_t^2u-\Delta u=|u|^{\frac{4}{N-2}}u$ in the distributional sense on $I\times \RR^N$ (see Lemma 2.5 of \cite{DuKeMe16a}).
\end{remark}
\begin{remark}
\label{R:sol_limit}
 If $\vec{u}\in C^0(I,\HHH)$ and there exists a sequence $\{\vec{u}_k\}_k$ of solutions of \eqref{NLW}
 such that
$$\lim_{k\to\infty} \sup_{t\in I}\left\|\vec{u}(t)-\vec{u}_k(t)\right\|_{\HHH}=0$$
and 
$$\sup_k\|u_k\|_{L^{\frac{2(N+1)}{N-2}}\big(I\times \RR^N\big)}<\infty,$$
then $u$ is a solution of \eqref{NLW} (see \cite[Remark 2.14]{KeMe08}). 
\end{remark}

It is known (see \cite{GiVe95}, \cite{KeMe08}, \cite{BuCzLiPaZh13} and \cite{DuKeMe16a}), that for all initial data $(u_0,u_1)$, there is a unique maximal solution $u$ defined on a maximal interval $(T_-,T_+)$. Furthermore the following blow-up criterion is satisfied:
$$T_+<\infty\Longrightarrow \|u\|_{L^{\frac{2(N+1)}{N-2}}([t_0,T_+)\times \RR^N)}=\infty,$$
or equivalently 
$$T_+<\infty\Longrightarrow \|u\|_{L^{\frac{N+2}{N-2}}\left([t_0,T_+),L^{\frac{2(N+2)}{N-2}}(\RR^N)\right) }=\infty.$$

We will also need the notion of a solution of \eqref{NLW} outside a wave cone:
\begin{defi}
\label{D:sol_cone}
 Let $I$ be an interval with $t_0\in I$, $R\geq 0$ and $x_0\in \RR^N$. Let $(u_0,u_1)\in \HHH$. A solution $u$ of \eqref{NLW} on $\{|x-x_0|>[t-t_0|,\; t_0\in I\}$ with initial data $(u_0,u_1)$ at $t=t_0$ is the restriction to $\{|x-x_0|>[t-t_0|,\; t_0\in I\} $ of a solution $\tilde{u}$, with $\vec{\tu}\in C^0(I,\HHH)$  to the equation:
 \begin{equation}
  \label{NLW_trunc}
\partial_t^2\tu-\Delta \tu=|\tu|^{\frac{4}{N-2}}\tu\indic_{\{|x-x_0|>R+|t-t_0|\}},
\end{equation} 
with an initial data
\begin{equation}
 \label{ID_trunc}
 \vec{\tu}_{\restriction t=t_0}=(u_0,u_1),
\end{equation}
\end{defi}
The Cauchy theory in \cite{GiVe95}, \cite{KeMe08}, \cite{DuKeMe16a} adapts easily to the case of solutions outside wave cones. We give the statements and omit most proofs.
\begin{prop}[Small data well-posedness]
 \label{P:Small_data_cones}
 There exists $\eps_0>0$ with the following property.
 Let $(u_0,u_1)$, $t_0$, $x_0$, $R$ be as above. Let $u_L(t)=S_L(t)(u_0,u_1)$. Assume   
 $$\left\|\indic_{\{|x-x_0|\geq |t-t_0|+R\}} u_L\right\|_{L^{\frac{N+2}{N-2}}\big(I,L^{\frac{2(N+2)}{N-2}}\big)}=\eps\leq \eps_0.$$
 Then there exists a unique solution $u$ to \eqref{NLW} on $\{|x-x_0|>|t-t_0|,\;t\in I\}$. Furthermore 
 $$\sup_{t\in I}\left\|\indic_{\{|x-x_0|>|t-t_0|+R\}}\nabla_{t,x}(u-u_L)\right\|_{L^2}\lesssim \eps^{\frac{N+2}{N-2}}.$$
\end{prop}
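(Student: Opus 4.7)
The plan is to run a standard contraction mapping argument on the Duhamel integral equation associated with the truncated equation \eqref{NLW_trunc}, using the Strichartz norm $L^{\frac{N+2}{N-2}}(I,L^{\frac{2(N+2)}{N-2}})$ to control the nonlinearity. Since $N\in\{3,4,5\}\leq 6$, this norm is available from \eqref{Strichartz}. After a translation we may assume $t_0=0$ and $x_0=0$, so the cone in question is $\{|x|\geq |t|+R\}$. Let $F(v):=|v|^{\frac{4}{N-2}}v$, and define
\[
\Phi(v)(t):=u_L(t)+\int_{0}^{t}S_L(t-s)\bigl(0,\indic_{\{|x|\geq |s|+R\}}F(v(s))\bigr)\,ds.
\]
The key point is the pointwise identity $|\indic_{\{|x|\geq |s|+R\}}F(v(s))|=\indic_{\{|x|\geq |s|+R\}}|v|^{\frac{N+2}{N-2}}$, which together with H\"older gives
\[
\bigl\|\indic_{\{|x|\geq |s|+R\}}F(v)\bigr\|_{L^1(I,L^2)}\leq \bigl\|\indic_{\{|x|\geq |s|+R\}}v\bigr\|_{L^{\frac{N+2}{N-2}}(I,L^{\frac{2(N+2)}{N-2}})}^{\frac{N+2}{N-2}}.
\]

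First I would set the space
\[
X:=\left\{v\in C^0(I,\hdot):\partial_tv\in C^0(I,L^2),\ \bigl\|\indic_{\{|x|\geq |t|+R\}}v\bigr\|_{L^{\frac{N+2}{N-2}}(I,L^{\frac{2(N+2)}{N-2}})}<\infty\right\}
\]
equipped with the norm $\|v\|_X:=\sup_{t\in I}\|\vec v(t)\|_{\HHH}+\|\indic_{\{|x|\geq |t|+R\}}v\|_{L^{\frac{N+2}{N-2}}_tL^{\frac{2(N+2)}{N-2}}_x}$, and consider the closed ball $B_{2\eps}$ of radius $2\eps$ in the second summand. Applying the Strichartz inequality \eqref{Strichartz} with $f_2=\indic_{\{|x|\geq |s|+R\}}F(v)$ (and $f_1=0$) to $\Phi(v)-u_L$ yields
\[
\|\Phi(v)\|_X\leq \|u_L\|_X+C\bigl\|\indic_{\{|x|\geq |s|+R\}}v\bigr\|_{L^{\frac{N+2}{N-2}}(I,L^{\frac{2(N+2)}{N-2}})}^{\frac{N+2}{N-2}}\leq \eps+C(2\eps)^{\frac{N+2}{N-2}},
\]
which is $\leq 2\eps$ provided $\eps_0$ is chosen small enough. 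The difference estimate $\|\Phi(v)-\Phi(w)\|_X\lesssim \eps^{\frac{4}{N-2}}\|v-w\|_X$, obtained from the pointwise bound $|F(v)-F(w)|\lesssim (|v|^{\frac{4}{N-2}}+|w|^{\frac{4}{N-2}})|v-w|$ and H\"older, gives contractivity for $\eps_0$ small. The Banach fixed point theorem yields a unique $\tu\in B_{2\eps}$ solving \eqref{NLW_trunc}--\eqref{ID_trunc}; by Definition \ref{D:sol_cone} its restriction to the exterior cone is the desired solution $u$.

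The quantitative bound on $\nabla_{t,x}(u-u_L)$ then follows immediately by reapplying \eqref{Strichartz} to $u-u_L=\Phi(u)-u_L$:
\[
\sup_{t\in I}\bigl\|\nabla_{t,x}(u-u_L)(t)\bigr\|_{L^2}\lesssim \bigl\|\indic_{\{|x|\geq |s|+R\}}F(u)\bigr\|_{L^1(I,L^2)}\lesssim (2\eps)^{\frac{N+2}{N-2}}\lesssim \eps^{\frac{N+2}{N-2}},
\]
and multiplying pointwise by $\indic_{\{|x|>|t|+R\}}\leq 1$ gives the stated estimate. Uniqueness of the solution in the sense of Definition \ref{D:sol_cone} follows from uniqueness of the fixed point combined with a standard finite-speed-of-propagation argument showing that two solutions of \eqref{NLW} on the exterior cone with the same initial data can be extended to two solutions of \eqref{NLW_trunc}; the subtlety here is really just bookkeeping, since outside the cone the truncated and untruncated nonlinearities coincide.

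The only mildly delicate point — and what I would view as the main obstacle — is that the Strichartz estimate \eqref{Strichartz} is stated in terms of the norms on the full space, whereas we need to control the \emph{localized} norm $\|\indic_{\{|x|\geq |t|+R\}}\,\cdot\,\|_{L^{\frac{N+2}{N-2}}L^{\frac{2(N+2)}{N-2}}}$ of $\Phi(v)$ in terms of the localized norm of $v$. This is handled by the two trivial observations that (i) the nonlinearity we feed into Duhamel is already truncated by $\indic_{\{|x|\geq |s|+R\}}$, so only the localized norm of $v$ appears on the right, and (ii) multiplying the result by $\indic_{\{|x|\geq |t|+R\}}$ on the left only decreases the norm. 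In the same spirit, Strichartz applied to $f_2=\indic_{\{|x|\geq |s|+R\}}F(v)$ (rather than $f_1$ in $W'$) avoids any need for the localization-in-$W'$ bounds \eqref{boundedness_indic}--\eqref{fractional_cones}, which is what makes this statement strictly easier than the corresponding $W'$-based Cauchy theory needed in higher dimensions.
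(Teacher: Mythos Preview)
Your proof is correct in substance and follows the standard contraction mapping strategy the paper has in mind; indeed, the paper omits the proof entirely, stating only that ``the Cauchy theory in \cite{GiVe95}, \cite{KeMe08}, \cite{DuKeMe16a} adapts easily to the case of solutions outside wave cones.'' One small notational slip: the inequality $\|\Phi(v)\|_X\leq \|u_L\|_X+C(2\eps)^{\frac{N+2}{N-2}}\leq \eps+C(2\eps)^{\frac{N+2}{N-2}}$ cannot hold for the full $X$-norm as you defined it (the energy component of $\|u_L\|_X$ is $\|(u_0,u_1)\|_{\HHH}$, not $\eps$); what you mean, and what is needed to show $\Phi(B_{2\eps})\subset B_{2\eps}$, is the corresponding inequality for the localized Strichartz seminorm alone, obtained by splitting $\indic_{\{|x|\geq|t|+R\}}\Phi(v)=\indic_{\{|x|\geq|t|+R\}}u_L+\indic_{\{|x|\geq|t|+R\}}(\Phi(v)-u_L)$ and applying Strichartz to $\Phi(v)-u_L$ (which has zero data).
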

Gluing the preceding local solutions, we obtain, for any initial data $(u_0,u_1)$, a maximal solution defined on a maximal domain $\{(t,x)\in (\widetilde{T}_-,\widetilde{T}_+)\times \RR^N\;\:\; |x-x_0|\geq |t-t_0|+R\}$, where $\widetilde{T}_{\pm}=\widetilde{T}_{\pm}(x_0,R)$, and that satisfies the following blow-up criterion:
$$\widetilde{T}_{+}<\infty\Longrightarrow \|\indic_{\{|x-x_0|>|t-t_0|\}}u\|_{L^{\frac{N+2}{N-2}}\left((0,\widetilde{T}_+),L^{\frac{2(N+2)}{N-2}}\right)}=+\infty
$$
(and similarly in the past). It is easy to check that if $R>R'$ then $\widetilde{T}_+(x_0,R)\geq \widetilde{T}_+(x_0,R')$, and also that for all $R\geq 0$, 
$\widetilde{T}_+(x_0,R)\geq T_+$, where $T_+$ is the maximal time of existence for the solution on the whole space (see above). We also have:
\begin{lemma}
\label{L:scattering}
If $\indic_{\{|x-x_0|\geq |t-t_0|+R\}}u\in L^{\frac{N+2}{N-2}}\left((0,\widetilde{T}_+),L^{\frac{2(N+2)}{N-2}}(\RR^N)\right)$, then $\widetilde{T}_+=+\infty$ and $u$ scatters to a linear solution on $\{|x-x_0|>R+|t-t_0|\}$, in the sense that there exists a solution $v_L$ of the linear wave equation on $\RR\times \RR^N$ such that 
$$\lim_{t\to +\infty} \left\|\indic_{\{|x-x_0|>|t-t_0|+R\}} |\nabla_{t,x}(u-v_L)(t)|\right\|_{L^2}=0.$$
\end{lemma}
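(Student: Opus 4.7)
The plan has two parts: globality in time, then existence of the scattering profile. For globality, invoke the blow-up criterion stated just above the lemma: if $\widetilde{T}_+<\infty$, then $\|\indic_{\{|x-x_0|>|t-t_0|+R\}}u\|_{L^{\frac{N+2}{N-2}}((0,\widetilde{T}_+),L^{\frac{2(N+2)}{N-2}})}=+\infty$. The hypothesis directly contradicts this, so $\widetilde{T}_+=+\infty$. (The case $R>0$ follows from $R=0$ since the domain shrinks, or one reproves the blow-up criterion inside $\widetilde{T}_+(x_0,R)$ using the small-data Proposition \ref{P:Small_data_cones}.)

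For the scattering profile, let $\tu$ be the extension to all of $\RR^N$ solving \eqref{NLW_trunc}--\eqref{ID_trunc}. By finite speed of propagation for \eqref{NLW_trunc}, $\tu(t,x)=u(t,x)$ whenever $|x-x_0|\geq |t-t_0|+R$. By Duhamel,
\[
\tu(t)=S_L(t-t_0)(u_0,u_1)+\int_{t_0}^{t}S_L(t-s)\bigl(0,\indic_{\{|y-x_0|\geq |s-t_0|+R\}}|\tu(s)|^{\frac{4}{N-2}}\tu(s)\bigr)\,ds.
\]
Define the asymptotic data
\[
(v_0,v_1):=(u_0,u_1)+\int_{t_0}^{+\infty}S_L(t_0-s)\bigl(0,\indic_{\{|y-x_0|\geq |s-t_0|+R\}}|\tu(s)|^{\frac{4}{N-2}}\tu(s)\bigr)\,ds,
\]
and let $v_L(t):=S_L(t-t_0)(v_0,v_1)$. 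Convergence of the integral in $\HHH$ will follow from the Strichartz estimate \eqref{Strichartz}: the nonlinearity $|\tu|^{\frac{4}{N-2}}\tu$, restricted by the indicator, is controlled in a dual Strichartz norm by $\|\indic_{\{|y-x_0|\geq |s-t_0|+R\}}\tu\|_{L^{\frac{N+2}{N-2}}L^{\frac{2(N+2)}{N-2}}}^{\frac{N+2}{N-2}}$, which is finite by hypothesis together with the fact that $\tu=u$ on the truncation set.

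From this setup,
\[
\tu(t)-v_L(t)=-\int_{t}^{+\infty}S_L(t-s)\bigl(0,\indic_{\{|y-x_0|\geq |s-t_0|+R\}}|\tu(s)|^{\frac{4}{N-2}}\tu(s)\bigr)\,ds,
\]
so applying \eqref{Strichartz} on $(t,+\infty)$ and using absolute continuity of the Strichartz norm of the nonlinearity,
\[
\lim_{t\to+\infty}\|\nabla_{t,x}(\tu-v_L)(t)\|_{L^2(\RR^N)}=0.
\]
Restricting to the region $\{|x-x_0|>|t-t_0|+R\}$ and using $\tu=u$ there yields the claimed limit. The $t\to-\infty$ version (if desired) is identical. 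The main technical point is verifying that the truncated nonlinearity $\indic_{\{|y-x_0|\geq |s-t_0|+R\}}|\tu|^{\frac{4}{N-2}}\tu$ belongs to the appropriate dual Strichartz space on $(t_0,+\infty)$; this reduces, via H\"older, to the hypothesis, so there is no serious obstacle, only bookkeeping on Strichartz exponents.
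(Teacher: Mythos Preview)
Your proof is correct and is precisely the standard argument the paper has in mind when it writes ``The Cauchy theory \ldots\ adapts easily to the case of solutions outside wave cones. We give the statements and omit most proofs.'' The paper does not supply its own proof of this lemma, so there is nothing to compare against beyond noting that your construction of $(v_0,v_1)$ via the full Duhamel integral and the use of $\|\indic\,|\tu|^{\frac{4}{N-2}}\tu\|_{L^1L^2}\le \|\indic\,\tu\|_{L^{\frac{N+2}{N-2}}L^{\frac{2(N+2)}{N-2}}}^{\frac{N+2}{N-2}}$ is exactly the expected route.
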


\begin{lemma}
Let $I$ be an interval with $t_0\in I$, 
and $f\in L^1(I,L^2)$. Let $u$ be defined by \eqref{Duhamel} for $t\in I$,  and assume that 
 $$f=|u|^{\frac{4}{N-2}}u \text{ a.e. for } |x-x_0|>|t-t_0|,\; t\in I.$$
 Then $I\subset (\widetilde{T}_-\widetilde{T}_+)$ and the restriction of $u$ to the set $\{|x-x_0|>|t-t_0|+R,\;t\in I\}$ coincides with the solution of \eqref{NLW} on this set, as defined in Definition \ref{D:sol_cone}.
\end{lemma}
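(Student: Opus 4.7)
The strategy is to compare the given $u$, which satisfies the Duhamel formula with source $f$, to the solution $\tilde u$ of the truncated equation \eqref{NLW_trunc} with the same initial data $(u_0,u_1)$ at $t=t_0$, and to show that, in the region $\{|x-x_0|>|t-t_0|+R\}$, the two functions agree. Throughout the argument, I will use that, by Strichartz applied to \eqref{Duhamel} with $f\in L^1(I,L^2)$ and $(u_0,u_1)\in \HHH$, the function $u$ automatically lies in $L^{\frac{2(N+1)}{N-2}}(J\times \RR^N)$ and in $L^{\frac{N+2}{N-2}}\bigl(J,L^{\frac{2(N+2)}{N-2}}\bigr)$ on every compact $J\subset I$, so $|u|^{\frac{4}{N-2}}u$ has good integrability on the cone.

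First, let $(\widetilde T_-,\widetilde T_+)$ be the maximal interval of existence of the truncated equation \eqref{NLW_trunc} with data $(u_0,u_1)$ at $t=t_0$, and set $I' = I\cap (\widetilde T_-,\widetilde T_+)$, which is nonempty by Proposition~\ref{P:Small_data_cones}. Subtracting the Duhamel formulas for $u$ and $\tilde u$ gives
\[
 u(t)-\tilde u(t)=\int_{t_0}^t S_L(t-s)\!\left(0,\,f(s)-|\tilde u|^{\frac{4}{N-2}}\tilde u(s)\,\indic_{\{|x-x_0|>R+|s-t_0|\}}\right) ds.
\]
By finite speed of propagation, the restriction of $u-\tilde u$ to $\{|x-x_0|>|t-t_0|+R\}$ depends only on the restriction of the source to the same set. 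On that set the truncating characteristic equals $1$ and, since $|x-x_0|>|s-t_0|+R\geq |s-t_0|$, the hypothesis forces $f(s)=|u|^{\frac{4}{N-2}}u(s)$. Hence on the cone over $I'$,
\[
 (u-\tilde u)(t)=\int_{t_0}^{t} S_L(t-s)\Bigl(0,\,|u|^{\frac{4}{N-2}}u-|\tilde u|^{\frac{4}{N-2}}\tilde u\Bigr) ds.
\]

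Second, I would run the standard Strichartz/Gr\"onwall argument used e.g.\ in Lemma~\ref{L:linear_approx}: applying the Strichartz estimate \eqref{Strichartz} with the localization \eqref{boundedness_indic} and the pointwise bound $||u|^{\frac{4}{N-2}}u-|\tilde u|^{\frac{4}{N-2}}\tilde u|\lesssim (|u|^{\frac{4}{N-2}}+|\tilde u|^{\frac{4}{N-2}})|u-\tilde u|$, combined with H\"older on small subintervals of $I'$, we obtain a linear integral inequality for the quantity $\|\indic_{\{|x-x_0|>|t-t_0|+R\}}(u-\tilde u)\|_{L^{\frac{2(N+1)}{N-2}}}$ with integrable kernel (thanks to $u,\tilde u\in L^{\frac{2(N+1)}{N-2}}$ on the cone). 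A Gr\"onwall argument then forces $u=\tilde u$ a.e. on the cone over $I'$.

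Third, I would upgrade this to $I\subset (\widetilde T_-,\widetilde T_+)$. Suppose, for contradiction, that $\widetilde T_+<\sup I$. By the blow-up criterion for the truncated Cauchy problem (the analog stated just before Lemma~\ref{L:scattering}), this forces $\|\indic_{\{|x-x_0|>|t-t_0|+R\}}\tilde u\|_{L^{\frac{N+2}{N-2}}L^{\frac{2(N+2)}{N-2}}}=+\infty$ on $(t_0,\widetilde T_+)$. But by the previous step this quantity equals the corresponding norm of $u$, which is finite on any compact subinterval of $I$ since $u$ is given by \eqref{Duhamel} with $f\in L^1(I,L^2)$. This contradiction shows $\widetilde T_+\geq \sup I$, and symmetrically $\widetilde T_-\leq \inf I$, so $I\subset (\widetilde T_-,\widetilde T_+)$; the equality of $u$ and $\tilde u$ on the cone then holds over all of $I$.

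The only delicate point is the Gr\"onwall step, where one must make sure that all Strichartz norms used are well defined on the truncated cone (so that the localization \eqref{boundedness_indic} and the chain rule \eqref{fractional_cones} apply) and that the constant appearing in the Gr\"onwall estimate is controlled uniformly by a Strichartz bound on $u$ itself, so that the conclusion propagates up to $\widetilde T_+$. Everything else is a routine bootstrap using already-cited results.
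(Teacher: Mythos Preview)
The paper omits the proof of this lemma (it writes ``We give the statements and omit most proofs''), so your approach is the natural one and is correct in outline: compare $u$ to the solution $\tilde u$ of the truncated problem with the same data, reduce to the exterior of the wave cone by finite speed of propagation, run a contraction on short subintervals to get $u=\tilde u$ there, and then invoke the blow-up criterion to extend to all of $I$.

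Two small points to clean up. First, the ``Gr\"onwall'' phrasing is slightly misleading: the Strichartz estimate gives a bound on a space--time norm, not on a pointwise-in-time quantity, so what you really do is a contraction/bootstrap on finitely many subintervals of $I'$ on which the Strichartz norms of $u\indic_\Gamma$ and $\tilde u\indic_\Gamma$ are small, then iterate using that $\vec{\hat w}$ vanishes at the endpoints (here $\hat w$ is the solution of the linear wave equation with source $(F(u)-F(\tilde u))\indic_\Gamma$ and zero data; by finite speed of propagation $\hat w=u-\tilde u$ on $\Gamma$, and the contraction then kills $\hat w$ everywhere on each subinterval). Second, in this subsection the paper restricts to $N\in\{3,4,5\}$, so you can work directly with $S=L^{\frac{N+2}{N-2}}_tL^{\frac{2(N+2)}{N-2}}_x$ and $L^1_tL^2_x$ on the source; the references to \eqref{boundedness_indic} and \eqref{fractional_cones} (which are for the $W'$ Besov framework) are unnecessary here, and the exponents you quote for the $L^{\frac{2(N+1)}{N-2}}_{t,x}$ norm do not pair cleanly with $L^1_tL^2_x$. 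With those adjustments, your argument goes through.
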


\subsection{Orthogonality conditions}
\label{SS:ortho}
If $\Theta=(X,\lambda,\vell)\in \RR^N\times (0,\infty)\times B_{\RR^N}$, where $B_{\RR^N}=\left\{X\in \RR^3\;:\; |X|<1\right\}$, we denote
 $$W_{\Theta}(t,x)= \frac{1}{\lambda^{\frac{N}{2}-1}}W_{\vell}\left( \frac{t}{\lambda},\frac{x-X}{\lambda} \right).$$
We claim:
 \begin{claim}
 \label{Cl:modulation}
Assume $N\geq 3$. There exists a small constant $\eps_0$, and, for all $\eta_0\in (0,1)$, a constant $\eta_1\in (0,1)$ with the following property.
 Let $(u_0,u_1)$ be such that \eqref{dist_u_W} holds for some $\vell_0$ with $|\vell_0|\leq \eta_0$.
 Then there exists $\Theta_*=(X_*,\lambda_*,\vell_*)\in \RR^N\times (0,\infty)\times B_{\RR^N}$, with $|\vell_*|<\eta_1$, such that 
 $$ d\left((u_0,u_1),\MMM_W\right)=\left\|(u_0,u_1)-\vec{W}_{\Theta_*}(0)\right\|_{\HHH}$$
 and
 \begin{equation}
 \label{good_ortho}
 \left(\lambda_*^{\frac{N-2}{2}}u_0\left(X_*+\lambda_*\cdot \right),\lambda_*^{\frac{N}{2}}u_1\left(X_*+\lambda_*\cdot  \right)\right)-(\vec{W}_{\vell_*},0) \in \widetilde{\ZZZZ}_{\vell_*}^{\bot}.
    \end{equation} 
  \end{claim}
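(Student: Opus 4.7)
The approach is the classical variational/modulation argument: realize $\Theta_{*}$ as a minimizer of the distance functional to the finite-dimensional submanifold $\MMM_W$, then read off the orthogonality conditions from the first-order optimality. Here the parameter count matches: $\MMM_W$ is parametrized by $X \in \RR^N$, $\lambda>0$, and $\vell \in B_{\RR^N}$, giving $2N+1$ parameters, which is exactly $\dim \widetilde{\ZZZZ}_{\vell_*}$.

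First I would prove existence of a minimizer of
$$F(\Theta) := \left\|(u_0,u_1) - \vec{W}_\Theta(0)\right\|_{\HHH}^2, \qquad \Theta = (X,\lambda,\vell),$$
which is smooth and satisfies $F((0,1,\vell_0)) \leq \eps^2$. The key is coercivity on compact sets of $\vell$: as $\lambda \to 0$ or $\lambda \to +\infty$, or as $|X|/\lambda \to +\infty$, the initial data $\vec{W}_\Theta(0)$ converges weakly to $0$ in $\HHH$ while its norm stays equal to $\|\vec{W}_\vell(0)\|_{\HHH}$, which is bounded below uniformly in $|\vell|<1-\delta$. Similarly as $|\vell|\to 1$, a Lorentz-concentration argument (see Appendix \ref{S:Lorentz}) forces $\vec{W}_\Theta(0) \rightharpoonup 0$ along any subsequence of bounded $(X,\lambda)$. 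Consequently, for $\eps_0$ small, any minimizing sequence stays in a compact subset of $\RR^N \times (0,\infty) \times \{|\vell| \leq \eta_1\}$ with $\eta_1 = \eta_1(\eta_0)<1$, and a minimizer $\Theta_* = (X_*,\lambda_*,\vell_*)$ exists. The same estimates show $|X_*|\lesssim \eps$, $|\lambda_*-1|\lesssim \eps$, and $|\vell_*|\leq \eta_1$.

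Next I read off the optimality: at the minimizer,
$$\left\langle (u_0,u_1) - \vec{W}_{\Theta_*}(0),\ \partial_{\mu} \vec{W}_\Theta(0)\big|_{\Theta=\Theta_*} \right\rangle_{\HHH} = 0
\quad \text{for } \mu \in \{X_1,\ldots,X_N,\lambda,\ell_1,\ldots,\ell_N\}.$$
The map $(u_0,u_1)\mapsto (\lambda_*^{(N-2)/2}u_0(X_*+\lambda_*\cdot), \lambda_*^{N/2}u_1(X_*+\lambda_*\cdot))$ is an isometry of $\HHH$, so pushing the orthogonality through this isometry reduces \eqref{good_ortho} to showing that the tangent space to $\MMM_W$ at $\vec{W}_{\Theta_*}(0)$, when transported to the parameter value $(X,\lambda,\vell)=(0,1,\vell_*)$, is exactly $\widetilde{\ZZZZ}_{\vell_*}$. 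The scaling derivative and the spatial-translation derivatives at $(0,1,\vell_*)$ produce, respectively, the Lorentz-transported versions of $(\Lambda W,0)$ and $(\partial_{x_j}W, 0)$, which lie in $\widetilde{\ZZZZ}_{\vell_*}$ by its very definition.

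The remaining and only non-routine step is the identification of the Lorentz-boost derivatives $\partial_{\ell_j}\vec{W}_{\vell}(0)|_{\vell=\vell_*}$ with the Lorentz transports of $(0,\partial_{x_j}W)$. Differentiating \eqref{defLl} with respect to $\ell_j$ and using that $W$ is stationary, one checks at $\vell=0$ that $\partial_{\ell_j}W_\vell(t,x)|_{\vell=0} = -t\,\partial_{x_j}W(x)$, so $\partial_{\ell_j}\vec{W}_\vell(0)|_{\vell=0} = (0,-\partial_{x_j}W)\in \widetilde{\ZZZZ}$. For general $\vell_*$ the composition rule $\LLL_{\vell_1}\circ \LLL_{\vell_2}$ from Appendix \ref{S:Lorentz} reduces the computation at $\vell_*$ to the one at $\vell=0$ conjugated by $\LLL_{\vell_*}$, showing that these $N$ vectors span the $N$-dimensional complement of the scaling/translation directions inside $\widetilde{\ZZZZ}_{\vell_*}$. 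Together the $2N+1$ tangent vectors span exactly $\widetilde{\ZZZZ}_{\vell_*}$, which concludes the proof. The main obstacle is this Lorentz-boost identification; the other ingredients are standard, but without carefully invoking the composition properties of $\LLL_\vell$ the identification of the $N$ boost-generators with the $(0,\partial_{x_j}W)$-directions (rather than with extraneous elements) is not automatic.
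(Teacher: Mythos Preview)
Your approach---minimize the distance functional over $\MMM_W$, establish coercivity in the parameters, and extract the orthogonality conditions from first-order optimality at the minimizer---is exactly the paper's. One correction: the claim that $\vec{W}_\Theta(0)\rightharpoonup 0$ as $|\vell|\to 1$ is false, since weakly convergent sequences in a Hilbert space are bounded while $\|\vec{W}_{\vell}(0)\|_{\HHH}\to\infty$; the paper instead uses the explicit formula $\|\vec{W}_{\vell}(0)\|_{\HHH}^2=\frac{N+(2-N)|\vell|^2}{N\sqrt{1-|\vell|^2}}\|W\|_{\hdot}^2\to\infty$ together with the triangle inequality to bound $\delta_W(\Theta)$ from below once $|\vell|\geq\eta_1(\eta_0)$. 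Your discussion of why the tangent vectors span $\widetilde{\ZZZZ}_{\vell_*}$ is actually more detailed than the paper's, which simply asserts that differentiating at the minimum and then rescaling and translating yields \eqref{good_ortho}.
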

 \begin{proof}
 Let 
  $$\delta_W:\Theta\mapsto \left\|\vec{W}_{\Theta}(0)-(u_0,u_1)\right\|_{\HHH}^2.$$
  We first claim that there exists a constant $\eta_1\in (0,1)$, depending only on $\eta_0$, such that 
  \begin{equation}
   \label{def_eta1}
   |\vell|\geq \eta_1\Longrightarrow \delta_W(\Theta)\geq \|W\|^2_{\hdot}.
  \end{equation} 
Indeed, by the triangle inequality and \eqref{dist_u_W},
$$\sqrt{\delta_W(\Theta)}\geq \left\|\vec{W}_{\Theta}(0)\right\|_{\HHH}-\left\|\vec{W}_{\vell_0}\right\|_{\HHH}-\eps_0.$$
Since 
$$\left\|\vec{W}_{\Theta}(0)\right\|_{\HHH}=\left(  \frac{N+(2-N)|\vell|^2}{N\sqrt{1-|\vell|^2}}\right)^{1/2}\|W\|_{\hdot}\underset{|\vell|\to 1}{\longrightarrow}+\infty,$$
we obtain the existence of $\eta_1$ such that \eqref{def_eta1} holds.
  
 We next claim that there exists $M>0$ (depending on $(u_0,u_1)$) such that
 \begin{equation}
  \label{NL1}
  |X|+|\log(\lambda)|\geq M\Longrightarrow \delta_W(\Theta)\geq \frac{1}{N} \|W\|^2_{\dot{H}^1}.
 \end{equation}
 
 If not, there exists a sequence $(\Theta_n)_n=\Big( (X_n,\lambda_n,\vell_n) \Big)_n$, such that
 \begin{equation*}
  \lim_{n\to\infty}|X_n|+|\log(\lambda_n)|=+\infty
 \end{equation*}
 and for all $n$, 
 $$\delta_W(\Theta_n)\leq \frac{1}{N}\|W\|^2_{\hdot}.$$
 By \eqref{def_eta1} we can assume $|\vell_n|\leq \eta_1$ and thus, extracting subsequences if necessary, that there exists $\vell$ such that $|\vell|\leq \eta_1$ and 
 $$\lim_{n\to\infty}\vell_n=\vell.$$
Since $\vec{W}_{\Theta_n}(0)$ converges weakly to $(0,0)$ in $\HHH$, we obtain
$$\lim_{n\to\infty} \delta_W(\Theta_n)=\left\|(u_0,u_1)\right\|^2_{\HHH}+\left\|\vec{W}_{\vell}(0)\right\|^2_{\HHH}.$$
Using
$$ \left\|\vec{W}_{\vell}(0)\right\|^2_{\HHH}=\frac{N+(2-N)|\vell|^2}{N\sqrt{1-|\vell|^2}}\|W\|_{\hdot}^2 >   \frac 1N \|W\|^2_{\hdot},$$
we deduce a contradiction,
concluding the proof of \eqref{NL1}.
 
 In view of \eqref{NL1} and since by the assumptions of the claim there exists $\Theta$ such that $\delta_W(\Theta)\leq \eps_0$, we see (taking $\eps_0<\frac{1}{N}\|W\|^2_{\hdot}$) that there exists $\Theta_{*}$ such that $\delta_W$ attains a global minimum at $\Theta_{*}$. 
 At $\Theta=\Theta_{*}$, the function $\delta_W$ has a critical point. Differentiating, we obtain that $\vec{W}_{\Theta_*}(0)-(u_0,u_1)$
 is orthogonal, in $\HHH$, to
 $$\left(\frac{\partial}{\partial X_j} W_{\Theta}(0)\right)_{\restriction\Theta=\Theta_{*}},\quad \left(\frac{\partial}{\partial \lambda} W_{\Theta}(0)\right)_{\restriction\Theta=\Theta_{*}},\quad \left(\frac{\partial}{\partial \ell_j} W_{\Theta}(0) \right)_{\restriction\Theta=\Theta_{*}}.$$
Rescaling and translating in space, we obtain \eqref{good_ortho}.
 \end{proof}
\subsection{Proof in space dimension $3$}
\label{SS:NL3}
\begin{proof}[Proof of Theorem \ref{T:NL3}]
In all of the proof $\eta_0$ is fixed in $(0,1)$, and the constant $C>0$ (that may change from line to line) is allowed to depend on $\eta_0$.

We use Claim \ref{Cl:modulation}. Without loss of generality (rescaling and translating $u$), we can assume $\lambda_*=1$, $X_*=0$. We let $\vell=\vell_*$ to lighten notations. Let 
$$(h_0,h_1)=(u_0,u_1)-\vec{W}_{\vell}(0).$$

Let $h$ be the solution of 
 \begin{equation*}
 \left\{
 \begin{aligned}
 \partial_t^2h-\Delta h&=\left( (W_{\vell}+h)^5-W_{\vell}^5 \right)\indic_{|x|\geq |t|}\\
 h_{\restriction t=0}&=(h_0,h_1).
\end{aligned}\right.  
 \end{equation*} 
 Then
 \begin{equation*} 
 \left|\partial_t^2h-\Delta h-5\indic_{|x|\geq |t|}W_{\vell}^4h\right|\leq C\left( |h|^5+W_{\vell}^3h^2 \right)\indic_{|x|\geq |t|}.  
 \end{equation*} 
 Since by explicit computation, 
 $\left\|\indic_{\{|x|\geq |t|\}}W_{\vell}\right\|_{L^5_tL^{10}_x}\leq C,$
 we deduce, using Lemma \ref{L:linear_approx} and the smallness of $\|(h_0,h_1)\|_{\HHH}$, that for all $T>0$,
\begin{equation}
 \label{NL4}
  \sup_{|t|\leq T}\left\|\vec{h}(t)\right\|_{\HHH}+\|h\|_{L^8([-T,T],L^8_x)}+\|h\|_{L^5([-T,+T],L^{10}_x)}\leq C\|(h_0,h_1)\|_{\HHH}.
\end{equation} 
Let $h_{L}$ be the solution of the linearized equation:
\begin{equation}
 \label{NL5}
 \left\{
 \begin{aligned}
 \partial_t^2h_L-\Delta h_L&=5W_{\vell}^4 h_L\\
 h_{L\restriction t=0}&=(h_0,h_1),
\end{aligned}\right.   
\end{equation} 
and $\tlh_{L}$ be the solution of the truncated linearized equation:
\begin{equation}
 \label{NL6}
 \left\{
 \begin{aligned}
 \partial_t^2\tlh_L-\Delta \tlh_L&=5W^4_{\vell} \tlh_L\indic_{|x|\geq |t|}\\
 \tlh_{L\restriction t=0}&=(h_0,h_1).
\end{aligned}\right.   
\end{equation} 
Then by Theorem \ref{T:LW} and \eqref{good_ortho},
$$\sum_{\pm}\lim_{t\to\pm \infty} \int_{|x|>|t|} \left|\nabla_{t,x}h_{L}(t,x)\right\|^2\,dx\geq \frac 1C \|(h_0,h_1)\|^2_{\HHH}.$$
Since $\tlh_L(t,x)=h_L(t,x)$ for $|x|>|t|$ by finite speed of propagation, we deduce
$$\sum_{\pm}\lim_{t\to\pm \infty} \int_{|x|>|t|} \left|\nabla_{t,x}\tlh_{L}(t,x)\right|^2\,dx\geq \frac 1C \|(h_0,h_1)\|^2_{\HHH}.$$
Next, notice that 
$$\left|\partial_t^2(h-\tlh_L)-\Delta(h-\tlh_L)-5W^4_{\vell}(h-\tlh_L)\indic_{\{|x|>|t|\}}\right|\leq C\left( |h|^5+W_{\vell}^3h^2 \right)\indic_{|x|\geq |t|}.$$
Using Lemma \ref{L:linear_approx} as before, and since $(\vec{h}-\vec{\tilde{h}}_L)(0)=0$, we obtain in view of \eqref{NL4},
$$\sup_{t\in \RR} \left\|\vec{h}(t)-\vec{\tlh}_L(t)\right\|_{\HHH}\leq C\left\|(h_0,h_1)\right\|^2_{\HHH}.$$
Hence, taking $\eps_0$ smaller if necessary,
\begin{equation}
 \label{NL8}
 \sum_{\pm}\lim_{t\to\pm \infty} \int_{|x|>|t|} \left|\nabla_{t,x}h(t,x)\right|^2\,dx\geq \frac 1C \|(h_0,h_1)\|^2_{\HHH}.
\end{equation} 
Letting $\tilde{u}=W_{\vell}+h$, we see that \eqref{NL8} implies \eqref{NL0}, since by the choice of $(h_0,h_1)$,
$$\|(h_0,h_1)\|_{\HHH}=d\left(\MMM_W,(u_0,u_1)\right).$$
Furthermore, $\tu$ satisfies the equation:
\begin{align*}
\partial_t^2\tu-\Delta\tu&=W_{\vell}^5+\left((W_{\vell}+h)^5-W_{\vell}^5 \right)\indic_{\{|x|>|t|\}}\\
&=\tilde{u}^5\indic_{\{|x|>|t|\}}+\tilde{u}W_{\vell}^5\indic_{\{|x|<|t|\}},
\end{align*}
which concludes the proof. Recall that  we have translated $u$ in space in order to assume $X_*=0$ in the conclusion of Claim \ref{Cl:modulation}, hence the necessity of the parameter $X$ in the conclusion of the Theorem (indeed, one can take $X=X_{*}$).
\end{proof}
\subsection{Proof in space dimension $5$}
\label{SS:NL5}
This subsection is dedicated to the proof of Theorem \ref{T:NL5}. The main difficulty compared to the case $N=3$ is the existence of an additional component $(0,\Lambda W)$ in $\ZZZZ$, so that $\widetilde{\ZZZZ}$ is a strict subspace of $\ZZZZ$. To deal with this additional direction, we apply Theorem \ref{T:LW} not only at $t=0$, but for all initial times close to $t=0$. As a drawback, Theorem \ref{T:NL5} is slightly weaker than Theorem \ref{T:NL3}.

We first prove:
\begin{lemma}
\label{L:PP1}
Assume $N=5$. There exists $\eps>0$ with the following property.
Let $u,v$ be two global solutions of \eqref{NLW} on $\{|x|>|t|\}$ with initial data $(u_0,u_1)$ and $(v_0,v_1)$ respectively, and such that 
\begin{gather}
 \label{L:PP1_H1} \|(u_0,u_1)-(W,0)\|_{\HHH}\leq \eps,\quad \|(v_0,v_1)-(W,0)\|_{\HHH}\leq \eps\\
 \label{L:PP1_H2} \sum_{\pm} \lim_{t\to \pm\infty}  \left(\int_{\{|x|\geq |t|\}} |\nabla_{t,x}u|^2\,dx+\int_{\{|x|\geq |t|\}} |\nabla_{t,x}v|^2\,dx\right)=0\\
 \label{L:PP1_H3} E(u_0,u_1)=E(v_0,v_1)\\
\label{L:PP1_H3'} P(u_0,u_1)=P(v_0,v_1)=0\\
 \label{L:PP1_H4} \int \nabla u_0\cdot \nabla \Lambda W=\int \nabla v_0\cdot \nabla \Lambda W=0\\ 
 \label{L:PP1_H4'} 
 \forall j\in\llbracket 1,N\rrbracket,\; \int \nabla u_0\cdot \nabla \partial_{x_j}W=\int \nabla v_0\cdot\nabla\partial_{x_j}W=0.
\end{gather}
Then $(u_0,u_1)=(v_0,v_1)$ or $(u_0,u_1)=(v_0,-v_1)$.
\end{lemma}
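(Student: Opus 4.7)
The plan is to view $w:=u-v$ as a perturbation of a solution of the linearized equation $\partial_t^2+L_W$ around $W$, extract linear information via Theorem~\ref{T:LW}, and then close the argument using the algebraic constraints provided by the momentum, energy and orthogonality hypotheses, supplemented by a time-reversal symmetry argument.

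First I would run the perturbative comparison. Outside the wave cone $\{|x|>|t|\}$ the difference $w=u-v$ satisfies $\partial_t^2 w+L_Ww=R(u,v,w)$, where $R$ collects the quadratic and higher terms in the Taylor expansion of $|u|^{4/3}u-|v|^{4/3}v$ around $W$. Since $u,v$ are $\eps$-close to $W$ in $\HHH$, Lemma~\ref{L:linear_approx}, used exactly as in the proof of Theorem~\ref{T:NL3}, yields
\[
\sup_{t\in\RR}\bigl\|\indic_{\{|x|>|t|\}}\nabla_{t,x}(w-w_L)(t)\bigr\|_{L^2}\;\lesssim\;\eps\,\|(w_0,w_1)\|_{\HHH},
\]
where $w_L$ denotes the global solution of the linear equation $\partial_t^2 w_L+L_Ww_L=0$ with initial data $(w_0,w_1)=(u_0-v_0,u_1-v_1)$. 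The hypothesis~\eqref{L:PP1_H2} forces $w$ to have vanishing exterior asymptotic energy, and the above estimate therefore bounds the exterior asymptotic energy of $w_L$ by $C\eps\|(w_0,w_1)\|_{\HHH}$. Applying Theorem~\ref{T:LW} with $\vell=0$ gives $\|\pi_{\ZZZZ^{\bot}}(w_0,w_1)\|_{\HHH}\lesssim \eps\,\|(w_0,w_1)\|_{\HHH}$; that is, $(w_0,w_1)$ lies in $\ZZZZ$ up to a controlled small error.

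Next I would use the remaining hypotheses to isolate the surviving direction inside $\ZZZZ$. Writing the $\ZZZZ$-component of $(w_0,w_1)$ as $\bigl(a_0\Lambda W+\sum_j a_j\partial_{x_j}W,\;b_0\Lambda W+\sum_j b_j\partial_{x_j}W\bigr)$, the radial symmetry of $\Lambda W$ and the antisymmetry of $\partial_{x_j}W$ reduce the orthogonalities \eqref{L:PP1_H4}--\eqref{L:PP1_H4'} to $a_0,a_j=O(\eps\|(w_0,w_1)\|_{\HHH})$; combining $P(u)=P(v)=0$ with $\partial_{x_k}u_0\approx\partial_{x_k}W$ yields $b_k=O(\eps\|(w_0,w_1)\|_{\HHH})$ for $k\ge1$; and $E(u)=E(v)$, using $u_0\approx v_0$, translates into the quadratic relation
\[
b_0\bigl(b_0\|\Lambda W\|_{L^2}^2+2\langle v_1,\Lambda W\rangle_{L^2}\bigr)\;=\;O\bigl(\eps\|(w_0,w_1)\|_{\HHH}^2\bigr).
\]
Hence $(w_0,w_1)$ is, modulo small errors, of the form $(0,b_0\Lambda W)$ with $b_0$ either essentially zero or essentially $-2\langle v_1,\Lambda W\rangle_{L^2}/\|\Lambda W\|_{L^2}^2$.

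The final step, and the main obstacle, is to close the $(0,\Lambda W)$ direction — absent for $N=3,4$ but genuinely present for $N=5$ since $\Lambda W\in L^2(\RR^5)$. I would rerun the previous analysis for the pair $(u,\tv)$, where $\tv(t,x):=v(-t,x)$. This $\tv$ satisfies every hypothesis of the lemma: the energy, the $\hdot$-orthogonalities and the no-channel property are preserved, while the momentum flips sign and hence remains zero. Applied to $(u,\tv)$, the argument just sketched produces $u_1+v_1\in\RR\Lambda W$ modulo $O(\eps)$, and combining with $u_1-v_1\in\RR\Lambda W$ forces $v_1$ itself to be essentially proportional to $\Lambda W$. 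Re-inserting this into the quadratic energy identity leaves only the two admissible alternatives $u_1=v_1$ and $u_1=-v_1$, matching the stated dichotomy. Promoting these approximate identities to exact equalities — the technical heart of the argument — will require a final bootstrap inside the finite-dimensional subspace $\ZZZZ$, using the invertibility of the Gram matrices $(\langle\partial_{x_j}W,\partial_{x_k}W\rangle_{L^2})_{jk}$ and $(\langle\nabla\partial_{x_j}W,\nabla\partial_{x_k}W\rangle_{L^2})_{jk}$, and taking $\eps_0$ sufficiently small.
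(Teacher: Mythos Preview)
Your overall strategy—linearize the difference $w=u-v$, apply Theorem~\ref{T:LW} to its linearized evolution, then close using the momentum, energy and orthogonality constraints—matches Steps~2 and~3 of the paper's proof. However, you skip what is the paper's Step~1, and this creates a gap that the time-reversal trick does not fill.

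The paper's Step~1 applies Theorem~\ref{T:LW} not to $u-v$ but to $a=u-W$ and $v-W$ \emph{individually}. Combined with $P(u_0,u_1)=0$, this yields the refined bounds $\|(f_0,f_1)\|_{\HHH}\lesssim\beta_0^2$ and $\sum_{j\geq 1}|\beta_j|\lesssim\beta_0^3$ (likewise for $v$). These are what make the energy step close: writing $E(u_0,0)-E(v_0,0)=E(W+f_0,0)-E(W+g_0,0)$ and using that $W$ is a critical point, one gets
\[
|E(u_0,0)-E(v_0,0)|\lesssim \|h_0\|_{\dot H^1}\bigl(\|f_0\|_{\dot H^1}+\|g_0\|_{\dot H^1}\bigr)\lesssim \eps|\alpha_0|\,(\beta_0^2+\gamma_0^2),
\]
so that $|\alpha_0(\beta_0+\gamma_0)|\lesssim \eps|\alpha_0|(\beta_0^2+\gamma_0^2)$. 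After arranging $\beta_0,\gamma_0\geq 0$ by a sign flip, this gives $\beta_0+\gamma_0\lesssim\eps(\beta_0+\gamma_0)^2$, forcing $\beta_0=\gamma_0=0$ and hence $\alpha_0=0$.

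Without Step~1 you only know $\|u_0-W\|_{\dot H^1},\|v_0-W\|_{\dot H^1}\leq\eps$, so the same computation gives merely $|E(u_0,0)-E(v_0,0)|\lesssim \eps\|w_0\|_{\dot H^1}\lesssim \eps^2|\alpha_0|$. Your stated error $O(\eps\|(w_0,w_1)\|_{\HHH}^2)=O(\eps\alpha_0^2)$ is therefore not what the argument actually produces; the true remainder is $O(\eps^2|\alpha_0|)$. Dividing by $\alpha_0$ leaves $|\beta_0+\gamma_0|\lesssim\eps^2$, and the time-reversal pairing $(u,\tilde v)$ symmetrically yields $|\beta_0-\gamma_0|\lesssim\eps^2$. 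Together these give only $|\beta_0|,|\gamma_0|\lesssim\eps^2$, with no mechanism to iterate to zero: nothing improves the factor $\|u_0-W\|+\|v_0-W\|\lesssim\eps$ in the energy estimate. The Gram-matrix remark at the end is orthogonal to this issue and does not supply the missing smallness. In short, the paper's sign convention $\beta_0,\gamma_0\geq 0$ together with the Step~1 quadratic gain $\|f_0\|\lesssim\beta_0^2$ is what replaces your time-reversal idea and actually closes the loop.
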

\begin{proof}
\setcounter{step}{0}
 \begin{step}[Estimates on $u$ and $v$]
 \label{St:uv}
  We let 
  \begin{gather*}
  \beta_0:=\frac{1}{\|\Lambda W\|_{L^2}}\int \Lambda W\,u_1,\quad \gamma_0:=\frac{1}{\|\Lambda W\|_{L^2}}\int \Lambda W\,v_1\\
  \beta_j:=\frac{1}{\|\partial_{x_j} W\|_{L^2}}\int \partial_{x_j}W\,u_1,\quad \gamma_j:=\frac{1}{\|\partial_{x_j}W\|_{L^2}}\int \partial_{x_j}W\,v_1,\quad j\in \llbracket 1,5 \rrbracket.
  \end{gather*}
  Replacing $u(t,x)$ by $u(-t,x)$ or/and $v(t,x)$ by $v(-t,x)$ if necessary, we assume that $\beta_0$ and $\gamma_0$ are non-negative. 
  We let
 \begin{align*}
 (f_0,f_1)&=(u_0,u_1)-(W,0)-\frac{\beta_0}{\|\Lambda W\|_{L^2}}(0,\Lambda W)-\sum_{j=1}^5\frac{\beta_j}{\|\partial_{x_j}W\|_{L^2}}(0,\partial_{x_j}W)\\
 (g_0,g_1)&=(v_0,v_1)-(W,0)-\frac{\gamma_0}{\|\Lambda W\|_{L^2}}(0,\Lambda W)-\sum_{j=1}^5\frac{\gamma_j}{\|\partial_{x_j}W\|_{L^2}}(0,\partial_{x_j}W).
 \end{align*}
In this step, we prove
\begin{gather}
 \label{PP3}
 \left\|(u-W)\indic_{\{|x|>|t|\}}\right\|_{L^{\frac{7}{3}}_tL^{\frac{14}{3}}_x}\lesssim \eps,\quad
 \|(f_0,f_1)\|_{\HHH}\lesssim \beta_0^2,\quad \sum_{j=1}^5 |\beta_j|\lesssim \beta_0^3\\
 \label{PP3'}
 \left\|(v-W)\indic_{\{|x|>|t|\}}\right\|_{L^{\frac{7}{3}}_tL^{\frac{14}{3}}_x}\lesssim \eps,\quad
 \|(g_0,g_1)\|_{\HHH}\lesssim \gamma_0^2,\quad \sum_{j=1}^5 |\gamma_j|\lesssim \gamma_0^3.
\end{gather} 
The proofs of \eqref{PP3} and \eqref{PP3'} are the same, and we will only prove \eqref{PP3}. 

Let $a:=u-W$. We have 
\begin{equation*}
 |\partial_t^2a+L_Wa|\lesssim W^{\frac 13}a^2+|a|^{\frac 73},
\end{equation*}
By Lemma \ref{L:linear_approx} and H\"older estimates, using that $\|a(0)\|_{\HHH}$ is small, we obtain
\begin{multline*}
\forall T>0,\quad 
 \left\|a\indic_{\{|x|>|t|\}}\right\|_{L^{\frac 73}\left(((-T,+T),L^{\frac{14}{3}}\right)}\\
 \lesssim \|\vec{a}(0)\|_{\HHH}+
 \left\|a\indic_{\{|x|>|t|\}}\right\|_{L^{\frac 73}\left(((-T,+T),L^{\frac{14}{3}}\right)}^2+\left\|a\indic_{\{|x|>|t|\}}\right\|_{L^{\frac 73}\left(((-T,+T),L^{\frac{14}{3}}\right)}^{\frac 73}.
 \end{multline*}
Since $\|a(0)\|_{\HHH}$ is small, we deduce:
$$\left\|a\indic_{\{|x|>|t|\}}\right\|_{L^{\frac 73}_tL_x^{\frac{14}{3}}}\lesssim \|\vec{a}(0)\|_{\HHH},$$
which implies the first inequality in \eqref{PP3}.
Let $a_L$ be the solution of 
$$\partial_t^2a_L+L_W a_L=0,\quad \vec{a}_L(0)=(u_0,u_1)-(W,0).$$
We have 
\begin{equation}
 \label{PP4}
 \left|(\partial_t^2+L_W)(a-a_L)\right|\lesssim |a|^2W^{\frac 13}+|a|^{\frac 73},\quad |x|>|t|.
\end{equation} 
By Lemma \ref{L:linear_approx}, since $\vec{a}(0)-\vec{a}_L(0)=(0,0)$,
$$ \sup_{t\in \RR}\left\|\indic_{\{|x|>|t|\}}(a-a_L)\right\|_{\HHH}\lesssim \left\|\left(|a|^2W^{\frac 13}+|a|^{\frac 73}\right)\indic_{\{|x|>|t|\}}\right\|_{L^{1}_tL^2_x}\lesssim \|\vec{a}(0)\|_{\HHH}^2,$$
and hence, by assumption \eqref{L:PP1_H2},
$$\limsup_{t\to \pm\infty} \left( \int_{|x|>|t|} \left|\nabla_{t,x}a_L\right|^2 \right)^{1/2}\lesssim \left\|(u_0,u_1)-(W,0)\right\|^2_{\HHH}.$$
Combining with Theorem \ref{T:LW} and the orthogonality assumptions \eqref{L:PP1_H4}, \eqref{L:PP1_H4'}, we deduce
\begin{equation*}
 \|(f_0,f_1)\|_{\HHH}\lesssim \|(f_0,f_1)\|_{\HHH}^2+\sum_{j=0}^5\beta_j^2,
\end{equation*} 
and thus
 \begin{equation}
\label{PP4'}
 \|(f_0,f_1)\|_{\HHH}\lesssim \sum_{j=0}^5\beta_j^2,
\end{equation} 
Furthermore since $P(u_0,u_1)=0$ by assumption \eqref{L:PP1_H3'},
$$\int \left( \nabla W+\nabla f_0 \right)\left( f_1+\frac{\beta_0}{\|\Lambda W\|_{L^2}}\Lambda W+\sum_{j=1}^5\frac{\beta_j}{\|\partial_{x_j}W\|_{L^2}}\partial_{x_j}W \right)=0.$$
Since $W$ is radial, we have $\int \nabla W\Lambda W=0$, $\int \partial_{x_j}W\partial_{x_k}W=0$ ($j\neq k$) and we obtain for $k\in\llbracket 1, 5\rrbracket$ (using that $\int \partial_{x_k}Wf_1=0$),
\begin{multline}
\label{momentum}
\beta_{k}\|\partial_{x_k}W\|_{L^2}+\int \partial_{x_k}f_0f_1\\
+\frac{\beta_0}{\|\Lambda W\|_{L^2}}\int \partial_{x_k}f_0\Lambda W +\sum_{j=1}^5 \frac{\beta_j}{\|\partial_{x_j}W\|_{L^2}}\int \partial_{x_k}f_0\partial_{x_j}W=0. 
\end{multline} 
Thus by \eqref{PP4'}
$$\sum_{k=1}^5 |\beta_k|\lesssim \left( \sum_{j=0}^5|\beta_j| \right)^3,$$
which yields
$$\sum_{k=1}^5 |\beta_k|\lesssim \beta_0^3.$$
Going back to \eqref{PP4'}, we deduce \eqref{PP3}.
 \end{step}
\begin{step}[Estimate on $u-v$]
\label{St:h}
We let 
\begin{gather*}
h:=u-v, \; (h_0,h_1):=\vec{h}(0)=(u_0,u_1)-(v_0,v_1),\\
\alpha_0:=\frac{1}{\|\Lambda W\|_{L^2}}\int h_1\Lambda W=\beta_0-\gamma_0,\quad \alpha_j:=\frac{1}{\|\partial_{x_j}W\|_{L^2}}\int h_1\partial_{x_j}W=\beta_j-\gamma_j,\\
h_1^{\bot}=h_1-\frac{\alpha_0}{\|\Lambda W\|_{L^2}} \Lambda W-\sum_{j=1}^5\frac{\alpha_j}{\|\partial_{x_j} W\|_{L^2}}\partial_{x_j}W=f_1-g_1.
\end{gather*}
 In this step, we prove
 \begin{equation}
  \label{PP5}
  \sum_{k=1}^{5}|\alpha_k|+
 \left\|(h_0,h_1^{\bot})\right\|_{\HHH}\lesssim \eps\alpha_0.
  \end{equation} 
Let $F(u)=|u|^{\frac 43}u$. We have 
\begin{equation}
 \label{PP6}
 \partial_t^2h+L_W h=F(u)-F(v)-F'(W)h.
\end{equation} 
We claim 
\begin{multline}
 \label{PP7}
 \left|F(u)-F(v)-F'(W)h\right|\\
 \lesssim |h|\left( |v-W|^{\frac 43}+|W|^{\frac 13}|v-W|+|W|^{\frac 13}|h|+|h|^{\frac 43} \right).
\end{multline}
Indeed, using elementary estimates on the function $F$, we obtain, since $h=u-v$,
\begin{multline*}
 |F(u)-F(v)-F'(v)h|\lesssim |v|^{\frac 13}|h|^2+|h|^{\frac 73}\lesssim W^{\frac{1}{3}}h^2+|v-W|^{\frac 13}h^2+|h|^{\frac 73}\\
 \lesssim W^{\frac 13}h^2+|v-W|^{\frac 43}|h|+|h|^{\frac 73},
\end{multline*}
where we have used that by Young's inequality, 
$|v-W|^{\frac 13}h^2\lesssim |v-W|^{\frac 43}|h|+|h|^{\frac{7}{3}}$.
Moreover,
$$|F'(v)-F'(W)|\lesssim |v-W|\,|W|^{\frac 13}+|v-W|^{\frac 43},$$
which yields \eqref{PP7}.

 By the first inequality in \eqref{PP3'}, and the fact that $W\indic_{\{|x|>|t|\}}\in L^{\frac 73}_tL^{\frac{14}{3}}_x$, we first obtain from \eqref{PP6}, \eqref{PP7} and Strichartz estimates
 $$\left\|h\ind\right\|_{L^{\frac 73}_tL^{\frac{14}{3}}_x}\lesssim \left\|(h_0,h_1)\right\|_{\HHH}.$$
 Letting $h_L$ be the solution of 
 $$(\partial_t^2+L_W)h_L=0,\quad \vec{h}_L(0)=(h_0,h_1),$$
we deduce, again from \eqref{PP6}, \eqref{PP7} and Lemma \ref{L:linear_approx},
 $$\sup_{t\in \RR} \int_{|x|>|t|} \left|\nabla_{t,x}(h-h_L)\right|^2\,dx\lesssim \eps^2\left\|(h_0,h_1)\right\|^2_{\HHH}.$$
 Using the assumption \eqref{L:PP1_H2}, we obtain
 $$\sum_{\pm}\lim_{t\to \pm\infty} \int_{|x|>|t|}|\nabla_{t,x}h_L(t,x)|^2\,dx\lesssim \eps^2\left\|(h_0,h_1)\right\|^2_{\HHH}.$$
 By Theorem \ref{T:LW}, and the orthogonality conditions \eqref{L:PP1_H4}, \eqref{L:PP1_H4'},
 \begin{equation}
 \label{PP7.1}
 \left\|h_1^{\bot}\right\|_{L^2}+\left\|h_0\right\|_{\hdot}\lesssim \eps\|(h_0,h_1)\|_{\HHH}=\eps\sqrt{\left\|(h_0,h_1^{\bot})\right\|^2_{\HHH}+\sum_{j=0}^5\alpha_j^2}. 
 \end{equation} 
 Expanding the momentums in the equality $P(u_0,u_1)-P(v_0,v_1)=0$ as in \eqref{momentum}, we obtain that for all $k$ in $\llbracket 1,5\rrbracket$, 
 \begin{multline*}
  0=(\beta_k-\gamma_k)\|\partial_{x_k}W\|_{L^2}+\int \partial_{x_k}f_0f_1-\partial_{x_k}g_0g_1\\
  +\frac{\beta_0}{\|\Lambda W\|_{L^2}}\int \partial_{x_k}f_0 \Lambda W-\frac{\gamma_0}{\|\Lambda W\|_{L^2}}\int \partial_{x_k}g_0\Lambda W\\+\sum_{j=1}^5 \left( \frac{\beta_j}{\|\partial_{x_j}W\|_{L^2}} \int \partial_{x_k}f_0\partial_{x_j}W-\frac{\gamma_j}{\|\partial_{x_j}W\|_{L^2}} \int \partial_{x_k}g_0\partial_{x_j}W \right).
 \end{multline*}
Using that by the assumption \eqref{L:PP1_H1},
$$\|\nabla f_0\|_{L^2}+\|\nabla g_0\|_{L^2}+\|f_1\|_{L^2}+\|g_1\|_{L^2}+\sum_{j=0}^5 |\beta_j|+|\gamma_j|\lesssim \eps,$$
we deduce
$$ \sum_{k=1}^5|\alpha_k|=\sum_{k=1}^5|\beta_k-\gamma_k|\lesssim \eps\|(h_0,h_1)\|_{L^2}.$$
Since 
$$\|(h_0,h_1)\|^2_{\HHH}=\|(h_0,h_1^{\bot}\|_{\HHH}^2+\sum_{j=0}^5 \alpha_j^2,$$
we obtain 
$$\sum_{j=1}^5 |\alpha_j|\lesssim \eps\left( \|(h_0,h_1^{\bot}) \|_{\HHH}+|\alpha_0|\right).$$
Combining with \eqref{PP7.1}, we obtain \eqref{PP5}.
 \end{step}
\begin{step}[Estimates on the energy and conclusion of the proof]
 \label{St:EP}
 We claim:
 \begin{gather}
 \label{PP8'} \left|\alpha_0(\beta_0+\gamma_0)\right|\lesssim \eps|\alpha_0| (\beta_0^2+\gamma_0^2).
 \end{gather}
By the assumption \eqref{L:PP1_H3}, $E(u_0,u_1)=E(v_0,v_1)$. Noting that
\begin{align*}
E(u_0,u_1)&=E(u_0,0)+\frac 12\beta_0^2+\frac 12\sum_{j=1}^5 \beta_j^2+\frac 12\|f_1\|^2_{L^2}\\
E(v_0,v_1)&=E(v_0,0)+\frac 12\gamma_0^2+\frac 12\sum_{j=1}^5 \gamma_j^2+\frac 12\|g_1\|_{L^2}^2, 
\end{align*}
and $\alpha_j=\beta_j-\gamma_j$, for $j\in \llbracket 0,5\rrbracket$, the estimate \eqref{PP8'} will follow from the following inequalities:
 \begin{gather}
 \label{PP10}
 \forall j\in \llbracket 1,5\rrbracket,\quad 
 \left|\beta_j^2-\gamma_j^2\right|\lesssim \eps |\alpha_0|(\beta_0^2+\gamma_0^2)\\
  \label{PP8}
  \left|E(u_0,0)-E(v_0,0)\right|\lesssim \eps |\alpha_0|(\beta_0^2+\gamma_0^2)\\
  \label{PP9}
  \left|\|f_1\|_{L^2}^2-\|g_1\|_{L^2}^2\right|\lesssim \eps |\alpha_0|(\beta_0^2+\gamma_0^2).
 \end{gather}
 By Step  \ref{St:h}, for $j\in \llbracket 1,5\rrbracket$
 $$ \left|\beta_j^2-\gamma_j^2\right|\lesssim |\alpha_j| \left( |\beta_j|+|\gamma_j| \right)\lesssim \eps|\alpha_0|\left( |\beta_j|+|\gamma_j| \right),$$
 and we deduce from Step \ref{St:uv}, 
 $$ \left|\beta_j^2-\gamma_j^2\right|\lesssim \eps |\alpha_0|\left( \beta_0^2+\gamma_0^2 \right).$$
 This yields \eqref{PP10}.
 
 To prove \eqref{PP8}, we expand
 $$E(u_0,0)-E(v_0,0)=E(W+f_0,0)-E(W+g_0,0),$$
 and see that the linear terms in $f_0,g_0$ are zero (since $(W,0)$ is a critical point for $E$). As a consequence (using the Sobolev inequality to control the terms coming from $\int |W+f_0|^{\frac{10}{3}}-\int |W+g_0|^{\frac{10}{3}}$, we obtain
 $$\left|E(u_0,0)-E(v_0,0)\right|\lesssim \|f_0-g_0\|_{\HHH}\left( \|f_0\|_{\HHH}+\|g_0\|_{\HHH} \right)\lesssim \left\|h_0\right\|_{\HHH}\left( \left\|f_0\right\|_{\HHH}+\|g_0\|_{\HHH} \right),$$
 and \eqref{PP8} follows from the estimates of Steps \ref{St:uv} and \ref{St:h}. The proof of \eqref{PP9} is similar, using that $h_1^{\bot}=f_1-g_1$ and thus that
 $$\left|\|f_1\|^2_{L^2}-\|g_1\|^2_{L^2}\right\|\leq \|h_1^{\bot}\|_{L^2}\left( \|f_1\|_{L^2}+\|g_1\|_{L^2} \right).$$

 In view of \eqref{PP8'},
since $\beta_0$ and $\gamma_0$ are non-negative, we obtain $\alpha_0=0$ or $\beta_0=\gamma_0=0$ (which implies $\alpha_0=0$). By \eqref{PP5} $(h_0,h_1)=(0,0)$, i.e. $(u_0,u_1)=(v_0,v_1)$. Recalling that we might have changed $u_1$ into $-u_1$ and $v_1$ into $-v_1$ (see the beginning of Step \ref{St:uv}), we obtain the conclusion of the lemma.
\end{step}
\end{proof}
\begin{proof}[Proof of Theorem \ref{T:NL5}]
Recall that for all $\vell$ with $|\vell|<1$, we have
$$ P\left(\vec{W}_{\vell}(0)\right)=-\frac{\vell}{\sqrt{1-|\vell|^2}}E(W,0) ,\quad  E\left(\vec{W}_{\vell}(0)\right)=\frac{1}{\sqrt{1-|\vell|^2}}E(W,0),$$
Since $(u_0,u_1)$ is close to $\vec{W}_{\vell}(0)$ for some $|\vell|\leq \eta_0$, we deduce that $|P(u_0,u_1)|<E(u_0,u_1)$.
Let 
$$ \vell_0=\frac{P(u_0,u_1)}{E(u_0,u_1)}.$$
By Lemma \ref{L:Lorentz} in the appendix, $P(\vu_{\vell_0}(0))=0$. By Lemma \ref{L:Lorentz_perturb} and the assumption\eqref{dist_u_W}, there exists $\vell'$ with $|\vell'|<1$ such that 
$$\left\|\vec{u}_{\vell_0}(0)-\vec{W}_{\vell'}(0)\right \|_{\HHH}\lesssim \eps$$
(where the implicit constant depends only on $\eta_0$).
This yields
$$\left|P(\vec{u}_{\vell_0}(0))-P(\vec{W}_{\vell'}(0)\right|\lesssim \eps,$$
and thus, since $P(\vec{u}_{\vell_0})=0$, 
$$\frac{|\vell'|}{\sqrt{1-|\vell'|^2}}\lesssim \eps.$$
As a consequence, $|\vell'|\lesssim \eps$. Using that by direct computation, $\left\|(W,0)-\vec{W}_{\vell'}(0)\right\|_{\HHH}\lesssim |\vell'|$ for small $\vell'$, we obtain
$$\|\vec{u}_{\vell_0}(0)-(W,0)) \|_{\HHH}\lesssim \eps.$$

Using the same arguments as in the end of Subsection \ref{SS:Lor} (see in particular \eqref{bound_Lorentz}), one can also check that the Lorentz transformation preserves the assumption \eqref{NL0_5} (with a smaller $\tau_0$). 

We are thus reduced to the case where
 \begin{equation}
  \label{zero_momentum}
  P(u_0,u_1)=0
 \end{equation}
 and $\|(u_0,u_1)-(W,0)\|_{\HHH}\lesssim \eps$.
 Since $\vec{u}$ is a $\HHH$-valued continuous function, we deduce that for small $t$,
 $$\|\vec{u}(t)-(W,0)\|_{\HHH}\lesssim \eps.$$
We also see that the assumption \eqref{NL0_5} implies that for small $t_0$
$$\sum_{\pm} \lim_{t\to\pm\infty} \int_{|x|\geq |t|-\tau_0/2} |\nabla_{t,x}(u(t+t_0,x))|^2\,dx=0.$$
By a standard application of the implicit function theorem, we can find $\lambda(t)>0$ close to $1$ and a small $x(t)\in \RR^5$ such that $u_{(\lambda(t))}(\cdot-x(t))$ satisfies the orthogonality assumptions \eqref{L:PP1_H4} and \eqref{L:PP1_H4'} of Lemma \ref{L:PP1}.
 By Lemma \ref{L:PP1}, for small $t$, 
 $\vec{u}(t)$ is equal to $\pm (u_0,u_1)$ up to translation and scaling. Using the continuity of the flow, we see that $\vec{u}(t)$ must be close to $(u_0,u_1)$ for small $t$.

We are thus reduced to proving the following:
\begin{claim}
\label{C:self_similar}
 Let $u$ be a solution of \eqref{NLW} defined on a interval $I$ containing $0$, such that $P(u_0,u_1)=0$ and $(u_0,u_1)$ is close to $W$. Assume
 \begin{multline}
\label{self_similarity}
 \forall \tau\in  I,\quad \exists x(\tau)\in \RR^N,\;\exists \lambda(\tau)\in (0,\infty),\\ \vec{u}(\tau,x)=\left( \frac{1}{\lambda(\tau)^{\frac{N}{2}-1}}u_0\left( \frac{x+x(\tau)}{\lambda(\tau)} \right), \frac{1}{\lambda(\tau)^{\frac{N}{2}}}u_1\left( \frac{x+x(\tau)}{\lambda(\tau)} \right)\right).
 \end{multline} 
 Then 
 $$\exists \lambda>0,\; \exists X\in \RR^N,\quad (u_0(x),u_1(x))=\left(\frac{1}{\lambda^{\frac{N}{2}-1}}W\left( \frac{x}{\lambda} \right),0\right).$$
\end{claim}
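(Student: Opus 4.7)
The plan is first to differentiate the self-similarity formula \eqref{self_similarity} in $\tau$ to show the modulation parameters $\lambda(\tau),x(\tau)$ are affine in $\tau$, and that $u_1$ is a specific linear combination of $\Lambda u_0$ and $\nabla u_0$. Matching the two expressions for $\partial_\tau u$ coming from differentiating the first coordinate of \eqref{self_similarity} and from reading off the second coordinate yields, for every $\tau\in I$ and $y\in\RR^N$,
\[
u_1(y)=-\lambda'(\tau)\,\Lambda u_0(y)+x'(\tau)\cdot\nabla u_0(y).
\]
Since the left-hand side is independent of $\tau$ and the $N+1$ functions $\Lambda u_0,\partial_{x_1}u_0,\ldots,\partial_{x_N}u_0$ are linearly independent (inherited from the non-degeneracy property $\ZZZ=\vect\{\Lambda W,\partial_{x_j}W\}$ of $W$ and from the closeness of $u_0$ to $W$), the coefficients $\lambda'(\tau),x'(\tau)$ must be constants $\lambda'_0,x'_0$; hence $\lambda(\tau)=1+\lambda'_0\tau$, $x(\tau)=x'_0\tau$, and $u_1=-\lambda'_0\Lambda u_0+x'_0\cdot\nabla u_0$.

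Next, I plug this affine form back into \eqref{NLW}. Computing $\partial_\tau^2 u|_{\tau=0}=-\lambda'_0\,\Lambda_1 u_1+x'_0\cdot\nabla u_1$, where $\Lambda_1:=N/2+x\cdot\nabla$, and equating with $\Delta u_0+|u_0|^{4/(N-2)}u_0$ gives the elliptic identity
\[
(\lambda'_0)^2\,\Lambda_1\Lambda u_0-2\lambda'_0\,x'_0\cdot\nabla(\Lambda u_0)+(x'_0\cdot\nabla)^2 u_0-\Delta u_0=|u_0|^{\frac{4}{N-2}}u_0.
\]
If $\lambda'_0=0$, this is the profile equation $(x'_0\cdot\nabla)^2 u_0-\Delta u_0=|u_0|^{4/(N-2)}u_0$ satisfied by the time-zero data of a traveling wave of velocity $-x'_0$. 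By Lemmas \ref{L:Lorentz} and \ref{L:Lorentz_perturb} from the appendix, combined with the uniqueness of the radial ground state near $W$, $u_0$ coincides, up to scaling and translation, with the profile of $W_{-x'_0}$ at $t=0$. Since the momentum of that profile equals $x'_0\,E(W,0)/\sqrt{1-|x'_0|^2}$, the hypothesis $P(u_0,u_1)=0$ forces $x'_0=0$; hence $u_1\equiv 0$ and $u_0=\lambda^{-(N-2)/2}W((\cdot-X)/\lambda)$, proving the claim in this case.

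Finally, the case $\lambda'_0\neq 0$ must be ruled out. The non-constant affine modulation means that the trajectory $\tau\mapsto\vec u(\tau)$, modulo the scaling and translation symmetries of \eqref{NLW}, reduces to a single point of $\HHH$, in particular a compact set. Under the hypotheses $P(u_0,u_1)=0$ and $E(u_0,u_1)<2E(W,0)$ (the latter following from the closeness of $(u_0,u_1)$ to $(W,0)$), the Kenig--Merle type rigidity theorem for the energy-critical wave equation---as developed in \cite{KeMe08} and refined near $W$ using the channel-of-energy estimate provided by Theorem \ref{T:LW} of the present paper---forces such a compact-modulo-symmetry trajectory to correspond to a stationary soliton, so $\lambda(\tau)$ must be constant. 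This contradicts $\lambda'_0\neq 0$ and concludes the proof. The principal obstacle is setting up the rigidity argument quantitatively in the regime $E\approx E(W)$ close to $(W,0)$; this is precisely the regime covered by Theorem \ref{T:LW}.
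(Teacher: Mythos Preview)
Your approach is substantially more elaborate than the paper's, and the extra machinery both introduces gaps and is ultimately unnecessary.

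The paper's proof is two lines: the hypothesis \eqref{self_similarity} already says that the orbit $\{\vec u(\tau):\tau\in I\}$ modulo scaling and translation is a \emph{single point}; an easy iteration extends this to the maximal interval, so $u$ has the compactness property while remaining close to $W$ modulo symmetries. With $P(u_0,u_1)=0$, the rigidity theorem of \cite{DuKeMe16a} then forces $(u_0,u_1)=(W_{(\lambda)}(\cdot-X),0)$. No differentiation, no case split.

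Your detour through differentiating $\lambda(\tau),x(\tau)$ has two problems. First, the hypothesis only asserts \emph{existence} of $\lambda(\tau),x(\tau)$ for each $\tau$; their differentiability is not given and would have to be established (e.g.\ via the implicit function theorem using the non-degeneracy of $W$), which you do not do. Second, the entire computation is redundant: your own final paragraph, in the case $\lambda'_0\neq 0$, correctly observes that the orbit is a single point modulo symmetries and invokes rigidity --- but this observation is valid regardless of the value of $\lambda'_0$, and is exactly the paper's argument.

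In the case $\lambda'_0=0$, your justification has a gap: Lemmas \ref{L:Lorentz} and \ref{L:Lorentz_perturb} concern well-definedness and continuity of the Lorentz transform on solutions, not classification of solutions of the traveling-wave profile equation. To conclude that a solution of $(x'_0\cdot\nabla)^2 u_0-\Delta u_0=|u_0|^{4/(N-2)}u_0$ close to $W$ must be a Lorentz-transformed $W$, you still need a rigidity statement (either undoing the Lorentz transform and invoking local uniqueness of stationary solutions near $W$ from the non-degeneracy $\ker L_W=\ZZZ$, or --- more simply --- noting that $u$ is then a global traveling wave, hence has compact orbit, and applying \cite{DuKeMe16a} directly). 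Either way you are back to the paper's route.
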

\end{proof}

\begin{proof}[Proof of Claim \ref{C:self_similar}]
By an easy induction, \eqref{self_similarity} holds for all $\tau$ in the maximal interval of definition of $u$. As a consequence, $u$ has the compactness property and remains close to $W$ in the energy space, up to translation and scaling. By \cite{DuKeMe16a}, and since $P(u_0,u_1)=0$, we deduce $(u_0,u_1)=(W,0)$.

 Let us mention that it is possible to prove a more general version of the claim, omitting the assumptions ``$P(u_0,u_1)=0$'' and ``$(u_0,u_1)$ is close to $W$''. In this case the conclusion is that $u$ is a general solitary wave.
 \end{proof}
\subsection{Channels of energy below twice the energy of the ground state}
\label{SS:rig2}
In this subsection we prove Corollary \ref{Cor:rigidity_intro2} and Proposition \ref{P:rigidity_intro3}. 
\begin{proof}[Proof of Corollary \ref{Cor:rigidity_intro2}]
We argue by contradiction. Let $u$ satisfy the assumptions of the corollary, and assume furthermore:
\begin{equation}
 \label{no_channels2}
 \forall \tau_0>0,\quad \sum_{\pm} \lim_{t\to\pm\infty}\int_{|x|>|t|-\tau_0} |\nabla_{t,x}u(t,x)|^2\,dx=0.
\end{equation} 

According to \cite{DuJiKeMe17} if $N\in \{3,5\}$ there exists a sequence of times $\{t_n\}_n$ with 
$$\lim_{n\to +\infty} t_n=+\infty,$$
an integer $J\ge 0$, scales $\lambda_n^j$ with $\lambda_n^j>0$ and $\lim_{n\to\infty}\lambda^j_n/t_n=0$, positions $x_n^j\in \RR^d$, with $\vell_j=\lim_{n\to\infty}\frac{x_n^j}{t_n}$ well defined, and traveling waves $Q_{\vell_j}^j$, for $1\leq j\leq J$, such that 
\begin{equation}
\label{decomposition}
\vec{u}(t_n)=\sum_{j=1}^{J}\,\left((\lambda_n^j)^{-\frac{N}{2}+1}\, Q_{\vell_j}^j\left(\frac{x-x_n^j}{\lambda_n^j},\,0\right),\,(\lambda_x^j)^{-\frac{N}{2}}\, \partial_tQ_{\vell_j}^j\left(\frac{x-x_n^j}{\lambda_n^j},\,0\right)\right)+o(1),
\end{equation}
in $\HHH$, as $n\to\infty$.
In addition, the parameters $\lambda_n^j,\,x_n^j$ satisfy the pseudo-orthogonality condition 
\begin{equation}
\label{ortho}
1\leq j<k\leq J\Longrightarrow \lim_{n\to\infty}
\frac{\lambda_n^j}{\lambda_n^{k}}+\frac{\lambda_n^{k}}{\lambda_n^j}+\frac{\left|x^j_n-x^{k}_n\right|}{\lambda^j_n}=+\infty.
\end{equation}
We note that by \eqref{no_channels2}, one can assume that the linear solution that appears usually in the expansion \eqref{decomposition} is identically $0$. 

By \eqref{decomposition}, \eqref{ortho}, we have 
$$E(u_0,u_1)=\sum_{j=1}^J E(\vec{Q}^j_{\vell_j}(0)=\sum_{j=1}^J \frac{1}{\sqrt{1-|\vell_j|^2}}E(Q^j,0).$$
Recall that 
$$ Q\in \Sigma\text{ and } E(Q,0)<2E(W,0)\Longrightarrow \exists \lambda>0, \; Q=\pm W_{(\lambda)}$$
(see e.g. \cite{DuKeMe12} for a proof). Thus the assumption $E(u_0,u_1)<2E(W,0)$ implies that $J\leq 1$, and that $Q^1=W$ if $J=1$. If $J=0$, then by conservation of the energy and \eqref{decomposition}, $u$ is identically $0$ and we are done. We are thus reduced to the case $J=1$. If $N\in \{3,5\}$, Corollary \ref{Cor:rigidity_intro} immediately yields a contradiction with \eqref{no_channels2}, unless $u$ is a solitary wave.
\end{proof}
\begin{proof}[Proof of Proposition \ref{P:rigidity_intro3}]
 Let $u$ be a global radial solution of \eqref{NLW} with $N\geq 7$ odd, such that $E(u_0,u_1)<2E(W,0)$ and
 \begin{equation}
  \label{no_radiation_end}
  \forall \tau_0>0,\quad
  \sum_{\pm}\lim_{t\to\pm\infty} \int_{|x|>|t|-\tau_0}|\nabla_{t,x}u(t,x)|^2\,dx=0.
 \end{equation} 
 By \cite{Rodriguez16}, \eqref{decomposition} holds with for all $j$, $Q^j\in \{\pm W\}$, $\vell_j=0$ and $x_n^j=0$. By the assumption $E(u_0,u_1)<2E(W,0)$, we obtain that $J=1$ (again, $J=0$ is excluded since $u$ is nonzero), and $E(u_0,u_1)=E(W,0)$. We are thus reduced to proving:
\begin{claim}
\label{Cl:rigidity_largeN}
Assume $N\geq 7$. 
There exists $\eps_0>0$ with the following property.
 Let $u$ be a global, radial solution of \eqref{NLW} such that $E(u_0,u_1)=E(W,0)$,
 $$\forall A\geq 0,\quad \lim_{t\to \pm\infty} \int_{|x|>|t|-A}|\nabla_{t,x}u(t,x)|^2\,dx=0$$
 and 
 $$\|(u_0,u_1)-(W,0)\|=\delta<\eps_0.$$
Then $(u_0,u_1)=(W,0)$.
\end{claim}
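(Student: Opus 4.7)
The plan is to linearize around $W$, use Theorem~\ref{T:LW} to show that $\vec{h}(0) := \vec{u}(0) - (W,0)$ lies almost entirely in the radial part of $\ZZZZ$, namely $\vect\{(\Lambda W,0),(0,\Lambda W)\}$, and then close the argument via the exact equality $E(u_0,u_1) = E(W,0)$. Setting $h = u - W$, the perturbation satisfies
\[
\partial_t^2 h + L_W h = N(h) := F(W+h) - F(W) - F'(W)h, \qquad F(u) = |u|^{4/(N-2)} u.
\]
For $N \geq 7$ the exponent $p := (N+2)/(N-2)$ lies in $(1,2)$, so $F$ is only of class $C^{1,\,p-1}$; integrating the H\"older estimate for $F'$ gives the pointwise bound $|N(h)| \lesssim |h|^p$. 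Because $u$ is radial and $N \geq 5$, the radial part of $\ZZZZ$ is the two-dimensional space $\vect\{(\Lambda W, 0), (0, \Lambda W)\}$.

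The first step is to control $\pi_{\ZZZZ^{\bot}}\vec{h}(0)$ by a power of $\delta$ strictly larger than $1$. Using the scaling invariance of \eqref{NLW}, I would choose $\lambda$ close to $1$ via the implicit function theorem so that, after replacing $u(t,x)$ by $\lambda^{(N-2)/2} u(\lambda t, \lambda x)$, one has $\int \nabla h_0 \cdot \nabla \Lambda W = 0$; the rescaled solution satisfies the same hypotheses, so it suffices to prove that it equals $(W, 0)$. Let $h_L$ solve $\partial_t^2 h_L + L_W h_L = 0$ with data $\vec{h}(0)$. Applying Lemma~\ref{L:linear_approx} to $h - h_L$ (trivial data, forcing $N(h)$) together with the chain-rule estimate~\eqref{fractional_cones} yields
\[
\sup_{t\in\RR}\bigl\|\indic_{\{|x|>|t|\}}\nabla_{t,x}(h-h_L)(t)\bigr\|_{L^2} \lesssim \|\indic\,h\|_{S(\RR)}^{4/(N-2)}\|h\|_{W(\RR)} \lesssim \delta^{p}.
\]
The no-channels assumption on $u$ transfers to $h$ since $W$ is stationary and localized ($\int_{|x|>|t|-A}|\nabla W|^2 \to 0$ as $|t|\to\infty$), and the exterior energy of $h$ is uniformly bounded by Lemma~\ref{L:linear_approx}. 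Thus the exterior asymptotic energy of $h_L$ is $O(\delta^{2p})$, and Theorem~\ref{T:LW} (with $\vell = 0$) produces
\[
\|\pi_{\ZZZZ^{\bot}}\vec{h}(0)\|_{\HHH} \lesssim \delta^{p}.
\]
Writing $\vec{h}(0) = \beta(0,\Lambda W) + r$ with $r \in \ZZZZ^{\bot}$, $\|r\|_{\HHH} \lesssim \delta^{p}$, one deduces from $\|\vec{h}(0)\|_{\HHH}^2 = \delta^2$ that $\beta^2 \|\Lambda W\|_{L^2}^2 = \delta^2\bigl(1 + O(\delta^{2(p-1)})\bigr)$.

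The second step closes the argument via energy conservation. Since $-\Delta W = W^{(N+2)/(N-2)}$, the linear term of the Taylor expansion of $E$ around $(W,0)$ vanishes, and the Sobolev embedding controls the remainder, giving
\[
0 = E(u_0, u_1) - E(W, 0) = \tfrac{1}{2}\langle L_W h_0, h_0\rangle + \tfrac{1}{2}\|h_1\|_{L^2}^2 + O\bigl(\|h_0\|_{\hdot}^{p+1}\bigr).
\]
With $h_0 = r_0$ and $h_1 = \beta \Lambda W + r_1$, using the orthogonality $\int \Lambda W \,r_1 = 0$ (since $(0,\Lambda W) \in \ZZZZ$) and $|\langle L_W r_0, r_0\rangle| \lesssim \|r_0\|_{\hdot}^2 = O(\delta^{2p})$, this collapses to
\[
\tfrac{1}{2}\beta^2 \|\Lambda W\|_{L^2}^2 = O(\delta^{2p}) + O(\delta^{p(p+1)}) = O(\delta^{2p}),
\]
since $p(p+1) > 2p$ when $p > 1$. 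Combined with $\beta^2 \|\Lambda W\|_{L^2}^2 \gtrsim \delta^2$, this yields $\delta^2 \lesssim \delta^{2p}$, i.e.\ $1 \lesssim \delta^{2(p-1)}$; for $\eps_0$ chosen small enough this forces $\delta = 0$, so $\vec{h}(0) = (0,0)$ and the rescaled $u$ equals $W$, hence $u$ is the stationary solution $W_{(\lambda)}$.

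The hard part will be the exterior nonlinear estimate of Step~1: for $p \in (1,2)$ the usual $C^2$ Taylor expansion is unavailable, and one must rely on the pointwise bound $|N(h)| \lesssim |h|^p$ combined with the cone-localization~\eqref{fractional_cones} to obtain the gain $\delta^{p}$ with a power strictly larger than $1$. A secondary subtlety is that $L_W$ admits a negative eigenvalue direction $\YYY \notin \ZZZZ$, so the quadratic form $\langle L_W \cdot, \cdot\rangle$ is not sign-definite on $\ZZZZ^{\bot}$; the argument above is saved because it only uses the absolute bound $|\langle L_W r_0, r_0\rangle| \lesssim \|r_0\|_{\hdot}^2$, not a positivity statement.
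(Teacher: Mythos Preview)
Your overall strategy matches the paper's: normalize away the $(\Lambda W,0)$ component, compare $h$ to the linearized solution $h_L$, invoke Theorem~\ref{T:LW} to get $\|\pi_{\ZZZZ^\bot}\vec h(0)\|\lesssim\delta^{p}$, and close via the exact energy identity. Step~2 is fine and essentially identical to the paper's argument.

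The gap is in Step~1, precisely at the point you yourself flag as ``the hard part''. The estimate \eqref{fractional_cones} is a fractional chain rule for the specific function $F(u)=|u|^{4/(N-2)}u$; it does \emph{not} apply to $N(h)=F(W+h)-F(W)-F'(W)h$. A pointwise bound $|N(h)|\lesssim |h|^{p}$ is useless for the Besov norm $W'$, and if you instead try to place $N(h)$ in $L^1_tL^2_x$ you would need $h\in L^{p}_tL^{2p}_x$, which is exactly the Strichartz norm that fails for $N\geq 7$ (this is why the paper's \eqref{Strichartz} carries the indicator $\indic_{\{N\leq 6\}}$ on that term). So as written the displayed bound $\lesssim\|\indic h\|_S^{4/(N-2)}\|h\|_W$ does not follow.

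The fix the paper uses is to split off the pure-$h$ part: write
\[
N(h)=F(h)+\NNN(h),\qquad \NNN(h)=F(W+h)-F(W)-F(h)-F'(W)h,
\]
estimate $F(h)$ in $W'$ via \eqref{fractional_cones}, and observe that $\NNN(h)$ satisfies the sharper pointwise bound
\[
|\NNN(h)|\lesssim W^{\frac{1}{N-2}}\,|h|^{\frac{N+1}{N-2}}.
\]
The extra factor $W^{1/(N-2)}\lesssim (1+|x|)^{-1}$ belongs to $L^2_tL^\infty_x$ on the cone exterior, while $|h|^{(N+1)/(N-2)}\in L^2_{t,x}$ is exactly $h\in S=L^{2(N+1)/(N-2)}_{t,x}$; H\"older then places $\NNN(h)$ in $L^1_tL^2_x$ with the bound $\|\tlh\|_S^{(N+1)/(N-2)}$. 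Combining the two pieces yields $\sup_t\|\vec{\tlh}-\vec{h}_L\|_{\HHH}\lesssim \delta^{(N+1)/(N-2)}$, after which your Step~2 goes through verbatim. Without this decomposition the argument does not close in dimension $N\geq 7$.
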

Indeed, Claim \ref{Cl:rigidity_largeN} applied to the solution $(t,x)\mapsto u(t_n+t,x)$ for some large $n$ yields the desired result.
\end{proof}
\begin{proof}[Proof of the Claim]
By a standard use of the implicit function theorem, we can assume
$$ \int \nabla u_{0}\nabla \Lambda W=0.$$
Expand $(u_0,u_1)$ as follows:
$$ (u_0,u_1)=\Big(W+f_0,\beta \Lambda W+ f_1\Big),\quad \int f_1\Lambda W=0.$$
Let $h(t)=u(t)-W$.
Then $\|\vec{h}(0)\|_{\HHH}=\delta$ and
 \begin{equation}
  \label{MNL1}
  \partial_t^2h+L_{W} h=F(h)+\NNN(h),
 \end{equation} 
 where 
 $$\NNN(h)=F\left( W+h \right)-F(W)-F(h)-\frac{N+2}{N-2}W^{\frac{4}{N-2}}h.$$
 By finite speed of propagation, $h$ coincides, for $|x|>|t|$, with the solution $\tlh$ of
\begin{equation}
  \label{MNL1'}
  \partial_t^2\tlh+L_{W} \tlh=(F(h)+\NNN(h))\indxt,
 \end{equation} 
 Let $T>0$ and denote by $\Gamma(T)=\Big\{(t,x),\; |t|\leq \min \{|x|,T\} \Big\}$.
 By the fractional chain rule \eqref{fractional_cones},
 \begin{equation}
  \label{MNL2}
  \left\|F(h)\indxt\right\|_{W'((0,T))}=\left\|F(\tlh)\indxt\right\|_{W'((0,T))}\lesssim \|\tlh\|_{W((0,T))}\|\tlh\|_{S(\Gamma(T))}^{\frac{4}{N-2}}.
 \end{equation} 
By the inequality
\begin{equation}
 \label{esti_linearisation}
\left|F\left( y+h \right)-F(y)-F(h)-\frac{N+2}{N-2}|y|^{\frac{4}{N-2}}h\right|
\lesssim |y|\,|h|^{\frac{N+1}{N-2}},
\end{equation}
we obtain
$$|\NNN(h)|\lesssim W^{\frac{1}{N-2}}|h|^{\frac{N+1}{N-2}}.$$
Furthermore,
\begin{multline*}
\left\|\indic_{\Gamma(T)} W^{\frac{1}{N-2}}|h|^{\frac{N+1}{N-2}}\right\|_{L^1_tL^2}\\
\leq \left\|\indic_{\Gamma(T)} W^{\frac{1}{N-2}}\right\|_{L^{2}_tL^{\infty}_x}\left\|\indic_{\Gamma(T)} |h|^{\frac{N+1}{N-2}}\right\|_{L^2_{t,x}}\lesssim \|\tlh\|^{\frac{N+1}{N-2}}_{S(\Gamma(T)}, 
\end{multline*}
where we have used that since $W^{\frac{1}{N-2}}\lesssim \frac{1}{1+|x|}$,
$W^{\frac{1}{N-2}}\indic_{\{|x|\geq t\}}$ is in  $L^{2}_t\left(\RR,L^{\infty}_x(\RR^N)\right)$.
We let $h_{L}(t)$ be the solution of 
\begin{equation*}
 \partial_t^2h_L+L_{W}h_L=0,\quad \vec{h}_{L\restriction t=0}=(u_0,u_1)-(W,0).
\end{equation*} 
By the perturbation Lemma for the linear wave equation with a potential (Lemma \ref{L:linear_approx}), we obtain
\begin{equation*}
\left\|\tlh-h_L\right\|_{S(\Gamma_T)}\lesssim \|\tlh\|_{S(\Gamma_T)}^{\frac{N+1}{N-2}}+\|\tlh\|_{S(\Gamma_T)}^{\frac{4}{N-2}}\|\tlh\|_{W((-T,T)}.
\end{equation*} 
Using again Strichartz estimates, we deduce
\begin{multline}
\label{MNL3}
\sup_{-T\leq t\leq T}\|\vec{\tlh}(t)-\vec{h}_L(t)\|_{\HHH}+
\left\|\tlh-h_L\right\|_{W((-T,T))\cap S((-T,T))}\\
\lesssim \|\tlh\|_{S(\Gamma_T)}^{\frac{N+1}{N-2}}+\|\tlh\|_{S(\Gamma_T)}^{\frac{4}{N-2}}\|\tlh\|_{W((-T,T)}.
\end{multline} 
and thus, since $\|h_L\|_{W((-T,T))\cap S((-T,T))}\lesssim \delta$,
$$\|\tlh\|_{W((-T,T))\cap S((-T,T))}\lesssim \delta.$$
Going back to \eqref{MNL3} we obtain
\begin{equation*}
\sup_{-T\leq t\leq T} \|\vec{\tlh}(t)-\vec{h}_L(t)\|_{\HHH}\lesssim \delta^{\frac{N+1}{N-2}}
\end{equation*} 
This estimate is uniform in $T$. Hence
\begin{equation*}
\sup_{t\in \RR} \|\vec{\tlh}(t)-\vec{h}_L(t)\|_{\HHH}\lesssim \delta^{\frac{N+1}{N-2}}.
\end{equation*} 
Using that $u$ is non-radiative, we deduce
$$\sum_{\pm}\lim_{t\to\pm\infty} \int_{\{|x|>|t|\}} |\nabla_{t,x}h_L(t,x)|^2\,dx\lesssim \delta^{\frac{N+1}{N-2}}.$$
By Theorem \ref{T:LW}, 
$$\left\|(f_0,f_1)\right\|_{\HHH}=\left\|\Pi_{\ZZZZ^{\bot}}(f_0,u_1)\right\|_{\HHH} \lesssim \delta^{\frac{N+1}{N-2}}.$$
Since $\delta^2\approx \|(f_0,f_1)\|^2_{\HHH}+\beta^2$, this
yields
\begin{equation}
\label{bound_by_alpha}
\left\|(f_0,f_1)\right\|_{\HHH} \lesssim \beta^{\frac{N+1}{N-2}}. 
\end{equation} 
Expanding the equality $E(W,0)=E(W+f_0,\beta \Lambda W+f_1)$, we obtain
$$ \beta^2\lesssim \|f_0\|_{\hdot}^2+\|f_0\|_{\hdot}^{\frac{2N}{N-2}}+\|f_1\|^2_{L^2},$$
which yields
$$\beta^2 \lesssim \beta^{\frac{2(N+1)}{N-2}}.$$
This proves that $\beta=0$ and by \eqref{bound_by_alpha}, that $(f_0,f_1)=(0,0)$. We have proved as announced that $(u_0,u_1)=(W,0)$.
\end{proof}

\appendix

\section{Lorentz transformation}
\label{S:Lorentz}
This appendix concerns the effect of the Lorentz transformations on solutions of \eqref{NLW}. If $u$ is a $C^2$ classical solution of \eqref{NLW}, then by direct computation, $u_{\vell}(t,x)$ (defined by \eqref{defLl} is also a $C^2$ classical solution of \eqref{NLW} on its domain of definition. The Lorentz tranform of a general finite energy solution of \eqref{NLW} (as defined in Definition \ref{D:solution} above) is more difficult to understand. If $u$ is global, the formula \eqref{defLl} makes sense, and one can prove that $u_{\vell}$ has indeed finite energy
and is a solution of \eqref{NLW} in the sense of Definition \ref{D:solution} (see e.g. \cite[Lemma 6.1]{DuKeMe16a}).

If $u$ is not globally defined, the formula \eqref{defLl} does not make sense anymore. In this section we prove however that using the Definition \ref{D:sol_cone} of solutions of \eqref{NLW} outside wave cones,
we can define the Lorentz transformation of a class of nonglobal solutions, that include a neighborhood of any global solution. 

If $\vell\in \RR^N$ with $|\vell|<1$, we denote by
$$c_{\vell}:=\sqrt{\frac{1+|\vell|}{1-|\vell|}}>1.$$
Let $(t,x)\in \RR^N$, and $(s,y)$ given by the change of variable of the Lorentz transformation:
$$(s,y)=\left(\frac{t-\vell\cdot x}{\sqrt{1-\ell^2}},\left(-\frac{t}{\sqrt{1-\ell^2}}+\frac{1}{\ell^2} \left(\frac{1}{\sqrt{1-\ell^2}}-1\right)\vell\cdot x\right)\vell+x\right). $$
Then 
$$ |x|^2-t^2=|y|^2-s^2$$
and 
$$|s|+|y|\leq c_{\vell}(|t|+|x|),\quad |t|+|x|\leq c_{\vell}(|s|+|y|).$$
This can be checked easily, assuming for example that $\vell=(\ell,0,\ldots,0)$, so that 
\begin{equation}
\label{defsy}
(s,y)=\left(\frac{t-\ell x_1}{\sqrt{1-\ell^2}},\frac{x_1-t\ell}{\sqrt{1-\ell^2}},x_2,\ldots,x_N \right). 
\end{equation} 
\begin{lemma}
\label{L:Lorentz}
Let $\eta_0\in (0,1)$. There exists $T>0$ with the following property. Let $\tau\geq T$, $u$ be a scattering solution of \eqref{NLW} in $\{|x|>|t|-\tau\}$ with initial data $(u_0,u_1)\in \HHH$ at $t=0$, and $\vell\in \RR^N$ with $|\vell|\leq \eta_0$. Then the formula \eqref{defLl} makes sense for $t\in [-c_{\vell}^{-1}\tau,c_{\vell}^{-1}\tau]$ and defines a solution of \eqref{NLW} on $[-c_{\vell}^{-1}\tau,c_{\vell}^{-1}\tau]\times \RR^N$.
Furthermore,
\begin{align}
 \label{Lorentz_energy}
 E(\vec{u}_{\vell}(0))&=\frac{E(u_0,u_1)}{\sqrt{1-|\vell|^2}} -\frac{1}{\sqrt{1-|\vell|^2}} \vell \cdot P(u_0,u_1)\\
\label{Lorentz_momentum}
 P(\vec{u}_{\vell}(0))&=P(u_0,u_1)+\frac{\vell \cdot P(u_0,u_1)}{|\vell|^2}\left( \frac{1}{\sqrt{1-|\vell|^2}}-1\right)\vell-\frac{E(u_0,u_1)}{\sqrt{1-|\vell|^2}} \vell.
 \end{align}
\end{lemma}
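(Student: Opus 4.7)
The plan is to verify three points in turn: (I) the formula \eqref{defLl} is well-defined on the stated slab, (II) the resulting $u_\vell$ is a solution of \eqref{NLW} in the sense of Definition \ref{D:solution}, and (III) the energy-momentum identities.

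For (I), by rotational invariance I may take $\vell=\ell e_1$ with $\ell=|\vell|\geq 0$, in which case \eqref{defsy} holds and its inverse reads $t=(s+\ell y_1)/\sqrt{1-\ell^2}$. Suppose $(s,y)$ lies outside the domain of $u$, i.e.\ $|s|\geq\tau$ and $|y|\leq|s|-\tau$. Then $\ell|y_1|\leq\ell|y|\leq|s|$, so
\begin{equation*}
 |t| \geq \frac{|s|-\ell|y_1|}{\sqrt{1-\ell^2}} \geq \frac{|s|-\ell(|s|-\tau)}{\sqrt{1-\ell^2}} = \frac{(1-\ell)|s|+\ell\tau}{\sqrt{1-\ell^2}} \geq \frac{\tau}{\sqrt{1-\ell^2}} \geq c_\vell^{-1}\tau,
\end{equation*}
the last inequality being strict for $\ell>0$ since $1/\sqrt{1-\ell^2}=c_\vell^{-1}/(1-\ell)$. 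Contrapositively, $|t|\leq c_\vell^{-1}\tau$ forces $(s,y)$ into the closed domain where $u$ is defined (the case $\ell=0$ being trivial).

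For (II), approximate $(u_0,u_1)$ in $\HHH$ by $(u_0^n,u_1^n)\in(C_c^{\infty}(\RR^N))^2$. Choose $T$ large enough that long-time perturbation theory for the exterior-cone problem \eqref{NLW_trunc} applies: the scattering hypothesis on $u$ supplies the required bound on the spacetime $S$-norm of $u$. The corresponding exterior solutions $u^n$ then exist on $\{|x|>|t|-\tau\}$, are $C^2$ in its interior, and converge to $u$ both in the scattering norm $L^{2(N+1)/(N-2)}$ on this set and uniformly in $t$ in the natural energy norm on each slice. By \eqref{propLl}, $u^n_\vell(t,x):=u^n(s,y)$ is a smooth classical solution of \eqref{NLW} on $[-c_\vell^{-1}\tau,c_\vell^{-1}\tau]\times\RR^N$, with initial data obtained by restriction at $t=0$. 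Because the spacetime Lebesgue measure is Lorentz invariant (Jacobian one), scattering-norm convergence transfers directly to convergence of $u^n_\vell$ in $L^{2(N+1)/(N-2)}$ on the slab. The uniform-in-$t$ convergence of $\vec u^n_\vell(t)$ in $\HHH$ follows from the stress-energy argument of \cite[Lemma 2.2]{KeMe08} (already used to prove Lemma \ref{L:trace}), which dominates the energy on the tilted hyperplane $\{s-\ell y_1=t\sqrt{1-\ell^2}\}$ by the energy on $\{s=0\}$ plus the $L^1_tL^2_x$-norm of the nonlinear forcing, both of which tend to zero as $n,m\to\infty$. Remark \ref{R:sol_limit} then identifies the limit $u_\vell$ as a solution of \eqref{NLW} in the Duhamel sense.

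For (III), a smooth classical solution of \eqref{NLW} carries a symmetric divergence-free stress-energy tensor $T^{\mu\nu}$ whose time-time and time-space components integrate on any space-like hyperplane to the energy and momentum respectively. The conservation law $\partial_\mu T^{\mu\nu}=0$ together with the invariance of \eqref{NLW} under $\LLL_\vell$ implies that $(E,P_1,\dots,P_N)$ transforms as a Lorentz four-vector under $\LLL_\vell$; the corresponding direct computation produces \eqref{Lorentz_energy} and \eqref{Lorentz_momentum} for each smooth $u^n$, and these identities pass to $u$ in the limit via (II). The main obstacle is the uniform-in-$t$ energy convergence in (II): since the energy norm depends on the choice of time slice and is not Lorentz invariant, the estimate must be extracted by converting energies on flat slices of $(t,x)$-space into fluxes across the corresponding tilted space-like hyperplanes in $(s,y)$-space, and then controlling these fluxes by the scattering norm of $u^n-u^m$ on the exterior cone, which is precisely where the scattering hypothesis on $u$ and the largeness condition $\tau\geq T$ enter.
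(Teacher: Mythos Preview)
Your argument is correct and parallels the paper's for parts (I) and (II): the same approximation by smooth compactly supported data, the same appeal to the trace estimate of \cite[Lemma 2.2]{KeMe08} (stated in the paper as Claim \ref{C:linear_Lorentz}), and the same conclusion via Remark \ref{R:sol_limit}. Your contrapositive in (I) is the mirror image of the paper's direct computation of $|y|-|s|$, and in fact yields the strict inequality more cleanly.

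For (III) the two proofs diverge. The paper parametrizes boosts by the rapidity $\zeta$, uses the group law $\LLL_{\zeta+\xi}=\LLL_\zeta\circ\LLL_\xi$ to reduce \eqref{Lorentz_energy}--\eqref{Lorentz_momentum} to the computation of $\frac{d}{d\zeta}E$ and $\frac{d}{d\zeta}P_j$ at $\zeta=0$, and integrates the resulting linear ODE system. To justify the group law for all $\zeta$ the paper regularizes \emph{both} the initial data and the nonlinearity (replacing $F$ by a compactly supported smooth $F_\eps$), so that the approximants are global $C^\infty$ solutions; passing to the limit then requires a separate argument that $E_\eps(\vec u_{\eps,\vell}(0))\to E(\vec u_\vell(0))$. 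Your route is more direct: you integrate $\partial_\mu T^{\mu\nu}=0$ over the region between the flat slice $\{s=0\}$ and the tilted slice $\{s=\ell y_1\}$ for the approximants $u^n$ from (II). This is legitimate because that wedge lies entirely inside $\EEE$ (since $|s|\le\ell|y|<|y|$ there), $u^n$ is a $C^2$ solution of \eqref{NLW} on $\EEE$, and compact support in space kills the boundary terms at infinity. The identities then pass to the limit by the $\HHH$-convergence established in (II). Your approach avoids smoothing the nonlinearity; the paper's makes the four-vector structure of $(E,P)$ under the boost group explicit.
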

\begin{lemma}
 \label{L:Lorentz_perturb}
Let $\eta_0$, $\tau$ and $u$ be as in Lemma \ref{L:Lorentz}. There exist constants $\eps_0>0$ and $C>0$ (depending on $u$, $\tau$ and $\eta_0$) such that if $(v_0,v_1)\in \HHH$ and 
$$\|(u_0,u_1)-(v_0,v_1)\|_{\HHH}<\eps_0,$$
then the solution $v$ of \eqref{NLW} in $\{|x|>|t|-\tau\}$ with initial data $(v_0,v_1)$ at $t=0$ is scattering, and, if $|\vell|\leq \eta_0$, 
$$ \left\|\vec{u}_{\vell}(0)-\vec{v}_{\vell}(0)\right\|_{\HHH}\leq C\left\|(u_0,u_1)-(v_0,v_1)\right\|_{\HHH}.$$
\end{lemma}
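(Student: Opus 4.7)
The plan is to combine a long-time perturbation argument for the nonlinear equation outside the wave cone with the linear energy bound for the Lorentz transform quoted in the proof of Lemma~\ref{L:trace} (Lemma~2.2 of \cite{KeMe08}).

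First, I would establish the existence and scattering of $v$ together with a Strichartz-type bound on the difference $w:=u-v$ in the exterior of the cone. Since $u$ scatters on $\{|x|>|t|-\tau\}$, $u\indic_{\{|x|>|t|-\tau\}}$ has finite $L^{2(N+1)/(N-2)}_{t,x}$ norm. By a standard long-time perturbation argument for \eqref{NLW} outside wave cones (based on Lemma~\ref{L:linear_approx} and finite speed of propagation, and entirely analogous to the Cauchy theory of Subsection~\ref{SS:WP} and of \cite{KeMe08}, \cite{BuCzLiPaZh13}, \cite{DuKeMe16a}), for $\eps_0$ small enough depending on $u$ and $\tau$ the solution $v$ exists and scatters on $\{|x|>|t|-\tau\}$ whenever $\|(u_0,u_1)-(v_0,v_1)\|_\HHH<\eps_0$, and satisfies
\begin{equation}
\label{plan_pert_bound}
 \sup_{t\in\RR}\left\|\indic_{\{|x|>|t|-\tau\}}\nabla_{t,x}w(t)\right\|_{L^2}+\left\|w\indic_{\{|x|>|t|-\tau\}}\right\|_{L^{\frac{2(N+1)}{N-2}}_{t,x}}\leq C\|(u_0,u_1)-(v_0,v_1)\|_\HHH.
\end{equation}

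Next, I would extend $w$ to a globally defined inhomogeneous linear wave. Let $\tilde w$ be the global solution of
\begin{equation*}
 \partial_t^2\tilde w-\Delta\tilde w=f:=\bigl(F(u)-F(v)\bigr)\indic_{\{|x|>|t|-\tau\}},\qquad \vec{\tilde w}_{\restriction t=0}=(u_0-v_0,u_1-v_1),
\end{equation*}
where $F(u)=|u|^{\frac{4}{N-2}}u$. By finite speed of propagation $\tilde w\equiv w$ on $\{|x|>|t|-\tau\}$. As already observed in the discussion preceding Lemma~\ref{L:Lorentz}, the change of variables $(s,y)\mapsto(t,x)$ underlying the Lorentz transform sends the slice $\{s=0\}$ in the new frame into $\{|x|\geq|t|\}\subset\{|x|>|t|-\tau\}$ in the old frame, so that $\vec{\tilde w}_\vell(0)=\vec u_\vell(0)-\vec v_\vell(0)$.

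The last step is to apply the linear Lorentz energy bound (Lemma~2.2 of \cite{KeMe08}, as quoted in the proof of Lemma~\ref{L:trace}) to $\tilde w$, yielding
\begin{equation*}
 \|\vec u_\vell(0)-\vec v_\vell(0)\|_\HHH=\|\vec{\tilde w}_\vell(0)\|_\HHH\leq C(\eta_0)\bigl(\|(u_0-v_0,u_1-v_1)\|_\HHH+\|f\|_{L^1_tL^2_x}\bigr).
\end{equation*}
Using the pointwise bound $|F(u)-F(v)|\lesssim(|u|^{\frac{4}{N-2}}+|v|^{\frac{4}{N-2}})|w|$ and H\"older's inequality in the spacetime Strichartz norms, one reduces the control of $\|f\|_{L^1_tL^2_x}$ to the product of Strichartz norms of $u\indic_{\{|x|>|t|-\tau\}}$ and $v\indic_{\{|x|>|t|-\tau\}}$ in an appropriate mixed Lebesgue space (finite by the scattering of $u$ and by Step~1 for $v$) and of the $L^{2(N+1)/(N-2)}_{t,x}$ norm of $w\indic_{\{|x|>|t|-\tau\}}$, which is already controlled by \eqref{plan_pert_bound}. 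Combining everything gives the desired Lipschitz bound. The main obstacle is executing the long-time perturbation of Step~1 cleanly in the exterior of the cone \emph{uniformly in} $t\in\RR$; this rests on the uniform-in-$R$ version of Lemma~\ref{L:linear_approx}. Everything else is a repetition of Step~1 in the proof of Lemma~\ref{L:trace}, applied to $\tilde w$ instead of $u$.
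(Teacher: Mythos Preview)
Your proposal is correct and follows essentially the same approach as the paper: long-time perturbation theory outside the wave cone yields scattering of $v$ and a Strichartz bound on $u-v$, and then the linear Lorentz trace bound (Claim~\ref{C:linear_Lorentz}, i.e.\ Lemma~2.2 of \cite{KeMe08}) applied to the difference gives the conclusion. The paper chooses the Strichartz pair $L^{\frac{N+2}{N-2}}_t L^{\frac{2(N+2)}{N-2}}_x$ for $u$, $v$ and $u-v$, which makes the H\"older step $\|(F(u)-F(v))\indic_\EEE\|_{L^1_tL^2_x}\lesssim\|u\indic_\EEE\|^{\frac{4}{N-2}}\|u-v\|$ cleaner than your $L^{\frac{2(N+1)}{N-2}}_{t,x}$ choice for $w$ (which forces a different, though still admissible, Strichartz norm on $u,v$); otherwise the arguments coincide.
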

\begin{remark}
 Let $u$ be a global solution of \eqref{NLW}. Then by \cite{DuKeMe19}, we can see that for all $A\in \RR$,
$$u\indic_{|x|\geq |t|+A}\in L^{\cinq}\left(\RR,L^{\dix}(\RR^N)\right).$$
Thus Lemma \ref{L:Lorentz} applies and one can define the Lorentz transform $u_{\vell}$ (which is global) of $u$ for any parameter $\vell$, with $|\vell|<1$. Furthermore by Lemma \ref{L:Lorentz_perturb}, for all $\eta_0$, there exists $\eps_0$ such that if $\|(u_0,u_1)-(v_0,v_1)\|_{\HHH}<\eps_0$ and $|\vell|\leq \eta_0$, then one can define the Lorentz transform $v_{\vell}$ of the solution $v$ of \eqref{NLW} with initial data $(v_0,v_1)$.
\end{remark}
\subsection{Lorentz transform of a solution}
In this subsection we prove the first part of Lemma \ref{L:Lorentz}, i.e. the fact that $u_{\vell}(t)$ is well-defined for $t\in [-c_{\vell}\tau,c_{\vell}\tau]$. We assume without loss of generality
$$\vell=(\ell,0,\ldots,0).$$
We recall from \cite[Lemma 2.2 and Remark 2.3]{KeMe08} the following claim:
\begin{claim}
\label{C:linear_Lorentz}
Let $\eta_0\in (0,1)$, $h\in L^1(\RR,L^2(\RR^N))$, $(w_0,w_1)\in \hdot\times L^2$, $\vell\in \RR^N$ with $|\vell|\leq \eta_0$ and 
\begin{equation}
\label{LCP}
w(t)=\cos(t\sqrt{-\Delta})w_0+\frac{\sin(t\sqrt{-\Delta})}{\sqrt{-\Delta}}w_1+\int_0^t\frac{\sin\left( (t-s)\sqrt{-\Delta} \right)}{\sqrt{-\Delta}}h(s)\,ds,\quad t\in \RR.
\end{equation}
Then $(w_{\vell},\partial_tw_{\vell})\in C^0\left(\RR,\hdot\times L^2\right)$ and there is a constant $C_{\eta_0}$ (depending only on $\eta_0$) such that
\begin{equation*}
 \sup_{t} \left\|(w_{\ell}(t),\partial_tw_{\ell}(t)\right\|_{\hdot\times L^2}\leq C_{\eta_0}\left( \|(w_0,w_1)\|_{\hdot\times L^2}+\|h\|_{L^1(\RR,L^2)} \right).
\end{equation*} 
\end{claim}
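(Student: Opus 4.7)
The plan is to combine the Lorentz covariance of the d'Alembertian with a spacelike trace estimate via the energy--momentum tensor. By linearity and density, first reduce to $(w_0,w_1)\in (C^{\infty}_0)^2$ and $h\in C^{\infty}_0(\RR^{1+N})$, in which case $w$ is a classical $C^2$ solution of $(\partial_t^2-\Delta)w=h$ on $\RR^{1+N}$. By \eqref{propLl}, $w_{\vell}$ is then itself a classical solution of
\begin{equation*}
(\partial_t^2-\Delta)w_{\vell}=h_{\vell}.
\end{equation*}
The standard inhomogeneous energy identity applied directly to $w_{\vell}$ then gives
\begin{equation*}
\sup_{t\in\RR}\|(w_{\vell}(t),\partial_t w_{\vell}(t))\|_{\hdot\times L^2}\lesssim \|(w_{\vell}(0),\partial_t w_{\vell}(0))\|_{\hdot\times L^2}+\|h_{\vell}\|_{L^1(\RR,L^2)},
\end{equation*}
together with the continuity $t\mapsto(w_{\vell}(t),\partial_tw_{\vell}(t))\in C^0(\RR,\hdot\times L^2)$. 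It therefore suffices to bound each of these two quantities by $C_{\eta_0}\bigl(\|(w_0,w_1)\|_{\hdot\times L^2}+\|h\|_{L^1 L^2}\bigr)$.

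The forcing bound is easy: taking $\vell=(\ell,0,\dots,0)$ without loss of generality, for each $t$ the affine change of variables \eqref{defsy} has Jacobian $(1-\ell^2)^{-1/2}$, so a direct computation combined with Minkowski's inequality gives $\|h_{\vell}\|_{L^1 L^2}\leq C_{\eta_0}\|h\|_{L^1 L^2}$. The heart of the argument is the initial-data trace estimate
\begin{equation*}
\|(w_{\vell}(0),\partial_t w_{\vell}(0))\|_{\hdot\times L^2}\leq C_{\eta_0}\bigl(\|(w_0,w_1)\|_{\hdot\times L^2}+\|h\|_{L^1 L^2}\bigr).
\end{equation*}
The pair on the left is (up to factors bounded by $C_{\eta_0}$) the trace of $\nabla_{s,y}w$ on the spacelike hyperplane $H_{\vell}=\{s=-\vell\cdot y/\sqrt{1-|\vell|^2}\}$. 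To handle it, introduce the energy--momentum current $T^{\mu 0}=\bigl(\tfrac12(|\partial_s w|^2+|\nabla_y w|^2),\,-\partial_s w\,\nabla_y w\bigr)$ associated to time translation, whose divergence satisfies $\partial_\mu T^{\mu 0}=h\,\partial_s w$. Integrating this identity over the spacetime wedge bounded by $\{s=0\}$ and $H_{\vell}$ yields
\begin{equation*}
\int_{H_{\vell}}T^{\mu 0}n_\mu\,d\sigma=\tfrac12\|(w_0,w_1)\|_{\hdot\times L^2}^2+\iint_{\text{wedge}}h\,\partial_s w\,ds\,dy.
\end{equation*}
By Cauchy--Schwarz and the usual a priori bound $\sup_s\|\vec w(s)\|_{\hdot\times L^2}\lesssim\|(w_0,w_1)\|_{\hdot\times L^2}+\|h\|_{L^1 L^2}$, the right-hand side is $\lesssim\bigl(\|(w_0,w_1)\|_{\hdot\times L^2}+\|h\|_{L^1 L^2}\bigr)^2$. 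Since $|\vell|\leq\eta_0<1$, the normal to $H_{\vell}$ stays uniformly timelike and the flux on the left dominates $C_{\eta_0}^{-1}\|(w_{\vell}(0),\partial_t w_{\vell}(0))\|_{\hdot\times L^2}^2$, which yields the trace estimate with the desired constant.

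Combining the three ingredients (the energy estimate for $w_{\vell}$, the forcing bound, and the initial-data trace estimate) produces the claim for smooth compactly supported data. The general case follows by density: linearity and uniformity of the estimate guarantee that any approximating sequence $(w_0^n,w_1^n,h^n)$ in $(\hdot\times L^2)\times L^1 L^2$ yields a Cauchy sequence $(w_{\vell}^n,\partial_t w_{\vell}^n)$ in $C^0(\RR,\hdot\times L^2)$, and we take its limit as the definition of $(w_{\vell},\partial_t w_{\vell})$. The main obstacle in the whole argument is the trace estimate: extracting a constant that depends \emph{only} on $\eta_0$ requires exploiting that the tilt of $H_{\vell}$ is controlled away from the light cone, and one must apply the wedge divergence identity together with the a priori energy bound on $w$ to close the inequality (rather than absorbing a term $\|h\|_{L^1 L^2}\cdot\|\partial_s w\|_{L^\infty L^2}$ into the left-hand side).
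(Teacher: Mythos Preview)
The paper does not prove this claim at all; it merely recalls it from \cite[Lemma 2.2 and Remark 2.3]{KeMe08}. So there is nothing to compare against, and your job is to give a self-contained argument.

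Your overall architecture (density, Lorentz covariance, energy--momentum flux) is the right one, but the forcing bound you call ``easy'' is \emph{false}: the norm $L^1_tL^2_x$ is not stable under the Lorentz shear. With $\vell=(\ell,0,\dots,0)$, the map $x\mapsto(s,y)$ at fixed $t$ sweeps $s$ over all of $\RR$, so you are not comparing $L^2$ norms on matching time slices. Concretely, take
\[
h(s,y)=\eps^{-1}\indic_{[0,\eps]}(s)\,\indic_{[0,R]}(y_1)\,\psi(y'),\qquad \|\psi\|_{L^2}=1,
\]
so that $\|h\|_{L^1_sL^2_y}=\sqrt{R}$. Then after your change of variables one finds, for $t$ in an interval of length $\sim R\ell$,
\[
\|h_{\vell}(t,\cdot)\|_{L^2_x}^2=\sqrt{1-\ell^2}\,\eps^{-2}\bigl|\{y_1\in[0,R]:0\le t\sqrt{1-\ell^2}-\ell y_1\le\eps\}\bigr|\sim \frac{1}{\eps\ell},
\]
so $\|h_{\vell}\|_{L^1_tL^2_x}\gtrsim R\sqrt{\ell/\eps}\to\infty$ as $\eps\to0$. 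No version of Minkowski's inequality rescues this. Consequently your chain (a)$+$(b)$+$(c) breaks at (b).

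The repair is already contained in your step (c): drop (a) and (b) entirely and run the divergence identity for $T^{\mu0}$ on the spacetime slab between $\{s=0\}$ and the tilted hyperplane $H_{\vell,t_0}$ corresponding to $\{t=t_0\}$, for \emph{every} $t_0\in\RR$, not just $t_0=0$. The bulk term $\iint h\,\partial_s w\,ds\,dy$ is still controlled by $\|h\|_{L^1_sL^2_y}\sup_s\|\partial_s w(s)\|_{L^2_y}$, independently of which tilted hyperplane caps the region. Since the tilt of $H_{\vell,t_0}$ depends only on $\vell$ (hence only on $\eta_0$), the flux through it dominates $C_{\eta_0}^{-1}\|\vec w_{\vell}(t_0)\|_{\hdot\times L^2}^2$ uniformly in $t_0$, and you obtain the full $\sup_t$ bound in one stroke. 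This is essentially the argument in \cite{KeMe08}; the density step and the continuity conclusion are then exactly as you wrote.
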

\setcounter{step}{0}
\begin{step}[Smooth compactly-supported initial data]
We first assume $(u_0,u_1)\in \left(C_0^{\infty}(\RR^N)\right)^2$, and denote by $R$ a positive number such that $(u_0,u_1)(y)=0$ for $|
y|\geq R$.  We denote by $\EEE$ the exterior of the wave cone:
$$\EEE:=\left\{(s,y)\in \RR\times \RR^N\;|\;|y|>|s|-\tau\right\},$$
and $\EEE_{\vell}$ its image:
\begin{align*} 
\EEE_{\vell}&:=\left\{\left( \frac{s+\ell y_1}{\sqrt{1-\ell^2}},\frac{y_1+s\ell}{\sqrt{1-\ell^2}},y_2,\ldots,y_N \right),\; (s,y)\in \EEE\right\}\\
&=
\bigg\{(t,x)\in \RR\times \RR^N\;\Big|\;\left( \frac{t-\ell x_1}{\sqrt{1-\ell^2}},\frac{x_1-t\ell}{\sqrt{1-\ell^2}},x_2,\ldots,x_N \bigg)\in \EEE\right\}.
\end{align*}
One can prove 
\begin{equation}
\label{continuity}
u\in C^0(\EEE). 
\end{equation} 
Indeed, since the  nonlinearity $F$ is $C^2$, we have that for all $(s_0,y_0)\in \EEE$, there exists a neighborhood $J\times \omega$ of $(s_0,y_0)$ in $\EEE$ such that 
$$\vec{u}\in C^0\left( J,(H^{3}\times H^2)(\omega) \right),$$
and \eqref{continuity} follows from Sobolev embedding (recall that $N\leq 5$). By \eqref{continuity} and the definition of $u_{\vell}$, 
\begin{equation}
\label{continuity'}
u_{\vell}\in C^0(\EEE_{\vell}). 
\end{equation} 
We next prove that if $t$ satisfies $|t|\leq c_{\vell}^{-1}\tau$ and $x\in \RR^N$, then $(t,x)\in \EEE_{\vell}$. Indeed, letting $(s,y)$ be as in \eqref{defsy}, we must prove that $(s,y)\in \EEE$. We have
$$ |y|-|s|=\frac{|y|^2-s^2}{|y|+|s|}=(|x|-|t|)\frac{|x|+|t|}{|y|+|s|}\geq -|t|\frac{|x|+|t|}{|y|+|s|}\geq -c_{\vell}^{-1}|\tau|\frac{|x|+|t|}{|y|+|s|}.$$
Since $\frac{|x|+|t|}{|y|+|s|}\leq c_{\vell}$, we deduce $(s,y)\in \EEE$, i.e. $(t,x)\in \EEE_{\vell}$. Using that $\indic_{|y|\geq |s|-|\tau|}u\in L^{\cinq}(\RR,L^{\dix})$, we obtain by Claim \ref{C:linear_Lorentz}
\begin{equation}
 \label{uell_H}
\vec{u}_{\vell}\in C^0\left([-c_{\vell}^{-1}\tau,c_{\vell}^{-1}\tau],\HHH\right).
\end{equation}  
Next, we prove 
$$|t|\leq c_{\vell}^{-1}\tau,\; |x|\geq |t|+R\,c_{\vell}\Longrightarrow u_{\vell}(t,x)=0$$
Indeed, the left-hand side of this implication implies
$$|y|-|s|\geq \frac{|x|+|t|}{|y|+|s|} c_{\vell}\,R\geq R,$$
and thus $u_{\vell}(t,x)=u(s,y)=0$. 

Since $u_{\vell}$ is compactly supported in the space variable and continuous on $\left[-\frac{\tau}{c_{\vell}},\frac{\tau}{c_{\vell}}\right]\times \RR^N$, we deduce 
\begin{equation}
\label{uell_L5L10}
 u_{\vell}\in L^{\cinq}\left([-c_{\vell}^{-1}\tau,c_{\vell}^{-1}\tau],L^{\dix} \right).
\end{equation} 
Finally it is easy to see, using that $u$ satisfies \eqref{NLW} in the distributional sense on $\EEE$, that $u_{\vell}$ satisfies \eqref{NLW} in the distributional sense on $\EEE_{\vell}$. By Remark \ref{R:solution}, $u_{\vell}$ is a solution of \eqref{NLW} on the interval $[-c_{\vell}^{-1}\tau,c_{\vell}^{-1}\tau]$. Notice for further use that by a simple change of variables,
\begin{equation}
\label{uniform_huit}
\|u_{\vell}\|_{L^{\huit}\left([-c_{\vell}^{-1}\tau,c_{\vell}^{-1}\tau]\times \RR^N\right)}\lesssim \|u\|_{L^{\huit}(\EEE)},
\end{equation} 
where the implicit constant depends only on $\eta_0$. 
\end{step}
\begin{step}
 We no longer assume $(u_0,u_1)\in \left( C_0^{\infty}(\RR^N) \right)^2$, and prove again that the Lorentz transform of $u$ is a solution of \eqref{NLW} on $\left[-c_{\vell}^{-1}\tau,c_{\vell}^{-1}\tau\right]$. Let $\left\{\left( u_{0}^k,u_1^k \right)\right\}_k$ be a sequence in $\left( C_0^{\infty}(\RR^N) \right)^2$ such that 
 $$\lim_{k\to\infty}\left\|\left( u_0^k,u_1^k)-(u_0,u_1) \right)\right\|_{\HHH}=0.$$
 Let $u$ be the solution of \eqref{NLW} on $\EEE=\{|x|> |t|-\tau\}$ with initial data $(u_0,u_1)$ at $t=0$. By Definition \ref{D:sol_cone}, this is the restriction to $\EEE$ of 
 the solution of 
\begin{equation*}
\partial_t^2u-\Delta u=|u|^{\frac{4}{N-2}}u\indic_{\EEE}, 
\end{equation*} 
with the same initial data (that we will also denote by $u$). We let $u^k$ the solution of the same equation with initial data $\left( u_0^k,u_1^k \right)$. By the above computations, the value of $u_{\vell}(t)$ (respectively $u^k_{\vell}(t)$) for $|t|\leq c_{\vell}^{-1}\tau$ depends only on the value of $u$ (respectively $u^k$) on $\EEE$.
By long-time perturbation theory, we obtain that for large $k$
\begin{equation}
 \label{S3}
 \left\|u^k\right\|_{L^{\frac{2(N+1)}{N-2}}(\RR\times \RR^N)}\leq 2\|u\|_{L^{\huit}(\RR\times \RR^N)}\lesssim \left\|\indic_{\EEE} u\right\|_{L^{\cinq}\big(\RR,L^{\dix}\big)},
\end{equation} 
and 
\begin{equation}
 \label{S4}
 \lim_{k\to\infty} \left\|u-u^k\right\|_{L^{\cinq}\left(\RR,L^{\dix}\right)}=0.
\end{equation} 
By the preceding step, $u^k_{\vell}$ is a solution of \eqref{NLW} on $\left[-c_{\vell}^{-1}\tau,c_{\vell}^{-1}\tau\right]\times \RR^N$. Since \eqref{S4} implies
$$ \lim_{k\to\infty}\left\|\left( F(u^k)-F(u) \right)\indic_{\EEE}\right\|_{L^1(\RR,L^2)}=0,$$
we deduce from Claim \ref{C:linear_Lorentz}
$$\sup_{t\in [-c_{\vell}^{-1}\tau,c_{\vell}^{-1}\tau]} \left\|\vec{u}_{\vell}(t)-\vec{u}^k_{\vell}(t)\right\|_{\HHH}\underset{k\to\infty}{\longrightarrow} 0.$$
Since by \eqref{S3},
$$ \left\|u^k_{\vell}\right\|_{L^{\huit}\left([-c_{\vell}^{-1}\tau,c_{\vell}^{-1}\tau]\times \RR^N\right)}$$
is uniformly bounded (see \eqref{uniform_huit} in the preceding step), we deduce by Remark \ref{R:sol_limit} that $u_{\vell}$ is a solution of \eqref{NLW} on $\left[-c_{\vell}^{-1}\tau,c_{\vell}^{-1}\tau\right]$.
\end{step}
\subsection{Perturbation}
In this subsection we prove Lemma \ref{L:Lorentz_perturb}. We use the notations of the previous subsection. Adapting the standard long-time perturbation theory to the exterior of wave cones, we obtain that there exists $\eps_0$ such that if 
$$\left\|(v_0,v_1)-(u_0,u_1)\right\|_{\HHH}\leq \eps_0,$$
then the solution $v$ of 
\begin{equation*}
 \left\{\begin{aligned}
         \partial_t^2v-\Delta v&=F(v)\indic_{\EEE}\\
         \vec{v}_{\restriction t=0}&=(v_0,v_1)
        \end{aligned}
\right.
\end{equation*} 
scatters and satisfies
$$ \sup_{t\in \RR}\left\|\vec{u}(t)-\vec{v}(t)\right\|_{\HHH}+\left\|u-v\right\|_{L^{\cinq}\left(\RR,L^{\dix}(\RR^N)\right)}\lesssim C\left\|(u_0,u_1)-(v_0,v_1)\right\|_{\HHH}.$$
As a consequence 
$$\left\|\left( |v|^{\frac{4}{N-2}}v-|u|^{\frac{4}{N-2}}u\right)\indic_{\EEE} \right\|_{L^1(\RR,L^2)}\lesssim \left\|\indic_{\EEE}u\right\|_{L^{\cinq}(\RR,L^{\dix})}^{\frac{4}{N-2}}\left\|(u_0,u_1)-(v_0,v_1)\right\|_{\HHH},$$
and the conclusion of the Lemma follows from Claim \ref{C:linear_Lorentz}.
\subsection{Energy and momentum}
\label{SS:en_mom}
It remains to prove the assertion on the energy and the momentum. This is classical (see e.g. \cite{KrNaSc13a}). We give a proof for the sake of completeness. We will assume
$$\vell=(\ell,0,\ldots,0)$$
to simplify notations. Let $\zeta\in \RR$ such that
$$\sinh \zeta=\frac{-\ell}{\sqrt{1-\ell^2}},\quad \cosh\zeta=\frac{1}{\sqrt{1-\ell^2}}.$$
As a consequence,
\begin{equation}
\label{S5}
u_{\vell}(t,x)=u(t\cosh\zeta+x_1\sinh\zeta,x_1\cosh \zeta+t\sinh\zeta,x_2,\ldots,x_N).
\end{equation} 
Let $\LLL_{\zeta}u(t,x)$ be the right-hand side of \eqref{S5}. Formally,
\begin{equation}
 \label{S6}
 \LLL_{\zeta+\xi}=\LLL_{\zeta}\circ\LLL_{\xi}, 
\end{equation} 
and, by direct computation,
\begin{align}
 \label{S7}
 \frac{d}{d\zeta}E\left(\bigvec{\LLL_{\zeta} u}(0)\right)_{\restriction \zeta=0}&=P_1\Big((u_0,u_1)\Big)\\
 \label{S8}
 \frac{d}{d\zeta}P_1\left(\bigvec{\LLL_{\zeta} u}(0)\right)_{\restriction \zeta=0}&=E\Big((u_0,u_1)\Big)\\
 \label{S8'}
\frac{d}{d\zeta}P_j\left(\bigvec{\LLL_{\zeta} u}(0)\right)_{\restriction \zeta=0}&=0,\quad j\in \llbracket 2,N\rrbracket.
 \end{align}
where 
$$P_ju(t)=\int \partial_tu(t,x)\partial_{x_j}u(t,x)\,dx.$$
Combining \eqref{S6}\ldots\eqref{S8'}, we deduce
\begin{align*}
 E\left( \bigvec{\LLL_{\zeta}u}(0) \right)&=\cosh \zeta E(u_0,u_1)+\sinh \zeta P_1(u_0,u_1)\\
 P_1\left(  \bigvec{\LLL_{\zeta}u}(0) \right)&=\cosh \zeta P_1(u_0,u_1)+\sinh \zeta E(u_0,u_1)\\
 P_j\left( \bigvec{\LLL_{\zeta}u}(0) \right)&=P_j\left( (u_0,u_1)\right),\quad j=\llbracket 2,N \rrbracket.
\end{align*}
This is exactly \eqref{Lorentz_energy} and \eqref{Lorentz_momentum}.
To make these formal computation rigorous, we smoothen the nonlinearity and the initial data. Let $\chi\in  C_0^{\infty}(\RR^N)$ such that $\chi(v)=1$ if $|v|\leq 1$ and $\chi(v)=0$ if $|v|\geq 2$. For $\eps>0$, let
$$F_{\eps}(v)=\left( 1-\chi\left( \frac{v}{\eps} \right) \right) )\chi\left( \eps v \right)|v|^{\frac{4}{N-2}}v,$$
and note that $F_{\eps}\in C_0^{\infty}\left( \RR^N \right)$. Let $\left( u_{0,\eps},u_{1,\eps} \right)\in \left( C_0^{\infty}(\RR^N) \right)^2$ such that 
$$\lim_{\eps \to 0} \left\|(u_{0,\eps},u_{1,\eps})-(u_0,u_1)\right\|_{\HHH}=0.$$
Let $u_{\eps}$ be the solution of 
\begin{equation}
\label{NLWeps}
\left\{
\begin{aligned}
 \partial_t^2u_{\eps}-\Delta u_{\eps}=|u_{\eps}|^{\frac{4}{N-2}}u_{\eps},\\
 \bigvec{u}_{\eps\restriction t=0}=(u_{0,\eps},u_{1,\eps})\in \HHH.
\end{aligned}
 \right.
 \end{equation} 
Note that $u_{\eps}$ is global, $C^{\infty}$, and that for all $t$
$$\supp \vec{u}_{\eps}(t)\subset\{|x|\leq |t|+R_{\eps}\},$$
where $R_{\eps}$ is such that $\supp \vec{u}_{\eps}(0)\subset\{|x|\leq R_{\eps}\}$. Let 
$$f_{\eps}(v)=\int_0^{v}F_{\eps}(w)\,dw.$$
The energy
$$ E_{\eps}(u_{\eps})=\frac{1}{2}\int |\nabla u_{\eps}|^2+\frac 12\int (\partial_t u_{\eps})^2-\int f_{\eps}(u_{\eps})$$
and the momentum
$$ P(u_{\eps})=\int \nabla u_{\eps}\partial_t u_{\eps}$$
are independent of time. The Lorentz transformation of $u_{\eps}$, $\LLL_{\zeta}u_{\eps}$ are solutions of \eqref{NLWeps} with $\left(C_0^{\infty}(\RR^N)\right)^2$ initial data. Explicit computations (which are rigorous in this context) prove that 
\begin{align*}
 E_{\eps}\left( \bigvec{\LLL_{\zeta}u_{\eps}}(0) \right)&=\cosh \zeta E_{\eps}(u_{0,{\eps}},u_{1,{\eps}})+\sinh \zeta P_1(u_{0,{\eps}},u_{1,\eps})\\
 P_1\left(  \bigvec{\LLL_{\zeta}u_{\eps}}(0) \right)&=\cosh \zeta P_1(u_{0,\eps},u_{1,\eps})+\sinh \zeta E_{\eps}(u_{0,\eps},u_{1,\eps})\\
 P_j\left( \bigvec{\LLL_{\zeta}u_{\eps}}(0) \right)&=P_j\left( (u_{0,\eps},u_{1,\eps} \right),\quad j=\llbracket 2,N \rrbracket.
\end{align*}
It remains to prove that if $|\vell|\leq \eta_0$, then 
\begin{equation}
 \label{S8''}
 \lim_{\eps\to 0}E_{\eps}\left( \bigvec{\LLL_{\zeta}u_{\eps}}(0) \right)=E\left(\bigvec{\LLL_{\zeta}u}(0) \right),\quad \lim_{\eps\to 0}P\left( \bigvec{\LLL_{\zeta}u_{\eps}}(0) \right)=P\left(\bigvec{\LLL_{\zeta}u}(0) \right).
\end{equation} 
We first prove
\begin{equation}
 \label{S9}
 \lim_{\eps\to 0} \left\|\bigvec{\LLL_{\zeta}(u_{\eps})}(0)-\bigvec{\LLL_{\zeta}(u)}(0)\right\|_{\HHH}=0.
\end{equation} 
For this, we start by proving
\begin{equation}
 \label{S10}
 \sup_{-\tau\leq t\leq \tau} \left\|\vec{u}(t)-\vec{u}_{\eps}(t)\right\|_{\HHH}+\left\|(u-u_{\eps})\indic_{\EEE}\right\|_{L^{\cinq}_tL^{\dix}_x}\underset{\eps\to 0}{\longrightarrow} 0.
\end{equation} 
Denote by $F(u)=|u|^{\frac{4}{N-2}}u$, $\psi_{\eps}(u)=\left( 1-\chi\left( \frac{u}{\eps} \right) \right)\chi(\eps u)$. Then
\begin{align*}
 \partial_t^2(u-u_{\eps})-\Delta(u-u_{\eps})&=\psi_{\eps}(u_{\eps})\left( F(u)\indic_{\EEE}-F(u_{\eps}) \right)+\left( 1-\psi_{\eps}(u_{\eps})\right)F(u)\indic_{\EEE} \\
 \bigvec{u-u_{\eps}}_{\restriction t=0}&= (u_0,u_1)-(u_{0,\eps},u_{1,\eps}).
\end{align*}
As a consequence, for all $t_0\geq 0$,
\begin{multline*}
 \left\|(u-u_{\eps})\indic_{\EEE}\right\|_{L^{\cinq}\left([0,t_0),L^{\dix}_x\right)}\\
 \lesssim \left\|(F(u)-F(u_{\eps}))\indic_{\EEE}\right\|_{L^1\left([0,t_0),L^{2}_x\right)}+\left\|(1-\psi_{\eps}(u_{\eps}))F(u)\indic_{\EEE}\right\|_{L^1\left([0,t_0),L^2_x\right)}
 \\+\left\|(u_0,u_1)-(u_{0,\eps},u_{1,\eps})\right\|_{\HHH}.
\end{multline*}
We write
\begin{equation*}
 \left( 1-\psi_{\eps}(u_{\eps}) \right)F(u)\indic_{\EEE}
 =\left(\indic_{|u-u_{\eps}|<\frac 12 |u|}+\indic_{|u-u_{\eps}|\geq \frac 12 |u|}\right) \left( 1-\psi_{\eps}(u_{\eps}) \right)F(u)\indic_{\EEE}
\end{equation*}
We have 
\begin{equation}
\label{a.e.1}
\lim_{\eps\to 0}
\indic_{|u-u_{\eps}|<\frac 12 |u|}\left( 1-\psi_{\eps}(u_{\eps}) \right)F(u)\indic_{\EEE}=0\quad \text{a.e.} 
\end{equation} 
Indeed, if $x$ is fixed, then 
$$
\indic_{|u-u_{\eps}|<\frac 12 |u|} \left( 1-\psi_{\eps}(u_{\eps}) \right)
\leq\begin{cases}
         0 & \text{ if }\eps\leq |u_{\eps}(x)|\leq \frac{1}{\eps}\\
         \chi\left( \frac{|u(x)|}{2\eps} \right)&\text{ if } |u_{\eps}(x)|\leq \eps\\
         \left( 1-\chi\left( \frac{3}{2}\eps |u(x)| \right) \right)&\text{ if }|u_{\eps}(x)|\geq \frac{1}{\eps},
        \end{cases}$$
where we have used that $\chi$ is decreasing. This obviously implies \eqref{a.e.1}. As a consequence of \eqref{a.e.1}, by the dominated convergence theorem
$$\lim_{\eps\to 0}\left\|\indic_{|u-u_{\eps}|<\frac 12 |u|} \left( 1-\psi_{\eps}(u_{\eps}) \right)F(u)\indic_{\EEE}\right\|_{L^{\cinq}_tL^{\dix}}=0.$$
On the other hand,
$$\indic_{|u-u_{\eps}|\geq \frac 12 |u|}\left( 1-\psi_{\eps}(u_{\eps}) \right)F(u)\lesssim |u|^{\frac 4N}|u-u_{\eps}|.$$
Using Strichartz estimates and the equation satisfied by $u-u_{\eps}$ we deduce that for all $t_0>0$,
\begin{multline*}
\left\|(u-u_{\eps})\indic_{\EEE}\right\|_{L^{\cinq}\left([0,t_0),L^{\dix}_x\right)}\\ \lesssim \left\| \indic_{\EEE}|u-u_{\eps}| |u|^{\frac{4}{N}}\right\|_{L^1\left( [0,t_0),L^2_x \right)}+\left\||u-u_{\eps}|^{1+\frac{4}{N}}\indic_{\EEE}\right\|_{L^1\left( [0,t_0),L^2_x \right)}+o(1),\quad \eps\to 0.
\end{multline*}
Since $u\indic_{\EEE}\in L^{\cinq}(\RR,L^{\dix})$,
we obtain, combining with the same argument for negative times,
$$\lim_{\eps\to\infty}\left\|(u-u_{\eps})\indic_{\EEE}\right\|_{L^{\cinq}\left(\RR,L^{\dix}\right)}=0.$$
Going back to the equation satisfied by $u-u_{\eps}$ and using  Strichartz estimates, we obtain \eqref{S10}. By Claim \ref{C:linear_Lorentz}, we deduce \eqref{S9}.

In view of \eqref{S9}, the following property will imply \eqref{S8''}:
$$\lim_{\eps\to 0}\int f_{\eps}\left(\LLL_{\zeta}(u_{\eps})(0,x) \right)\,dx=\frac{N-2}{2N}\int \left|\LLL_{\zeta}(u)(0,x) \right|^{\frac{2N}{N-2}}\,dx.$$
Denote $w(x)= \LLL_{\zeta}(u)(0,x)$, $w_{\eps}(x)=\LLL_{\zeta}(u_{\eps})(0,x)$,
and $f(u)=\frac{N-2}{2N}|u|^{\frac{2N}{N-2}}$. Write
\begin{multline*}
\int f_{\eps}\left(w_{\eps}(x)\right)\,dx-\int f\left(w(x)\right)\,dx\\
=\int f_{\eps}\left(w_{\eps}(x)\right)\,dx-\int f_{\eps}\left(w(x)\right)\,dx+\int f_{\eps}\left(w(x)\right)\,dx-\int f\left(w(x)\right)\,dx.
\end{multline*}
We have 
$0\leq f_{\eps}\left(w\right)\leq f\left(w\right)$
and
$\lim_{\eps\to 0} f_{\eps}\left(w(x)\right)=f\left(w(x)\right),$
a.e., which implies 
$$ \lim_{\eps\to 0} \int f_{\eps}\left(w(x)\right)\,dx=\int f\left(w(x)\right)\,dx,$$
by the dominated convergence theorem.

On the other hand,
\begin{multline*}
 \left|f_{\eps}\left(w_{\eps}(x)\right)-f_{\eps}\left(w(x)\right)\right|=\left|\int_{w(x)}^{w_{\eps}(x)}F_{\eps}(\sigma)\,d\sigma\right|\\
 \leq \left|\int_{w(x)}^{w_{\eps}(x)}F(\sigma)\,d\sigma\right|\leq \left| F\left( w_{\eps}(x) \right)+F\left( w(x) \right)\right|\,|w_{\eps}(x)-w(x)|,
\end{multline*}
where we have used that $F$ is monotonic. By H\"older inequality,
\begin{multline*}
 \int \left| f_{\eps}(w_{\eps}(x))-f_{\eps}(w(x))\right|\,dx\\ 
 \lesssim \left( \int |w_{\eps}(x)-w(x)|^{\frac{2N}{N-2}}\,dx \right)^{\frac{N-2}{2N}} \left( \|w_{\eps}\|^{\frac{N+2}{N-2}}_{L^{\frac{2N}{N-2}}}+\|w\|^{\frac{N+2}{N-2}}_{L^{\frac{2N}{N-2}}} \right)\underset{\eps\to 0}{\longrightarrow}0.
\end{multline*}
This concludes the proof.

\bibliographystyle{acm}
\bibliography{/home/duyckaerts/ownCloud2/Recherche/toto} 

\end{document}